\DeclareFontFamily{U}{mathx}{\hyphenchar\font45}
\DeclareFontShape{U}{mathx}{m}{n}{
      <5> <6> <7> <8> <9> <10>
      <10.95> <12> <14.4> <17.28> <20.74> <24.88>
      mathx10
      }{}
\DeclareSymbolFont{mathx}{U}{mathx}{m}{n}
\let\widecheck\@undefined
\let\widebar\@undefined
\DeclareMathAccent{\widecheck}{\mathord}{mathx}{"71}
\DeclareMathAccent{\widebar}{\mathord}{mathx}{"73}
\newcommand{\supsize}{%
  \expandafter\ifx\csname S@\f@size\endcsname\relax
  \calculate@math@sizes
  \fi
  \csname S@\f@size\endcsname
  \fontsize\sf@size\z@\selectfont
}
\DeclareRobustCommand{\tsup}[1]{%
  \leavevmode\raise.9ex\hbox{\supsize #1}%
}
\DeclareTextSymbolDefault{\textprimechar}{OMS}
\DeclareTextSymbol{\textprimechar}{OMS}{48}
\DeclareSymbolFont{stmry}{U}{stmry}{m}{n}
\let\llbracket\@undefined
\let\rrbracket\@undefined
\DeclareMathDelimiter{\llbracket}{\mathopen}%
                     {stmry}{"4A}{stmry}{"71}
\DeclareMathDelimiter{\rrbracket}{\mathclose}%
                     {stmry}{"4B}{stmry}{"79}
\newcommand{\set}[1]{\mathchoice%
  {\left\lbrace #1 \right\rbrace}%
  {\lbrace #1 \rbrace}%
  {\lbrace #1 \rbrace}%
  {\lbrace #1 \rbrace}%
}
\newcommand{\paren}[1]{\mathchoice%
  {\left( #1 \right)}%
  {( #1 )}%
  {( #1 )}%
  {( #1 )}%
}
\newcommand{\brac}[1]{\mathchoice%
  {\left[ #1 \right]}%
  {[ #1 ]}%
  {[ #1 ]}%
  {[ #1 ]}%
}
\newcommand{\abs}[1]{\mathchoice%
  {\left\lvert #1 \right\rvert}%
  {\lvert #1 \rvert}%
  {\lvert #1 \rvert}%
  {\lvert #1 \rvert}%
}
\newcommand{\dirsum}{\oplus}                 
\newcommand{\bigdirsum}{\bigoplus}           
\newcommand{\tensor}{\otimes}                
\newcommand{\bigtensor}{\bigotimes}          
\newcommand{\union}{\cup}                    
\newcommand{\intersect}{\cap}                
\newcommand{\comp}{\circ}                    
\newcommand{\cross}{\times}                  
\newcommand{\isom}{\cong}                    
\newcommand{\numset}[1]{\mathbb{#1}}
\newcommand{\Z}{\numset{Z}}
\newcommand{\Q}{\numset{Q}}
\newcommand{\R}{\numset{R}}
\newcommand{\C}{\numset{C}}
\newcommand{\F}[1]{\numset{F}_{#1}}
\DeclareMathOperator{\id}{id}
\DeclareMathOperator{\Id}{Id}
\DeclareMathOperator{\im}{Im} 
\DeclareMathOperator{\Hom}{Hom} 
\DeclareMathOperator{\rk}{rk} 
\newcommand{\matrixgrp}[1]{\mathrm{#1}}
\newcommand{\GL}{\matrixgrp{GL}}
\newcommand{\SL}{\matrixgrp{SL}}
\newcommand{\SO}{\matrixgrp{SO}}
\newcommand{\SU}{\matrixgrp{SU}}
\newcommand{\fg}{\mathfrak g}
\newcommand{\bdy}{\partial} 
\newcommand{\connsum}{\mathbin{\#}} 
\newcommand{\bigconnsum}{\mathop{\mathchoice%
    {\vcenter{\hbox{\LARGE $\#$}}}
    {\vcenter{\hbox{\large $\#$}}}
    {\vcenter{\hbox{\footnotesize $\#$}}}
    {\vcenter{\hbox{\scriptsize $\#$}}}
  }}
\DeclareMathOperator{\Sym}{Sym}
\newcommand{\nbhd}[1]{\nu (#1)}
\DeclareMathOperator{\CF}{CF}
\DeclareMathOperator{\HF}{HF}
\DeclareMathOperator{\HFK}{HFK}
\newcommand{\opgen}[1]{#1^{\bullet}} 
\newcommand{\opminus}[1]{#1^-}
\newcommand{\opplus}[1]{#1^+}
\newcommand{\opinfty}[1]{#1^\infty}
\newcommand{\ophat}[1]{\widehat{#1}}
\newcommand{\optilde}[1]{\widetilde{#1}}
\newcommand{\opbar}[1]{\overline{#1}}
\newcommand{\CFh}{\ophat{\CF}}
\newcommand{\HFg}{\opgen{\HF}}
\newcommand{\HFm}{\opminus{\HF}}
\newcommand{\HFp}{\opplus{\HF}}
\newcommand{\HFi}{\opinfty{\HF}}
\newcommand{\HFh}{\ophat{\HF}}
\newcommand{\HFKh}{\ophat{\HFK}}
\newcommand{\curves}[1]{\boldsymbol{#1}}
\newcommand{\alphas}[1][]{%
  \ifthenelse{\equal{#1}{}}{\curves{\alpha}}{\curves{\alpha^{#1}}}
}
\newcommand{\betas}[1][]{%
  \ifthenelse{\equal{#1}{}}{\curves{\beta}}{\curves{\beta_{#1}}}
}
\newcommand{\genset}[1]{\mathfrak{#1}}
\DeclareMathOperator{\spin}{Spin}
\newcommand{\SpinC}{\spin^c}
\newcommand{\spinc}{\mathfrak{s}}
\newcommand{\spinct}{\mathfrak{t}}
\DeclareMathOperator{\ind}{ind}
\DeclareMathOperator{\SFH}{SFH}
\newcommand*{\disjunion}{\sqcup}
\newcommand*{\bigdisjunion}{\coprod}
\newcommand*{\fprod}{\mathbin{*}}
\renewcommand*{\Id}{\mathbb{I}}
\renewcommand*{\F}{\mathbb{F}}
\newcommand*{\zeetwo}{\Z / 2}
\DeclareMathOperator{\End}{End}
\newcommand*{\extprod}{\Lambda}
\newcommand*{\topwedge}{\Lambda^{\mathrm{top}}}
\DeclareMathOperator{\Tors}{Tors}
\newcommand*{\homeo}{\cong}
\DeclareMathOperator{\Int}{Int}
\renewcommand*{\opgen}[1]{#1^{\mathord{\circ}}}
\newcommand*{\Fh}{\ophat{F}}
\newcommand*{\Fm}{\opminus{F}}
\newcommand*{\Fg}{\opgen{F}}
\newcommand*{\double}[1]{D (#1)}
\newcommand*{\Yt}{\widetilde{Y}}
\newcommand*{\Mh}{\widehat{M}}
\newcommand*{\Rh}{\widehat{R}}
\newcommand*{\Nh}{\widehat{N}}
\newcommand*{\orrev}[1]{(-#1)}
\newcommand*{\nbhdgamma}{\nbhd{\gamma}}
\newcommand*{\nbhdgammai}{\nbhd{\gamma_i}}
\newcommand*{\pts}{\mathbf{p}}
\renewcommand*{\graph}{\Gamma}
\newcommand*{\graphXp}{\graph_{X'}}
\newcommand*{\graphP}{\graph_P}
\newcommand*{\graphQi}{\graph_{Q_i}}
\newcommand*{\graphRi}{\graph_{R_i}}
\newcommand*{\graphZp}{\graph_{Z'}}
\newcommand*{\Fc}{\mathcal{F}}
\newcommand*{\bigparen}[2]{%
  \left( \raisebox{9pt}{$\displaystyle#1$} \; \, #2 \right)
}
\newcommand*{\tallsum}[1]{%
  \raisebox{9pt}{$\displaystyle#1$} \; \, %
}
\newcommand*{\spincm}{\spinc_-}
\newcommand*{\spincp}{\spinc_+}
\newcommand*{\Ym}{Y_-}
\newcommand*{\Yp}{Y_+}
\newcommand*{\Ypm}{Y_{\pm}}
\newcommand*{\Mm}{M_-}
\newcommand*{\Mp}{M_+}
\newcommand*{\sutm}{\sut_-}
\newcommand*{\sutp}{\sut_+}
\newcommand*{\Msutm}{(\Mm, \sutm)}
\newcommand*{\Msutp}{(\Mp, \sutp)}
\newcommand*{\Km}{K_-}
\newcommand*{\Kp}{K_+}
\newcommand*{\Kpm}{K_{\pm}}
\newcommand*{\wm}{w_-}
\newcommand*{\wpl}{w_+}
\newcommand*{\rhom}{\rho_-}
\newcommand*{\rhop}{\rho_+}
\newcommand*{\rhoW}{\rho_W}
\DeclareMathOperator{\Tor}{Tor}
\DeclareMathOperator{\Lef}{Lef}
\newcommand*{\casson}{\lambda}
\newcommand*{\lFO}{\casson_{\mathrm{FO}}}
\newcommand*{\lSW}{\casson_{\mathrm{SW}}}
\newcommand*{\ZZ}{\Z[\Z]}
\renewcommand*{\connsum}{\mathbin{\sharp}} 
\renewcommand*{\bigconnsum}{\mathop{\sharp}}
\newcommand*{\eval}[2]{\mathchoice%
  {\left\langle #1, #2 \right\rangle}%
  {\langle #1, #2 \rangle}%
  {\langle #1, #2 \rangle}%
  {\langle #1, #2 \rangle}%
}
\newcommand*{\moduli}{\mathcal{M}}
\newcommand*{\opred}[1]{#1^{\mathrm{red}}}
\newcommand*{\opsharp}[1]{#1^{\sharp}}
\newcommand*{\opnatural}[1]{#1^{\natural}}
\newcommand*{\opfrom}[1]{\widehat{#1}}
\newcommand*{\opto}[1]{\widecheck{#1}}
\DeclareMathOperator{\HFI}{HFI}
\DeclareMathOperator{\HM}{HM}
\DeclareMathOperator{\IFH}{I}
\DeclareMathOperator{\IFC}{C}
\DeclareMathOperator{\SHI}{SHI}
\DeclareMathOperator{\KHI}{KHI}
\DeclareMathOperator{\IFCeq}{C}
\newcommand*{\deq}{d}
\newcommand*{\HFIh}{\widehat{\HFI}}
\newcommand*{\HFr}{\opred{\HF}}
\newcommand*{\HMg}{\opgen{\HM}}
\newcommand*{\HMf}{\opfrom{\HM}}
\newcommand*{\HMt}{\opto{\HM}}
\newcommand*{\HMb}{\opbar{\HM}}
\newcommand*{\HMr}{\opred{\HM}}
\newcommand*{\Io}{\IFH}
\newcommand*{\Is}{\opsharp{\IFH}}
\newcommand*{\In}{\opnatural{\IFH}}
\newcommand*{\If}{\opfrom{\IFH}}
\newcommand*{\It}{\opto{\IFH}}
\newcommand*{\Ib}{\opbar{\IFH}}
\newcommand*{\Ig}{\opgen{\IFH}}
\newcommand*{\Co}{\IFC}
\newcommand*{\Cf}{\opfrom{\IFCeq}}
\newcommand*{\Ct}{\opto{\IFCeq}}
\newcommand*{\Cb}{\opbar{\IFCeq}}
\newcommand*{\df}{\opfrom{\deq}}
\newcommand*{\dt}{\opto{\deq}}
\newcommand*{\db}{\opbar{\deq}}
\newcommand{\fC}{\genset{S}}
\newcommand*{\Cto}{\optilde{\Co}}
\newcommand*{\dto}{\optilde{d}}
\newcommand*{\Ys}{\opsharp{Y}}
\newcommand*{\ws}{\opsharp{w}}
\newcommand*{\Ws}{\opsharp{W}}
\newcommand*{\cSO}{\mathcal{SO}}
\newcommand*{\lambdat}{\optilde{\lambda}}
\newcommand*{\Kt}{\optilde{K}}
\newcommand{\A}{\mathbb{A}}
\newcommand{\CS}{\mathrm{CS}}
\newcommand*{\CSt}{\widetilde{\CS}}
\newcommand{\charvar}{\mathcal{X}}
\newcommand{\charvarG}{\charvar_G}
\newcommand*{\charvarSUtwo}{\charvar_{\SUtwo}}
\newcommand{\repvar}{\mathcal{R}}
\newcommand*{\repvarG}{\repvar_G}
\newcommand*{\SUtwo}{\SU (2)}
\newcommand*{\SOthree}{\SO (3)}
\newcommand*{\connsp}[1]{\mathcal{#1}}
\newcommand*{\connspA}{\connsp{A}}
\newcommand*{\connspB}{\connsp{B}}
\newcommand*{\energy}{\mathcal{E}}
\DeclareMathOperator{\tr}{tr}
\newcommand*{\asdop}{\mathcal{D}}
\newcommand*{\asdopa}{\asdop_A}
\newcommand*{\asdopabar}{\asdop_{\Abar}}
\newcommand*{\closure}[1]{\overline{#1}}
\newcommand*{\Abar}{\closure{A}}
\newcommand*{\DWbar}{\closure{\double{W}}}
\DeclareMathOperator{\Ad}{Ad}
\DeclareMathOperator{\ad}{ad}
\newcommand*{\prop}{\mathcal{P}}
\newcommand*{\PHS}{\Sigma (2, 3, 5)}
\newcommand*{\RPthree}{\R \mathrm{P}^3}
\newcommand*{\Nil}{\mathrm{Nil}}
\newcommand*{\Sol}{\mathrm{Sol}}
\newcommand*{\SLtwot}{\widetilde{\SL (2, \R)}}
\newcommand*{\HtwoR}{\mathbb{H}^2 \cross \R}
\newcommand*{\tprod}{\mathbin{\widetilde{\times}}}
\newcommand*{\polyh}[1]{\mathbf{#1}}
\newcommand*{\icosah}{\polyh{I}}
\newcommand*{\dih}{\polyh{D}}
\newcommand*{\sut}{\eta}
\newcommand*{\trivconn}{\Theta}
\newcommand*{\Dform}{\Omega}
\newcommand*{\coxeter}{\check{h}}
\newcommand*{\curvature}[1]{F (#1)}
\newcommand*{\curvA}{\curvature{A}}
\newcommand*{\gauge}{\mathcal{G}}
\newcommand*{\Gt}{\widetilde{G}}
\newcommand*{\dimR}{\dim_{\R}}
\newcommand*{\grpgen}[1]{\mathchoice%
  {\left\langle #1 \right\rangle}%
  {\langle #1 \rangle}%
  {\langle #1 \rangle}%
  {\langle #1 \rangle}%
}
\newcommand*{\grppre}[2]{\mathchoice%
  {\left\langle #1 \, \middle\vert \, #2 \right\rangle}%
  {\langle #1 \, \vert \, #2 \rangle}%
  {\langle #1 \, \vert \, #2 \rangle}%
  {\langle #1 \, \vert \, #2 \rangle}%
}
\newcommand*{\normal}[1]{\mathchoice%
  {\left\llangle #1 \right\rrangle}%
  {\llangle #1 \rrangle}%
  {\llangle #1 \rrangle}%
  {\llangle #1 \rrangle}%
}
\newcommand*{\ideal}[1]{\mathchoice%
  {\left\langle #1 \right\rangle}%
  {\langle #1 \rangle}%
  {\langle #1 \rangle}%
  {\langle #1 \rangle}%
}
\newcommand*{\blank}{\scalebox{0.75}[1.0]{$\mathord{-}$}} 
\newcommand*{\cover}[1]{\widetilde{#1}}
\DeclareMathOperator{\imlc}{im}
\renewcommand*{\im}{\imlc}
\DeclareMathOperator{\Arg}{Arg}
\newcommand*{\Un}{\matrixgrp{U} (n)}
\newcommand*{\Qlocpoly}{\Q \llbracket x^{-1}, x]}
\newcommand*{\homor}{\mathfrak{o}}
\newcommand*{\Uone}{\matrixgrp{U} (1)}
\DeclareMathOperator{\diag}{diag}
\newcommand*{\Ymp}{Y_{\mp}}
\newcommand*{\card}[1]{\abs{#1}}
\newcommand*{\subgrp}{\leq}
\newcommand*{\ccover}[1]{\widehat{#1}}
\newcommand*{\knotclos}{T^3}
\title[Ribbon homology cobordisms]{Ribbon homology cobordisms}
\author[Aliakbar Daemi]{Aliakbar Daemi}
\address{Department of Mathematics \\ Washington University in St.\ Louis \\ 
  St.\ Louis, MO 63130}
\email{\href{mailto:adaemi@wustl.edu}{adaemi@wustl.edu}}
\urladdr{\url{https://www.math.wustl.edu/~adaemi/}}
\author[Tye Lidman]{Tye Lidman}
\address{Department of Mathematics \\ North Carolina State University \\ 
  Raleigh, NC 27695}
\email{\href{mailto:tlid@math.ncsu.edu}{tlid@math.ncsu.edu}}
\urladdr{\url{https://sites.google.com/ncsu.edu/tlid/}}
\author[David Shea Vela-Vick]{David Shea Vela-Vick}
\address{Department of Mathematics \\ Louisiana State University \\ Baton 
  Rouge, LA 70803}
\email{\href{mailto:shea@math.lsu.edu}{shea@math.lsu.edu}}
\urladdr{\url{https://www.math.lsu.edu/~shea/}}
\author[C.-M. Michael Wong]{C.-M. Michael Wong}
\address{Department of Mathematics \\ Dartmouth College \\ Hanover, NH 03755}
\email{\href{mailto:wong@math.dartmouth.edu}{wong@math.dartmouth.edu}}
\urladdr{\url{https://math.dartmouth.edu/~wong/}}
\begin{document}

\begin{abstract}
  We study $4$-dimensional homology cobordisms without $3$-handles, showing 
  that they interact nicely with Thurston geometries, character varieties, and 
  instanton and Heegaard Floer homologies.
  Using these, we derive obstructions to such cobordisms. As one example of 
  these obstructions, we generalize other recent results on the behavior of 
  knot Floer homology under ribbon concordances.
  Finally, we provide topological applications, including to Dehn surgery 
  problems.
\end{abstract}

\maketitle

\section{Introduction}
\label{sec:intro}

The advent of topological quantum field theories (TQFTs) in the past few decades 
has renewed interest in smooth cobordisms and the associated category. In 
dimension $4$, Seiberg--Witten Floer homology, a gauge-theoretic TQFT, was 
recently used by Manolescu \cite{Man:Triangulation} to study the homology 
cobordism group $\Theta_{\Z}^3$, disproving the Triangulation Conjecture in 
dimensions $n \geq 5$. Many questions remain: For example, it is unknown 
whether $\Theta_{\Z}^3$ has any torsion.

In this article, we study $4$-dimensional cobordisms from a new perspective, in 
terms of their directionality. More precisely, we study \emph{ribbon 
  cobordisms}, which are $2n$-dimensional manifolds that can be built from 
$k$-handles with $k \leq n$.  They arise in at least two natural ways: as Stein 
cobordisms between closed, contact manifolds, and as the exterior of (strongly 
homotopy-) ribbon surfaces, which are cobordisms between link exteriors 
\cite{Gor:Rib}.  Note that every (homology) cobordism can be split into two 
ribbon (homology) cobordisms. While homology cobordism is an equivalence 
relation, ribbon homology cobordisms are not symmetric. In fact, as we will 
see, ribbon homology cobordisms give rise to a preorder on $3$-manifolds that 
seems to agree with orderings by various invariants and Thurston geometries. We conjecture:

\begin{conjecture}[cf.~{\cite[Conjecture~1.1]{Gor:Rib}}]
  \label{conj:order}
  The preorder on the set of homeomorphism classes of closed, connected, 
  oriented $3$-manifolds given by ribbon $\Q$-homology cobordisms is
  a partial order.
\end{conjecture}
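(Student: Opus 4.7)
Reflexivity and transitivity are formal: the cylinder $Y \times [0, 1]$ is the evident reflexive ribbon $\Q$-homology cobordism, and given composable ribbon $\Q$-homology cobordisms $W_1 \colon Y_1 \to Y_2$ and $W_2 \colon Y_2 \to Y_3$, the stacked handle decomposition of $W_2 \cup_{Y_2} W_1$ relative to $Y_1$ still uses only $0$-, $1$-, and $2$-handles, and a Mayer--Vietoris sequence with $\Q$-coefficients confirms that the composite remains a $\Q$-homology cobordism.

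The substantive content is antisymmetry. Given ribbon $\Q$-homology cobordisms $W_1 \colon Y_1 \to Y_2$ and $W_2 \colon Y_2 \to Y_1$, the plan is to produce a numerical invariant $\nu$ of closed, oriented $3$-manifolds that is monotone under ribbon $\Q$-homology cobordism and strictly monotone unless the cobordism is a homotopy equivalence. Candidates, whose monotonicity should be among the results proved later in the paper, are the ranks of the various flavors of Heegaard Floer and instanton Floer homology, dimensions of components of the $\SUtwo$-character variety $\charvar$, and Casson-type counts $\casson$. Monotonicity applied to $W_1$ and $W_2$ forces $\nu(Y_1) = \nu(Y_2)$, and monotonicity applied to the ribbon self-cobordism $W_2 \comp W_1 \colon Y_1 \to Y_1$, together with strict monotonicity, then shows that this composite is ``tight'': it induces the identity on $\nu$ and contributes no essential $1$- or $2$-handles. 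Combining this with the fundamental ribbon observation that the outgoing boundary of a ribbon cobordism surjects onto the fundamental group of the cobordism — because the dual decomposition uses only handles of index $\geq 2$, which add no generators — one expects to deduce $\pi_1(Y_1) \isom \pi_1(Y_2)$.

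The two principal obstacles are the two ingredients of this program. First, strict monotonicity of $\nu$ must actually be established: a nontrivial ribbon $\Q$-homology cobordism must strictly decrease the chosen invariant. Non-strict monotonicity statements are common in the Floer-theoretic literature, but upgrading them to strict inequalities is substantially more delicate, and may require different arguments in different geometric regimes. Second, even granting a $\pi_1$-isomorphism (or homotopy equivalence) between $Y_1$ and $Y_2$, passing to a homeomorphism is not automatic in dimension $3$ (as the lens space classification shows), and a smooth $s$-cobordism theorem is unavailable in dimension $4$. A workable strategy is to split along the prime decomposition — first verifying that ribbon $\Q$-homology cobordism respects connected summands so that the problem reduces to prime targets — and then to handle each Thurston geometric class separately, using the rigidity of the geometry together with any residual $\Q$-homological or Floer-theoretic data to recover the homeomorphism type. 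The interaction of ribbon cobordisms with Thurston geometries promised in the abstract is exactly the tool needed for this final geometric step, but assembling the global argument is where the genuine difficulty lies.
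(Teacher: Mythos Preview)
The statement you are trying to prove is labeled a \emph{conjecture} in the paper, and the paper does not prove it. The authors explicitly present it as an open problem analogous to Gordon's conjecture on ribbon concordance, and the results of the paper (monotonicity of Floer invariants, character-variety dimensions, the Thurston-geometry hierarchy) are offered only as \emph{evidence} for it, not as a proof. So there is no ``paper's own proof'' to compare against.

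Your proposal is candid about this: you yourself identify the two ``principal obstacles,'' and neither is resolved. The first --- upgrading the non-strict monotonicity of the Floer ranks or character-variety dimensions to a \emph{strict} inequality for any nontrivial ribbon $\Q$-homology cobordism --- is not established anywhere in the paper and is genuinely hard; the paper's Floer arguments show $F(\Ym)$ is a summand of $F(\Yp)$, but say nothing about when the inclusion is proper. The second --- promoting a $\pi_1$-isomorphism to a homeomorphism --- is exactly the kind of rigidity statement that fails in general (lens spaces), and the paper's Thurston-geometry results (\fullref{thm:geom-intro}) only constrain which geometry classes can map to which, not homeomorphism type within a class. Your suggestion that ribbon $\Q$-homology cobordism ``respects connected summands'' is also unproved and not addressed in the paper; indeed the authors remark that the interaction with JSJ decompositions is already badly behaved.

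In short, what you have written is a reasonable outline of a program, but it is not a proof, and the paper does not claim one either.
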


Here, an \emph{$R$-homology cobordism} between two compact, oriented 
$3$-manifolds $Y_1$ and $Y_2$ is an oriented, smooth cobordism $W \colon Y_1 
\to Y_2$ such that
$H_* (W, Y_i; R) = 0$ for $i \in \set{1, 2}$.
For example, the exterior of a knot concordance
is a $\Z$-homology cobordism.  Our results can be summarized as:

\begin{metatheorem}
  \label{metathm:main}
  Let $\Ym$ and $\Yp$ be compact, connected, oriented $3$-manifolds possibly 
  with boundary, and suppose that there exists a ribbon homology cobordism from 
  $\Ym$ to $\Yp$. Then the complexity of $\Ym$ is no greater than that of 
  $\Yp$, as measured by each of the following invariants:
  \begin{enumerate}[label=(\Alph*), ref=\Alph*]
    \item \label{it:meta-pi1} The fundamental group;
    \item \label{it:meta-char} The $G$-character variety, for a compact, 
      connected Lie group $G$, and its Zariski tangent space at a conjugacy 
      class;
    \item \label{it:meta-floer} Various flavors of instanton and Heegaard Floer 
      homologies.
  \end{enumerate}
  These comparisons are sometimes realized by explicit morphisms in the 
  appropriate category.
\end{metatheorem}

Note that \eqref{it:meta-pi1} was proved by Gordon \cite{Gor:Rib} in the case 
that $\Ym$ and $\Yp$ have toroidal boundary, and his proof immediately 
generalizes to the closed case following Geometrization;
see \fullref{ssec:intro_context} for context.
We will provide more precise statements for \eqref{it:meta-char} and 
\eqref{it:meta-floer} in \fullref{ssec:intro_character} and 
\fullref{ssec:intro_floer}.

Our metatheorem has many topological applications, which we will discuss below.  
But first, \eqref{it:meta-pi1} and Geometrization together give us the 
following:

\begin{theorem}
  \label{thm:geom-intro}
  There is a hierarchy among the Thurston geometries with respect to ribbon 
  $\Q$-homology cobordisms, given by the diagram
  \[
    S^3 \to (S^2 \cross \R) \to \R^3 \to \Nil \to \Sol \to (\HtwoR) \union 
    \SLtwot \union \mathbb{H}^3.
  \]
  In other words, suppose that $\Ym$ and $\Yp$ are compact $3$-manifolds with 
  empty or toroidal boundary that admit distinct geometries, and that there 
  exists a ribbon $\Q$-homology cobordism from $\Ym$ to $\Yp$. Then there is a 
  sequence of arrows from the geometry of $\Ym$ to that of $\Yp$ in the diagram 
  above.
\end{theorem}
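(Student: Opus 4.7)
The plan is to invoke part~\eqref{it:meta-pi1} of \fullref{metathm:main}, which supplies a surjection $\pi_1(\Yp) \twoheadrightarrow \pi_1(\Ym)$ from any ribbon $\Q$-homology cobordism from $\Ym$ to $\Yp$. The theorem then follows from a case analysis on the Thurston geometry of $\Yp$.

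First, I would recall the standard classification of the fundamental group of a compact, orientable $3$-manifold $Y$ with empty or toroidal boundary according to its Thurston geometry: $Y$ is spherical iff $\pi_1(Y)$ is finite; $Y$ has $S^2 \cross \R$ geometry iff $\pi_1(Y)$ is infinite virtually cyclic; $Y$ is Euclidean iff $\pi_1(Y)$ is virtually abelian but not virtually cyclic; $Y$ has $\Nil$ geometry iff $\pi_1(Y)$ is virtually nilpotent but not virtually abelian; $Y$ has $\Sol$ geometry iff $\pi_1(Y)$ is virtually polycyclic but not virtually nilpotent; and $Y$ has one of $\HtwoR$, $\SLtwot$, or $\mathbb{H}^3$ geometry iff $\pi_1(Y)$ is not virtually polycyclic. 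Crucially, each of the properties \emph{finite}, \emph{virtually cyclic}, \emph{virtually abelian}, \emph{virtually nilpotent}, and \emph{virtually polycyclic} is preserved under quotients.

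Next, I would run the case analysis. Since $\pi_1(\Ym)$ is a quotient of $\pi_1(\Yp)$, it inherits any of the above algebraic properties satisfied by $\pi_1(\Yp)$. Consequently, if $\Yp$ carries one of the five lower geometries, the geometry of $\Ym$ is forced to lie at or below $\Yp$'s in the diagram: $\Yp$ spherical forces $\Ym$ spherical; $\Yp$ in $\set{S^3, S^2 \cross \R}$ forces $\Ym$ to lie in the same pair; and so on up through $\Sol$. If instead $\Yp$ carries one of the top three geometries, the classification imposes no restriction, and $\Ym$ may carry any geometry---consistent with the diagram, in which the top three geometries form a single terminal layer reachable from every other geometry. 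In all cases, the assumption that the geometries of $\Ym$ and $\Yp$ are distinct then produces the required non-trivial sequence of arrows.

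The main anticipated obstacle is the bookkeeping required to establish the classification in the toroidal-boundary setting, where spherical examples are absent, $S^2 \cross \R$ rarely arises, and several low-complexity cases must be inspected individually. Once this tabulation is in hand, the proof reduces to a routine check, and I do not anticipate any essentially new techniques beyond part~\eqref{it:meta-pi1} of \fullref{metathm:main} and the geometrization theorem.
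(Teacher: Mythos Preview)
Your overall strategy matches the paper's: reduce the hierarchy to a chain of group-theoretic properties that characterize each geometry and check that those properties pass from $\pi_1(\Yp)$ to $\pi_1(\Ym)$. The paper carries this out via \fullref{lem:grpprops} and the classification summarized in Figures~\ref{fig:hierarchy1} and~\ref{fig:hierarchy2}, culminating in \fullref{thm:geom-hierarchy}.

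There is, however, a genuine error in your invocation of \fullref{metathm:main}~\eqref{it:meta-pi1}. You claim it yields a surjection $\pi_1(\Yp)\twoheadrightarrow\pi_1(\Ym)$. It does not. The precise content, spelled out in \fullref{thm:gordon}, is that the inclusions induce an \emph{injection} $\pi_1(\Ym)\hookrightarrow\pi_1(W)$ and a \emph{surjection} $\pi_1(\Yp)\twoheadrightarrow\pi_1(W)$. Thus $\pi_1(\Ym)$ is a subgroup of a quotient of $\pi_1(\Yp)$, and there is in general no map $\pi_1(\Yp)\to\pi_1(\Ym)$ at all. Consequently your argument, as written, needs closure of each property not only under quotients but also under subgroups. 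Fortunately every property you list (finite, virtually cyclic, virtually abelian, virtually nilpotent, virtually polycyclic) is inherited by subgroups as well, so the repair is routine; this is exactly what the paper checks in the proof of \fullref{lem:grpprops}. You should rewrite the second paragraph to reflect the correct relationship and explicitly note the subgroup step.

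One minor point: the paper phrases the $\Sol$ level using \emph{virtually solvable} rather than \emph{virtually polycyclic}. For the $3$-manifold groups at hand the two notions coincide (the relevant groups are lattices in solvable Lie groups), but if you use polycyclicity you should justify that equivalence or simply switch to solvability as the paper does, since solvability is visibly closed under subgroups and quotients.
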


A more refined version of \fullref{thm:geom-intro} is stated in 
\fullref{thm:geom-hierarchy}.

\begin{remark}
It is natural to ask how the ribbon $\Q$-homology cobordism preorder interacts with
the JSJ decomposition.  Unfortunately,
there exist examples where $\Ypm$ is hyperbolic and $\Ymp$ has non-trivial JSJ 
decomposition; see \fullref{rmk:seifert} for more details.
\end{remark}

Evidence for \fullref{conj:order} is provided by the metatheorem and 
\fullref{thm:geom-intro}, as well as \fullref{cor:chern-simons}, and 
\fullref{cor:norm} below. \fullref{conj:order} is analogous to 
\cite[Conjecture~1.1]{Gor:Rib}, which states that the preorder on the set of 
knots in $S^3$ by ribbon concordance is a partial order.

\begin{remark}
  \label{rmk:slice-ribbon}
  Another major open problem regarding ribbon concordance is the Slice--Ribbon 
  Conjecture. In a similar spirit, a natural question to ask is whether a 
  $\Z$-homology sphere bounding a $\Z$-homology $4$-ball always bounds a 
  $\Z$-homology $4$-ball without $3$-handles.
\end{remark}

We now turn to some new applications.  We begin with an application to 
Seifert fibered homology spheres that illustrates the use of several different 
tools described above.

\begin{theorem}
  \label{thm:seifert}
  Suppose that $\Ym$ and $\Yp$ are the Seifert fibered homology spheres $\Sigma 
  (a_1, \dotsc, a_n)$ and $\Sigma (a_1', \dotsc, a_m')$ respectively, and that 
  there exists a ribbon $\Q$-homology cobordism from $\Ym$ to $\Yp$.  Then
  \begin{enumerate}
    \item\label{it:casson} The Casson invariants of $\Ym$ and $\Yp$ satisfy 
      $\abs{\lambda (\Ym)} \leq \abs{\lambda (\Yp)}$;
    \item\label{it:plumbing} Either $\Ym$ and $\Yp$ both bound 
      negative-definite plumbings, or both bound positive-definite plumbings;  
      and
    \item\label{it:fibers} The numbers of exceptional fibers satisfy $n \leq 
      m$.
  \end{enumerate}
\end{theorem}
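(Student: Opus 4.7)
The plan is to derive each of the three conclusions from a separate ingredient of the Metatheorem, exploiting the rigid structure of Seifert fibered homology spheres. Throughout, let $W \colon \Ym \to \Yp$ denote the given ribbon $\Q$-homology cobordism.

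For part~(\ref{it:casson}), my first step is to invoke the theorem of Fintushel--Stern that the instanton Floer homology of a Seifert fibered homology sphere is supported in even gradings, so that Taubes's identification of the Casson invariant with $\frac{1}{2}\chi(\IFH_*)$ yields $\rk \IFH_*(Y) = 2\abs{\casson(Y)}$. The instanton piece of Metatheorem~(\ref{it:meta-floer}) should furnish a rank comparison, most naturally via an injection $\IFH_*(\Ym) \hookrightarrow \IFH_*(\Yp)$ induced by $W$, from which the Casson inequality follows. The subtlety lies in ensuring that this Floer-theoretic comparison controls the total rank rather than only a graded summand.

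For part~(\ref{it:plumbing}), my strategy is to exploit a Floer-homological sign invariant that distinguishes negative- from positive-definite plumbing bounds. Either Frøyshov's instanton invariant $h$ or the Ozsváth--Szabó correction term $d$ is a natural candidate: for a Seifert fibered homology sphere bounding a negative-definite plumbing the sign of this invariant is prescribed, and reversing orientation reverses it. The Metatheorem should imply a monotonicity statement under ribbon cobordism, forcing $\Ym$ and $\Yp$ to share the sign and hence to have the same definiteness. The delicate case is when the chosen invariant vanishes, which I would handle by appealing to the refined structure of reduced Floer homology, or alternatively by combining with the known classification of definite plumbings bounded by Seifert fibered homology spheres.

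For part~(\ref{it:fibers}), I would use that the $n$ conjugacy classes of maximal finite cyclic subgroups of $\pi_1(\Sigma(a_1, \ldots, a_n))$ are in bijection with its exceptional fibers. Gordon's injectivity result encoded in Metatheorem~(\ref{it:meta-pi1}) gives $\pi_1(\Ym) \hookrightarrow \pi_1(W)$, while the absence of $3$-handles gives a surjection $\pi_1(\Yp) \twoheadrightarrow \pi_1(W)$, so the $n$ distinct torsion classes in $\pi_1(\Ym)$ persist in $\pi_1(W)$ and must be accounted for in $\pi_1(\Yp)$. The main obstacle will be that a group surjection can collapse conjugacy classes of finite subgroups and need not lift torsion to torsion; I would resolve this by routing the argument through the $\SUtwo$-character variety of Metatheorem~(\ref{it:meta-char}), matching each exceptional fiber in $\Ym$ with irreducible representations (or tangent data at the corresponding reducibles) whose count or dimension scales with $n$, and then transferring these to $\Yp$ via the functoriality provided by the Metatheorem.
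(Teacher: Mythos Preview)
Your proposal for part~(\ref{it:casson}) is essentially the paper's argument: the injection $\IFH(W) \colon \IFH(\Ym) \hookrightarrow \IFH(\Yp)$ from \fullref{thm:main-i-o}, combined with $\dim \IFH(Y) = 2\abs{\casson(Y)}$ for Seifert fibered homology spheres (Saveliev, building on Fintushel--Stern), gives the inequality immediately.

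For part~(\ref{it:plumbing}), your approach via Fr{\o}yshov or $d$-invariants differs from the paper's. The paper stays in instanton Floer homology and exploits the $\Z/2$-grading: by Saveliev, $\IFH(\Yp)$ is supported in a single $\Z/2$-grading, and this grading determines both the sign of $\casson(\Yp)$ and the definiteness of the plumbing $\Yp$ bounds. Since the cobordism map of \fullref{thm:main-i-o} is a \emph{graded} injection, $\IFH(\Ym)$ is forced into the same $\Z/2$-grading, whence the definiteness matches. (The edge case $\IFH(\Ym)=0$ forces $\Ym \homeo S^3$, which bounds both.) Your route can be made to work --- the paper notes a Heegaard Floer alternative --- but the grading argument is more direct and sidesteps the vanishing concern you raise.

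For part~(\ref{it:fibers}), your instinct to abandon the $\pi_1$-and-torsion approach in favor of the character variety is correct, and ``tangent data'' is exactly what the paper uses. But your proposal is missing the specific mechanism and a nontrivial existence step. The paper's argument runs as follows. By Fintushel--Stern, for any irreducible $\rho \in \repvar(\Sigma(a_1,\dotsc,a_n))$ one has $\dimR H^1(\Sigma;\Ad_\rho) = 2t-6$, where $t \leq n$ is the number of singular-fiber generators not sent to $\pm 1$. The key technical input is \fullref{prop:seifert-zariski}: there always \emph{exists} an irreducible $\rho$ achieving the maximum $2n-6$. This existence is not automatic and occupies several lemmas (reducing to prime $a_i$, a pinching map to three singular fibers, and an explicit construction in that case). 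Granted this, one applies the Zariski tangent space comparison \fullref{prop:zariski} to an extension $\rhoW$ of such a $\rhom$; irreducibility ensures the $H^0$ hypothesis, and one obtains $2n-6 \leq \dimR H^1(\Yp;\Ad_{\rhop}) \leq 2m-6$. Your suggestion to count conjugacy classes of finite cyclic subgroups does not survive the passage through $\pi_1(W)$, as you suspect; the Zariski dimension is the invariant that does.
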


The first two items above can be proved with either instanton or Heegaard Floer 
homology using Metatheorem~\eqref{it:meta-floer}.  However, the authors do not 
know of a Floer-homology proof of \eqref{it:fibers}.

Next, we have applications to ribbon concordance. Recall that a strongly 
homotopy-ribbon concordance is a knot concordance in $S^3 \cross I$ whose 
exterior is ribbon.\footnote{All ribbon concordances are strongly 
  homotopy-ribbon.}

\begin{corollary}
  \label{cor:montesinos}
  Suppose that $\Km$ and $\Kp$ are Montesinos knots in $S^3$ of determinant 
  $1$, and that the number of rational tangles in $\Km$ with denominator at 
  most $2$ is greater than that of $\Kp$. Then there does not exist a strongly 
  homotopy-ribbon concordance from $\Km$ to $\Kp$.
\end{corollary}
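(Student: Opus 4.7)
The plan is to reduce to \fullref{thm:seifert}\eqref{it:fibers} via double branched covers. Suppose for contradiction that there is a strongly homotopy-ribbon concordance $C \colon \Km \to \Kp$ in $S^3 \cross I$. Let $\Sigma_2 (\Kpm)$ denote the double covers of $S^3$ branched along $\Kpm$, and let $W$ denote the double cover of $S^3 \cross I$ branched along $C$, regarded as a cobordism from $\Sigma_2 (\Km)$ to $\Sigma_2 (\Kp)$.

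Since $\det (\Kpm) = 1$ and $\Kpm$ are Montesinos, each $\Sigma_2 (\Kpm)$ is a Seifert fibered integer homology sphere of the form $\Sigma (a_1, \dotsc, a_n)$ or $\Sigma (a'_1, \dotsc, a'_m)$, respectively.  After normalizing each Montesinos presentation so that every rational tangle has denominator at least~$2$---absorbing any integer tangle into the framing parameter---the multiplicities $a_i$ and $a'_j$ become precisely these denominators, so $n$ and $m$ count the non-trivial rational tangles of $\Kpm$.  Granting that $W$ is a ribbon $\Z$-homology cobordism, \fullref{thm:seifert}\eqref{it:fibers} then forces $n \leq m$, contradicting the hypothesis.

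The main work is verifying that $W$ is ribbon.  The strongly homotopy-ribbon hypothesis says that the concordance exterior $E (C) = (S^3 \cross I) \setminus \nbhd{C}$ is built from $E (\Km)$ using only $1$- and $2$-handles.  The unbranched double cover $\widetilde{E (C)}$ associated to the mod-$2$ abelianization $\pi_1 (E (C)) \to \Z / 2$ lifts each $k$-handle to one or two $k$-handles, so also has no $3$-handles.  The branched cover $W$ is then obtained by gluing back $\nbhd{C} \isom D^2 \cross S^1 \cross I$ along the double cover $z \mapsto z^2$ on each $D^2$-fiber; this attachment contributes only a $2$-handle along the meridian of $C$ (thickened), leaving $W$ free of $3$-handles relative to $\Sigma_2 (\Km)$.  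A Mayer--Vietoris argument, together with $\widetilde{H}_* (\Sigma_2 (\Kpm); \Z) = 0$ and the ordinary $\Z$-homology cobordism property of $E (C)$ (a formal consequence of the strongly homotopy-ribbon hypothesis together with $\det (\Kpm) = 1$), then yields the $\Z$-homology cobordism property of $W$.

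The main obstacle is this handle analysis, particularly the local model near the saddle critical points of the ribbon Morse function on $C$, which must confirm that no $3$-handle is introduced when passing to the branched double cover.  It is also the place where the word ``strongly'' in strongly homotopy-ribbon is indispensable, since it is what supplies the handle decomposition on the exterior to which the covering construction can be applied.
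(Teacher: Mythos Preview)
Your approach is exactly the one taken in the paper: pass to the double branched cover of the concordance, observe that this is a ribbon homology cobordism between Seifert fibered integer homology spheres, and invoke \fullref{thm:seifert}\eqref{it:fibers}. The paper's own proof is a single paragraph that simply asserts the branched double cover is ribbon (cf.\ \fullref{rmk:branched}), whereas you supply the handle-lifting justification explicitly; your extra detail is welcome and correct in substance. One small wording issue: gluing $\nbhd{C} \cong D^2 \times S^1 \times I$ back in is a product attachment along the $I$-direction and contributes \emph{no} handles to the cobordism structure from $\Sigma_2(\Km)$ to $\Sigma_2(\Kp)$, rather than a ``thickened $2$-handle''; the conclusion that $W$ has no $3$-handles is unaffected.
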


Recall that a knot in $S^3$ is \emph{small} if there are no closed, 
non--boundary-parallel, incompressible surfaces in its exterior, and that torus 
knots are small.

\begin{corollary}
  \label{cor:composite-to-small}
  Suppose that $\Km$ is a composite knot in $S^3$, and that $\Kp$ is a small 
  knot in $S^3$.  Then there does not exist a strongly homotopy-ribbon 
  concordance from $\Km$ to $\Kp$.
\end{corollary}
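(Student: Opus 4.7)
The plan is to use Metatheorem~\eqref{it:meta-pi1} to embed $\pi_1 (E (\Km))$ into the fundamental group of the concordance exterior and then play the essential torus coming from the composite structure of $\Km$ against the smallness of $\Kp$.

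First, suppose for contradiction that $C \colon \Km \to \Kp$ is a strongly homotopy-ribbon concordance. By definition, its exterior $W = (S^3 \cross I) \setminus \nbhd{C}$ is a ribbon cobordism from $E (\Km)$ to $E (\Kp)$, and Alexander duality makes it a $\Z$-homology cobordism. Metatheorem~\eqref{it:meta-pi1} then yields a $\pi_1$-injection $\pi_1 (E (\Km)) \hookrightarrow \pi_1 (W)$ carrying the meridian of $\Km$ to the meridian of $\Kp$. Dually, since the handle decomposition of $W$ relative to $E (\Kp)$ contains no $1$-handles, we also obtain a surjection $\pi_1 (E (\Kp)) \twoheadrightarrow \pi_1 (W)$.

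Next, I would exploit the composite structure. Writing $\Km = J_1 \connsum J_2$ with both $J_i$ nontrivial, Seifert--van Kampen gives the nontrivial amalgamation
\[
  \pi_1 (E (\Km)) \isom \pi_1 (E (J_1)) *_{\langle \mu \rangle} \pi_1 (E (J_2))
\]
over the infinite cyclic meridional subgroup, with both factors nonabelian. Geometrically, the swallow-follow torus $T \subset E (\Km)$ is essential, yielding a non-peripheral $\pi_1 (T) \isom \Z \dirsum \Z$ generated by the meridian $\mu$ and the longitude $\lambda$ of $J_1$. Via the injection above, this produces a $\Z \dirsum \Z$ subgroup of $\pi_1 (W)$ that contains $\mu$. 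On the other hand, since $\Kp$ is small, $E (\Kp)$ is atoroidal, so Geometrization forces $E (\Kp)$ to be either hyperbolic or a Seifert-fibered torus-knot exterior. In both cases, the rank-two free abelian subgroups of $\pi_1 (E (\Kp))$ are highly restricted: in the hyperbolic case, any $\Z \dirsum \Z$ is conjugate into the peripheral subgroup $\langle \mu_{\Kp}, \lambda_{\Kp} \rangle$; in the torus-knot case, any $\Z \dirsum \Z$ containing a meridional class is peripheral, since the Seifert fiber is non-meridional.

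The main obstacle, and the technical heart of the argument, is converting the peripheral restriction on the image of $\pi_1 (T)$ inside the quotient $\pi_1 (W)$ of $\pi_1 (E (\Kp))$ into a contradiction with the amalgamated product structure of $\pi_1 (E (\Km))$. The plan is to lift the $\Z \dirsum \Z$ through $\pi_1 (E (\Kp)) \twoheadrightarrow \pi_1 (W)$, using the canonical meridional lift, to a peripheral $\Z \dirsum \Z$ in $\pi_1 (E (\Kp))$, and then apply the same peripheral-forcing argument to the boundary tori of each $E (J_i) \subset E (\Km)$---each of which also supplies a $\Z \dirsum \Z \subset \pi_1 (W)$ containing $\mu$---so that each $\pi_1 (E (J_i))$ lies in the centralizer of the meridian of $\Kp$, hence in an abelian subgroup. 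This would force each $\pi_1 (E (J_i))$ to be abelian, contradicting the nontriviality of $J_i$. The subtlety is that the kernel of $\pi_1 (E (\Kp)) \twoheadrightarrow \pi_1 (W)$ may be large, so commuting images need not lift to commuting preimages; I would expect to handle this by passing to the infinite cyclic cover of $W$ determined by $H_1 (W) \isom \Z$, where the homological obstructions to lifting become manageable and commutation can be recovered from the constraints on abelian subgroups of small knot groups.
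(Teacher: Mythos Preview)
Your setup is correct: the concordance exterior $W$ is a ribbon $\Z$-homology cobordism, so \fullref{thm:gordon} gives the injection $(\iota_-)_* \colon \pi_1(E(\Km)) \hookrightarrow \pi_1(W)$ and the surjection $(\iota_+)_* \colon \pi_1(E(\Kp)) \twoheadrightarrow \pi_1(W)$. The swallow-follow torus indeed produces a rank-two abelian subgroup of $\pi_1(W)$ containing the meridian. But the argument breaks down exactly at the point you flag and do not repair. Smallness of $\Kp$ constrains $\Z^2$-subgroups of $\pi_1(E(\Kp))$, not of its quotient $\pi_1(W)$; and a surjection can create commutation that was not present upstairs. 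Concretely, if $\tilde\lambda \in \pi_1(E(\Kp))$ is any lift of the image of $\lambda_{J_1}$, all you know is $[\mu_{\Kp}, \tilde\lambda] \in \ker\bigl((\iota_+)_*\bigr)$, and that kernel is the normal closure of $m$ unknown relators in $\pi_1(E(\Kp))$. There is no mechanism---peripheral structure, infinite cyclic cover, or otherwise---that forces such a commutator to vanish or to push $\tilde\lambda$ into the peripheral subgroup. The subsequent step, deducing that each $\pi_1(E(J_i))$ lies in the centralizer of the meridian, is a non sequitur: even granting that the \emph{boundary} $\Z^2$ of $E(J_i)$ maps to something peripheral, nothing constrains where the rest of $\pi_1(E(J_i))$ goes.

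The paper's proof avoids this lifting problem entirely by working with representation varieties rather than with $\pi_1$ directly. The point is that $\repvarG$ is contravariant, so the troublesome surjection $\pi_1(E(\Kp)) \twoheadrightarrow \pi_1(W)$ becomes an \emph{injection} $\repvar(W) \hookrightarrow \repvar(\Kp)$, and one never has to lift anything. Using the Sivek--Zentner dichotomy for irreducible $\SUtwo$-representations of knot groups, the paper builds a $5$-dimensional family in $\repvar(\Km)$ by amalgamating representations of the two summands; \fullref{prop:character-embedding-2} transports this to a $5$-dimensional family in $\repvar(\Kp)$, hence a $2$-dimensional piece of $\charvar(\Kp)$, which contradicts smallness by Klassen's result. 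The moral is that the character-variety bookkeeping is what converts ``$\pi_1(\Ym)$ embeds in a quotient of $\pi_1(\Yp)$'' into a usable comparison---the raw $\pi_1$ statement alone does not suffice here.
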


We also obtain applications to reducible Dehn surgery problems. The following 
is a sample theorem; see \fullref{sec:surgery} for its proof, as well as 
another similar result. The same techniques can be used to obtain other results 
along the same lines, which we do not pursue in this article.

\begin{theorem}
  \label{thm:surgery-general}
  Suppose that $Y$ is an irreducible $\Q$-homology sphere, $L$ is a 
  null-homotopic link in $Y$ of $\ell$ components, and $Y_0 (L) \homeo N 
  \connsum \ell (S^1 \cross S^2)$, where $Y_0 (L)$ denotes the result of 
  $0$-surgery along each component of $L$.  Then $N$ is orientation-preserving 
  homeomorphic to $Y$.
\end{theorem}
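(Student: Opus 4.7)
The plan is to encode the $0$-surgery hypothesis as a ribbon $\Q$-homology cobordism and then use Metatheorem~\eqref{it:meta-pi1} together with Geometrization. I would first build a cobordism $X' \colon Y \to N$ by attaching $\ell$ zero-framed $2$-handles to $Y$ along $L$ (producing the trace $W$ with $\partial_+ W = Y_0(L)$), and then $\ell$ $3$-handles along the $S^2$-factors of the $\ell(S^1 \cross S^2)$-summands of $Y_0(L) \homeo N \connsum \ell(S^1 \cross S^2)$ (producing $N$). Dualizing, the reversed cobordism $-X' \colon N \to Y$ admits a handle decomposition consisting only of $\ell$ $1$-handles and $\ell$ $2$-handles, and hence is a ribbon cobordism.

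Next I would verify that $-X'$ is a $\Q$-homology cobordism and directly compute $\pi_1(Y) \isom \pi_1(N)$. For the homology vanishing, the $\ell$ $2$-spheres along which the $3$-handles are attached form a $\Q$-basis of $H_2(W; \Q) \isom \Q^\ell$---this follows from the long exact sequence of $(W, Y_0(L))$ together with the fact that $L$ is null-homologous---so these $3$-handles kill $H_2$, yielding $\tilde{H}_*(X'; \Q) = 0$. For the fundamental group, I would compute $\pi_1(X')$ from both ends: from the $Y$-end, the $2$-handles are attached along null-homotopic circles (imposing no relations) and the $3$-handles do not affect $\pi_1$, so $\pi_1(X') \isom \pi_1(Y)$; from the $N$-end, the dual $1$-handles produce $\pi_1(N) * F_\ell \isom \pi_1(Y_0(L))$, and the dual $2$-handles, attached along the $S^1$-factors of the $\ell(S^1 \cross S^2)$-summands, kill the free generators, yielding $\pi_1(X') \isom \pi_1(N)$.

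Finally I would conclude via Kneser's theorem and Geometrization. Since $Y$ is irreducible (hence prime), $\pi_1(Y)$ is not a nontrivial free product; by Kneser, $N$'s prime decomposition has exactly one summand with nontrivial fundamental group, the remaining ones being $S^3$'s. Because $Y$ is a $\Q$-homology sphere, $\pi_1(Y) \nisom \Z$, which excludes any $S^1 \cross S^2$-summand of $N$, so $N$ itself is irreducible. Closed, orientable, irreducible $3$-manifolds with isomorphic fundamental groups are homeomorphic by Geometrization. The main obstacle will be the orientation-preserving refinement of this last step, which should follow from tracking how the oriented cobordism $X'$ induces the $\pi_1$-isomorphism---for instance, by analyzing its action on fundamental classes in $H_3$, or by appealing to the Thurston-geometric hierarchy of \autoref{thm:geom-intro}.
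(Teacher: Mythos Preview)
Your setup is correct and matches the paper's: the cobordism $-X' \colon N \to Y$ is indeed a ribbon $\Z$-homology cobordism (the paper's \fullref{lem:surgery-ribbon}), and your computation from the $Y$-end correctly gives $\pi_1(X') \isom \pi_1(Y)$ since the $2$-handles go along null-homotopic curves and $3$-handles do not affect $\pi_1$. However, your computation from the $N$-end contains a genuine error. The dual $2$-handles are attached along the \emph{meridians} $\mu_i$ of $L$ in $Y_0(L)$ (the belt circles of the original $2$-handles), not along the $S^1$-factors of the $(S^1 \times S^2)$-summands; the identification $Y_0(L) \homeo N \connsum \ell(S^1 \times S^2)$ gives no control over where the $\mu_i$ land in $\pi_1(Y_0(L)) \isom \pi_1(N) \fprod F_\ell$. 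Asserting that killing the $\mu_i$ leaves exactly $\pi_1(N)$ is tantamount to assuming the conclusion. The paper fills this gap in \fullref{prop:surgery-fundamental} by constructing a retraction $\rho \colon X' \to Y$---using that $L$ is null-homotopic to extend $\rho$ over the $2$-handles, and that $\pi_2(Y) = 0$ (from irreducibility and the Sphere Theorem) to extend over the $3$-handles---and then observing that $\rho \comp \iota_N \colon N \to Y$ is an orientation-preserving degree-$1$ map, hence $\pi_1$-surjective; combined with Gordon's injectivity this yields $\pi_1(N) \isom \pi_1(Y)$.

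Your final step also has a gap: the assertion that closed, orientable, irreducible $3$-manifolds with isomorphic $\pi_1$ are homeomorphic is \emph{false} for lens spaces (e.g., $L(5,1)$ and $L(5,2)$). The paper handles the aspherical case via the degree-$1$ map and the Borel Conjecture in dimension $3$, which also delivers the orientation-preserving refinement directly, but must treat the spherical case separately: lens spaces via the fact that $\Z$-homology cobordant lens spaces are orientation-preserving homeomorphic, and the remaining spherical manifolds via an intricate covering-space and parity argument (\fullref{lem:surgery-covers}) that occupies the bulk of the proof. Your suggested $H_3$-tracking is the right idea for the aspherical case---it is precisely the degree-$1$ argument---but it does not resolve the spherical case, where the orientation question is genuinely delicate.
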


\begin{remark}
  \label{rmk:surgery-general-triv}
  Since the first appearance of this article, Hom and the second author 
  \cite[Corollary~1.2]{HomLidman} have used \fullref{thm:surgery-general} to 
  show that when $L$ is a knot, $L$ must in fact be trivial.
\end{remark}

\begin{remark}
  \label{rmk:surgery-L-space}
  The technique used to prove \fullref{thm:surgery-general} can also be applied 
  to the case where $Y$ is an $L$-space $\Z$-homology sphere and $L$ is a 
  non-trivial knot.  In this case, one could show that $Y_0 (L)$ does not 
  contain an $S^1 \cross S^2$ summand, which follows from Ni 
  \cite[p.~1144]{Ni-taut}.
\end{remark}

Finally, we also obtain an application to the computation of the Furuta--Ohta 
invariant $\lFO$ for $\ZZ$-homology $S^1 \cross S^3$'s \cite{FurutaOhta}. 

\begin{corollary}
  \label{cor:casson}
  Suppose that $\Ym$ and $\Yp$ are $\Z$-homology spheres, and that $W \colon 
  \Ym \to \Yp$ is a ribbon $\Q$-homology cobordism, and that $\DWbar$ is the 
  $\ZZ$-homology $S^1 \cross S^3$ obtained by gluing the ends of $\double{W}$ 
  by the identity.  Then $\lFO (\DWbar) = \casson (\Ym)$.  In particular, $\lFO 
  (\DWbar)$ agrees with the Rokhlin invariant of $\Ym$ mod $2$.
\end{corollary}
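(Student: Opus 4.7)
The approach is to interpret $\lFO (\DWbar)$ as a Lefschetz-type trace on instanton Floer homology and then apply the Floer-theoretic part of the metatheorem. Cutting $\DWbar$ along the distinguished cross-section $\Ym$ recovers the cobordism $\double{W} \colon \Ym \to \Ym$. Extending the original Furuta--Ohta computation for the trivial mapping torus $\Ym \cross S^1$ by a neck-stretching and gluing analysis of the moduli space of irreducible anti-self-dual connections on $\DWbar$, one should obtain a Lefschetz-type identity
\[
  \lFO (\DWbar) = \tfrac{1}{2} \Lef (\Phi),
\]
where $\Phi \colon \IFH (\Ym) \to \IFH (\Ym)$ is the cobordism map induced by $\double{W}$ and $\Lef$ denotes the graded trace on $\Z/8$-graded instanton Floer homology. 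The factor of $\tfrac{1}{2}$ is precisely the one producing $\lFO (\Ym \cross S^1) = \casson (\Ym)$ in the trivial case from Taubes' identification $\chi (\IFH (\Ym)) = 2 \casson (\Ym)$.

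Next, Metatheorem~\eqref{it:meta-floer} applied to the ribbon cobordism $W \colon \Ym \to \Yp$ yields that $W_* \colon \IFH (\Ym) \to \IFH (\Yp)$ is split injective, with one-sided inverse $\orrev{W}_* \colon \IFH (\Yp) \to \IFH (\Ym)$ coming from the orientation-reversed cobordism. By the functoriality of instanton Floer cobordism maps and the decomposition $\double{W} = W \cup_{\Yp} \orrev{W}$, this gives $\Phi = \double{W}_* = \orrev{W}_* \comp W_* = \id_{\IFH (\Ym)}$. Therefore $\Lef (\Phi) = \chi (\IFH (\Ym)) = 2 \casson (\Ym)$, and substitution into the Lefschetz-type identity yields $\lFO (\DWbar) = \casson (\Ym)$. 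The Rokhlin congruence in the ``in particular'' statement is then immediate from Casson's theorem $\casson (\Ym) \equiv \mu (\Ym) \pmod{2}$.

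The principal technical obstacle is establishing the Lefschetz-type trace formula in the first step: this requires a careful neck-stretching and gluing analysis relating the irreducible ASD moduli space on $\DWbar$ to the chain-level Floer data associated to the cobordism $\double{W}$, together with a delicate matching of orientations. An alternative route that sidesteps the Lefschetz formalism would be to identify the irreducible $\SUtwo$-character variety of $\pi_1 (\DWbar)$ with that of $\pi_1 (\Ym \cross S^1) = \pi_1 (\Ym) \cross \Z$ directly via Metatheorem~\eqref{it:meta-char} (applied to the van Kampen decomposition $\pi_1 (\double{W}) = \pi_1 (W) *_{\pi_1 (\Yp)} \pi_1 (W)$ and to the HNN extension forming $\pi_1 (\DWbar)$), use the Zariski-tangent-space statement for the necessary orientation comparison, and then invoke Furuta and Ohta's original equality $\lFO (\Ym \cross S^1) = \casson (\Ym)$.
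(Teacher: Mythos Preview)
Your proposal is correct and follows essentially the same route as the paper: a gluing/neck-stretching argument identifies $\lFO(\DWbar)$ with $\tfrac12\Lef(\Io(\double{W}))$ (the paper phrases this as the signed count of index-$0$ ASD connections on $\DWbar$ equaling $2\Lef(\Io(\double{W}))$, with that count being $4\lFO(\DWbar)$ by definition), and then the ribbon result \fullref{thm:main-i-o} gives $\Io(\double{W}) = \Id$, so $\Lef = \chi(\Io(\Ym)) = 2\casson(\Ym)$. The paper handles your ``principal technical obstacle'' by citing \cite[Theorem~6.7]{Don:YM-Floer} as a standard gluing result; your suggested character-variety alternative is not pursued in the paper's proof of this corollary, though a related philosophy appears in \fullref{ssec:characters-i}.
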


\begin{remark}
  \label{rmk:sw-intro}
  We believe that the proof of \fullref{cor:casson} can be adapted to show the 
  analogous statement $\lSW (\DWbar) = - \casson (\Ym)$ for the 
  Mrowka--Ruberman--Saveliev invariant $\lSW$ \cite{MRS11}.  This would verify 
  the conjecture that $\lSW = - \lFO$ \cite[Conjecture~B]{MRS11}, for 
  $\ZZ$-homology $S^1 \cross S^3$'s that are of the form $\DWbar$. See 
  \fullref{rmk:sw} for more details.
\end{remark}

\begin{remark}
  \label{rmk:zhs}
  Suppose that $\Yp$ is a $\Z$-homology sphere. Then for any $\Q$-homology 
  sphere $\Ym$, a ribbon $\Q$-homology cobordism from $\Ym$ to $\Yp$ is in fact 
  a ribbon $\Z$-homology cobordism, and the existence of such a cobordism 
  implies that $\Ym$ is also a $\Z$-homology sphere. See \fullref{lem:same-h1} 
  for the proof. This is relevant, for example, to \fullref{thm:seifert} and 
  \fullref{cor:casson}, as well as \fullref{thm:main-i-o} and 
  \fullref{thm:main-i-eq} later.
\end{remark}

To ease our discussion, we set up some conventions for the article.

\begin{convention}
  All $3$- and $4$-manifolds are assumed to be oriented and smooth, and, except 
  in \fullref{ssec:hf-surgery}, they are also assumed to be 
  connected.\footnote{Connectedness is often not essential in our statements, 
    but we impose it for ease of exposition.} Accordingly, we also assume that 
  handle decompositions of cobordisms between non-empty $3$-manifolds have no 
  $0$- or $4$-handles. We say that a handle decomposition is \emph{ribbon} if 
  it has no $3$-handles. We always denote the ends of a ribbon homology 
  cobordism by $\Ypm$; for results that hold for more general cobordisms, we 
  typically denote the cobordism by, for example, $W \colon Y_1 \to Y_2$. All 
  sutured manifolds are assumed to be balanced.  We denote by $I$ the interval 
  $[0, 1]$.
  Unless otherwise specified, all singular homologies have coefficients in 
  $\Z$, instanton Floer homologies have coefficients in $\Q$, and Heegaard 
  Floer homologies have coefficients in $\zeetwo$.
\end{convention}

\subsection{Context}
\label{ssec:intro_context}

In the seminal work of Gordon \cite{Gor:Rib} on ribbon concordance, the key 
theorem, which is very special to the absence of $3$-handles, is the following.

\begin{theorem}[Gordon {\cite[Lemma~3.1]{Gor:Rib}}]
  \label{thm:gordon}
  Let $\Ym$ and $\Yp$ be compact $3$-manifolds possibly with boundary, and 
  suppose that $W \colon \Ym \to \Yp$ is a ribbon $\Q$-homology cobordism.  
  Then
  \begin{enumerate}
    \item\label{it:gordon-inj}  The map $\iota_* \colon \pi_1 (\Ym) \to \pi_1 
      (W)$ induced by inclusion is injective; and
    \item\label{it:gordon-surj}  The map $\iota_* \colon \pi_1 (\Yp) \to \pi_1 
      (W)$ induced by inclusion is surjective.
  \end{enumerate}
\end{theorem}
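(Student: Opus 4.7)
For part~(2), the argument is brief. The ribbon hypothesis yields a handle decomposition of $W$ on $\Ym$ with no $3$-handles, and dualizing (a $k$-handle becomes a $(4-k)$-handle) produces a handle decomposition of $W$ on $\Yp$ with only $2$- and $3$-handles. Since attaching a $2$-handle only imposes a relation on $\pi_1$ (killing the conjugacy class of its attaching circle) and attaching a $3$-handle along an embedded $2$-sphere does not affect $\pi_1$ at all, iterating van Kampen through these attachments shows that $\iota_* \colon \pi_1(\Yp) \to \pi_1(W)$ is surjective. Note that this half does not use the $\Q$-homology hypothesis.

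For part~(1), I would follow Gordon's strategy. Applying van Kampen to the ribbon handle decomposition of $W$ on $\Ym$ produces a presentation $\pi_1(W) = (\pi_1(\Ym) * F_k) / N$, where $F_k$ is free on generators $g_1, \dotsc, g_k$ corresponding to the $1$-handles and $N$ is the normal closure of words $w_1, \dotsc, w_m$ encoding the $2$-handle attaching circles. The $\Q$-homology hypothesis forces $k = m$ and makes the $m \cross k$ matrix of exponent sums of the $g_i$'s in the $w_j$'s invertible over $\Q$, since the relative cellular chain complex of $(W, \Ym)$ with $\Q$-coefficients is acyclic. The plan is to combine this algebraic invertibility with residual finiteness of $\pi_1(\Ym)$: given a putative $\alpha \in \ker \iota_*$ with $\alpha \neq 1$, pick a finite quotient $\phi \colon \pi_1(\Ym) \to Q$ with $\phi(\alpha) \neq 1$, and then use the $\Q$-invertibility of the relation matrix (which upgrades to integral invertibility modulo $p$ for all but finitely many primes) to extend $\phi$ to $\tilde\phi \colon \pi_1(W) \to \tilde Q$ for a suitable finite extension $\tilde Q$ of $Q$, by solving for images of the $g_i$'s. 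Since $\tilde\phi \comp \iota_* = \phi$, we obtain $\phi(\alpha) = 1$, a contradiction.

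The main obstacle is executing this extension step cleanly: one must arrange that $\Q$-invertibility of the relation matrix (rather than the stronger $\Z$-invertibility available only for $\Z$-homology cobordisms) suffices to produce the extension $\tilde \phi$. This is done by passing to a $p$-power quotient of $Q$ for a well-chosen prime $p$ that avoids the denominators and elementary divisors of the matrix. For toroidal boundary, residual finiteness of $\pi_1(\Ym)$ is classical via Hempel--Thurston for Haken manifolds, so Gordon's original argument applies directly in that case. For closed $\Ym$, residual finiteness is precisely Hempel's theorem, which depends on Geometrization; once this is in hand, Gordon's argument extends immediately, as signaled in the remark preceding the statement.
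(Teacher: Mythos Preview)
Your argument for part~(2) is correct and is exactly what the paper does: dualize the handle decomposition to see $W$ built from $\Yp$ by $2$- and $3$-handles, and note that neither adds generators to $\pi_1$.

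For part~(1), your overall strategy---residual finiteness of $\pi_1(\Ym)$ combined with an extension of a finite representation over the new generators---is indeed Gordon's strategy and is what the paper follows. However, the extension step you propose has a genuine gap. You suggest passing to a $p$-power quotient of $Q$ for a prime $p$ coprime to $\det(M)$, and then solving the equations $w_j = 1$ there (which can indeed be done in a $p$-group when $\det(M)$ is a unit mod $p$, by descending the lower central series). The problem is that you have no control over whether $\phi(\alpha)$ survives in that $p$-power quotient: the primes dividing the order of $\phi(\alpha)$ in $Q$ may all divide $\det(M)$, and $3$-manifold groups are not residually $p$ for arbitrary $p$, so you cannot simply choose a different finite quotient to fix this. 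Concretely, nothing prevents $\phi(\alpha)$ from lying in every normal subgroup of $Q$ of $p$-power index for your chosen $p$.

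The correct tool, which the paper invokes explicitly, is the theorem of Gerstenhaber and Rothaus: one embeds the finite quotient $H = \pi_1(\Ym)/N$ into a compact connected Lie group $G$ (e.g.\ a unitary group), and then uses that the word map $G^m \to G^m$ determined by the relators has degree $\pm\det(M) \neq 0$, hence is surjective. This produces the desired extension $\pi_1(W) \to G$ restricting to the given map on $\pi_1(\Ym)$, without any constraint linking the primes in $\lvert H\rvert$ to those in $\det(M)$. Equivalently, their Theorem~2 states directly that $H \to (H * F_m)/\normal{w_1,\dotsc,w_m}$ is injective whenever the exponent-sum matrix is nonsingular over $\Q$. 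Your write-up would be complete if you replaced the $p$-group paragraph with a citation to this result; the residual-finiteness discussion (Hempel--Thurston for the bounded case, Geometrization for the closed case) is correct as you have it.
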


(While Gordon's original statement is only for exteriors of ribbon 
concordances, the more general result holds, as explained in this subsection.) 
Gordon uses the theorem above, combined with various properties of knot groups, 
to study questions related to ribbon concordance.

Our employment of several different approaches above is motivated by two 
observations.  First, since Gordon's work, there have been many breakthroughs 
in low-dimensional topology, including the Geometrization Theorem for 
$3$-manifolds, the applications of representation theory and gauge theory, and, 
relatedly, the advent of Floer theory. Each of these constitutes a new, 
powerful tool that can be applied in the context of ribbon homology cobordisms, 
and a major goal of the present article is to systematically carry out these 
applications. In particular, we will develop obstructions from these theories, 
which we will then use for topological gain.

Second, while the approaches reflect very different perspectives, there are 
interesting theoretical connections between them. To illustrate this point, we 
discuss \fullref{thm:gordon} further. This theorem follows from the deep 
property of residual finiteness of $3$-manifold groups together with the 
elegant results of Gerstenhaber and Rothaus \cite{GerstenhaberRothaus} on the 
representations of finitely presented groups to a compact, connected Lie group 
$G$.  (The residual finiteness of closed $3$-manifold groups has only been 
known after the proof of the Geometrization Theorem; this new development is 
the ingredient that extends Gordon's original statement to closed $3$-manifolds 
in \fullref{thm:gordon}.) The statement of Gerstenhaber and Rothaus can be 
reinterpreted as saying that the $G$-representations of $\pi_1 (\Ym)$ extend to 
those of $\pi_1 (W)$, and \fullref{thm:gordon}~\eqref{it:gordon-surj} implies 
that any non-trivial representation of $\pi_1 (W)$ determines a non-trivial 
representation of $\pi_1 (\Yp)$ by pullback. Thus, \fullref{thm:gordon} 
naturally leads to the study of the character varieties of $\Ypm$. Moving 
further, focusing on $G = \SUtwo$, we observe that the $\SUtwo$-representations 
of $\pi_1 (\Ypm)$ are related to the instanton Floer homology of $\Ypm$.  Like 
instanton Floer homology, Heegaard Floer homology is defined by considering 
certain moduli spaces of solutions; however, while they share many formal 
properties, the exact relationship between these two theories remains somewhat 
unclear.  Finally, we note that the Geometrization Theorem implies that if 
$\Ypm$ is geometric, its geometry can be determined from $\pi_1 (\Ypm)$ in many 
situations.

In fact, apart from theoretical connections, there is considerable interplay 
among these perspectives even in their \emph{applications}. We direct the 
interested reader to \fullref{thm:seifert}, \fullref{rmk:stein-irred}, and 
\fullref{rmk:sol-lspace} for a few examples.

\subsection{Character varieties and ribbon homology cobordisms}
\label{ssec:intro_character}

As briefly mentioned above, the proof of \fullref{thm:gordon} requires 
understanding the relationship between $G$-representations of $\pi_1 (\Ypm)$ 
and $\pi_1 (W)$.  Consequently, given a ribbon $\Q$-homology cobordism $W 
\colon \Ym \to \Yp$, we will also obtain relations between the character 
varieties of $\Ym$ and $\Yp$.  Recall that for a group $\pi$ and compact, 
connected Lie group $G$ (e.g.\ $\SUtwo$), we can define the 
\emph{representation variety} $\repvarG (\pi)$, which is the set of 
$G$-representations of $\pi$; we can also quotient by the conjugation action to 
obtain the \emph{character variety} $\charvarG (\pi)$.
For a path-connected space $X$, we will write $\repvarG (X)$ for $\repvarG 
(\pi_1(X))$, and $\charvarG (X)$ for $\charvarG (\pi_1 (X))$.   As discussed 
above, we have the following proposition.

\begin{proposition}
  \label{prop:character-embedding}
  Let $\Ym$ and $\Yp$ be compact $3$-manifolds possibly with boundary, and 
  suppose that $W \colon \Ym \to \Yp$ is a ribbon $\Q$-homology cobordism. Then 
  any $\rhom \in \repvarG (\Ym)$ can be extended to an element $\rhoW \in 
  \repvarG (W)$ that pulls back to an element $\rhop \in \repvarG (\Yp)$, and 
  distinct elements in $\repvarG (\Ym)$ corresponds to distinct elements in 
  $\repvarG (\Yp)$. The analogous statement for $\charvarG$ also holds.
\end{proposition}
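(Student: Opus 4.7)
The plan is to combine the homological vanishing of the pair $(W, \Ym)$ with the Gerstenhaber--Rothaus extension theorem to construct the desired $\rho_W$ extending $\rho_-$, and then to exploit the surjectivity part of \fullref{thm:gordon} to deduce that distinct classes at $\Ym$ descend to distinct classes at $\Yp$.

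First, I would fix a handle decomposition of $W$ on $\Ym \cross I$. The ribbon hypothesis eliminates $3$-handles, and the conventions of the paper eliminate $0$- and $4$-handles, so only $1$- and $2$-handles appear; let $k$ and $m$ denote their respective numbers. The vanishing $H_*(W, \Ym; \Q) = 0$ forces $k = m$ via the handle chain complex of $(W, \Ym)$ over $\Q$, and moreover forces the boundary map $C_2(W, \Ym; \Q) \to C_1(W, \Ym; \Q)$ to be an isomorphism. This produces a presentation
$$\pi_1(W) \isom \paren{\pi_1(\Ym) * F_k} \big/ \normal{r_1, \dotsc, r_k},$$
in which the $k \cross k$ matrix of exponent sums of $r_1, \dotsc, r_k$ in the free generators $x_1, \dotsc, x_k$ coming from the $1$-handles is non-singular over $\Q$.

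Given $\rho_- \in \repvarG(\Ym)$, I would then invoke Gerstenhaber--Rothaus \cite{GerstenhaberRothaus}: because the exponent sum matrix is non-singular, any initial assignment of the $x_i$'s to elements of $G$ can be deformed within $G$ to produce a genuine representation $\rho_W \in \repvarG(W)$ restricting to $\rho_-$. Set $\rho_+ := \rho_W \circ \iota_*^{\Yp}$ to obtain the desired pullback. For the injectivity clause, suppose $\rho_-^{(1)}, \rho_-^{(2)} \in \repvarG(\Ym)$ admit extensions $\rho_W^{(1)}, \rho_W^{(2)}$ with $\rho_+^{(1)} = \rho_+^{(2)}$. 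By \fullref{thm:gordon}~\eqref{it:gordon-surj}, $\iota_*^{\Yp}$ is surjective, forcing $\rho_W^{(1)} = \rho_W^{(2)}$ on all of $\pi_1(W)$; restricting along $\iota_*^{\Ym}$ then recovers $\rho_-^{(1)} = \rho_-^{(2)}$. For the character variety analogue, if $\rho_+^{(1)} = g \rho_+^{(2)} g^{-1}$ for some $g \in G$, then $\rho_W^{(1)}$ and $g \rho_W^{(2)} g^{-1}$ agree on $\im(\iota_*^{\Yp})$, hence on all of $\pi_1(W)$ by surjectivity, so restriction gives $\rho_-^{(1)} = g \rho_-^{(2)} g^{-1}$.

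The main technical check is the non-degeneracy of the exponent sum matrix required by Gerstenhaber--Rothaus; since this is essentially the boundary $\partial_2 \colon C_2(W, \Ym; \Q) \to C_1(W, \Ym; \Q)$ of the handle chain complex, its non-singularity follows directly from $H_1(W, \Ym; \Q) = H_2(W, \Ym; \Q) = 0$. Beyond identifying the exponent sum matrix with this boundary map, I expect no significant further obstacles.
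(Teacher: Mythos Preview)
Your proposal is correct and follows essentially the same approach as the paper's proof (given in \fullref{prop:character-embedding-2}): both use the ribbon handle decomposition to obtain a presentation $\pi_1(W) \cong (\pi_1(\Ym) * F_k)/\normal{r_1,\dotsc,r_k}$, observe that the exponent-sum matrix is non-singular because $H_1(W,\Ym;\Q)=0$, and apply Gerstenhaber--Rothaus to extend $\rhom$; the injectivity into $\repvarG(\Yp)$ follows from the surjectivity of $(\iota_+)_*$ on $\pi_1$. Your explicit treatment of the $\charvarG$ case via conjugation is a small elaboration beyond what the paper writes out, but is entirely in line with their argument.
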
 

See \fullref{prop:character-embedding-2} for a restatement and proof.  Recall 
that the Chern--Simons functional gives an $\R / \Z$-valued function on 
$\repvarG (Y)$; the image of this function is a finite subset of $\R / \Z$, 
which we call the \emph{$G$--Chern--Simons invariants} of $Y$.  
\fullref{prop:character-embedding} implies a relation between the 
$G$--Chern--Simons invariants of $\Ym$ and $\Yp$.

\begin{corollary}
  \label{cor:chern-simons}
  Let $\Ym$ and $\Yp$ be closed $3$-manifolds, and suppose that there exists a 
  ribbon $\Q$-homology cobordism from $\Ym$ to $\Yp$.  Then the set of 
  $G$--Chern--Simons invariants of $\Ym$ is a subset of that of $\Yp$.
\end{corollary}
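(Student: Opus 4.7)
The plan is to deduce this directly from \fullref{prop:character-embedding} together with the standard fact that the Chern--Simons functional is a primitive for a Chern--Weil $4$-form, so that flat cobordisms force equality of Chern--Simons invariants.

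Concretely, let $c_- \in \R/\Z$ be a $G$--Chern--Simons invariant of $\Ym$, realized by some $[\rhom] \in \repvarG(\Ym)$. By \fullref{prop:character-embedding}, I can extend $\rhom$ to a representation $\rhoW \colon \pi_1(W) \to G$ such that its pullback $\rhop = \rhoW \circ \iota_*$ lies in $\repvarG(\Yp)$. Let $P_W \to W$ denote the associated flat principal $G$-bundle and $A_W$ its canonical flat connection. The restrictions $P_W|_{\Ypm}$ are the flat bundles $P_{\rhopm}$, with restricted connections $A_{\pm}$, so that $\CS(A_-) = \CS(\rhom)$ and $\CS(A_+) = \CS(\rhop)$ in $\R/\Z$.

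Next, I would apply the Chern--Simons/Chern--Weil transgression formula: for any connection $A$ on a principal $G$-bundle over the cobordism $W \colon \Ym \to \Yp$,
\[
  \CS(A|_{\Yp}) - \CS(A|_{\Ym}) \equiv \int_{W} \kappa(F_A \wedge F_A) \pmod{\Z},
\]
where $\kappa$ is the normalized Killing form on $\mathfrak{g}$ (for $G = \SUtwo$, this is $\frac{1}{8\pi^2}\tr$). Applied to the flat connection $A_W$, the curvature $F_{A_W}$ vanishes identically, so the integral is zero. Therefore $\CS(\rhop) = \CS(\rhom) = c_-$ in $\R/\Z$, which shows $c_-$ is also a $G$--Chern--Simons invariant of $\Yp$.

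The only non-trivial input beyond \fullref{prop:character-embedding} is the transgression identity, which is classical; the main point to verify is simply that the flat bundle and connection obtained on $\Ypm$ by restricting $(P_W, A_W)$ coincide with those coming intrinsically from $\rhopm$, and this is immediate from naturality of the flat-bundle construction under $\pi_1$. I do not anticipate a serious obstacle; the corollary is essentially a direct translation of the $\pi_1$-level extension statement into gauge-theoretic language via Chern--Weil.
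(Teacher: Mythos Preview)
Your proof is correct and follows essentially the same route as the paper: both use \fullref{prop:character-embedding} to extend $\rhom$ across $W$ to obtain $\rhop$, then argue that the Chern--Simons invariants of $\rhom$ and $\rhop$ coincide because they bound a flat connection on $W$. The paper cites Auckly for this last step, whereas you spell out the underlying Chern--Weil transgression argument directly; the content is the same.
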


\begin{remark}
  \label{rmk:stein-irred}
  Stein manifolds provide a large family of $4$-manifolds without $3$-handles.  
  It is interesting to compare the discussion above with the work of Baldwin 
  and Sivek \cite{BS:Stein}, who use instanton Floer homology to prove that if 
  $Y$ is a $\Z$-homology sphere that admits a Stein filling with non-trivial 
  homology, then $\pi_1 (Y)$ admits an irreducible $\SUtwo$-representation.  In 
  comparison, if $W \colon \Ym \to \Yp$ is a Stein $\Q$-homology cobordism, and 
  $\pi_1 (\Ym)$ admits a non-trivial $\SUtwo$-representation, then it extends 
  to an $\SUtwo$-representation of $\pi_1 (W)$ that pulls back to a non-trivial 
  $\SUtwo$-representation of $\pi_1 (\Yp)$ by 
  \fullref{prop:character-embedding}, which requires no gauge theory.
\end{remark}

In fact, with a bit more work, we can compare the local structures of the 
character varieties. For a path-connected space $X$ and a representation $\rho 
\colon \pi_1 (X) \to G$, recall that the \emph{Zariski tangent space} to 
$\charvarG (X)$ at the conjugacy class $[\rho]$ is the first group cohomology 
of $\pi_1 (X)$ with coefficients in the adjoint representation associated to 
$\rho$, denoted by $H^1 (X; \Ad_{\rho})$; see \fullref{ssec:character-grpcoh} 
for more details.  Below, we also consider the zeroth group cohomology $H^0 (X; 
\Ad_{\rho})$.

\begin{proposition}
  \label{prop:zariski}
  Let $\Ym$ and $\Yp$ be compact $3$-manifolds possibly with boundary, and 
  suppose that $W \colon \Ym \to \Yp$ is a ribbon $\Q$-homology cobordism.  Fix 
  $\rhom \in \repvarG (\Ym)$, choose an extension $\rhoW \in \repvarG (W)$, and 
  denote by $\rhop \in \repvarG (\Yp)$ the pullback of $\rhoW$.  Suppose that 
  $\dimR H^0 (\Ym; \Ad_{\rhom}) = \dimR H^0 (\Yp; \Ad_{\rhop})$.  Then
  \[
    \dimR H^1 (\Ym; \Ad_{\rhom}) \leq \dimR H^1 (W; \Ad_{\rhoW}) \leq \dimR H^1 
    (\Yp; \Ad_{\rhop}).
  \]
\end{proposition}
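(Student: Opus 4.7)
The plan is to derive both inequalities from the long exact sequences in twisted cohomology of the pairs $(W, \Ym)$ and $(W, \Yp)$ with coefficients in $V = \Ad_{\rhoW}$, using Poincar\'e--Lefschetz duality together with the self-duality of $V$ provided by any $\Ad$-invariant inner product on $\mathfrak{g}$ (which exists since $G$ is compact).

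For the upper bound $\dimR H^1(W; V) \leq \dimR H^1(\Yp; \Ad_{\rhop})$, the ribbon hypothesis endows $W$ with a handle decomposition on $\Ym$ using only $1$- and $2$-handles, so the relative chain complex is concentrated in those degrees and hence $H_k(W, \Ym; V) = 0$ for $k \notin \{1, 2\}$. Twisted Poincar\'e--Lefschetz duality $H^k(W, \Yp; V) \cong H_{4-k}(W, \Ym; V)$ then forces $H^k(W, \Yp; V) = 0$ for $k \in \{0, 1\}$, and the long exact sequence of $(W, \Yp)$ yields the isomorphism $H^0(W; V) \cong H^0(\Yp; \Ad_{\rhop})$ together with the injection $H^1(W; V) \hookrightarrow H^1(\Yp; \Ad_{\rhop})$.

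For the lower bound, the hypothesis $\dimR H^0(\Ym; \Ad_{\rhom}) = \dimR H^0(\Yp; \Ad_{\rhop})$ combined with the $H^0$-isomorphism above forces the inclusion-induced map $H^0(W; V) \hookrightarrow H^0(\Ym; \Ad_{\rhom})$ from the long exact sequence of $(W, \Ym)$ to be an isomorphism, so that the connecting map out of $H^0(\Ym; \Ad_{\rhom})$ vanishes. Using also $H^k(W, \Ym; V) = 0$ for $k \geq 3$, I obtain the truncated exact sequence
\[
0 \to H^1(W, \Ym; V) \to H^1(W; V) \to H^1(\Ym; \Ad_{\rhom}) \to H^2(W, \Ym; V) \to H^2(W; V) \to H^2(\Ym; \Ad_{\rhom}) \to 0.
\]
An Euler characteristic computation using $\chi(W; V) = \dimR V \cdot \chi(W) = 0$ (since $W$ is a $\Q$-homology cobordism of $3$-manifolds), the vanishing $H^4(W; V) = 0$, and $\dimR H^0(W; V) = \dimR H^3(W; V)$ (via Poincar\'e duality on $\Ym$ together with the isomorphism $H^3(W; V) \cong H^3(\Ym; V)$ extracted from the long exact sequence of $(W, \Ym)$) then forces $\dimR H^1(W; V) = \dimR H^2(W; V)$; Poincar\'e duality on $\Ym$ similarly gives $\dimR H^1(\Ym; V) = \dimR H^2(\Ym; V)$. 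Substituting these equalities into the alternating-sum relation from the displayed sequence yields $\dimR H^1(W, \Ym; V) = \dimR H^2(W, \Ym; V)$. A final rank chase identifies the image of $H^1(\Ym; V) \to H^2(W, \Ym; V)$ as having dimension $\dimR H^1(\Ym; V) - \dimR H^1(W; V) + \dimR H^1(W, \Ym; V)$, which must be bounded above by $\dimR H^2(W, \Ym; V) = \dimR H^1(W, \Ym; V)$, giving the desired inequality.

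The principal bookkeeping obstacle is the case of non-empty boundary on $\Ypm$: Poincar\'e duality on $\Ypm$ and on $W$ must then be replaced by Lefschetz duality relative to the lateral product region $\partial \Ym \cross I \subset \partial W$, and the handle decomposition and the pairs $(W, \Ypm)$ interpreted rel this boundary. I expect this to be a routine technical adjustment that does not alter the structural argument above.
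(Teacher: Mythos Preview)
Your approach is correct and takes a genuinely different route from the paper. The paper argues purely in group cohomology: the upper bound is the inflation--restriction exact sequence applied to the surjection $\pi_1(\Yp) \twoheadrightarrow \pi_1(W)$, and the lower bound is a direct rank comparison of the cochain complexes built from presentations of $\pi_1(\Ym)$ and $\pi_1(W)$, exploiting that the coboundary matrix for $W$ has block-triangular form over that for $\Ym$. Your argument instead uses twisted singular cohomology together with Poincar\'e--Lefschetz duality on the $4$-manifold $W$; this is more topological and treats the two inequalities in a unified way via long exact sequences of pairs. The trade-off is that the paper's method is insensitive to whether $\Ypm$ have boundary, while yours requires the triad adjustments you flag at the end.

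One small correction within that adjustment: your claim $\chi(W;V) = 0$ relies on $\chi(\Ym) = 0$, which can fail when $\bdy\Ym$ has components of nonzero Euler characteristic. The clean fix is to bypass Poincar\'e duality on $\Ym$ altogether and observe that, because the ribbon handle decomposition has equal numbers of $1$- and $2$-handles, the relative cochain complex $C^*(W,\Ym;V)$ is concentrated in degrees $1$ and $2$ with equal ranks; hence $\dimR H^1(W,\Ym;V) = \dimR H^2(W,\Ym;V)$ follows immediately from $\chi(W,\Ym;V)=0$, and your rank chase then goes through unchanged.
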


This seemingly technical result, applied to ribbon $\Q$-homology cobordisms 
between Seifert fibered homology spheres, will be our avenue to prove 
\fullref{thm:seifert}~\eqref{it:fibers}.

\subsection{Floer homologies and ribbon homology cobordisms}
\label{ssec:intro_floer}

Another way that representations appear in $3$- and $4$-manifold topology is 
through instanton Floer homology, where we specialize to $G = \SUtwo$ or 
$\SOthree$.  Recall that a Floer homology associates a vector space or module 
to a $3$-manifold, and a linear transformation or homomorphism to a cobordism.  
In the case of instanton Floer homology, the associated group comes roughly 
from counting $\SUtwo$ or $\SOthree$ representations of the fundamental group.
Below, we state a theorem for the behavior of a \emph{general} Floer homology 
theory under ribbon homology cobordisms. In \fullref{sec:results_floer}, we 
give results for most versions of Floer homology with precise conditions on the 
$3$-manifolds and the ribbon homology cobordism.

\begin{theorem}
  \label{thm:main-floer}
  Let $F$ be one of the $3$-manifold Floer homology theories discussed in 
  \fullref{sec:results_floer}.  Let $\Ym$ and $\Ym$ be compact $3$-manifolds, 
  and suppose that $W \colon \Ym \to \Yp$ is a ribbon homology cobordism.  Then 
  $F (W)$ includes $F (\Ym)$ into $F (\Yp)$ as a summand.\footnote{For some 
    flavors of Floer homology, we prove the weaker statement that $F (\Ym)$ is 
    isomorphic to a summand of $F (\Yp)$.}
\end{theorem}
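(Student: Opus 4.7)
The plan is to exhibit $F (W)$ as a split injection by constructing an explicit left inverse, in the spirit of Zemke's argument for knot Floer homology and ribbon concordances. Let $\opbar{W} \colon \Yp \to \Ym$ denote the time-reversal of $W$. Because $W$ admits a handle decomposition relative to $\Ym$ using only $1$- and $2$-handles, the cobordism $\opbar{W}$ dually admits a handle decomposition relative to $\Yp$ using only $2$- and $3$-handles. The composition $V := \opbar{W} \circ W \colon \Ym \to \Ym$ is therefore built from $\Ym \cross I$ by attaching, in order, $1$-handles, $2$-handles, more $2$-handles (the duals of those from $W$), and finally $3$-handles (the duals of the $1$-handles of $W$).

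The next step is a Kirby-calculus analysis of $V$. The $1$-handles of $W$ and their dual $3$-handles in $\opbar{W}$ form cancelling $1$/$3$-handle pairs once the $3$-handles are slid past the intervening $2$-handles; analogously, the $2$-handles of $W$ and their duals in $\opbar{W}$ form $2$/$2$-handle pairs whose attaching and framing data, using the $\Q$-homology hypothesis on $W$, can be arranged (after further handle slides) to cancel topologically. For each specific flavor $F$, I would combine this decomposition with the standard cobordism-map formulas: $1$-handle attachments induce inclusions into the tensor-product summand associated with stabilization by $S^1 \cross S^2$, dual $3$-handles induce the corresponding projections, and the intermediate $2$-handle pairs compose, in the relevant $\SpinC$-structure (or critical-set component in the instanton case), to the identity. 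Stringing these together gives $F (\opbar{W}) \circ F (W) = \id_{F (\Ym)}$, so $F (W)$ includes $F (\Ym)$ into $F (\Yp)$ as a summand.

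The main obstacle is the flavor-by-flavor implementation carried out in \fullref{sec:results_floer}: each Floer theory brings its own functoriality package, its own $\SpinC$- or representation-sector conventions, and its own subtleties concerning reducibles, correction terms, and well-definedness of cobordism maps (especially in the sutured or knot-decorated settings). For those flavors where the cobordism maps cannot be shown to compose to the identity on the nose, I would weaken the conclusion to the abstract summand statement of the footnote, proved by a rank count or a mapping-cone argument that still exploits the Kirby picture of $V$ but extracts only injectivity at the level of homology.
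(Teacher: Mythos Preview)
Your overall strategy---study the doubled cobordism $\double{W} = \opbar{W} \comp W$ and show it induces the identity on $F$---matches the paper's. The crucial divergence is in how you propose to analyze $\double{W}$.

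Your Kirby-calculus claim that the handle pairs ``cancel topologically'' is not correct. If the $1$/$3$-handle and $2$/$2$-handle pairs in $\double{W}$ could all be slid and cancelled, then $\double{W}$ would be diffeomorphic to $\Ym \cross I$. But van Kampen (using that $\pi_1(\Yp) \to \pi_1(W)$ is onto) gives $\pi_1(\double{W}) \isom \pi_1(W)$, which in general strictly contains $\pi_1(\Ym)$; so $\double{W} \not\homeo \Ym \cross I$. Concretely, a $1$-handle together with its dual $3$-handle contributes an $S^1 \cross S^3$ connect-summand rather than a product piece, and a $2$-handle together with its dual $2$-handle forms an embedded $S^2$ with trivial normal bundle---i.e.\ a loop surgery---not a handle cancellation. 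There is also no Kirby move that cancels a $2$-handle against another $2$-handle.

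The paper's mechanism is exactly this last observation, made precise in \fullref{lem:surgery} and \fullref{prop:surgery}: $\double{W}$ is obtained from $X \homeo (\Ym \cross I) \connsum m (S^1 \cross S^3)$ by surgery along $m$ loops $\gamma_1, \dotsc, \gamma_m$, and the homology-cobordism hypothesis forces $[\gamma_1] \wedge \dotsb \wedge [\gamma_m]$ to agree (up to a nonzero scalar) with the wedge of the $S^1 \cross S^3$ core classes $\alpha_1 \wedge \dotsb \wedge \alpha_m$. One then invokes a \emph{surgery formula} for cobordism maps (\fullref{prop:surgery-i-adm}, \fullref{prop:surgery-i}, \fullref{prop:surgery-maps}) of the shape
\[
  F(Z) = F\bigl(X;\, [\gamma_1] \wedge \dotsb \wedge [\gamma_m]\bigr),
\]
relating the map after loop surgery to the $H_1$-twisted map before surgery. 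Applying this once with $Z = \double{W}$ and once with $Z = \Ym \cross I$ (surgery along the cores $\alpha_i$), and using that the two wedge products agree, one obtains $F(\double{W}) = F(\Ym \cross I) = \Id$. Your proposal skips this surgery formula entirely; the bare assertion that ``the intermediate $2$-handle pairs compose \dots\ to the identity'' is precisely the place where this formula is required and where the real content of the proof lies.
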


Very recently, Zemke and his collaborators \cite{Zem19, MilZem19, LevZem19} 
have shown that ribbon concordances induce injections on knot Heegaard Floer 
homology and Khovanov homology, and this has led to several other interesting 
results \cite{JuhMilZem19, Sar19}, including an exciting relationship between 
knot Heegaard Floer homology and the bridge index 
\cite[Corollary~1.9]{JuhMilZem19}. In the special case that $F$ is sutured 
Heegaard Floer homology, and $W$ is the exterior of a strongly-homotopy ribbon 
concordance, \fullref{thm:main-floer} recovers the results of Zemke 
\cite[Theorem~1.1]{Zem19} and Miller and Zemke \cite[Theorem~1.2]{MilZem19} on 
knot Heegaard Floer homology. (For a more precise statement, see 
\fullref{cor:main-hf-knot}.)

While much of the work involving Floer homologies is inspired by the work of 
Zemke et al., our proofs use a different argument that holds in a more general 
context.

\subsection*{Organization}
In \fullref{sec:character}, we study the relationship between ribbon homology 
cobordisms and character varieties, proving \fullref{prop:character-embedding} 
and \fullref{prop:zariski}.
In \fullref{sec:geometry}, we prove \fullref{thm:geom-intro}, pertaining to 
Thurston geometries.

Next, in \fullref{sec:results_floer}, we give the precise statements associated 
with \fullref{thm:main-floer}, on the behavior of various versions of Floer 
homology under ribbon homology cobordisms.
The following three sections are then devoted to proving these Floer-theoretic 
results.
First, in \fullref{sec:topology}, we give the necessary topological background 
to analyze the double of a ribbon homology cobordism, and give a short 
application to metrics with positive scalar curvature.
In \fullref{sec:i}, after giving an overview of the Chern--Simons functional 
(proving \fullref{cor:chern-simons}) and instanton Floer homology, we prove 
\fullref{thm:main-i-o} to \fullref{thm:main-i-eq} which are instantiations of 
\fullref{thm:main-floer} for instanton Floer homology, as well as 
\fullref{cor:casson}; we also outline a proof of one of these theorems via 
character varieties.
In \fullref{sec:hf}, we set up the necessary tools for Heegaard Floer homology 
and prove \fullref{thm:main-hf} to \fullref{thm:main-hf-inv} which are versions 
of \fullref{thm:main-floer} for Heegaard Floer homology.

Combining the results above, in \fullref{sec:specific}, we prove some specific 
obstructions that arise from results discussed so far, including 
\fullref{thm:seifert}, \fullref{cor:montesinos}, 
\fullref{cor:composite-to-small}, and other statements.
Finally, in \fullref{sec:surgery}, we provide further applications of ribbon 
homology cobordisms to Dehn surgery problems,
proving
\fullref{thm:surgery-general}.

We provide a few routes for the reader. The reader solely interested in 
character varieties, Thurston geometries, or Dehn surgeries can read only 
\fullref{sec:character}, \fullref{sec:geometry}, or \fullref{sec:surgery}, 
respectively. If the sole interest is in instanton Floer homology, then refer 
to \fullref{sec:results_floer}, \fullref{sec:topology} and \fullref{sec:i}.  
For Heegaard Floer homology, see \fullref{sec:results_floer}, 
\fullref{sec:topology} and \fullref{sec:hf}.  

\subsection*{Acknowledgements}
Many of these ideas were developed while TL was visiting AD at Columbia 
University and DSV and CMMW at Louisiana State University, and we thank both 
departments for their support and hospitality.  Part of the research was 
conducted while AD was at the Simons Center for Geometry and Physics. The 
authors are grateful to Steven Sivek for helping them strengthen 
\fullref{cor:composite-to-small} to hold for all composite knots $\Km$, to 
Cameron Gordon for pointing out a mistake in \fullref{lem:same-h1} in a 
previous version, and to Ian Zemke for pointing out the extension of 
\fullref{thm:main-hf} from $\Q$-homology spheres to closed $3$-manifolds, as 
well as the equivalence between his two graph TQFTs in 
\fullref{ssec:hf-surgery}.  The authors also thank Riley Casper, John Etnyre, 
Sherry Gong, Jen Hom, Misha Kapovich, Zhenkun Li, Lenny Ng, and Yi Ni for 
helpful discussions.  Last but not least, the authors thank the anonymous 
referee for many helpful comments that improved the exposition of the article.

AD was partially supported by NSF Grant DMS-1812033.
TL was partially supported by NSF Grant DMS-1709702 and a Sloan Fellowship.
DSV was partially supported by NSF Grant DMS-1907654 and Simons Foundation 
Grant 524876.
CMMW was partially supported by NSF Grant DMS-2039688 and an AMS--Simons Travel 
Grant.

\section{The fundamental group and character varieties}
\label{sec:character}

In this section, we study the fundamental groups and character varieties of 
$3$-manifolds related by ribbon cobordisms.

\subsection{Background}
\label{ssec:character-bg}

Throughout, we let $G$ denote a compact, connected Lie group.  For a group 
$\pi$, let $\repvarG (\pi)$ denote the space of $G$-representations.  If $X$ is 
a connected manifold, we write $\repvarG (X)$ for $\repvarG (\pi_1 (X))$.  We 
write $\charvarG (\pi)$ for the set of conjugacy classes of 
$G$-representations.  We will omit $G$ from the notation when $G = \SUtwo$.

We first prove the following proposition, which is a restatement of 
\fullref{thm:gordon} and \fullref{prop:character-embedding}. The argument, 
using work of Gerstenhaber and Rothaus \cite{GerstenhaberRothaus}, repeats that 
of Gordon \cite{Gor:Rib} and also that of Cornwell, Ng, and Sivek \cite{CNS16}.

\begin{proposition}
  \label{prop:character-embedding-2}
  Let $\Ym$ and $\Yp$ be compact $3$-manifolds possibly with boundary, and 
  suppose that $W \colon \Ym \to \Yp$ is a ribbon $\Q$-homology cobordism. Then 
  the inclusion $\iota_+ \colon \Yp \to W$ induces a surjection $(\iota_+)_* 
  \colon \pi_1 (\Yp) \to \pi_1 (W)$ and an injection $\iota_+^* \colon \repvarG 
  (W) \to \repvarG (\Yp)$, and the inclusion $\iota_- \colon \Ym \to W$ induces 
  an injection $(\iota_-)_* \colon \pi_1 (\Ym) \to \pi_1 (W)$ and a surjection 
  $\iota_-^* \colon \repvarG (W) \to \repvarG (\Ym)$.
\end{proposition}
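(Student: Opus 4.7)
The plan is to handle the two inclusions $\iota_-$ and $\iota_+$ separately, as they require rather different ingredients. First I would dispatch $\iota_+$ using the dual handle decomposition: since $W$ is built from $\Ym \cross I$ using only $1$- and $2$-handles, it can equally be built from $\Yp \cross I$ using only $2$- and $3$-handles. A $3$-handle is attached along a simply connected $S^2$ and so leaves $\pi_1$ unchanged, while a $2$-handle only adds a relation. Van Kampen then yields surjectivity of $(\iota_+)_* \colon \pi_1 (\Yp) \to \pi_1 (W)$. Surjectivity on fundamental groups translates directly into injectivity of the pullback $\iota_+^* \colon \repvarG (W) \to \repvarG (\Yp)$: two representations that agree after pulling back to $\pi_1 (\Yp)$ must agree on a generating set of $\pi_1 (W)$. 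Because conjugations in the target still conjugate the pulled back representations, this also gives injectivity on $\charvarG$.

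For the inclusion $\iota_-$, the crux is the extension theorem of Gerstenhaber and Rothaus~\cite{GerstenhaberRothaus}. The key preparatory step is to count handles: since $H_* (W, \Ym; \Q) = 0$ and there are no $0$-, $3$-, or $4$-handles, the relative cellular chain complex is concentrated in degrees $1$ and $2$, forcing the number $n$ of $1$-handles to equal the number of $2$-handles. Via van Kampen this yields a balanced presentation
\[
  \pi_1 (W) \isom \bigl( \pi_1 (\Ym) \fprod F_n \bigr) / \normal{r_1, \dotsc, r_n},
\]
obtained from $\pi_1 (\Ym)$ by adjoining equal numbers of free generators and relators. The Gerstenhaber--Rothaus theorem asserts precisely that under such a balanced extension, any representation of $\pi_1 (\Ym)$ into a compact connected Lie group $G$ lifts to a representation of $\pi_1 (W)$; this gives surjectivity of $\iota_-^* \colon \repvarG (W) \to \repvarG (\Ym)$, which again descends to $\charvarG$.

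Finally, I would prove injectivity of $(\iota_-)_* \colon \pi_1 (\Ym) \to \pi_1 (W)$ by combining the previous extension argument with the residual finiteness of $3$-manifold groups. Given a nontrivial $g \in \pi_1 (\Ym)$, residual finiteness supplies a finite quotient detecting $g$; composing with an embedding of this finite group into a unitary group yields a representation $\rho$ of $\pi_1 (\Ym)$ into a compact connected Lie group with $\rho (g) \neq 1$. Gerstenhaber--Rothaus extends $\rho$ to $\pi_1 (W)$, and the composition then sends $(\iota_-)_* (g)$ to $\rho (g) \neq 1$, ruling out $g \in \ker (\iota_-)_*$. The main obstacle I anticipate is precisely the appeal to residual finiteness: for $3$-manifolds with nonempty boundary this is classical, but in the closed case it rests on the full strength of the Geometrization Theorem combined with the work of Hempel and others. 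This is exactly the ingredient that promotes Gordon's original argument for concordance exteriors to the general closed setting covered here.
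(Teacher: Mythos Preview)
Your proof is correct and follows essentially the same route as the paper.  One point worth making explicit: the Gerstenhaber--Rothaus extension theorem needs more than a balanced presentation---it requires that the integer matrix of exponent sums of the new generators $b_j$ in the new relators $r_i$ have nonzero determinant, which is exactly the statement that the relative boundary map $C_2(W,\Ym;\Q)\to C_1(W,\Ym;\Q)$ is invertible, i.e.\ $H_1(W,\Ym;\Q)=0$; you have this ingredient in hand but the paper spells it out while you leave it implicit in the word ``balanced.''  For the injectivity of $(\iota_-)_*$, the paper invokes \cite[Theorem~2]{GerstenhaberRothaus} directly on the finite quotient rather than first embedding the quotient in a unitary group and reapplying the extension result, but your variant is equally valid and in the same spirit.
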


\begin{proof}
  Since $W$ consists entirely of $1$- and $2$-handles, we may flip $W$ upside 
  down and view it as a cobordism from $-\Yp$ to $-\Ym$.  From this 
  perspective, $W$ is obtained by attaching $2$- and $3$-handles to $-\Yp$.  It 
  follows that the inclusion from $-\Yp$ into $W$ induces a surjection from 
  $\pi_1 (-\Yp)$ to $\pi_1 (W)$.
  
  For $\iota_- \colon \Ym \to W$, we will prove the second claim first. Choose 
  a representation $\rho \colon \pi_1 (\Ym) \to G$.  Since $W$ is a 
  $\Q$-homology cobordism, it admits a handle decomposition with an equal 
  number $m$ of $1$- and $2$-handles.  This allows us to write $\pi_1 (W) \isom 
  (\pi_1 (\Ym) \fprod \grpgen{b_1, \dotsc, b_m}) / \normal{v_1, \dotsc, v_m}$, 
  where the generators $b_i$ are induced by the $1$-handles and the relators 
  $v_i$ are induced by the $2$-handles.  The words $v_i$ induce a map $K \colon 
  G^m \to G^m$, and the existence of an extension of $\rho$ to $\pi_1 (W)$ is equivalent to 
  solving the equation $K = \vec{e}$.  (To handle the elements in $\pi_1 (\Ym)$ 
  that appear in $v_i$, we apply $\rho$ to the element to view it in $G$.)  By 
  quotienting out by $\pi_1 (\Ym)$, each element $v_i$ induces a word $v_i'$ in 
  the free group $\grpgen{b_1, \dotsc, b_n}$.  Consider the matrix $B$ whose 
  $(ij)^\text{th}$ coordinate is the signed number of times that $b_j$ appears 
  in $v_i'$.  Since $H_1 (W, \Ym; \Q) = 0$, we see that $\det (B) \neq 0$.  It 
  now follows from \cite[Theorem~1]{GerstenhaberRothaus} that there exists a 
  solution to the equation $K = \vec{e}$. 

  Now we show that the inclusion map $(\iota_-)_*$ from $\pi_1 (\Ym)$ to 
  $\pi_1(W)$ is injective.  The residual finiteness property of $3$-manifold 
  groups implies that for any non-trivial $x \in \pi_1 (\Ym)$, there exists a 
  finite quotient $H$ of $\pi_1 (\Ym)$ by a normal subgroup $N$ such that $x 
  \notin N$.
  We claim that the induced map $\overline{(\iota_-)_*} \colon H \to \pi_1 (W) 
  / \normal{(\iota_-)_* (N)}$ is injective; this will imply that $(\iota_-)_* 
  (x)$ is a non-trivial element of $\pi_1(W)$. To prove our claim, note that 
  $\pi_1 (W) / \normal{(\iota_-)_* (N)} \isom (H \fprod \grpgen{b_1, \dotsc, 
    b_m}) / \normal{v_1'', \dotsc, v_m''}$, where $v_i''$ is obtained from 
  $v_i$ by reducing the elements in $\pi_1 (\Ym)$ to $H$.  Now 
  \cite[Theorem~2]{GerstenhaberRothaus} says that there is a finite extension 
  $\widetilde{H}$ containing elements $\beta_1, \dotsc, \beta_m$ such that 
  $v_i'' (\beta_1, \dotsc, \beta_m) = e$ for each $i \in \set{1, \dotsc, m}$.  
  In other words, there is a homomorphism $\Phi \colon \pi_1 (W) / 
  \normal{(\iota_-)_* (N)} \to \widetilde{H}$ such that $\Phi \comp 
  \overline{(\iota_-)_*}$ is the inclusion of $H$ into $\widetilde{H}$. This 
  implies that $\overline{(\iota_-)_*}$ is injective, and our proof is 
  complete.
\end{proof}

\begin{corollary}
  \label{cor:charvarG-finite}
  Let $\Ym$ and $\Yp$ be compact $3$-manifolds possibly with boundary, and 
  suppose that $W \colon \Ym \to \Yp$ is a ribbon $\Q$-homology cobordism.  If 
  $\charvarG (\Yp)$ is finite, then $\charvarG (\Ym)$ is finite.
\end{corollary}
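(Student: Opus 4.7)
The plan is to use \fullref{prop:character-embedding-2} to produce a zig-zag $\charvarG (\Ym) \twoheadleftarrow \charvarG (W) \hookrightarrow \charvarG (\Yp)$ at the level of character varieties, and then transfer finiteness along it. All of the needed information is already encoded in the two maps $\iota_{\pm}^* \colon \repvarG (W) \to \repvarG (\Ypm)$ from that proposition; what remains is only to check that they descend to the conjugation quotients in the expected way.

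First I would verify that $\iota_+^* \colon \repvarG (W) \to \repvarG (\Yp)$ descends to an \emph{injection} $\charvarG (W) \hookrightarrow \charvarG (\Yp)$. Since $(\iota_+)_* \colon \pi_1 (\Yp) \to \pi_1 (W)$ is surjective, any $\rho \in \repvarG (W)$ is determined by $\iota_+^* \rho$, so $\iota_+^*$ is already injective on the nose. Moreover, if $\iota_+^* \rho_1 = g (\iota_+^* \rho_2) g^{-1}$ for some $g \in G$, then $\rho_1 ((\iota_+)_* y) = g \rho_2 ((\iota_+)_* y) g^{-1}$ for all $y \in \pi_1 (\Yp)$, and surjectivity of $(\iota_+)_*$ upgrades this to $\rho_1 = g \rho_2 g^{-1}$; hence conjugacy classes separate. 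Next I would note that $\iota_-^* \colon \repvarG (W) \to \repvarG (\Ym)$, being surjective by \fullref{prop:character-embedding-2}, trivially descends to a surjection $\charvarG (W) \twoheadrightarrow \charvarG (\Ym)$.

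With this zig-zag in hand, the conclusion is immediate: if $\charvarG (\Yp)$ is finite, then $\charvarG (W)$ is a subset of a finite set and hence finite, and $\charvarG (\Ym)$ is the image of a finite set under the surjection from $\charvarG (W)$, hence also finite.

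There is no real technical obstacle here; the only point that needs any care is the descent of $\iota_+^*$ to character varieties, and that follows mechanically from the surjectivity of $(\iota_+)_*$ established in \fullref{prop:character-embedding-2}.
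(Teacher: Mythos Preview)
Your proposal is correct and is essentially the paper's own proof, which simply reads ``This is a direct consequence of \fullref{prop:character-embedding-2}.'' You have just unpacked the one detail the paper leaves implicit---namely that the maps $\iota_\pm^*$ descend to $\charvarG$ in the expected way (surjection from $\charvarG(W)$ to $\charvarG(\Ym)$, injection into $\charvarG(\Yp)$)---which is also asserted in the introductory restatement \fullref{prop:character-embedding}.
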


\begin{proof}
  This is a direct consequence of \fullref{prop:character-embedding-2}.
\end{proof}

In the next subsection, we give a more structured comparison of the character 
varieties with a bit more work.  

\subsection{Group cohomology computations and Zariski tangent spaces}
\label{ssec:character-grpcoh}

We briefly review some definitions and constructions in group cohomology; see 
\cite{BrownCohomology} for more details.  Let $\pi$ be a group and let $M$ be a 
$\Z[\pi]$-module.  The group cohomology $H^* (\pi; M)$ with coefficients in $M$ 
is defined by taking a projective $\Z[\pi]$-resolution $\dotsb \to P_1 \to P_0 
\to \Z$ of $\Z$, where $\Z$ has the $\Z[\pi]$-module structure where $\pi$ acts 
by the identity.  Then $C^* (\pi; M)$ is defined by applying $\Hom_{\Z[\pi]} 
(\blank, M)$, and omitting $\Z$, as in
\[
  0 \to \Hom_{\Z[\pi]} (P_0, M) \to \Hom_{\Z[\pi]} (P_1, M) \to \dotsb,
\]
and $H^* (\pi; M)$ is the cohomology of this cochain complex.  A natural way of 
constructing a free resolution of $\Z$ is as follows.  Consider an aspherical CW complex $X$ with $\pi_1 (X) = \pi$, and lift this to a CW 
structure on the universal cover $\cover{X}$.  Then, the (augmented/reduced) CW 
chain complex for $\cover{X}$ naturally inherits a $\Z[\pi]$-module structure, 
where $\pi$ acts by the deck transformation group action, and this is a free 
$\Z[\pi]$-resolution of $\Z$.  (The lift of an individual cell in $X$ yields a 
$\pi$'s worth of cells upstairs, and these constitute a single copy of 
$\Z[\pi]$ in the cellular chain complex for the universal cover.)  Recall that 
a presentation $\pi = \grppre{a_\alpha}{w_\beta}$ determines a CW structure on 
$X$ with one $0$-cell, one $1$-cell $e^1_{\alpha}$ for each generator 
$a_{\alpha}$, and one $2$-cell $e^2_{\beta}$ for each relator $w_\beta$; then 
$H^* (\pi; M)$ can be computed from a cochain complex with Abelian groups
\[
  C^0 (\pi; M) = \Hom_{\Z[\pi]} (\Z[\pi], M) \isom M, \quad C^1 (\pi; M) \isom 
  \prod_\alpha M, \quad C^2(\pi; M) \isom \prod_\beta M,
\]
and possibly non-trivial higher cochain groups $C^i (\pi; M)$ for $i > 2$ that 
we will not be concerned with. The ($\alpha \beta)^{\text{th}}$ component of 
the differential from $C^1(\pi; M)$ to $C^2(\pi; M)$ is non-zero only if 
$a_\alpha$ appears in $w_\beta$.  Indeed, in $X$, if $\overline{e^1_\alpha} \cap \overline{e^2_\beta} 
= \emptyset$, then the same is true in the universal cover.  

Now, given a representation $\rho \in \repvarG (\pi)$, we can consider the 
$\Z[\pi]$-module $\Ad_\rho$, which is the Lie algebra $\fg$ of $G$ with the 
$\Z[\pi]$-action where $\pi$ acts by the composition of $\rho$ and the adjoint 
representation.  Note that $\Ad_\rho$ is in fact an $\R[\pi]$-module, and so 
$H^* (\pi; \Ad_\rho)$ is an $\R$--vector space.  Recall also that $H^1 (\pi; 
\Ad_\rho)$ is the \emph{Zariski tangent space} of $\charvarG (\pi)$ at 
$[\rho]$.    We are now ready to show that ribbon homology cobordisms 
induce relations between the Zariski tangent spaces.

\begin{proof}[Proof of \fullref{prop:zariski}]
  We begin by comparing $\dimR H^1 (W; \Ad_{\rhoW})$ and $\dimR H^1 (\Yp; 
  \Ad_{\rhop})$.  First, we recall the inflation--restriction exact sequence in 
  group cohomology (see, for example, \cite[6.8.3]{Weibel}), which says that, 
  given a normal subgroup $N$ of $\pi$ and a $\Z[\pi]$-module $M$, there exists 
  an injection of $H^1 (\pi / N; M^N)$ into $H^1 (\pi; M)$, where $M^N$ is the 
  subgroup of elements of $M$ fixed by the action of $\pi$ restricted to $N$.  
  It is clear that $M^N$ naturally inherits a $\Z[\pi/N]$-module structure.  
  (Further, if $M$ actually has an $\R[\pi]$-module structure, then everything 
  respects the $\R$--vector space structures.)  

  In our case, we take $\pi = \pi_1(\Yp)$, take $N$ to be the kernel of the 
  quotient map from $\pi_1 (\Yp)$ to $\pi_1 (W)$, and take $M = \Ad_{\rhop}$; 
  then $(\Ad_{\rhop})^N$ is a $\Z[\pi_1(W)]$-module.  By construction, $N 
  \subset \ker (\rhop) \subset \ker (\Ad \comp \rhop)$; thus, $N$ acts by the 
  identity on $\Ad_{\rhop}$, and so $(\Ad_{\rhop})^N$ is in fact 
  $\Ad_{\rho_W}$.  Therefore, we conclude that $\dimR H^1 (W; \Ad_{\rhoW}) \leq 
  \dimR H^1 (\Yp; \Ad_{\rhop})$.  

  Next, we consider the restriction of $\rhoW$ to $\pi_1 (\Ym)$.  Suppose that 
  $\pi_1 (\Ym)$ has a presentation of the form $\pi_1 (\Ym) = \grppre{a_1, 
    \dotsc, a_g}{w_1, \dotsc, w_r}$.  (We do not require $\Ypm$ to be closed, 
  and so there may not exist a balanced presentation.)  Then $\pi_1 (W)$ admits 
  a presentation of the form
  \[
    \pi_1(W) = \grppre{a_1, \dotsc, a_g, b_1, \dotsc, b_m}{w_1, \dotsc, w_r, 
      v_1, \dotsc, v_m}.
  \]
  As discussed above, $H^* (\Ym; \Ad_{\rhom})$ is the cohomology of a cochain 
  complex of the form
  \begin{equation}
    \label{eq:zariski-cochain}
    0 \to \fg \xrightarrow{\psi} \bigdirsum_{i=1}^g \fg \xrightarrow{\phi} 
    \bigdirsum_{j=1}^r \fg \to \dotsb.
  \end{equation}
  Thus, $H^0 (\Ym; \Ad_{\rhom}) = \ker (\psi)$, and $\dimR H^1(\Ym; 
  \Ad_{\rhom}) = \dimR \ker (\phi) - \dimR \im (\psi)$.  We consider a similar 
  setup for $\pi_1 (W)$, where $C^1 (W; \Ad_{\rhoW})$ (resp.\ $C^2 (W;  
  \Ad_{\rhoW})$) has $g + m$ (resp.\ $r + m$) copies of $\fg$, and write 
  $\psi'$ and $\phi'$ for the associated differentials. It is obvious now that  
  the condition $\dimR H^0 (\Ym; \Ad_{\rhom}) = \dimR H^0 (W; \Ad_{\rhoW})$ 
  implies that $\dimR \im (\psi) = \dimR \im (\psi')$.
  
  We now aim to compare $H^1 (\Ym; \Ad_{\rhom})$ and $H^1 (W; \Ad_{\rhoW})$.  
  Note that we have an $\R$--vector space decomposition $C^i (W; \Ad_{\rho_W}) 
  = C^i(\Ym; \Ad_{\rhom}) \dirsum \fg^m$ for $i \in \set{1, 2}$.  Since the 
  relators $w_1, \dotsc, w_r$ do not interact with the $m$ additional 
  generators in $\pi_1(W)$, we have a block decomposition
  \[
    \phi' = \begin{pmatrix} \phi & 0 \\ \eta & \gamma \end{pmatrix}.
  \]
  Writing $\dimR \fg = d$, we note that $\eta$ is a ($dm \times dg$)-matrix and 
  $\gamma$ is a ($dm \times dm$)-matrix.  We deduce
  \begin{align*}
    \dimR H^1 (\Ym; \Ad_{\rhom}) &= \dimR \ker(\phi) - \dimR \im(\psi) \\
    &= dg - \dimR \im (\phi) - \dimR \im (\psi) \\
    &= dg + dm - (\dimR \im (\phi) + dm) - \dimR \im(\psi') \\
    &\leq d(g+m) - \dimR \im (\phi') - \dimR \im (\psi') \\
    &= \dimR \ker (\phi') - \dimR \im (\psi') \\
    &= \dimR H^1 (W; \Ad_{\rhoW}),
  \end{align*}
  which completes the proof.
\end{proof}

\section{Thurston geometries}
\label{sec:geometry}

In this section, we study the relationship between ribbon $\Q$-homology 
cobordisms between compact $3$-manifolds and the Thurston geometries that these 
manifolds admit.

We first prove a homology version of \fullref{thm:gordon}.

\begin{lemma}
  \label{lem:gordon-h1}
  Let $\Ym$ and $\Yp$ be compact $3$-manifolds possibly with boundary, and 
  suppose that $W \colon \Ym \to \Yp$ is a ribbon $\Q$-homology cobordism.  
  Then
  \begin{enumerate}
    \item\label{it:gordon-h1-inj}  The inclusion of $\Ym$ into $W$ induces an 
      injection on $H_1$; and
    \item\label{it:gordon-h1-surj} The inclusion of $\Yp$ into $W$ induces a 
      surjection on $H_1$.
  \end{enumerate}
\end{lemma}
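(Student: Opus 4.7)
The plan is to extract both statements directly from the relative handle chain complex of the pair $(W, \Ypm)$, which in each case has nonzero chain groups in only two consecutive degrees and so affords sharp integral control.

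For part \eqref{it:gordon-h1-surj}, I would flip $W$ upside down and regard it as a cobordism from $-\Yp$ to $-\Ym$. By the ribbon hypothesis $W$ has no $3$-handles, and by our standing conventions no $0$- or $4$-handles; so relative to $-\Yp$ the upside-down $W$ is built by attaching only $2$- and $3$-handles. The relative cellular chain complex of $(W, \Yp)$ therefore has $C_0(W, \Yp) = C_1(W, \Yp) = 0$, which gives $H_1(W, \Yp) = 0$ at once, and the long exact sequence of the pair yields the desired surjection $H_1(\Yp) \to H_1(W)$. Alternatively, this statement is immediate from \fullref{prop:character-embedding-2} by abelianization.

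For part \eqref{it:gordon-h1-inj}, I would use the relative handle complex in the opposite direction: relative to $\Ym$, the cobordism $W$ has only $1$- and $2$-handles, so $C_i(W, \Ym) = 0$ outside $i \in \set{1,2}$, and the entire complex reduces to a single differential
\[
  \partial \colon C_2(W, \Ym) \to C_1(W, \Ym),
\]
an integer matrix recording algebraic intersections of the $2$-handle attaching circles with the $1$-handle cocores. Since $W$ is a $\Q$-homology cobordism, the numbers of $1$- and $2$-handles must agree and $\partial \tensor \Q$ is an isomorphism; hence $\partial$ is a square integer matrix of nonzero determinant, and is therefore injective already as a map of free abelian groups. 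It follows that $H_2(W, \Ym; \Z) = 0$, and the long exact sequence of $(W, \Ym)$ then forces $H_1(\Ym) \to H_1(W)$ to be injective.

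The step worth highlighting is the upgrade in part \eqref{it:gordon-h1-inj} from the rational vanishing $H_*(W, \Ym; \Q) = 0$ to the integral vanishing $H_2(W, \Ym; \Z) = 0$; this upgrade is exactly what the ribbon hypothesis buys us here, and it is driven by the same nondegeneracy of the attaching matrix that underlies the Gerstenhaber--Rothaus argument in the proof of \fullref{prop:character-embedding-2}. Beyond this point both parts reduce to routine invocations of the long exact sequence of a pair, so I anticipate no further obstacles.
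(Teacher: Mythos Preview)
Your proof is correct and follows essentially the same route as the paper: for \eqref{it:gordon-h1-inj} you and the paper both argue that the ribbon handle structure makes the relative chain complex $C_*(W,\Ym)$ a two-term complex whose differential is rationally invertible, hence integrally injective, giving $H_2(W,\Ym;\Z)=0$; for \eqref{it:gordon-h1-surj} the paper simply abelianizes \fullref{thm:gordon}~\eqref{it:gordon-surj}, which is exactly your second alternative. Your first argument for \eqref{it:gordon-h1-surj}, reading off $H_1(W,\Yp)=0$ directly from the upside-down handle complex, is a nice bonus: it is more elementary than the paper's route since it bypasses the Gerstenhaber--Rothaus and residual-finiteness machinery underlying \fullref{thm:gordon}.
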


\begin{proof}
  For \eqref{it:gordon-h1-inj}, view $W$ as constructed by attaching $1$- and 
  $2$-handles to $\Ym$; the fact that $W$ is a $\Q$-homology cobordism implies 
  that the attaching circles of the $2$-handles are linearly independent in 
  $H_1 (\Ym \connsum m (S^1 \cross S^2)) / H_1 (\Ym)$, implying that $H_2 (W, 
  \Ym) = 0$.  The statement now follows from the long exact sequence associated 
  to the pair $(W, \Ym)$.

  The statement \eqref{it:gordon-h1-surj} follows from Abelianizing the 
  statement of \fullref{thm:gordon}~\eqref{it:gordon-surj}.
\end{proof}

This has the following consequence, explaining \fullref{rmk:zhs}:

\begin{lemma}
  \label{lem:same-h1}
  Suppose that $\Ym$ and $\Yp$ are $\Q$-homology spheres such that $H_1 (\Ym)$ 
  and $H_1 (\Yp)$ are isomorphic. Then any ribbon $\Q$-homology cobordism from 
  $\Ym$ to $\Yp$ is in fact a ribbon $\Z$-homology cobordism. In particular, in 
  view of \fullref{lem:gordon-h1}, the same conclusion holds in the case that 
  $\Yp$ is a $\Z$-homology sphere.
\end{lemma}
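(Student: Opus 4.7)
The plan is to reduce the vanishing of $H_*(W, Y_\pm; \Z)$ to the triviality of a single finite abelian group, which the hypothesis on $H_1$ then forces.

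First, since $W$ is a ribbon $\Q$-homology cobordism, the relative cellular chain complex $C_*(W, Y_-)$ has cells only in degrees $1$ and $2$, with equal counts $m$; it reduces to $\Z^m \xrightarrow{M} \Z^m$, where $M$ is the linking matrix of the $2$-handle attaching curves against the $1$-handle cocores. The $\Q$-hypothesis forces $\det M \neq 0$, so $H_2(W, Y_-; \Z) = 0$ and $T_- := H_1(W, Y_-; \Z) = \Coker(M)$ is a finite abelian group, identified via the long exact sequence of $(W, Y_-)$ with $H_1(W)/(\iota_-)_*(H_1(Y_-))$. Poincaré--Lefschetz duality $H_k(W, Y_+; \Z) \cong H^{4-k}(W, Y_-; \Z)$ combined with the universal coefficient theorem concentrates $H_*(W, Y_+; \Z)$ in degree $2$, where it is $\mathrm{Ext}^1_{\Z}(T_-, \Z) \cong T_-$. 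Hence the claim reduces to showing $T_- = 0$.

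Next, apply \fullref{lem:gordon-h1} to obtain the injection $(\iota_-)_* \colon H_1(Y_-) \hookrightarrow H_1(W)$ and the surjection $(\iota_+)_* \colon H_1(Y_+) \twoheadrightarrow H_1(W)$; let $K := \ker (\iota_+)_*$, a finite group since $W$ is a $\Q$-homology cobordism. The short exact sequence $0 \to K \to H_1(Y_+) \to H_1(W) \to 0$ yields $\abs{\Tors H_1(W)} = \abs{\Tors H_1(Y_+)}/\abs{K}$, while the injectivity on torsion implies that $\abs{\Tors H_1(Y_-)}$ divides $\abs{\Tors H_1(W)}$. Combining these with the hypothesis $\abs{\Tors H_1(Y_-)} = \abs{\Tors H_1(Y_+)}$ forces $\abs{K} = 1$, so $(\iota_+)_*$ is an isomorphism; moreover the restriction of $(\iota_-)_*$ to torsion is an isomorphism of finite groups of equal order.

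It remains to show that $(\iota_-)_*$ is surjective. With $(\iota_+)_*$ now an isomorphism, the hypothesis $H_1(Y_-) \cong H_1(Y_+)$ gives $H_1(Y_-) \cong H_1(W)$ abstractly, so $(\iota_-)_*$ is an injection between abstractly isomorphic finitely generated abelian groups that is already an isomorphism on torsion subgroups; consequently $T_-$ is the cokernel of the induced injection on the free parts. The main obstacle is forcing this index to equal $1$: I expect this to follow by pairing the $(W, Y_-)$ analysis with its Poincaré-dual $(W, Y_+)$ counterpart to compare $H_2$ of the ends via $3$-manifold Poincaré duality, together with the $H_1$-identifications established above.
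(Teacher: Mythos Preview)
Your reduction and intermediate steps are correct and run parallel to the paper's own argument: both establish that $(\iota_-)_* \colon H_1(Y_-) \hookrightarrow H_1(W)$ is an injection between abstractly isomorphic finitely generated abelian groups, restricting to an isomorphism on torsion subgroups, so that $T_-$ is the cokernel on the free parts. The paper then simply writes ``$H_1(\Ym) \cong H_1(W)$, and so $H_1(W,\Ym)=0$''; you more honestly flag this last implication as the remaining obstacle and propose a Poincar\'e-duality fix.

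The obstacle is real and cannot be removed: the lemma as stated is \emph{false} whenever $b_1(Y_\pm) > 0$. Take $Y_- = S^1 \times S^2$, attach one $1$-handle to $Y_- \times I$ so the intermediate level is $(S^1 \times S^2) \connsum (S^1 \times S^2)$ with $H_1 = \Z\alpha \oplus \Z\beta$, and then attach one $2$-handle along a curve $\gamma$ with $[\gamma] = \alpha + 2\beta$ and any framing. The relative complex $C_*(W, Y_-)$ is $\Z \xrightarrow{\,2\,} \Z$, so $W$ is a ribbon $\Q$-homology cobordism with $H_1(W, Y_-) = \Z/2$; a linking-matrix computation for the three-component surgery description gives $H_1(Y_+) \cong \Z \cong H_1(Y_-)$. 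Hence $W$ is not a $\Z$-homology cobordism. Your proposed duality comparison only reproduces the same index on the dual side (the inclusion $H_2(Y_+) \to H_2(W)$ is multiplication by $\pm 2$ between copies of $\Z$) and yields no new constraint.

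What \emph{does} survive is the case where $H_1(Y_\pm)$ is finite---in particular the ``$Y_+$ a $\Z$-homology sphere'' clause, which is the only case the paper actually invokes (in \fullref{rmk:zhs} and \fullref{lem:surgery-ribbon}). There your argument is already complete: an injection between finite abelian groups of equal order is an isomorphism, so $T_- = 0$ immediately.
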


\begin{proof}
  First note that for any ribbon $\Q$-homology cobordism, we have $H_2 (W, \Ym) 
  = H_3 (W, \Ym) = 0$, by considering the attachment of $1$- and $2$-handles to 
  $\Ym$ to form $W$. Analogously, we also have $H_1 (W, \Yp) = H_3 (W, \Yp) = 
  0$. Thus, in general, the only possibly nonzero relative homology groups are 
  $H_1 (W, \Ym) \isom H_2 (W, \Yp)$, which are torsion. (These are isomorphic 
  via $H^2 (W, \Ym)$.)

  When $H_1 (\Ym) \isom H_1 (\Yp)$, \fullref{lem:gordon-h1} implies that $H_1 
  (\Ym)$, $H_1 (W)$, and $H_1 (\Yp)$ all have the same finite cardinality, and 
  so the injection of $H_1 (\Ym)$ into $H_1 (W)$ must be an isomorphism; thus, 
  $H_1 (W, \Ym) = 0$.
\end{proof}

The authors thank Cameron Gordon for pointing out that \fullref{lem:same-h1} is 
false when $b_1 (\Ym) > 0$; this case was mistakenly included in a previous 
version.

We now turn to the key lemma that relates $\pi_1 (\Ym)$ and $\pi_1 (\Yp)$ under 
a ribbon $\Q$-homology cobordism.

\begin{lemma}
  \label{lem:grpprops}
  Let $\prop$ be one of the following properties of groups:
  \begin{enumerate}
    \item \label{item:propfirst} Finite;
    \item Cyclic;
    \item Abelian;
    \item Nilpotent;
    \item \label{item:proplast} Solvable; or
    \item \label{item:virtual} Virtually $\prop'$, where $\prop'$ is one of the 
      properties above.
  \end{enumerate}
  Let $\Ym$ and $\Yp$ be compact $3$-manifolds. Suppose that $\pi_1 (\Yp)$ has 
  property $\prop$, while $\pi_1 (\Ym)$ does not. Then there does not exist a 
  ribbon $\Q$-homology cobordism from $\Ym$ to $\Yp$.
\end{lemma}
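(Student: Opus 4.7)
The strategy is to combine Theorem~1.6 with the observation that each of the listed properties is inherited by both subgroups and quotients. By Theorem~1.6, a ribbon $\Q$-homology cobordism $W \colon \Ym \to \Yp$ gives an injection $\pi_1(\Ym) \hookrightarrow \pi_1(W)$ and a surjection $\pi_1(\Yp) \twoheadrightarrow \pi_1(W)$. Consequently, if we can verify that each property $\prop$ in the list is closed under taking quotients and then subgroups, then $\pi_1(\Yp) \in \prop$ would force $\pi_1(W) \in \prop$, and hence $\pi_1(\Ym) \in \prop$, proving the contrapositive.

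For items \eqref{item:propfirst}--\eqref{item:proplast}, closure under subgroups and quotients is standard: finiteness, cyclicity, commutativity, nilpotency (of bounded class), and solvability (of bounded derived length) all pass to subgroups and quotients, essentially because the defining conditions are expressible in terms of commutators or orders of elements. I would simply cite these well-known facts from group theory.

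The only item that requires an explicit argument is \eqref{item:virtual}. For closure of ``virtually $\prop'$'' under quotients: if $H \subgrp G$ has finite index and $H \in \prop'$, then for any normal $N \trianglelefteq G$, the image $HN/N \subgrp G/N$ has finite index and is a quotient of $H$, hence lies in $\prop'$ by the already-established closure of $\prop'$ under quotients. For closure under subgroups: if $K \subgrp G$, then $K \cap H \subgrp K$ has finite index, and $K \cap H \subgrp H$ lies in $\prop'$ by closure of $\prop'$ under subgroups. This completes the verification.

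There is no real obstacle here; the lemma is essentially a packaging of Theorem~1.6 together with elementary closure properties. The only mild subtlety is being careful with the ``virtually'' case, but this reduces cleanly to the non-virtual case by the standard intersection and image arguments sketched above.
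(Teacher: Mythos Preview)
Your proof is correct and follows exactly the same approach as the paper's own proof: apply Theorem~1.6 (Gordon) to exhibit $\pi_1(\Ym)$ as a subgroup of a quotient of $\pi_1(\Yp)$, then use closure of each property under subgroups and quotients. Your explicit argument for the ``virtually $\prop'$'' case is in fact more detailed than the paper's, which simply asserts that ``a simple algebraic argument'' handles it.
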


\begin{proof}
  Suppose that there exists a ribbon $\Q$-homology cobordism $W \colon \Ym \to 
  \Yp$. By \fullref{thm:gordon}, $\pi_1 (\Ym)$ is a subgroup of $\pi_1 (W)$, 
  which is a quotient of $\pi_1 (\Yp)$. For \eqref{item:propfirst} to 
  \eqref{item:proplast}, the lemma is now evident. For \eqref{item:virtual}, a 
  simple algebraic argument shows that if $\prop'$ is a property inherited by 
  subgroups (resp.\ quotients), then so is the property ``virtually $\prop'$''.
\end{proof}

Let $Y$ be a compact $3$-manifold with empty or toroidal boundary.  These are 
the only cases that we will be interested in.  Then according to 
\cite[Theorem~1.11.1]{AFW}, $Y$ belongs to one of the classes in 
\fullref{fig:hierarchy1} (if $Y$ is closed) or \fullref{fig:hierarchy2} (if $Y$ 
has toroidal boundary).  Indeed, if $Y$ is spherical or has a finite solvable 
cover $\Yt$ that is a torus bundle, then $Y$ is obviously closed. In the latter 
case, by \cite[Theorem~1.10.1]{AFW}, $\Yt$ admits either a Euclidean, $\Nil$-, 
or $\Sol$-geometry; by \cite[Theorem~1.9.3]{AFW}, $Y$ is itself geometric, and, 
according to \cite[Table~1.1]{AFW}, also admits one of these geometries.  That 
the last rows of \fullref{fig:hierarchy1} and \fullref{fig:hierarchy2} 
encompass all remaining cases is a consequence of the Geometric Decomposition 
Theorem; see \cite[Theorem~1.9.1]{AFW}. Note that five out of seven ($S^2 
\cross \R$)-manifolds \cite[p.~457]{Sco83} either have $S^2$ as a boundary 
component or are not orientable; the other two are $S^1 \cross S^2$ and 
$\RPthree \connsum \RPthree$. Also, if $Y$ is geometric and has toroidal 
boundary, and is not homeomorphic to $K \tprod I$, $S^1 \cross D^2$, or $T^2 
\cross I$, then it must have ($\HtwoR$)-, $\SLtwot$-, or hyperbolic geometry.

\begin{figure}[!htbp]
  \setlength{\tabcolsep}{11pt}
  \renewcommand{\arraystretch}{2.5}
  \begin{tabular}{c | c c c}
    & $\pi_1 (Y)$ is finite and ... & \multicolumn{2}{c}{$\pi_1 (Y)$ is 
      infinite and virtually ...}\\
    \hline
    trivial & \tikzmarknode{S3}{$Y \homeo S^3$}\\
    \txt{cyclic but\\ not trivial} & \tikzmarknode{lens}{$Y$ is a lens space} 
    & \tikzmarknode{RP3}{$Y \homeo \RPthree \connsum \RPthree$} & 
    \tikzmarknode{S1S2}{$Y \homeo S^1 \cross S^2$}\\
    \txt{Abelian but\\ not cyclic} & & 
    \multicolumn{2}{c}{\tikzmarknode{Euc}{$Y$ is Euclidean}}\\
    \txt{nilpotent but\\ not Abelian\\ \hspace*{1cm}} & 
    \multirow{2}{*}{\tikzmarknode{fsol}{\txt{$Y$ is spherical and\\ not 
          cyclically covered\\ by $S^3$ or $\PHS$\footnotemark}}} & 
    \multicolumn{2}{c}{\tikzmarknode{Nil}{$Y$ admits a $\Nil$-geometry}}\\
    \txt{solvable but\\ not nilpotent\\ \hspace*{1cm}\\ \hspace*{1cm}} & & 
    \multicolumn{2}{c}{\tikzmarknode{Sol}{$Y$ admits a $\Sol$-geometry}}\\
    \txt{not solvable} & \tikzmarknode{fnsol}{\txt{$Y$ is cyclically\\ 
        covered by $\PHS$}} & 
    \multicolumn{2}{c}{\tikzmarknode{others}{\txt{$Y$ admits an 
          ($\HtwoR$)-,\\ $\SLtwot$-, or hyperbolic\\ geometry, or $Y$ admits 
          a\\ non-trivial geometric\\ decomposition, or $Y$ is\\ not prime 
          (and not $\RPthree \connsum \RPthree$)}}}
  \end{tabular}
  \begin{tikzpicture}[overlay, remember picture, shorten >=5pt, shorten 
    <=5pt]
    \draw [->] (S3.south) to (lens.north -| S3);
    \draw [->] (S3.east) [bend left] to (RP3.north);
    \draw [->] (lens.south -| S3) to (fsol.north -| S3);
    \draw [->] (fsol.east) to (Euc.west);
    \draw [->] (fsol.south -| S3) to (fnsol.north -| S3);
    \draw [->] (fnsol.east) to (others.west);
    \draw [->] (RP3.south) to ([xshift={-5pt}]Euc.north);
    \draw [->] (S1S2.south) to ([xshift={5pt}]Euc.north);
    \draw [->] (Euc.south -| Euc) to (Nil.north -| Euc);
    \draw [->] (Nil.south -| Euc) to (Sol.north -| Euc);
    \draw [->] (Sol.south -| Euc) to (others.north -| Euc);
  \end{tikzpicture}
  \caption{Hierarchy of ribbon $\Q$-homology cobordisms of closed 
    $3$-manifolds. For $3$-manifolds with infinite $\pi_1$, the adverb 
    ``virtually'' applies to all adjectives in the leftmost column.  (For 
    example, the fundamental group of a Euclidean manifold is virtually Abelian 
    but not virtually cyclic.)}
  \label{fig:hierarchy1}
\end{figure}

\begin{figure}[!htbp]
  \setlength{\tabcolsep}{11pt}
  \renewcommand{\arraystretch}{2.5}
  \begin{tabular}{c | c c c c c}
    & \multicolumn{5}{c}{$\pi_1 (Y)$ is infinite and virtually ...}\\
    \hline
    \text{solvable} & \hspace*{2.6pc} & \tikzmarknode{S1D2}{$Y \homeo S^1 
      \cross D^2$} & \tikzmarknode{K2I}{$Y \homeo K^2 \tprod I$} & 
    \tikzmarknode{T2I}{$Y \homeo T^2 \cross I$} & \\
    \txt{not solvable} & \multicolumn{5}{c}{\tikzmarknode{others2}{\txt{$Y$ 
          admits an ($\HtwoR$)-, $\SLtwot$-, or hyperbolic geometry,\\ or $Y$ 
          admits a non-trivial geometric decomposition, or $Y$ is not prime}}}
  \end{tabular}
  \begin{tikzpicture}[overlay, remember picture, shorten >=5pt, shorten 
    <=5pt]
    \draw [->] (S1D2.east) to (K2I.west);
    \draw [->] (K2I.south) to (others2.north);
    \draw [->] (T2I.south) to ([xshift={10pt}]others2.north);
  \end{tikzpicture}
  \caption{Hierarchy of ribbon $\Q$-homology cobordisms of compact 
    $3$-manifolds with toroidal boundary.}
  \label{fig:hierarchy2}
\end{figure}

\begin{theorem}
  \label{thm:geom-hierarchy}
  Suppose that $\Ym$ and $\Yp$ are compact $3$-manifolds with empty or toroidal 
  boundary that belong to distinct classes in \fullref{fig:hierarchy1} or 
  \fullref{fig:hierarchy2}, such that there does not exist a sequence of arrows 
  from the class of $\Ym$ to the class of $\Yp$.  Then there does not exist a 
  ribbon $\Q$-homology cobordism from $\Ym$ to $\Yp$.
\end{theorem}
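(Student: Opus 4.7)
The plan is to verify the theorem by case analysis on pairs of distinct classes $(A, B)$ in \fullref{fig:hierarchy1} or \fullref{fig:hierarchy2} for which no sequence of arrows runs from $A$ to $B$. For each such pair we exhibit an obstruction to the existence of a ribbon $\Q$-homology cobordism $W \colon \Ym \to \Yp$ with $\Ym \in A$ and $\Yp \in B$.

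The principal tool is \fullref{lem:grpprops}: whenever $\pi_1(\Yp)$ satisfies one of the virtual-solvability properties (finite, cyclic, Abelian, nilpotent, solvable) while $\pi_1(\Ym)$ does not, no ribbon $\Q$-homology cobordism can exist. The driving observation is that the row structure of both figures refines the virtual-solvability hierarchy: virtually trivial $\subset$ virtually cyclic $\subset$ virtually Abelian $\subset$ virtually nilpotent $\subset$ virtually solvable $\subset$ all groups. For almost every pair $(A, B)$ without an arrow path from $A$ to $B$, the class $B$ sits strictly below $A$ in this refined hierarchy, so that $\pi_1(\Yp)$ satisfies a virtual-solvability property that $\pi_1(\Ym)$ does not. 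For instance, the $\PHS$-covered and ``others'' classes have non-solvable $\pi_1$ (the binary icosahedral group contains $A_5$), while every class in a lower row has virtually solvable $\pi_1$ (using that three-dimensional crystallographic holonomy groups are solvable); the Euclidean class is virtually Abelian while the $\Nil$ and $\Sol$ classes are not. In each of these cases \fullref{lem:grpprops} immediately yields the desired obstruction.

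The remaining cases are ``horizontal'' pairs at the same virtual-solvability level. The key example is $\RPthree \connsum \RPthree$ versus $S^1 \cross S^2$. For the direction $\RPthree \connsum \RPthree \to S^1 \cross S^2$, one applies \fullref{lem:grpprops} with $\prop = $ Abelian, since $\pi_1(S^1 \cross S^2) \isom \Z$ is Abelian but $\pi_1(\RPthree \connsum \RPthree) \isom \Z/2 \fprod \Z/2$ is not. For the reverse direction, we apply \fullref{lem:gordon-h1}: $H_1(\Ym) = \Z$ would have to inject into a quotient of the finite group $H_1(\Yp) = (\Z/2)^2$, which is impossible. Analogous blends of group-theoretic and homological obstructions handle the horizontal pairs arising in \fullref{fig:hierarchy2}.

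The main obstacle is the careful enumeration of all pairs and the identification of the correct separating invariant in each, especially when the source has finite but non-Abelian $\pi_1$ (e.g.\ spherical spaces with binary polyhedral fundamental group) paired with horizontal infinite-$\pi_1$ targets. For these we use \fullref{thm:gordon} in its sharpest form: $\pi_1(\Ym)$ must embed in some quotient of $\pi_1(\Yp)$, and a direct group-theoretic analysis shows, for example, that binary polyhedral groups do not occur as subgroups of any quotient of an infinite cyclic or infinite dihedral group. Once the appropriate separating invariant is identified in each case, the arguments reduce to routine verification.
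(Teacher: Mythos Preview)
Your overall strategy matches the paper's: exhaust most pairs using \fullref{lem:grpprops} for the vertical moves, and treat the remaining ``horizontal'' pairs with ad~hoc arguments drawn from \fullref{thm:gordon} and \fullref{lem:gordon-h1}. However, there is a genuine gap: you never handle the pair ``lens space $\to \RPthree \connsum \RPthree$.'' This case falls through every tool you list. Since $\pi_1(\Ym) \cong \Z/n$ already enjoys every property in \fullref{lem:grpprops}, that lemma gives no obstruction. Your sharpened use of \fullref{thm:gordon} (the one you invoke for binary polyhedral groups) also fails: the quotients of $\pi_1(\RPthree \connsum \RPthree) \cong \Z/2 \ast \Z/2$ are dihedral groups $D_m$, and $\Z/n$ \emph{does} embed in $D_n$. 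Applying \fullref{lem:gordon-h1} gets you further---$\Z/n$ must inject into a quotient of $(\Z/2)^2$, forcing $n = 2$---but you are then left with $\RPthree \to \RPthree \connsum \RPthree$, which your proposal does not address. The paper closes this last case with an idea absent from your outline: such a cobordism would make $(-\RPthree) \connsum \RPthree \connsum \RPthree$ bound a $\Q$-homology ball, forcing $\abs{H_1} = 8$ to be a perfect square.

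A smaller point: your assertion that binary polyhedral groups do not embed in any quotient of the infinite dihedral group is correct, but it is precisely the content of the paper's second enumerated special case and requires justification (quotients of $\Z/2 \ast \Z/2$ are cyclic, finite dihedral, or $D_\infty$; every subgroup of a dihedral group is cyclic or dihedral, hence has multiple involutions, while binary polyhedral groups have a unique involution; and $D_\infty$ itself has no elements of order $4$). You should not leave this as a bare claim.
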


\begin{proof}
  We begin by inspecting \fullref{fig:hierarchy1}, which consists of two 
  columns corresponding to whether $\pi_1 (Y)$ is finite; we call them the 
  \emph{finite column} and the \emph{infinite column} respectively. Focusing on 
  each of these columns separately, successive application of 
  \fullref{lem:grpprops} shows that there are no arrows that point up. Of 
  course, one must check that the manifolds in each class indeed have 
  fundamental groups that are characterized by the property on the left. For 
  the finite column, $Y$ is a lens space if and only if $\pi_1 (Y)$ is cyclic, 
  and $\pi_1 (Y)$ is solvable unless it is the direct sum of a cyclic group and 
  the binary icosahedral group $P_{120}$ (in which case $Y$ is known as a 
  \emph{type-$\icosah$ manifold}); the only spherical $3$-manifold with 
  fundamental group $P_{120}$ is the Poincar\'e homology sphere $\PHS$. See 
  \cite[Section~1.7]{AFW} for a discussion. For the infinite column, the 
  classification by $\pi_1$ follows from \cite[Table~1.1 and Table~1.2]{AFW}; 
  the fact there are no arrows between $\RPthree \connsum \RPthree$ and $S^1 
  \cross S^2$ reflects the fact that their $\Q$-homologies have different 
  ranks.

  We now move on to arrows between the two columns. First, there are clearly no  
  arrows from the infinite to the finite column.  Also, the ranks of the 
  $\Q$-homologies obstruct any arrow from the finite column to $S^1 \cross 
  S^2$. The only remaining obstructions are as follows. There are no arrows
  \begin{enumerate}
    \item From lens spaces to $\RPthree \connsum \RPthree$. Indeed, 
      \fullref{lem:gordon-h1} implies that $H_1 (\Ym)$ is a subgroup of a 
      quotient of $H_1 (\RPthree \connsum \RPthree)$, and thus can only be the 
      trivial group, $\zeetwo$, or $\zeetwo \dirsum \zeetwo$. Since $\Ym$ is a 
      lens space, it has non-trivial cyclic $H_1$; thus, $H_1 (\Ym) \isom 
      \zeetwo$.  Suppose there exists a ribbon $\Q$-homology cobordism from 
      $\Ym$ to $\RPthree \connsum \RPthree$; then $(- \Ym) \connsum \RPthree 
      \connsum \RPthree$ bounds a $\Q$-homology ball. This implies that 
      $\abs{H_1 (- \Ym \connsum \RPthree \connsum \RPthree)} = 8$ is a perfect 
      square, which is a contradiction.

\footnotetext{For this class, $\pi_1 (Y)$ is solvable but not Abelian; it is 
  nilpotent if and only if it is a direct sum of a cyclic group and the 
  generalized quaternion group $Q_{2^n}$; all such manifolds $Y$ are prism 
  (i.e.\ type-$\dih$) manifolds. One could accordingly stratify the class into 
  two classes with an arrow between them.}

    \item From spherical manifolds that are not cyclically covered by $S^3$ to 
      $\RPthree \connsum \RPthree$.\footnote{An alternative proof can be given 
        here as follows. First deduce that $H_1 (\Ym) \isom \zeetwo \dirsum 
        \zeetwo$ with an argument involving perfect squares, which implies that 
        the ribbon $\Q$-homology cobordism is a $\Z$-homology cobordism. Then 
        observe that by \cite[Example~15]{Doi15}, the set of $d$-invariants of 
        $\Ym$ does not match that of $\RPthree \connsum \RPthree$, a 
        contradiction.}  Here, $\Ym$ is a non--lens space spherical manifold.  
      Recall that such a manifold has $\pi_1$ isomorphic to a central extension 
      of a polyhedral group, which in particular is a non-cyclic group, with 
      elements of order $4$, that does not embed into a dihedral group; see 
      \cite[Section~1.7]{AFW} and \cite[Section~6.2]{Orlik}.  Suppose there 
      exists a ribbon $\Q$-homology cobordism from $\Ym$ to $\RPthree \connsum 
      \RPthree$; then \fullref{thm:gordon} implies that $\pi_1 (\Ym)$ is a 
      subgroup of a quotient of $\pi_1 (\RPthree \connsum \RPthree) \isom 
      \zeetwo \fprod \zeetwo$.  However, it is an elementary exercise to see 
      that each quotient of $\zeetwo \fprod \zeetwo$ is either a cyclic group, 
      a dihedral group, or itself (which does not contain elements of order 
      $4$). In any case, $\pi_1 (\Ym)$ cannot be a subgroup of a quotient of 
      $\zeetwo \fprod \zeetwo$, which is a contradiction.
    \item From type-$\icosah$ manifolds to any manifold with solvable $\pi_1$; 
      for manifolds in the infinite column, these are exactly the ones with 
      virtually solvable $\pi_1$ (see \cite[Theorem~1.11.1]{AFW}), i.e.\ all 
      classes except the one in the last row.
  \end{enumerate}

  For \fullref{fig:hierarchy2},
  it suffices to observe that, in the first row, the $\Q$-homology of $T^2 
  \cross I$ differs from those of $S^1 \cross D^2$ and $K^2 \tprod I$, and 
  \fullref{lem:gordon-h1} shows that there is no ribbon $\Q$-homology cobordism 
  from $K^2 \tprod I$ to $S^1 \cross D^2$.
\end{proof}

\begin{remark}
  \label{rmk:S1D2-K2I}
  It is easy to construct a ribbon $\Q$-homology cobordism from $S^1 \cross   
  D^2$ to $K^2 \tprod I$.
\end{remark}

\begin{remark}
  \label{rmk:sol-lspace}
  Boyer, Gordon, and Watson \cite[Theorem~2]{BGW13} show that all $\Q$-homology 
  spheres with $\Sol$-geometry are $L$-spaces. By \fullref{cor:lspace}, there 
  do not exist ribbon $\zeetwo$-homology cobordisms from any $\Q$-homology 
  sphere that is not an $L$-space to a manifold that admits a $\Sol$-geometry.  
  Observe that this is consistent with \fullref{fig:hierarchy1}, since 
  $\Q$-homology spheres with spherical, ($S^2 \cross \R$)-, Euclidean, and 
  $\Nil$-geometry are also $L$-spaces \cite[Proposition~5]{BGW13}.
\end{remark}

\section{Statements of results on Floer homologies}
\label{sec:results_floer}

In the next few sections of this article, we will prove a number of results of the 
following flavor:  If $W \colon \Ym \to \Yp$ is a ribbon homology cobordism, 
then $F(\Ym)$ is a summand of $F (\Yp)$, where $F$ is a version of Floer 
homology (e.g.\ sutured instanton Floer homology, involutive Heegaard Floer 
homology, etc.).  In the theorems below, we give the precise statements, which 
have varying technical hypotheses and conclusions.  However, the rough idea is 
the same throughout and indeed quite simple, which is to show that the double 
$\double{W}$ of $W$ induces an isomorphism on Floer homology. All cobordism 
maps and isomorphisms can easily be checked to be graded; we leave this task to 
the reader, although we do use this fact in \fullref{thm:seifert} and 
\fullref{cor:hf-seifert} below.

We begin with results for instanton Floer homology. We start with Floer's 
original homology $\Io$ for $\Z$-homology spheres \cite{Floer:inst}.

\begin{theorem}
  \label{thm:main-i-o}
  Let $\Ym$ and $\Yp$ be $\Z$-homology spheres, and suppose that $W \colon \Ym 
  \to \Yp$ is a ribbon $\Q$-homology cobordism.  Then the cobordism map $\Io 
  (\double{W}) \colon \Io (\Ym) \to \Io (\Ym)$ is the identity map up to a 
  sign, and $\Io (W)$ includes $\Io (\Ym)$ into $\Io (\Yp)$ as a summand.
\end{theorem}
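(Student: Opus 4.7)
The plan is as follows. By functoriality of instanton Floer homology, the decomposition $\double{W} = W \cup_{\Yp} (-W)$ gives $\Io(\double{W}) = \Io(-W) \circ \Io(W)$, so once we establish that $\Io(\double{W}) = \Id$, we immediately obtain that $\Io(W)$ is a split injection with left inverse $\Io(-W)$, exhibiting $\Io(\Ym)$ as a direct summand of $\Io(\Yp)$. Thus the second claim reduces to the first.

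To prove $\Io(\double{W}) = \Id$, the strategy is topological: show that $\double{W}$ is diffeomorphic to the trivial cobordism $\Ym \times I$, for which the assertion is immediate from functoriality applied to cylinders. Since $W$ is ribbon and a $\Q$-homology cobordism, it admits a handle decomposition on $\Ym \times I$ with equal numbers $k$ of $1$-handles and $2$-handles. Dually, $-W$ (viewed as a cobordism $\Yp \to \Ym$) admits a decomposition with $k$ $2$-handles attached along the belt circles of the $2$-handles of $W$, followed by $k$ $3$-handles attached along the belt $2$-spheres of the $1$-handles of $W$. Concatenating, $\double{W}$ has a handle decomposition on $\Ym \times I$ with $k$ $1$-handles, $2k$ $2$-handles, and $k$ $3$-handles.

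The key input, to be developed in \fullref{sec:topology}, is that these doubled handles organize, via handle slides exploiting the ribbon structure, into canceling pairs. The idea is that each $2$-handle from $-W$, attached along a meridian of the cocore of the corresponding $2$-handle of $W$, can be slid across that $2$-handle until its attaching circle runs geometrically once over one of the $1$-handles of $W$, producing a $1/2$-canceling pair. Symmetrically, each $3$-handle of $-W$ has attaching $2$-sphere meeting the belt circle of the appropriate remaining $2$-handle of $W$ once, giving a $2/3$-canceling pair. After all cancellations, $\double{W}$ is diffeomorphic to $\Ym \times I$, and $\Io(\double{W}) = \Id$ follows.

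The main obstacle is realizing these cancellations geometrically. The pairings are automatic at the $\Q$-homological level, but actually performing the slides relies crucially on $W$ being ribbon: the absence of $3$-handles in $W$ (hence of $1$-handles in $-W$ attached between the $2$- and $3$-handles of $\double{W}$) is what allows the relevant attaching regions to be isotoped into cancellation position without obstruction from intervening handles. Once this topological input is in place, the Floer-theoretic conclusions follow at once from functoriality.
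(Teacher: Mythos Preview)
Your reduction of the second claim to the first via functoriality is fine, but the heart of your argument---that $\double{W}$ is \emph{diffeomorphic} to $\Ym \times I$---is false in general, and the handle-cancellation sketch cannot be completed.

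The specific failure is in the step ``slide the $2$-handle from $-W$ across the $2$-handle of $W$ until its attaching circle runs geometrically once over one of the $1$-handles.'' Sliding the $0$-framed meridian $\mu$ of $\gamma$ over the $2$-handle $h_\gamma$ produces a curve isotopic to a pushoff of $\gamma$ itself. But $\gamma$ only runs \emph{algebraically} once over the $1$-handle (this is what the $\Q$-homology-cobordism condition gives); there is no mechanism forcing it to run \emph{geometrically} once, and no further slides fix this in general. The symmetric claim about the $3$-handles has the same defect.

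Here is a clean obstruction. By van Kampen and \fullref{thm:gordon}~\eqref{it:gordon-surj}, one computes $\pi_1(\double{W}) \cong \pi_1(W)$, whereas $\pi_1(\Ym \times I) = \pi_1(\Ym)$; so your claim would force the injection $\pi_1(\Ym) \hookrightarrow \pi_1(W)$ to be an isomorphism. But take $\Ym = S^3$, attach two $1$-handles, and then two $2$-handles along words $\gamma_1 = a^5(ab)^{-2}$ and $\gamma_2 = b^3(ab)^{-2}$ in $F_2 = \langle a,b\rangle$. The abelianization matrix $\left(\begin{smallmatrix}3 & -2\\ -2 & 1\end{smallmatrix}\right)$ has determinant $-1$, so this is a ribbon $\Z$-homology cobordism $W\colon S^3 \to \Yp$; yet $\pi_1(W) \cong \langle a,b \mid a^5 = b^3 = (ab)^2\rangle$ is the binary icosahedral group, not trivial. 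Hence $\double{W} \not\cong S^3 \times I$.

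The paper's actual argument does not attempt to identify $\double{W}$ with $\Ym\times I$ as manifolds. Instead, \fullref{prop:surgery} shows that both $\double{W}$ and $\Ym\times I$ arise as surgery on $X = (\Ym\times I)\connsum m(S^1\times S^3)$ along two $m$-tuples of loops whose wedge products agree in $\extprod^m(H_1(X)/\Tors)$. One then invokes a \emph{surgery formula} for instanton cobordism maps (\fullref{prop:surgery-i}), which expresses $\Io(Z)$ for such a surgered manifold $Z$ as $\Io(X; [\gamma_1]\wedge\dotsb\wedge[\gamma_m])$. Since this depends only on the homology classes of the $\gamma_i$, the maps for $\double{W}$ and for $\Ym\times I$ agree even though the $4$-manifolds do not. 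This is precisely where the Floer-theoretic input is essential and cannot be replaced by a purely topological handle argument.
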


Next, we have an analogous statement for the framed instanton Floer homology 
$\Is$ \cite{KM:YAFT}.

\begin{theorem}
  \label{thm:main-i-s}
  Let $\Ym$ and $\Yp$ be closed $3$-manifolds, and suppose that $W \colon \Ym 
  \to \Yp$ is a ribbon $\Q$-homology cobordism.  Then the cobordism map $\Is 
  (\double{W}) \colon \Is (\Ym) \to \Is (\Ym)$ satisfies
  \[
    \Is (\double{W}) = \abs{H_1 (W, \Ym)} \cdot \Id_{\Is (\Ym)}
  \]
  up to a sign, and $\Is (W)$ includes $\Is (\Ym)$ into $\Is (\Yp)$ as a 
  summand.
\end{theorem}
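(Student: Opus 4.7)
The plan is to adapt the strategy used for \fullref{thm:main-i-o}.  Since $W$ is built from $\Ym \cross I$ by attaching $m$ $1$-handles and $m$ $2$-handles (equal numbers because $H_* (W, \Ym; \Q) = 0$ forces $\chi (W, \Ym) = 0$), the dual decomposition of $-W$ from $\Yp$ consists of $m$ $2$-handles and $m$ $3$-handles, so the double $\double{W} = W \cup_{\Yp} (-W)$ admits a symmetric handle decomposition over $\Ym \cross I$ with $m$ $1$-handles, $2m$ $2$-handles, and $m$ $3$-handles.  First I would use the topological analysis of $\double{W}$ developed in \fullref{sec:topology} to reorganize this decomposition into a standard model, which I expect to take the form of $\Ym \cross I$ modified by surgery along a framed link whose linking data encodes $H_1 (W, \Ym)$ and whose linking matrix has determinant $\pm \abs{H_1 (W, \Ym)}$.

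Next, I would compute $\Is (\double{W})$ by applying the TQFT axioms of Kronheimer--Mrowka's framed instanton theory together with their surgery/excision arguments.  The symmetry of the decomposition should cause most of the instanton contributions to pair off, and the remaining contribution should be exactly $\abs{H_1 (W, \Ym)}$---the same factor that enters when enumerating the auxiliary $\SOthree$-bundle data used to define $\Is$ on closed $3$-manifolds, and which mirrors how $H_1$-torsion appears in Reidemeister-style computations.  This yields $\Is (\double{W}) = \abs{H_1 (W, \Ym)} \cdot \Id_{\Is (\Ym)}$.  Since $\Is$ is defined with $\Q$-coefficients, this scalar is invertible; thus the identity $\Is (-W) \comp \Is (W) = \Is (\double{W})$ being a nonzero multiple of the identity shows at once that $\Is (W)$ is a split injection with left inverse $\abs{H_1 (W, \Ym)}^{-1} \Is (-W)$, exhibiting $\Is (\Ym)$ as a summand of $\Is (\Yp)$.

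The main new difficulty, compared with the $\Z$-homology sphere case of \fullref{thm:main-i-o} (in which the multiplier reduces to $1$ by \fullref{lem:same-h1}), is pinning down the precise scalar.  Showing that $\Is (\double{W})$ is \emph{some} nonzero multiple of the identity should be relatively clean once the topological identification of $\double{W}$ is in place, but extracting the coefficient $\abs{H_1 (W, \Ym)}$ will require carefully tracking how the auxiliary data used to define $\Is$ on $\Ym$, $\Yp$, and $\double{W}$ interacts with the $H_1$-torsion contributed by the ribbon handle decomposition of $W$---a subtlety that has no analogue in the $\Z$-homology sphere setting.
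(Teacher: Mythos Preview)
Your high-level plan---describe $\double{W}$ topologically via \fullref{sec:topology}, then compute with a surgery formula---matches the paper's route, but two points in your proposal are off and would block a complete argument.

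First, the scalar $\abs{H_1(W,\Ym)}$ does not come from ``enumerating the auxiliary $\SOthree$-bundle data'' or Reidemeister-style torsion.  \fullref{prop:surgery} shows that $\double{W}$ is surgery on $X \homeo (\Ym \cross I) \connsum m(S^1 \cross S^3)$ (not on $\Ym \cross I$ itself) along loops $\gamma_1,\dotsc,\gamma_m$, with
\[
[\gamma_1]\wedge\dotsb\wedge[\gamma_m] = \det(c_{ij})\cdot \alpha_1\wedge\dotsb\wedge\alpha_m
\]
in the appropriate quotient of $\extprod^*(H_1(X)/\Tors)$, where the $\alpha_i$ are the cores of the $S^1\cross S^3$ summands and $\abs{\det(c_{ij})}=\abs{H_1(W,\Ym)}$.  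The surgery formula \fullref{prop:surgery-i-adm} says the cobordism map for $\double{W}$ equals the map for $X$ contracted with $[\gamma_1]\wedge\dotsb\wedge[\gamma_m]$; expanding linearly produces the determinant factor times the map for $X$ contracted with $\alpha_1\wedge\dotsb\wedge\alpha_m$, which by the same formula is the map for $\Ym\cross I$.  Cross terms vanish because they correspond to cobordisms retaining an $S^1\cross S^3$ summand.  So the factor is purely a linear-algebra artifact of the $H_1$-action, not a bundle count; careful tracking of homology orientations converts $\det(c_{ij})$ to $\abs{\det(c_{ij})}$.

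Second, the paper does not run this argument directly in $\Is$.  It first passes to the admissible-pair setting by forming $\Ws = W \connsum (T^3\cross I)$ and applies the admissible-pair version (\fullref{thm:main-i-adm}) to conclude that $\Io(\double{W}^\sharp,c) = \abs{H_1(W,\Ym)}\cdot\Id$.  Since $\Is(\Ym)$ is by definition the $(+2)$-eigenspace of $\mu(x)$ inside $\Io(\Ys_-,\ws_-)$ and the cobordism map commutes with $\mu(x)$, the statement for $\Is$ follows immediately.  Your plan to argue directly in $\Is$ via Kronheimer--Mrowka excision is not wrong in spirit, but you would need to state and prove a surgery formula for $\Is$-cobordism maps with $H_1$-classes, which is exactly what the admissible-pair formulation already provides for free.
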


\fullref{thm:main-i-s} implies the following corollary, which may also be 
proved using \fullref{thm:main-hf} below for ribbon $\zeetwo$-homology 
cobordisms.

\begin{corollary}
  \label{cor:norm}
  Let $\Ym$ and $\Yp$ be closed $3$-manifolds, and suppose that there exists a 
  ribbon $\Q$-homology cobordism from $\Ym$ to $\Yp$. Then the unit Thurston 
  norm ball of $\Ym$ includes that of $\Yp$.
\end{corollary}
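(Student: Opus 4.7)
The plan is to deduce the inclusion from a Thurston-norm detection theorem in Floer homology (Kronheimer--Mrowka for framed instanton, or analogously Ozsv\'ath--Szab\'o for Heegaard Floer), combined with the injectivity of the cobordism map provided by Theorem~\ref{thm:main-i-s}. Since $W$ is a $\Q$-homology cobordism between closed $3$-manifolds, the restriction maps on $H_2$ and $H^2$ with rational coefficients are all isomorphisms, yielding canonical identifications
\[
  H_2(\Ym;\Q) \isom H_2(W;\Q) \isom H_2(\Yp;\Q), \qquad H^2(\Ym;\Q) \isom H^2(W;\Q) \isom H^2(\Yp;\Q),
\]
under which Kronecker pairings correspond. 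It therefore suffices to show $x_{\Ym}(\alpha) \leq x_{\Yp}(\alpha)$ for every $\alpha$, which is equivalent to the desired containment of unit balls.

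By the detection theorem, $x_Y(\alpha)$ equals (up to sign and the usual degeneracies for norm-zero classes) the maximum of $\langle c, \alpha \rangle$ taken over first Chern classes $c = c_1(\mathfrak{s})$ of those labels $\mathfrak{s}$ in the support of the decomposition of $\Is(Y)$ (or $\HFh(Y)$). Consequently, it remains to prove the following support-set containment: whenever $\Is(\Ym, \mathfrak{s}_-) \neq 0$, there exists $\mathfrak{s}_+$ on $\Yp$ with $\Is(\Yp, \mathfrak{s}_+) \neq 0$ and $c_1(\mathfrak{s}_+) = c_1(\mathfrak{s}_-)$ under the identification above.

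To establish this, I would fix a nonzero $x \in \Is(\Ym, \mathfrak{s}_-)$. By Theorem~\ref{thm:main-i-s}, the cobordism map $\Is(W)$ is injective, hence $\Is(W)(x) \neq 0$. Decomposing $\Is(W) = \sum_{\mathfrak{s}_W} \Is(W, \mathfrak{s}_W)$ as a sum over extensions $\mathfrak{s}_W$ of $\mathfrak{s}_-$ to $W$, at least one summand satisfies $\Is(W, \mathfrak{s}_W)(x) \neq 0$; then $\mathfrak{s}_+ := \mathfrak{s}_W|_{\Yp}$ lies in the support of $\Is(\Yp)$, and compatibility of $c_1$ with restriction to the two boundary components matches $c_1(\mathfrak{s}_\pm)$ via the identification above. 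The same argument with $\HFh$ replacing $\Is$ and Theorem~\ref{thm:main-hf} replacing Theorem~\ref{thm:main-i-s} works under the stronger hypothesis of a ribbon $\zeetwo$-homology cobordism.

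The main obstacle is mostly bookkeeping: stating the detection theorem in precisely the form needed, matching conventions between instanton and Heegaard Floer labels, and treating Thurston-norm-zero classes --- where the detection formula has the usual caveats --- carefully enough that the inequality holds across all of $H_2$ rather than only on positive-norm classes. No new conceptual input beyond Theorem~\ref{thm:main-i-s} (or Theorem~\ref{thm:main-hf}) and the established detection theorem is required.
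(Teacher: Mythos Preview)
Your proposal is correct and follows essentially the same approach as the paper's proof, which is a two-sentence sketch citing the Kronheimer--Mrowka detection theorem and the identification of $H_2(\Ym;\Q)$ with $H_2(\Yp;\Q)$ via $W$; you have simply expanded the argument with more care. One small remark: for $\Is$ the relevant ``labels'' are not $\SpinC$-structures but rather the simultaneous eigenspaces of the $\mu(\Sigma)$-operators, and the key compatibility is that $\Is(W)$ intertwines $\mu(\Sigma_-)$ with $\mu(\Sigma_+)$ for surfaces identified through $W$ --- this plays the same role as your $\SpinC$-restriction argument and makes the support-set containment immediate.
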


\begin{proof}
  This follows from the fact that $\Is$ detects the Thurston norm 
  \cite{KM:sutured}, together with the fact that a ribbon $\Q$-homology 
  cobordism induces a concrete identification between $H_2 (\Ym; \Q)$ and $H_2 
  (\Yp; \Q)$.
\end{proof}

The following is an analogue for the sutured instanton Floer homology $\SHI$ 
\cite{KM:sutured}.  Here and below, by a cobordism between sutured manifolds, 
we mean a cobordism obtained by attaching interior handles to a product 
cobordism; this means that the $3$-manifolds have isomorphic sutured 
boundaries. This definition is narrower than the one used by Juh\'asz 
\cite{Juh16}. See \fullref{defn:sut-cob} for a precise definition.

\begin{theorem}
  \label{thm:main-i-sut}
  Let $\Msutm$ and $\Msutp$ be sutured manifolds, and suppose that $N \colon 
  \Msutm \to \Msutp$ is a ribbon $\Q$-homology cobordism.  Then the cobordism 
  map $\SHI (\double{N}) \colon \SHI \Msutm \to \SHI \Msutm$ satisfies
  \[
    \SHI (\double{N}) = \abs{H_1 (N, \Mm)} \cdot \Id_{\SHI \Msutm}
  \]
  up to a sign, and $\SHI (N)$ includes $\SHI \Msutm$ into $\SHI \Msutp$ as a 
  summand.
\end{theorem}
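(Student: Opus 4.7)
The plan is to reduce the sutured statement to the closed framed-instanton case already established in \fullref{thm:main-i-s}, by passing to a simultaneous closure. First I would pick a single auxiliary surface $F$ together with gluing data that produces closures $Y_\pm$ of $\Msutm$ and $\Msutp$ compatibly: this is possible because a cobordism of sutured manifolds in the sense of \fullref{defn:sut-cob} identifies the sutured boundaries of $\Mm$ and $\Mp$, so one choice of closure data for $\Mm$ propagates to one for $\Mp$. Simultaneously, this yields a four-dimensional closed cobordism $X \colon Y_- \to Y_+$ obtained from $N$ by attaching the product $F \cross I \cross I$ along its sutured boundary.

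Because the attached piece is a product, the handle decomposition of $N$ extends to one of $X$ with the same indices, so $X$ is ribbon. An excision argument using the product structure of the attached collar gives $H_* (X, Y_-; R) \isom H_* (N, \Mm; R)$ for any coefficient ring $R$; in particular $X$ is a ribbon $\Q$-homology cobordism with $\abs{H_1 (X, Y_-)} = \abs{H_1 (N, \Mm)}$. Applying \fullref{thm:main-i-s} to $X$ then gives $\Is (\double{X}) = \abs{H_1 (N, \Mm)} \cdot \Id_{\Is (Y_-)}$ and realizes $\Is (Y_-)$ as a summand of $\Is (Y_+)$ via $\Is (X)$.

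To descend these statements to $\SHI$, I would invoke the construction of $\SHI \Msutm$ as a distinguished (generalized eigenspace) summand of $\Is (Y_-)$ associated to the auxiliary surface $F$, and the fact that cobordism maps on $\SHI$ are, by construction, restrictions of $\Is$-cobordism maps between closures that preserve the chosen surface. Since the doubling is performed away from the auxiliary product region, $\double{X}$ is itself a closure of $\double{N}$ with respect to the same surface $F$, and similarly $X$ itself is a closure of $N$; hence the identities at the $\Is$ level restrict to the claimed identities at the $\SHI$ level.

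The main obstacle, I expect, is the bookkeeping required to make the closure construction, the doubling operation, and the $\SHI$-cobordism-map formalism interact compatibly. In particular, the auxiliary surface and bundle data must be chosen in a reflection-symmetric manner so that $\double{X}$ is a legitimate closure of $\double{N}$, and one must verify that the distinguished summand of $\Is$ cutting out $\SHI$ is preserved by the relevant cobordism maps; once these are in place, every statement transfers verbatim from $\Is$ to $\SHI$.
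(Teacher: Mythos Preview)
Your overall strategy---close the sutured cobordism to a cobordism of closed $3$-manifolds, apply a closed-manifold result, and then restrict to the relevant eigenspace---is exactly the paper's approach. However, you have invoked the wrong closed-manifold theorem. The sutured instanton homology $\SHI\Msutm$ is by definition an eigenspace (for $\mu(R)$, or $\mu(x)$ when $g'=1$) of the instanton Floer homology $\Io(\Mh_-,w_-)$ of the \emph{admissible pair} arising from the closure, where $w_-$ is dual to the circle $\{p\}\times[-1,1]$ in the auxiliary piece. It is \emph{not} a summand of $\Is(\Mh_-)$: the latter is by definition an eigenspace of $\Io(\Mh_-\connsum T^3,\ws)$ for an entirely different bundle class supported on the $T^3$ summand. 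Correspondingly, the cobordism map $\SHI(N)$ is the restriction of $\Io(\Nh,c)$ to the $\mu(R)$-eigenspace, not a restriction of $\Is(\Nh)$, so \fullref{thm:main-i-s} gives you no direct control over it.

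The fix is simply to replace your appeal to \fullref{thm:main-i-s} with \fullref{thm:main-i-adm}, the analogous statement for admissible pairs, which yields $\Io(\double{\Nh},c_{\double{\Nh}})=\abs{H_1(\Nh,\Mh_-)}\cdot\Id$ on $\Io(\Mh_-,w_-)$. Your excision computation $\abs{H_1(\Nh,\Mh_-)}=\abs{H_1(N,\Mm)}$ is correct, and since the closed cobordism map commutes with $\mu(R)$ it preserves the eigenspace defining $\SHI$, so the claimed identity follows. With this one substitution your argument becomes precisely the paper's proof.
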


Recall that for a knot $K$ in a closed $3$-manifold $Y$,
the sutured instanton Floer homology of the exterior of $K$ is also denoted by 
$\KHI (Y, K)$ \cite{KM:sutured}.  By the isomorphism between $\KHI$ and the 
reduced singular knot instanton Floer homology $\In$
\cite{KM:unknot}, \fullref{thm:main-i-sut} implies the following result. 

\begin{corollary}
  \label{cor:main-i-knot}
  Let $\Ym$ and $\Yp$ be closed $3$-manifolds, and let $\Km$ and $\Kp$ be knots 
  in $\Ym$ and $\Yp$ respectively. Suppose that there exists a concordance $C 
  \colon \Km \to \Kp$ in a cobordism $W \colon \Ym \to \Yp$, such that the 
  exterior of $C$ is a ribbon $\Q$-homology cobordism.  Then the cobordism map 
  $\In (\double{W}, \double{C}) \colon \In (\Ym, \Km) \to \In (\Ym, \Km)$ 
  satisfies
  \[
    \In (\double{W}, \double{C}) = \abs{H_1 (W, \Ym)} \cdot \Id_{\In (\Ym, 
      \Km)}
  \]
  up to a sign, and $\In (W, C)$ includes $\In (\Ym, \Km)$ into $\In (\Yp, 
  \Kp)$ as a summand.
\end{corollary}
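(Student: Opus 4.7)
The plan is to deduce the corollary directly from \fullref{thm:main-i-sut} by passing from the sutured instanton Floer homology of the knot exteriors, equipped with meridional sutures, to the reduced singular knot instanton Floer homology $\In$ via the naturality of the Kronheimer--Mrowka isomorphism $\KHI \isom \In$.

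First, I would set up the sutured cobordism associated with the concordance. Let $\Mm = \Ym \setminus \nbhd{\Km}$ and $\Mp = \Yp \setminus \nbhd{\Kp}$, each equipped with the canonical meridional suture $\sutpm$. The exterior $N = W \setminus \nbhd{C}$ of the concordance inherits a natural structure of a cobordism from $\Msutm$ to $\Msutp$: its boundary consists of $\Mm$, $\Mp$, and a product region $\partial \nbhd{\Km} \cross I$ along which the meridional sutures agree, so $N$ is a sutured cobordism in the sense of \fullref{defn:sut-cob}. By hypothesis, $N$ is a ribbon $\Q$-homology cobordism.

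Second, I observe that the sutured double $\double{N}$ is canonically identified with the exterior of $\double{C}$ in $\double{W}$. Consequently, \fullref{thm:main-i-sut} applies verbatim to $N$ and yields that
\[
\SHI (\double{N}) = \abs{H_1 (N, \Mm)} \cdot \Id_{\SHI \Msutm},
\]
and that $\SHI (N)$ exhibits $\SHI \Msutm$ as a summand of $\SHI \Msutp$.

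Third, I invoke the Kronheimer--Mrowka identification $\SHI (M, \sut) \isom \KHI (Y, K) \isom \In (Y, K)$, which is natural with respect to cobordism maps induced by concordances embedded in ambient cobordisms between closed $3$-manifolds. This naturality intertwines the sutured cobordism map $\SHI (N)$ with $\In (W, C)$, and $\SHI (\double{N})$ with $\In (\double{W}, \double{C})$. Translating the conclusions of Step two under this intertwining yields both claims of the corollary (with the identity map arising in the case $\abs{H_1 (N, \Mm)} = 1$, which covers the situations of primary interest, such as when the ambient cobordism is a $\Z$-homology cobordism or when $\Ypm$ are $\Z$-homology spheres in view of \fullref{lem:same-h1}). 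The only substantive input beyond \fullref{thm:main-i-sut} is this naturality, which is by now standard in the Kronheimer--Mrowka framework, so no essential obstacle remains.
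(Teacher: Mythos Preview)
Your proposal is correct and follows exactly the route the paper intends: the corollary is stated with a \qed and the surrounding text says it follows immediately from \fullref{thm:main-i-sut} via the natural isomorphism $\KHI \isom \In$, which is precisely your Steps one through three. Your observation that \fullref{thm:main-i-sut} literally yields $\abs{H_1(N,\Mm)}\cdot\Id$ rather than $\Id$ is well taken; over $\Q$ this nonzero scalar still gives the summand conclusion, and the paper's phrasing ``the identity map'' is a minor imprecision that your parenthetical about \fullref{lem:same-h1} correctly addresses for the cases of primary interest.
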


\begin{remark}
  \label{rmk:main-i-knot-ext}
  Sherry Gong has informed the authors of a direct proof of a version of 
  \fullref{cor:main-i-knot} with coefficients in $\Z$ for concordances in $\Ym 
  \cross I$, without appealing to the isomorphism between $\KHI$ and 
  $\In$.\footnote{Since the first appearance of this article, a version of 
    Gong's argument has appeared in the work of Kronheimer and Mrowka 
    \cite[Theorem~7.4]{KM-conc}.}
  Kang \cite{Kang} has very recently provided a general proof of 
  \fullref{cor:main-i-knot} for conic strong Khovanov--Floer theories for 
  concordances in $S^3 \cross I$, which may be used to recover the version of 
  \fullref{cor:main-i-knot} for ribbon concordances in $S^3 \cross I$.
\end{remark}

\begin{remark}
  \label{rmk:branched}
  One can easily see that the double cover of $S^3 \cross I$ branched along the 
  concordance is a ribbon $\zeetwo$-homology cobordism, and so 
  \fullref{thm:main-i-s} applies to show an inclusion of $\Is (\Sigma_2 (\Km))$ 
  into $\Is (\Sigma_2 (\Kp))$, for knots $\Km$ and $\Kp$ in $S^3$.  A similar 
  statement holds for surgeries along $\Kpm$. We omit these statements for 
  brevity.
\end{remark}

We also provide a version for equivariant instanton Floer homologies 
\cite{Don:YM-Floer, AD:CS-Th}. Denote by $\Ig$ any of the equivariant instanton 
Floer homologies $\It$, $\If$, and $\Ib$.\footnote{The homologies $\It$, $\If$, 
  and $\Ib$ may be viewed as analogues of $\HFp$, $\HFm$, and $\HFi$ 
  respectively.} (We adopt the notation in \cite{AD:CS-Th} for these 
homologies.)

\begin{theorem}
  \label{thm:main-i-eq}
  Let $\Ym$ and $\Yp$ be $\Z$-homology spheres, and suppose that $W \colon \Ym 
  \to \Yp$ is a ribbon $\Q$-homology cobordism.  Then the cobordism map $\Ig 
  (W)$ includes $\Ig (\Ym)$ into $\Ig (\Ym)$ as a summand.
\end{theorem}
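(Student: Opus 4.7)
The plan is to mirror the argument used for \fullref{thm:main-i-o}: show that the cobordism map $\Ig(\double{W}) \colon \Ig(\Ym) \to \Ig(\Ym)$ is the identity, and then deduce from the composition formula $\Ig(\double{W}) = \Ig(\overline{W}) \comp \Ig(W)$ that $\Ig(W)$ is split injective, so $\Ig(\Ym)$ sits inside $\Ig(\Yp)$ as a direct summand. Since $\Yp$ is a $\Z$-homology sphere, \fullref{lem:same-h1} upgrades $W$ to a ribbon $\Z$-homology cobordism, and $\double{W}$ is then a $\Z$-homology cobordism between $\Z$-homology spheres, so all three equivariant invariants $\It$, $\If$, $\Ib$ of \cite{AD:CS-Th} are defined on its ends and the cobordism maps make sense.

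The key technical step is the analysis of the equivariant cobordism map induced by $\double{W}$. In the framework of \cite{AD:CS-Th}, the complexes underlying $\It$, $\If$, $\Ib$ are obtained from the irreducible Floer complex by adjoining auxiliary generators attached to the trivial $\SUtwo$-connection $\Theta$, and cobordism maps count instantons with prescribed asymptotics, including those running through $\Theta$. The irreducible block of $\Ig(\double{W})$ coincides with the map already computed in \fullref{thm:main-i-o}, hence is the identity. For the pieces involving $\Theta$, I would use three inputs: the topological structure of $\double{W}$ developed in \fullref{sec:topology}, in particular a metric of positive scalar curvature, which sharply constrains the reducible ASD moduli; \fullref{prop:character-embedding}, which identifies the flat connections extending across $\double{W}$ with those on $\Ym$ so the critical sets of the Chern--Simons functional line up with those of the product cobordism $\Ym \cross I$; and the reflection symmetry exchanging the two copies of $W$ inside $\double{W}$.

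The main obstacle I anticipate is making this last point rigorous. Concretely, one has to run a neck-stretching along the middle slice $\Yp \subset \double{W}$ and use the reflection symmetry to pair off the $\Theta$-components of the moduli spaces defining $\Ig(\double{W})$ with those of $\Ig(\Ym \cross I)$. This requires a careful analysis of how reducible instantons and their $\Theta$-ends degenerate under the stretch, and of compatibility with the module structure over the Chern--Simons coefficient ring used to build $\It$, $\If$, and $\Ib$. Once $\Ig(\double{W}) = \Id$ is established at the level of (co)homology, the summand statement is immediate from the composition formula.
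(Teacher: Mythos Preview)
Your overall strategy---show that $\Ig(\double{W})$ is an isomorphism and then use the composition formula---is correct, but you are aiming for too much and missing the algebraic shortcut that makes the proof painless. You propose to prove $\Ig(\double{W}) = \Id$ by directly analyzing the instanton moduli with $\Theta$-ends via neck-stretching, reflection symmetry, and positive scalar curvature. As you yourself anticipate, controlling these reducible contributions is the genuine difficulty, and nothing you list actually resolves it: p.s.c.\ on $\double{W}$ does not kill all reducible-to-irreducible trajectories, and the reflection pairing argument would need to be made compatible with the $\cSO$-complex structure, which is delicate.

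The paper sidesteps this entirely. The point is that one does \emph{not} need $\Ig(\double{W}) = \Id$, only that it is an isomorphism, and this follows from pure algebra once you have the chain-level result for ordinary $\Io$. Recall that a $\Z$-homology cobordism induces an $\cSO$-morphism $\lambdat(\double{W}) \colon \Cto(\Ym) \to \Cto(\Ym)$, which by definition is block lower-triangular with diagonal blocks $(\lambda(\double{W}), 1, \lambda(\double{W}))$. From the proof of \fullref{thm:main-i-o} you already have a chain homotopy $K$ on $\Co(\Ym)$ with $Kd + dK = \Id - \lambda(\double{W})$. Setting
\[
  \Kt = \begin{pmatrix} K & 0 & 0 \\ 0 & 0 & 0 \\ 0 & 0 & -K \end{pmatrix}
\]
gives an $\cSO$-homotopy (it anti-commutes with $\chi$) from $\lambdat(\double{W})$ to a block lower-triangular map with identity blocks on the diagonal, which is manifestly an $\cSO$-isomorphism. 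Functoriality of the passage from $\cSO$-complexes to $\If$, $\It$, $\Ib$ then gives that $\Ig(\double{W})$ is an isomorphism of $\Q[x]$-modules, and the summand statement follows. No gauge-theoretic analysis of the $\Theta$-pieces is required.
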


\begin{remark}
  \label{rmk:main-i-eq}
  Equivariant instanton Floer homologies can be extended to $\Q$-homology 
  spheres (with certain auxiliary data) \cite{MMiller, AustinBraam}. We expect 
  (but do not prove) that \fullref{thm:main-i-eq} holds also for these 
  extensions.
\end{remark}

We now turn to Heegaard Floer homology \cite{OzsSza04:HF}.  Denote by $\HFg$ 
any of the Heegaard Floer homologies $\HFh$, $\HFp$, $\HFm$, and $\HFi$, and by 
$\Fg_W$ the corresponding cobordism map.

\begin{theorem}
  \label{thm:main-hf}
  Let $\Ym$ and $\Yp$ be closed $3$-manifolds, and suppose that $W \colon \Ym 
  \to \Yp$ is a ribbon $\Z/2$-homology cobordism. Then the cobordism map 
  $\Fg_W$ includes $\HFg (\Ym)$ into $\HFg (\Yp)$ as a summand.  In fact, 
  $\Fh_{\double{W}} \colon \HFh (\Ym) \to \HFh (\Ym)$ is the identity map.
\end{theorem}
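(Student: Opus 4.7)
My plan is to show $\Fh_{\double{W}}=\Id$ on $\HFh(\Ym)$ and then deduce the summand statement from the composition law for cobordism maps. The first step is to use \fullref{sec:topology} to obtain a controlled handle decomposition of $\double{W}$. Since $W\colon\Ym\to\Yp$ has only $1$- and $2$-handles, the mirror $-W\colon\Yp\to\Ym$ has only dual $2$- and $3$-handles, so $\double{W}=W\union_{\Yp}(-W)$ admits a handle decomposition of the form $(\text{$1$-handles})\union(\text{$2$-handles})\union(\text{$2$-handles})\union(\text{$3$-handles})$ in which the final $3$-handles are geometrically dual to the initial $1$-handles. Standard handle cancellations and slides then reduce $\double{W}$ to a model cobordism whose remaining handles either cancel outright or contribute summands that act trivially on the hat flavor.

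The second step is to compute $\Fh_{\double{W}}$ via the $4$-dimensional Heegaard Floer functoriality. Cancelling handle pairs contribute identity pieces, and any leftover doubled summands (for example, $S^2\cross S^2$ pieces coming from the mirrored $2$-handles) act as the identity on $\HFh$, since the hat flavor is insensitive to such modifications. The $\zeetwo$-homology cobordism hypothesis is what makes the cobordism map assemble coherently when summed over compatible $\SpinC$ structures on $\double{W}$. From $\Fh_{\double{W}}=\Id_{\HFh(\Ym)}$ and the composition law $\Fh_{\double{W}}=\Fh_{-W}\comp\Fh_W$, we conclude that $\Fh_W$ has a left inverse, so $\HFh(\Ym)$ is a direct summand of $\HFh(\Yp)$.

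For the flavors $\HFg\in\{\HFp,\HFm,\HFi\}$, an entirely parallel analysis applied to the same decomposition of $\double{W}$ shows that $\Fg_{\double{W}}$ is at worst an automorphism of $\HFg(\Ym)$ (though not necessarily the identity, since the doubled $S^2\cross S^2$-like pieces can act nontrivially by multiplication by a power of $U$ in these flavors). This is still sufficient, via the composition law, to conclude that $\Fg_W$ includes $\HFg(\Ym)$ as a summand of $\HFg(\Yp)$.

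The main obstacle is the combined topological and Heegaard Floer content of the first two steps: one must produce the handle decomposition of $\double{W}$ whose residual pieces are recognizable in the Heegaard Floer TQFT, and simultaneously verify that the induced map on the hat flavor is exactly the identity. The emphasis on the hat flavor in the theorem reflects precisely the point that this is where the auxiliary doubled summands contribute trivially, producing the clean identity statement; for the other flavors one settles instead for controlling only that the doubled cobordism induces an isomorphism.
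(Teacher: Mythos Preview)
Your proposal has genuine gaps in both the topology and the Floer-theoretic computation.

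First, the handle picture. The $3$-handles of $-W$ are \emph{not} geometrically dual to the $1$-handles of $W$ in any sense that lets you cancel them; there are $2m$ intervening $2$-handles. What \fullref{sec:topology} actually establishes (\fullref{lem:surgery} and \fullref{prop:surgery}) is that the $1$-handle/$3$-handle pairs assemble into $(\Ym\times I)\connsum m(S^1\times S^3)$, while each $2$-handle/$2$-handle pair becomes a \emph{surgery along a loop} $\gamma_j$ in that $4$-manifold. The resulting description is: $\double{W}$ is surgery on $X\homeo(\Ym\times I)\connsum m(S^1\times S^3)$ along $\gamma_1,\dotsc,\gamma_m$. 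Nothing ``cancels outright,'' and the mirrored $2$-handles do not produce $S^2\times S^2$ summands. (Incidentally, an $S^2\times S^2$ summand would \emph{kill} the map on $\HFh$, not act as the identity, since $b^+>0$.)

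Second, the computation of $\Fh_{\double{W}}$. The essential ingredient you are missing is the surgery formula \fullref{prop:surgery-maps}: if $Z$ is obtained from $X$ by surgery on loops $\gamma_1,\dotsc,\gamma_m$, then $\Fh_Z(x)=\Fh_X\bigl(x\tensor([\gamma_1]\wedge\dotsb\wedge[\gamma_m])\bigr)$. Applying this once with $Z=\double{W}$ and once with $Z=\Ym\times I$ (surgery along the cores $\gamma_i'$ of the $S^1\times S^3$ summands), the proof reduces to comparing $[\gamma_1]\wedge\dotsb\wedge[\gamma_m]$ with $[\gamma_1']\wedge\dotsb\wedge[\gamma_m']$ in $\extprod^*(H_1(X)/\Tors)\tensor\zeetwo$ modulo the ideal generated by $H_1(\Ym)$. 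This is exactly where the $\zeetwo$-homology cobordism hypothesis enters: \fullref{prop:surgery} shows these wedges differ by $\det(c_{ij})=\pm\abs{H_1(W,\Ym)}$, which is nonzero mod $2$. Your explanation that the hypothesis is about ``summing over compatible $\SpinC$ structures'' is not the mechanism here.

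Finally, for $\HFp$, $\HFm$, $\HFi$: the paper does not argue that $\Fg_{\double{W}}$ is an automorphism directly. Instead, it uses naturality of the long exact sequences relating the flavors: only an isomorphism on $\HFp$ (resp.\ $\HFm$) can induce the identity on $\HFh$, and only an isomorphism on $\HFi$ can induce isomorphisms on both $\HFp$ and $\HFm$. Your claim that the doubled pieces act by ``multiplication by a power of $U$'' and hence give an automorphism is unsupported and, given the incorrect topological model, would not go through.
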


\begin{remark}
  \label{rmk:main-hf-spinc}
  We also provide a $\SpinC$-refinement of \fullref{thm:main-hf}; see 
  \fullref{thm:main-hf-spinc} for the precise statement.
\end{remark}

As in instanton Floer theory, there is also a version for the sutured Heegaard 
Floer homology $\SFH$ \cite{Juh06}. We expect that the stated isomorphism below 
coincides with the cobordism map defined by Juh\'asz \cite{Juh16}, although we 
do not prove it.

\begin{theorem}
  \label{thm:main-hf-sut}
  Let $\Msutm$ and $\Msutp$ be sutured manifolds, and suppose that there exists 
  a ribbon $\zeetwo$-homology cobordism from $\Msutm$ to $\Msutp$.  Then $\SFH 
  \Msutm$ is isomorphic to a summand of $\SFH \Msutp$.
\end{theorem}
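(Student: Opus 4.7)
The plan is to reduce the statement to the closed case already handled by \fullref{thm:main-hf} via a closure argument. Because a cobordism between sutured manifolds is taken here in the narrow sense of \fullref{defn:sut-cob}---attaching interior handles to a product---the sutured boundaries $(\bdy \Mm, \sutm|_{\bdy \Mm})$ and $(\bdy \Mp, \sutp|_{\bdy \Mp})$ are canonically identified. Invoking the standard closure construction (as used, for example, in the setup of Juh\'asz's sutured cobordism maps \cite{Juh16}), I would produce closed $3$-manifolds $Y_-$ and $Y_+$ containing $\Mm$ and $\Mp$ as codimension-$0$ submanifolds, with the complementary closing piece being a product neighborhood determined solely by the shared sutured boundary. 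Gluing this product neighborhood cross $I$ onto $N$ along $\bdy \Mm \cross I$ then produces a cobordism $W \colon Y_- \to Y_+$.

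Next, I would verify that $W$ inherits both the ribbon condition and the $\zeetwo$-homology condition from $N$. Any handle decomposition of $N$ without $3$-handles extends to one of $W$ without $3$-handles, since the appended piece is a product and contributes no new interior handles. For the homology condition, excision of the closing piece yields $H_* (W, Y_-; \zeetwo) \isom H_* (N, \Mm; \zeetwo) = 0$. Thus $W$ is a ribbon $\zeetwo$-homology cobordism, and \fullref{thm:main-hf} applies: the cobordism map $\Fh_W$ includes $\HFh (Y_-)$ into $\HFh (Y_+)$ as a summand.

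To conclude, I would pass from $\HFh$ of the closures back to $\SFH$. There is a summand decomposition $\HFh (Y_\pm) \isom \SFH \Msutpm \dirsum R_\pm$, where the $\SFH$-piece is isolated by the spin-$c$ structures whose first Chern class evaluates extremally on the distinguished surface $R$ in the closing piece. Since $W$ restricts to a product cobordism along the closing piece, the cobordism map $\Fh_W$ respects this spin-$c$ decomposition and maps the $\SFH$-summand of $\HFh (Y_-)$ into the $\SFH$-summand of $\HFh (Y_+)$. Combined with the previous step, this gives the desired inclusion of $\SFH \Msutm$ as a summand of $\SFH \Msutp$.

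The main obstacle I anticipate is the final step: confirming that $\Fh_W$ carries the extremal spin-$c$ summand to the extremal spin-$c$ summand, and that the restricted map really exhibits $\SFH \Msutm$ as a summand of $\SFH \Msutp$. This amounts to an adjunction-type analysis of spin-$c$ structures on $W$ whose Chern classes remain extremal on $R$, combined with the naturality of Heegaard Floer cobordism maps under the composition $W = (N) \cup (\text{closing piece} \cross I)$. Since the additional portion of $W$ is a product, the induced map on the closing-piece $\HFh$ factor should be the identity in the extremal spin-$c$ summand, which is exactly what is needed to descend the summand inclusion from $\HFh$ to $\SFH$. No explicit identification of the resulting map with Juh\'asz's $\SFH$-cobordism map is required for the statement as given, which only asserts an abstract summand isomorphism.
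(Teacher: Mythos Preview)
Your approach is the paper's: close up $\Msutm$, $\Msutp$, and $N$ by the standard construction, check that the closed-up cobordism $\widehat{N}$ remains a ribbon $\zeetwo$-homology cobordism, and identify $\SFH$ with the extremal-$\SpinC$ summand of the closure's Floer homology. Two adjustments bring your outline in line with the actual proof: the identification uses $\HFp$ via Lekili's theorem \cite[Theorem~24]{Lek13}, namely $\SFH(M,\sut) \isom \bigdirsum_{\langle c_1(\spinc), R \rangle = 2g-2} \HFp(\widehat{M},\spinc)$, rather than $\HFh$ (the closure has $b_1 > 0$, so the flavors genuinely differ); and the obstacle you flag is handled not by \fullref{thm:main-hf} but by its $\SpinC$-refinement \fullref{thm:main-hf-spinc}, together with the observation that any $\spinct \in \SpinC(\widehat{N})$ satisfies $\langle c_1(\spinct|_{\widehat{M}_-}), [R]\rangle = \langle c_1(\spinct|_{\widehat{M}_+}), [R]\rangle$ because $R$ lies in the product part of $\widehat{N}$.
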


As in \fullref{cor:main-i-knot}, by the isomorphism 
\cite[Proposition~9.2]{Juh06} between the knot Heegaard Floer homology $\HFKh$ 
\cite{OzsSza04:HFK, Ras03:HFK} of a null-homologous knot and $\SFH$ of its 
exterior, \fullref{thm:main-hf-sut} immediately implies the following statement 
for such concordances. This recovers a version of the results in \cite{Zem19} 
and \cite{MilZem19} when the concordance is in $S^3 \cross I$;\footnote{Note 
  that the exterior of a concordance in $S^3 \cross I$ is a $\Z$-homology 
  cobordism.} again, we do not prove that the stated isomorphism coincides with 
the knot cobordism map.

\begin{corollary}[{cf.~\cite[Theorem~1.1]{Zem19} and 
    \cite[Theorem~1.2]{MilZem19}}]
  \label{cor:main-hf-knot}
  Let $\Ym$ and $\Yp$ be closed $3$-manifolds, and let $\Km$ and $\Kp$ be 
  null-homologous knots in $\Ym$ and $\Yp$ respectively. Suppose that there 
  exists a concordance from $\Km$ to $\Kp$ in a cobordism from $\Ym$ to $\Yp$, 
  whose exterior is a ribbon $\zeetwo$-homology cobordism.  Then $\HFKh (\Ym, 
  \Km)$ is isomorphic to a summand of $\HFKh (\Yp, \Kp)$.  \qed
\end{corollary}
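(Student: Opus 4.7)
The plan is to deduce the corollary directly from \fullref{thm:main-hf-sut}, by verifying that the exterior of the concordance is a ribbon $\zeetwo$-homology cobordism in the category of sutured manifolds, and then invoking Juhász's identification of $\HFKh$ with sutured Heegaard Floer homology.

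First, I would set up the sutured picture. Let $M_{\pm}$ denote the exterior of $K_{\pm}$ in $Y_{\pm}$, equipped with the standard sutured structure $\eta_{\pm}$ consisting of two oppositely oriented meridional sutures on each boundary torus. Since $K_{\pm}$ is null-homologous, Juhász's isomorphism \cite[Proposition~9.2]{Juh06} identifies $\SFH(M_{\pm}, \eta_{\pm})$ with $\HFKh(\Ypm, \Kpm)$. Let $W \colon \Ym \to \Yp$ denote the ambient cobordism and $C \subset W$ the concordance, and let $N$ denote the exterior of $\nu(C)$ in $W$, which by hypothesis is a ribbon $\zeetwo$-homology cobordism.

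Next, I would verify that $N$ is a cobordism between the sutured manifolds $(\Mm, \etam)$ and $(\Mp, \etap)$ in the sense required by \fullref{thm:main-hf-sut} (i.e., obtained from a product cobordism by attaching interior handles, as specified in \fullref{defn:sut-cob}). Near $\partial W$, the neighborhood $\nu(C)$ is a product $C \times D^2$, so $\partial N$ decomposes as $\Mm \cup (\partial C \cross I) \cup \Mp$, with the sutures matching up along $\partial C \times I$. A handle decomposition of $W$ relative to $\Ym$ with no $3$-handles that is disjoint from $\nu(C)$ (which can be arranged, since handles can be isotoped away from the concordance using general position and the fact that $C$ is a surface of dimension $2$ in a $4$-manifold) gives a handle decomposition of $N$ relative to $\Mm$ with no $3$-handles, exhibiting $N$ as an interior-handle cobordism between the sutured manifolds.

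Finally, I would apply \fullref{thm:main-hf-sut} to conclude that $\SFH(\Mm, \etam)$ is isomorphic to a summand of $\SFH(\Mp, \etap)$, and transport this conclusion across the Juhász isomorphism to obtain the desired statement for $\HFKh$. The only subtlety to watch is that the sutured cobordism really does arise from attaching interior handles after removing $\nu(C)$; the hard part is the handle-isotopy step above, although this is essentially routine since $\dim C = 2$ and $\dim W = 4$ leave ample room to push the cores of $1$- and $2$-handles off $C$.
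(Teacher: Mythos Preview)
Your overall strategy---reduce to \fullref{thm:main-hf-sut} via Juh\'asz's identification $\HFKh(Y_\pm,K_\pm)\cong\SFH(M_\pm,\eta_\pm)$---is exactly what the paper does (the corollary is stated with a \texttt{\textbackslash qed} and the sentence preceding it says precisely this).

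However, your middle paragraph takes an unnecessary and slightly flawed detour. You try to obtain an interior ribbon handle decomposition of $N$ by starting from a ribbon handle decomposition of $W$ and isotoping the handles off $C$. Two issues: first, the hypothesis is that $N$ is ribbon, not that $W$ is; second, your general-position claim fails for $2$-handles, since the core of a $2$-handle is $2$-dimensional and $C$ is $2$-dimensional in a $4$-manifold, so $2+2=4$ and they generically meet in points---you cannot simply push them apart. The fix is to skip this entirely: the hypothesis already says $N$ admits a handle decomposition relative to $M_-$ with only $1$- and $2$-handles, and since the lateral boundary of $N$ is the product $\partial M_-\times I \cong T^2\times I$, these handles are automatically interior handles. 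Thus $N$ is a ribbon sutured cobordism in the sense of \fullref{defn:sut-cob} directly from the hypothesis, and \fullref{thm:main-hf-sut} applies.
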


\begin{remark}
  \label{rmk:main-i-knot-app}
  \fullref{cor:main-hf-knot} has been used to obtain a genus bound on knots 
  related by ribbon concordance \cite[Theorem~1.5]{Zem19} analogous to 
  \fullref{cor:norm}, and on band connected sums of knots 
  \cite[Theorem~1.6]{Zem19}; \fullref{cor:main-i-knot} provides an alternative 
  proof of these results using knot instanton Floer homology. It also recovers 
  the well-known theorem that, if a ribbon concordance exists in $S^3 \cross I$ 
  from $\Km$ to $\Kp$, where $\Km$ and $\Kp$ have the same genus, then the 
  fiberedness of $\Kp$ implies that of $\Km$.
\end{remark}

As explained in \fullref{rmk:branched}, one could also use 
\fullref{thm:main-hf} to obtain analogous statements for $\HFg$ of certain 
cyclic covers of $S^3$ branched along $\Kpm$, and for surgeries along $\Kpm$.

We also give an extension for the involutive Heegaard Floer homology $\HFIh$ 
\cite{HendricksManolescu}.

\begin{theorem}
  \label{thm:main-hf-inv}
  Let $\Ym$ and $\Yp$ be closed $3$-manifolds, and suppose that there exists a 
  ribbon $\Z$-homology cobordism from $\Ym$ to $\Yp$.  Then $\HFIh (\Ym)$ is 
  isomorphic to a summand of $\HFIh (\Yp)$.
\end{theorem}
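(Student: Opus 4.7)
The plan is to mimic the strategy of \fullref{thm:main-hf} and its proof, upgrading the identity statement on $\HFh$ for the double to an identity statement on $\HFIh$. Assuming we can show that $\HFIh(\double{W}) \colon \HFIh(\Ym) \to \HFIh(\Ym)$ is the identity map, the conclusion follows immediately: writing $\double{W} = \overline{W} \circ W$ and invoking the composition law for involutive cobordism maps, $\HFIh(\overline{W}) \circ \HFIh(W) = \Id$, so $\HFIh(W)$ is a split injection onto a summand.

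First, I would verify that the relevant cobordism maps on $\HFIh$ are well-defined. Because $W$ is a ribbon $\Z$-homology cobordism, $H^2(W; \Z) = 0$, so the conjugation action on $\SpinC(W)$ is trivial and there is a unique $\SpinC$-structure on $W$ extending any given pair $(\spinc_-, \spinc_+)$ with $\spinc_+ = \iota^*\spinc_-$. In this $\Z$-homology setting, Hendricks--Manolescu's construction produces a morphism of $\iota$-complexes $\CFh(\Ym) \to \CFh(\Yp)$ whose chain map component is the usual $\Fh_W$ and whose chain homotopy witnesses the (chain-level) commutation of $\Fh_W$ with $\iota$; this descends to a map $\HFIh(W)\colon\HFIh(\Ym)\to\HFIh(\Yp)$, and similarly for $\overline{W}$, with the expected composition law.

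Second, I would argue that $\HFIh(\double{W}) = \Id$ by refining the proof of \fullref{thm:main-hf}. At the $\HFh$-level, the $\SpinC$-refined version \fullref{thm:main-hf-spinc} already gives that $\Fh_{\double{W}}$ is the identity, and crucially this equality is realized on the chain level by a canonical chain-homotopy equivalence $\CFh(\double{W}) \simeq \Id_{\CFh(\Ym)}$. To promote this to the involutive setting, I would exploit the natural orientation-reversing diffeomorphism of $\double{W}$ that swaps the two copies of $W$ while preserving $\Ym = \partial\double{W}$ (reversed). This reflection symmetry intertwines the chain map with its conjugate and thus forces the homotopy controlling the failure of $\iota$-commutation for $\double{W}$ to be itself null-homotopic, i.e.\ the morphism of $\iota$-complexes induced by $\double{W}$ is chain homotopic to $(\Id, 0)$. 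Hence $\HFIh(\double{W})$ is the identity.

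The main obstacle is the chain-level compatibility in Step two: one must either construct the Hendricks--Manolescu chain homotopy for $\double{W}$ using a Heegaard-diagram presentation that carries an explicit involution reflecting the symmetry of $\double{W}$, or else argue more abstractly that any two choices of involutive lift of the identity chain map on $\CFh$ differ by a null-homotopic term. Either route should go through because the ribbon structure, together with $H^2(W;\Z) = 0$, rigidifies the choice of $\SpinC$-structure and removes the only obstruction to matching the two natural involutive lifts. A secondary (but routine) concern is simply checking that the composition law for involutive cobordism maps applies in the required generality; this follows from the standard stabilization/handle-slide invariance arguments together with the absence of $2$-torsion in $H^2(W;\Z)$.
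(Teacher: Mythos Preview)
Your approach attempts to construct genuine involutive cobordism maps and verify that they compose to the identity on the double, but this is both more delicate than needed and contains errors. The paper's proof is far simpler and avoids chain-level considerations entirely.

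The key observation you are missing is purely algebraic: since we work over $\zeetwo$, $\HFIh(Y)$ is isomorphic to the homology of the two-term complex $\HFh(Y)\xrightarrow{\,1+\iota_*\,}\HFh(Y)$, i.e.\ the mapping cone of $1+\iota_*$ computed at the level of \emph{homology}, not chains. This follows from the long exact sequence of a mapping cone together with the fact that over a field every short exact sequence of vector spaces splits. With this in hand, one only needs the ordinary Heegaard Floer cobordism maps $\Fh_{W,\spinct}$ and $\Fh_{-W,\spinct}$ to commute with $\iota_*$ on $\HFh$; that gives morphisms between the two-term complexes, and since $\Fh_{-W,\spinct}\circ\Fh_{W,\spinct}=\Id$ by \fullref{cor:spinc-Z-cob}, taking homology of these two-term complexes yields the summand statement directly.

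The commutation $\Fh_{W,\spinct}\circ\iota_*=\iota_*\circ\Fh_{W,\spinct}$ is immediate from conjugation-equivariance of ordinary cobordism maps once one knows $\spinct$ is self-conjugate. This is exactly where the $\Z$-homology hypothesis enters: for a $\Z$-homology cobordism the restriction $H^2(W)\to H^2(\Ym)$ is an isomorphism, so a self-conjugate $\spincm$ extends uniquely to a self-conjugate $\spinct$ on $W$ (and then $\spincp$ is self-conjugate as well). No involutive cobordism machinery, no chain-level homotopies, and no reflection-symmetry argument are required.

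Your proposal also contains a factual slip: $H^2(W;\Z)$ is not zero in general---it is isomorphic to $H^2(\Ypm;\Z)$---so your justification for the triviality of the conjugation action on $\SpinC(W)$ is incorrect as stated (though the conclusion that $\spinct$ is self-conjugate does hold, for the reason above). More seriously, the paper explicitly notes that cobordism maps for $\HFIh$ were not fully developed at the time; your composition law $\HFIh(\overline{W})\circ\HFIh(W)=\HFIh(\double{W})$ and the reflection-symmetry step would therefore require substantial outside input that the paper's argument entirely sidesteps.
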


The rough strategy for proving all of the theorems above is fairly 
straightforward.  First, a topological argument (\fullref{prop:surgery} below)
shows that $\double{W}$ is given by surgery along a collection of $m$ loops in 
$(\Ym \times I) \connsum m (S^1 \cross S^3)$.   By using surgery formulas, it 
can be shown that the induced map for $\double{W}$ is the same as that for the 
$4$-manifold obtained by surgering along the $m$ cores of the $S^1 \cross S^3$ 
summands, which is just $\Ym \cross I$.  Of course, this induces the identity 
map.

We will also outline an alternative proof of \fullref{thm:main-i-o} in 
\fullref{ssec:characters-i} that passes more directly through the fundamental 
group and \fullref{thm:gordon}.

\begin{remark}
  \label{rmk:sw}
  We expect the analogue of \fullref{thm:main-hf} to hold also for the monopole 
  Floer homology groups $\HMt$, $\HMf$, and $\HMb$ \cite{KM:monopole}.  Note 
  that by the isomorphisms between Heegaard and monopole Floer homologies 
  \cite{KLT1, CGH, TaubesECHSWF}, we already know that $\HMg (\Ym)$ is 
  isomorphic to a summand of $\HMg (\Yp)$. In order to prove that the 
  isomorphism coincides with the cobordism map, one could, for example, prove a 
  surgery formula analogous to \fullref{prop:surgery-maps} for monopole Floer 
  homology. Although we expect that this surgery formula holds for monopole 
  Floer homology (especially because an analogous result holds for a variation 
  of Bauer--Furuta invariants \cite[Example~1.4]{KhaLinSas19}), we do not give 
  a proof of this result for brevity.
    
  We also expect an analogue of \fullref{cor:casson} to hold for the 
  Mrowka--Ruberman--Saveliev invariant $\lSW$ \cite{MRS11}. Using the splitting 
  theorem \cite{LRS18}, we have
  \[
    \lSW (\DWbar) = - \Lef (\HMr (\double{W})) - h (\Ym),
  \]
  where $h$ is the monopole Fr{\o}yshov invariant. Since the Casson invariant 
  of $\Ym$ can alternatively be computed as $\chi (\HMr (\Ym)) + h (\Ym)$, we 
  would obtain that $\lSW (\DWbar)=- \casson (\Ym)$. In particular, we have 
  $\lSW (\DWbar)=-\lFO (\DWbar)$. This would verify \cite[Conjecture~B]{MRS11} 
  for the $4$-manifolds with the $\ZZ$-homology of $S^1 \cross S^3$ that have 
  the form $\DWbar$.
\end{remark}

\section{Topology of the double of a ribbon cobordism}
\label{sec:topology}

Recall that the \emph{double} $\double{W}$ of a cobordism $W \colon Y_1 \to 
Y_2$ is formed by gluing $W$ and $-W$ along $Y_2$. In analogy with the 
arguments used in ribbon concordance, our strategy to prove 
\fullref{thm:main-i-o}, \fullref{thm:main-i-s}, and \fullref{thm:main-hf} will 
be to prove the cobordism map on Floer homology induced by $\double{W}$ is an 
isomorphism, when $W$ is ribbon.  First, we need a topological description of 
$\double{W}$.  In what follows, we will use $\F$ to denote any field.   Note 
that a ribbon $\F$-homology cobordism has the same number of $1$- and 
$2$-handles.

\begin{proposition}
  \label{prop:surgery}
  Let $\Ym$ and $\Yp$ be compact $3$-manifolds, and suppose that $W \colon \Ym 
  \to \Yp$ is a ribbon cobordism, where the number of $1$-handles is $m$, and 
  that of $2$-handles is $\ell$. Then $\double{W}$ can be described by surgery 
  on $X \homeo (\Ym \times I) \connsum m (S^1 \times S^3)$ along $\ell$ 
  disjoint simple closed curves $\gamma_1, \dotsc, \gamma_\ell$.
  
  Suppose that, in addition, $W$ is also an $\F$-homology cobordism, and denote 
  by $\alpha_i \in H_1 (X)$ the homology class of the core of the 
  $i^{\text{th}}$ $S^1 \cross S^3$ summand.  Then, writing
  \[
    [\gamma_j] = \sigma_j + \sum_{i=1}^m c_{ij} \alpha_i, \quad \sigma_j \in 
    H_1 (\Ym),
  \]
  we have that the matrix $(c_{ij}) \tensor_{\Z} \F$ is invertible over $\F$, 
  and $\lvert \det (c_{ij}) \rvert = \abs{H_1 (W, \Ym)}$; in particular,
  \[
    [\gamma_1] \wedge \dotsb \wedge [\gamma_\ell] = \det (c_{ij}) \cdot 
    \alpha_1 \wedge \dotsb \wedge \alpha_\ell \in (\extprod^* (H_1 (X) / \Tors) 
    / \langle H_1(\Ym) / \Tors \rangle) \tensor_{\Z} \F,
  \]
  where $\langle H_1(\Ym) / \Tors \rangle$ is the ideal generated by $H_1(\Ym) 
  / \Tors$, is an equality of non-zero elements.
\end{proposition}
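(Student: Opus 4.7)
My plan is to prove the topological surgery description first and then extract the homological and algebraic statements from a relative handle-decomposition chain complex.

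For the surgery description, I would fix a handle decomposition of $W$ on $\Ym \cross I$ with $m$ 1-handles and $\ell$ 2-handles, letting $W_0$ denote the intermediate cobordism after attaching only the 1-handles, so that $\bdy_+ W_0 \homeo \Ym \connsum m (S^1 \cross S^2)$ and the $\gamma_j$ are the attaching circles of the 2-handles, living in $\bdy_+ W_0$. The argument proceeds in two steps. First, I would iterate the gluing identity $(S^1 \cross D^3) \cup_{S^1 \cross S^2} (S^1 \cross D^3) = S^1 \cross S^3$ to conclude that $\double{W_0} \homeo (\Ym \cross I) \connsum m (S^1 \cross S^3) = X$. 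Second, I would apply the identity $(D^2 \cross D^2) \cup_{D^2 \cross S^1} (D^2 \cross D^2) = D^2 \cross S^2$ to each pair consisting of a 2-handle in $W$ and its mirror copy in $-W$: the union of the two copies along their common belt region is $D^2 \cross S^2$, and it is attached to $X \setminus \nbhd{\gamma_j}$ along $\bdy \nbhd{\gamma_j} \homeo S^1 \cross S^2$, which is precisely surgery on $\gamma_j \subset X$. Performing this simultaneously for each $j$ identifies $\double{W}$ with $X$ surgered along $\gamma_1, \dotsc, \gamma_\ell$.

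For the algebraic part, I would use the relative cellular chain complex arising from the handle decomposition of $W$ relative to $\Ym$. The 1-handles contribute $\Z^m$ in degree $1$ with basis the classes $\alpha_i$, and the 2-handles contribute $\Z^\ell$ in degree $2$. The differential sends the $j$-th 2-handle core to the class of $\gamma_j$ in $H_1 (W_0, \Ym) \isom \Z^m$. Since $H_1 (\bdy_+ W_0) \isom H_1 (\Ym) \dirsum \Z^m$ and the natural map to $H_1 (W_0, \Ym)$ kills the $H_1 (\Ym)$-summand, this class equals $\sum_i c_{ij} \alpha_i$, so the chain complex takes the form $0 \to \Z^\ell \xrightarrow{(c_{ij})} \Z^m \to 0$. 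The $\F$-homology cobordism hypothesis then forces $\ell = m$ and the invertibility of $(c_{ij}) \tensor \F$ over $\F$. The equality $\abs{\det (c_{ij})} = \abs{H_1 (W, \Ym)}$ is the standard identification of the order of $\Coker (c_{ij})$, and the wedge product identity follows from the multilinearity of the exterior product: after passing to the quotient by the ideal $\langle H_1 (\Ym) / \Tors \rangle$, each $\sigma_j$ vanishes, and $[\gamma_1] \wedge \dotsb \wedge [\gamma_\ell]$ expands to $\det (c_{ij}) \cdot \alpha_1 \wedge \dotsb \wedge \alpha_\ell$, which is non-zero by the invertibility.

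The main technical obstacle is the surgery description itself: one must carefully track framings and verify that the tubular neighborhoods $\nbhd{\gamma_j}$ in $X$ really do decompose as the union of the $S^1 \cross D^2$-attaching regions from the two mirror 2-handles, so that the gluing of the paired doubled 2-handles genuinely matches the standard surgery operation. Once that topological identification is in hand, the homological consequences are essentially formal from the chain complex analysis above.
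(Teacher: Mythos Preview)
Your proposal is correct and follows essentially the same approach as the paper. The paper packages the key topological step---that doubling a single $k$-handle attachment amounts to surgery along the attaching sphere---as a separate general lemma (their \fullref{lem:surgery}), while you argue it directly via the gluing identities for $1$- and $2$-handles; the algebraic part via the relative chain complex $0 \to \Z^\ell \xrightarrow{(c_{ij})} \Z^m \to 0$ is identical in substance to the paper's block-matrix computation.
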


Of course, working with elements in $\extprod^* (H_1 (X) / \Tors ) / \langle 
H_1(\Ym) / \Tors \rangle$ is the same as first projecting $H_1 (X)$ to the 
submodule corresponding to the $S^1 \cross S^3$ summands and then working in 
the exterior algebra there.  See \fullref{fig:switch} for a schematic diagram 
when $m = 1$.

\begin{figure}[!htbp]
  \labellist
  \hair 2pt
  \pinlabel $\Ym$ at -7 25
  \pinlabel $\Ym$ at 170 25
  \pinlabel {$X \setminus \nbhdgamma$} at 82 25
  \pinlabel {$S^1 \times D^3$} at -15 100
  \pinlabel {$D^2 \times S^2$} at 178 100
  \endlabellist
  \includegraphics{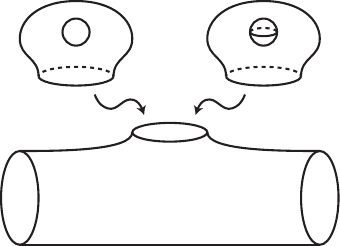}
  \caption{An illustration of \fullref{prop:surgery} in the case of $m = \ell = 
    1$.  Here, $\nbhdgamma$ denotes a neighborhood of $\gamma$. Reattaching the 
    $S^1 \cross D^3$ would yield $X \homeo (\Ym \cross I) \connsum  (S^1 \cross 
    S^3)$, while we may obtain $\double{W}$ by switching it for the $D^2 \cross 
    S^2$.}
  \label{fig:switch}
\end{figure}

Before we prove \fullref{prop:surgery}, we first establish an elementary fact.

\begin{lemma}
  \label{lem:surgery}
  Let $M_1$ and $M_2$ be $(n-1)$-manifolds, and suppose that $N \colon M_1 \to 
  M_2$ is a cobordism associated to attaching an $n$-dimensional $k$-handle 
  $h$. Then the double $\double{N}$ can be described by surgery on $M_1 \cross 
  [-1, 1]$ along some $S^{k-1} \subset M_1 \cross \set{0}$ given by the 
  attaching sphere of $h$.
\end{lemma}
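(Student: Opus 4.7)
The plan is to describe $\double{N}$ explicitly by decomposing it into a ``product region'' and a ``doubled handle region'', then to check that this decomposition matches the standard surgery description of $M_1 \cross [-1,1]$.

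I would first fix notation by writing $N = M_1 \cross I \cup_\phi h$ with $h = D^k \cross D^{n-k}$ and $\phi \colon S^{k-1} \cross D^{n-k} \to M_1 \cross \set{1}$ the attaching map. Let $S = \phi(S^{k-1} \cross \set{0}) \subset M_1$ be the attaching sphere and $\nu = \phi(S^{k-1} \cross D^{n-k})$ its tubular neighborhood in $M_1$. Decompose $N = P_N \cup Q_N$ where $P_N = (M_1 \setminus \Int \nu) \cross I$ is the ``far from the handle'' product region and $Q_N = \nu \cross I \cup h$ is the ``near the handle'' region; these two pieces meet along $\partial \nu \cross I$. A parallel decomposition applies to $-N$.

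The key computation is then that the doubled handle region $\double{Q} = Q_N \cup_{Q_N \cap M_2} -Q_N$ is diffeomorphic to $D^k \cross S^{n-k}$. The two copies of $h$ glue along their common intersection with $M_2$, which is the belt region $D^k \cross S^{n-k-1}$, so the elementary identity
\[
  D^k \cross D^{n-k} \cup_{D^k \cross S^{n-k-1}} D^k \cross D^{n-k} = D^k \cross \paren{D^{n-k} \cup_{S^{n-k-1}} D^{n-k}} = D^k \cross S^{n-k}
\]
produces the doubled handle; the two collars $\nu \cross I$ contribute only a boundary thickening. Meanwhile, the doubled product region $\double{P}$ is visibly $(M_1 \setminus \Int \nu) \cross [-1,1]$, glued to $\double{Q}$ along $\partial \nu \cross [-1,1] \cong S^{k-1} \cross S^{n-k-1} \cross I$.

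For the comparison, the surgered manifold is $(M_1 \cross [-1,1] \setminus \Int \tau(S)) \cup_{S^{k-1} \cross S^{n-k}} (D^k \cross S^{n-k})$, with tubular neighborhood $\tau(S) = \nu \cross [-\epsilon, \epsilon] \cong S^{k-1} \cross D^{n-k+1}$ having boundary $S^{k-1} \cross S^{n-k}$, and the framing given by $\phi$ together with the $[-1,1]$-direction. The main obstacle is that the two descriptions partition the manifold along different submanifolds: the surgery picture cuts along the full sphere bundle $\partial \tau(S) \cong S^{k-1} \cross S^{n-k}$, whereas the doubled picture cuts along the equator cylinder $S^{k-1} \cross S^{n-k-1} \cross I$. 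One must show that the ``extra collars'' $\nu \cross [\epsilon, 1]$ and $\nu \cross [-1, -\epsilon]$ appearing in the surgery picture can be absorbed into the $D^k \cross S^{n-k}$ factor (realizing the two hemispheres of $\partial(D^k \cross S^{n-k})$ as the ``inner faces'' $\nu \cross \set{0}$ of the collars from $\double{Q}$) to produce a genuine diffeomorphism, respecting the framing. Once this bookkeeping is handled, the lemma follows.
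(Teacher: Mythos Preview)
Your proof is correct and rests on the same core observation as the paper: the two copies of the handle glue to a $D^k \times S^{n-k}$, which is exactly the piece one glues in when performing the surgery. The difference is in how the pieces are cut.

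You slice $\double{N}$ \emph{vertically}, along $\partial\nu \times [-1,1]$, into $\double{P}$ and $\double{Q}$. The paper instead writes $\double{N} = (M_1 \times [-1,0]) \cup h \cup h' \cup (M_1 \times [0,1])$, where $h'$ is the dual $(n-k)$-handle, and observes that the cocore of $h$ and the core of $h'$ together form an $S^{n-k}$ whose tubular neighborhood is $h \cup h' \cong D^k \times S^{n-k}$. Since $h$ and $h'$ are attached to the two product pieces along the \emph{same} attaching region $\nu \subset M_1 \times \{0\}$, removing $h \cup h'$ leaves precisely $(M_1 \times [-1,1]) \setminus (\nu \times (-\epsilon,\epsilon))$. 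That is already the surgery description, verbatim.

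The payoff of the paper's cut is that the decomposition of $\double{N}$ and the surgery decomposition of $M_1 \times [-1,1]$ are literally the same gluing along $S^{k-1} \times S^{n-k}$, so the ``bookkeeping'' step you defer---absorbing the collars $\nu \times [\epsilon,1]$ and $\nu \times [-1,-\epsilon]$ and matching framings---never arises. Your vertical cut produces the same two pieces up to collars, but you then have to reconcile the interface $\partial\nu \times [-1,1]$ with the surgery interface $\partial\tau(S) \cong S^{k-1}\times S^{n-k}$, which is the extra step you flag. It is routine, as you say, but using the dual-handle viewpoint makes it disappear entirely.
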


\begin{proof}
  Write $\double{N} = (M_1 \cross [-1, 0]) \union h \union h' \union (M_1 
  \cross [0, 1])$, where $h'$ is the dual handle of $h$.  The cocore of $h$ and 
  the core of $h'$ together form an $S^{n-k}$ with trivial normal bundle, which 
  may be identified with $h \union h'$. (The case where $n = 4$ and $k = 2$ is 
  described, for example, in \cite[Example~4.6.3]{GomSti99}.) Note that $h$ 
  meets the lower $M_1 \cross [-1, 0]$, and $h'$ meets the upper $M_1 \cross 
  [0, 1]$, at the same attaching region $S^{k-1} \cross D^{n-k} \subset M_1 
  \cross \set{0}$, with the same framing.  Thus, removing $h \union h'$ from 
  $\double{N}$ would result in $(M_1 \cross [-1, 1]) \setminus (S^{k-1} \cross 
  D^{n-k} \cross (-\epsilon, \epsilon))$.  In other words, $\double{N}$ may be 
  formed by removing $S^{k-1} \cross D^{n-k} \cross (-\epsilon, \epsilon) 
  \homeo S^{k-1} \cross D^{n-k+1}$ from $M_1 \cross [-1, 1]$ and replacing it 
  with $h \union h' \homeo D^k \cross S^{n-k}$, which is the definition of 
  surgery.
\end{proof}

In the case where $n = 4$ and $k = 2$, the handles $h$ and $h'$ above can be 
described by a Kirby diagram consisting of a loop $\gamma$ with some (possibly 
non-zero) framing and the linking circle of $\gamma$ with zero framing; the 
fact that this corresponds to surgery is well known to experts; see, for 
example, \cite[p.~500]{Akbulut}.

\begin{proof}[Proof of \fullref{prop:surgery}]
  First, decompose $W$ into a cobordism $W_1$ from $\Ym$ to $\Yt \homeo \Ym 
  \connsum m (S^1 \cross S^2)$ and a cobordism $W_2$ from $\Yt$ to $\Yp$, 
  corresponding to the attachment of $1$- and $2$-handles respectively. Below, 
  we will compare $\double{W} = W_1 \union W_2 \union \orrev{W_2} \union 
  \orrev{W_1}$ with $W_1 \union \orrev{W_1}$.

  Applying \fullref{lem:surgery} to each of the $2$-handles in $W_2$, we see 
  that $W_2 \union \orrev{W_2}$ can be described by surgery on $\Yt \cross [-1, 
  1]$ along some $\gamma_1, \dotsc, \gamma_\ell$, where the $\gamma_i$'s are 
  given by the attaching circles of the $2$-handles.  (Perform isotopies and 
  handleslides first, if necessary, to ensure that the attaching regions of the 
  $2$-handles lie in $\Yt$ and are disjoint.)

  Note that $W_1 \union \orrev{W_1} \homeo W_1 \union (\Yt \cross [-1, 1]) 
  \union \orrev{W_1}$ is diffeomorphic to $X \homeo (\Ym \cross I) \connsum m 
  (S^1 \cross S^3)$.  Thus, we see that $\double{W} = W_1 \union W_2 \union 
  \orrev{W_2} \union \orrev{W_1}$ can be described by surgery on $X$ along 
  $\gamma_1, \dotsc, \gamma_\ell$.

  Finally, suppose $W$ is a ribbon $\F$-homology cobordism; then $m = \ell$.  
  Present the differential $\bdy_2 \colon C_2 (\Ym) \to C_1 (\Ym)$ by a matrix 
  $A$; then in the corresponding cellular chain complex of $W$, the 
  presentation matrix $Q$ of the differential $\bdy_2 \colon C_2 (W) \to C_1 
  (W)$ is of the form
  \[
    Q = \begin{pmatrix} A & B \\ 0 & C \end{pmatrix},
  \]
  where $C$ is an ($m \times m$)-matrix representing the attachment of the 
  $2$-handles in $W_2$. As the $\gamma_i$'s are given by the attaching circles 
  of these $2$-handles, we see that $C_{ij}$ is given by the algebraic 
  intersection number of $\gamma_j$ with $\set{p} \cross S^3$ in the 
  $i^\text{th}$ $S^1 \cross S^3$ summand, and so $C_{ij} = c_{ij}$.  Since $W$ 
  is an $\F$-homology cobordism, we have $H_1 (W, \Ym; \F) = 0$, implying that 
  $C \tensor_{\Z} \F \colon \F^m \to \F^m$ is surjective, and hence invertible.  
  It is now clear that $\lvert \det (C_{ij}) \rvert = \abs{H_1 (W, \Ym)}$, and 
  the equality in $(\extprod^* (H_1 (X) / \Tors) / \langle H_1 (\Ym) / \Tors 
  \rangle) \tensor_{\Z} \F$ is obvious.
\end{proof}

While it will not be used later in the paper, we conclude this section with the 
following geometric result, which may be of independent interest.

\begin{proposition}
  \label{prop:pos-curv}
  Suppose that $W$ is a compact $4$-manifold with connected boundary and a 
  ribbon handle decomposition. Then $W$ admits a metric with positive scalar 
  curvature.
\end{proposition}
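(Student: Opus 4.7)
The plan is to construct a positive scalar curvature (psc) metric on $W$ inductively along the handle decomposition, applying a Gromov--Lawson-type deformation at each handle attachment. Since $\partial W$ is connected, we may assume without loss of generality that the decomposition has a single $0$-handle; write $W = W_0 \union W_1 \union W_2$ according to handle indices. The sub-handlebody $W_0 \union W_1$ is diffeomorphic to the boundary-connected sum $\bdysum^{g_1} (S^1 \cross D^3)$, which embeds as a codimension-$0$ submanifold of the closed manifold $\connsum^{g_1} (S^1 \cross S^3)$. The latter admits a product psc metric whose $S^3$ factor is round and contributes positive scalar curvature dominating the flat $S^1$ factor, so restriction yields a psc metric on $W_0 \union W_1$ that is a product near the boundary.

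To attach a $2$-handle, we proceed inductively. Suppose $W'$ denotes the sub-handlebody at some stage, already endowed with a psc metric that is a product near its boundary, and let $h = D^2 \cross D^2$ be the next $2$-handle, attached along a framed embedding $\phi \colon S^1 \cross D^2 \hookrightarrow \partial W'$. The crucial observation is that while the attaching sphere $\phi(S^1 \cross \set{0})$ has codimension only $2$ in the $3$-dimensional boundary $\partial W'$, it has codimension $3$ in the $4$-dimensional ambient $W'$, which is the Gromov--Lawson threshold. Working in a collar neighborhood $\partial W' \cross [0, \epsilon) \subset W'$, one regards $\phi(S^1)$ as a codimension-$3$ submanifold with trivial normal bundle and applies the Gromov--Lawson construction to deform the metric in a tubular neighborhood into a standard \emph{torpedo} form $S^1_{\delta} \cross (\text{cap of round }S^3)$ while preserving psc. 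The handle $D^2 \cross D^2$ can itself be equipped with a psc metric---for instance, as a codimension-$0$ subset of $S^2 \cross S^2$, or as a product of a flat $D^2$ with a hemispherical cap of a round $S^2$---arranged to match the torpedo along the attaching region $S^1 \cross D^2$. Gluing then yields a psc metric on $W' \union h$ that is again a product near its boundary, completing the induction step.

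The principal technical obstacle is to verify that the Gromov--Lawson deformation can be carried out with the attaching sphere lying in the boundary rather than in the interior, and that the resulting metric can be interpolated to the standard psc metric on the handle without dropping scalar curvature below zero. This adaptation follows the lines of Gajer's extension of the surgery theorem to manifolds with boundary, but with the codimension counted in the $4$-dimensional ambient instead of on the $3$-dimensional boundary---which is precisely what renders the ribbon hypothesis (handles of index at most $n - 2 = 2$) the correct one. Iterating the $2$-handle attachment then produces a psc metric on all of $W$.
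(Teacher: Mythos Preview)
Your inductive approach differs from the paper's and has a genuine gap at the $2$-handle step. Gajer's handle-attachment theorem for psc metrics with product boundary condition applies only to handles of index at most $n-3$; in dimension $4$ this covers $0$- and $1$-handles but \emph{not} $2$-handles. You acknowledge this as ``the principal technical obstacle'' and assert that the codimension may instead be counted in the ambient $4$-manifold, but you do not prove this, and it is not a standard result. The difficulty is concrete: the attaching circle lies on $\bdy W'$, so its tubular neighborhood in $W'$ is only a half-space $S^1 \times D^2 \times [0,\epsilon)$, and the Gromov--Lawson bending argument---which relies on a full round normal $S^2$ to furnish the positive curvature---does not apply as stated. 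Even granting some version of the deformation, you would still need the resulting metric on $W' \cup h$ to be a product near the \emph{new} boundary in order to continue the induction, and no argument is given for this either.

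The paper sidesteps all of these issues by doubling $W$ at the outset. Since $W$ has one $0$-handle, $m$ $1$-handles, and $\ell$ $2$-handles, the closed manifold $\double{W}$ is obtained from $\bigconnsum m (S^1 \times S^3)$ by surgery along $\ell$ loops (via \fullref{prop:surgery} and \fullref{lem:surgery}). The connected sum already carries a psc metric, loop surgery is codimension-$3$ surgery \emph{in the interior of a closed manifold}, and the classical Gromov--Lawson theorem applies directly to give a psc metric on $\double{W}$. One then restricts to the codimension-$0$ submanifold $W \subset \double{W}$. No boundary adaptation of Gromov--Lawson is needed, and there is no induction to maintain.
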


\begin{proof}
  By \fullref{prop:surgery}, $\double{W}$ is obtained by surgery on a 
  collection of $\ell$ loops in $\bigconnsum m (S^1 \cross S^3)$. First, it is 
  well known that $S^1 \cross S^3$ has a p.s.c.\ metric.  By the work of Gromov 
  and Lawson \cite[Theorem~A]{GromovLawson}, $\bigconnsum m (S^1 \cross S^3)$ 
  admits a p.s.c.\ metric.  Next, surgery on loops is a codimension-$3$ 
  surgery, and so we may again apply the result of Gromov and Lawson to see 
  that $\double{W}$ admits a p.s.c.\ metric.  Since $W$ is a codimension-$0$ 
  submanifold of $\double{W}$, it inherits a p.s.c.\ metric as well.
\end{proof}

\section{Instanton Floer homology}
\label{sec:i}

\subsection{The Chern--Simons functional}
\label{ssec:i-CS}

Let $G$ be a compact, connected, simply connected, simple Lie group, and let 
$P$ be a principal $G$-bundle on $Y$. Any such bundle can be trivialized, and 
we fix one such trivialization. Denote by $\ad (P)$ the adjoint bundle 
associated to $P$; this vector bundle is induced by the adjoint action of $G$ 
on its Lie algebra $\fg$. The space of connections $\connspA (P)$ on $P$ is an 
affine space modeled on $\Dform^1 (Y; \ad (P))$, with a distinguished element 
$\trivconn$, which is the trivial connection (associated to the trivialization 
we chose).  Given a connection $B \in \connspA (P)$, let $A$ be the connection 
on the bundle $P \cross \R$ over $Y \cross \R$ that is equal to the pull-back 
of $B$ on $P \cross (-\infty, -1]$ and the pull-back of $\trivconn$ on $P 
\cross [1, \infty)$.  The \emph{Chern--Simons functional of $B$} is defined by 
the Chern--Weil integral
\begin{equation}
  \label{eq:CS}
  \CSt (B) = - \frac{1}{32 \pi^2 \coxeter} \int_{Y \cross \R} \tr (\ad (\curvA) 
  \wedge \ad (\curvA)),
\end{equation}
where $\curvA$ is the $\ad (P)$-valued curvature $2$-form, and $\ad (\curvA)$ 
is the corresponding induced element of $\End (\ad (P))$. The constant 
$\coxeter$ is the \emph{dual Coxeter number}, which depends on $G$; it is equal 
to $N$ when $G = \SU (N)$.
 
Let $\gauge_G$ be the space of smooth maps from $Y$ to $G$. This space can be 
identified with the group of automorphisms of $P$, known as the \emph{gauge 
  group}; in particular, any $g \in \gauge_G$ acts on $\connspA (P)$ by mapping 
a connection $A$ to its pull-back $g^* (A)$. The integral in \eqref{eq:CS} is 
not necessarily invariant with respect to this $\gauge_G$-action; however, it 
always changes by multiples of a fixed constant, and the normalization in 
\eqref{eq:CS} is chosen such that the change in $\CSt$ is always an integer.  
In particular, if we denote by $\connspB (P)$ the quotient of $\connspA (P)$ by 
this $\gauge_G$-action, then \eqref{eq:CS} induces a map $\CS: \connspB (P) \to 
\R / \Z$. An important feature of $\CS$ is that it is a topological function, 
in that its definition does not require a metric on $Y$.

It is not hard to see from the definition that a connection $B$ is a critical 
point of $\CSt$ if and only if $B$ has vanishing curvature, i.e.\ if $B$ is 
flat. Given a flat connection, one may take its holonomy along closed loops in 
$Y$ to obtain a homomorphism $\rho \colon \pi_1 (Y) \to G$, i.e.\ an element of 
the representation variety $\repvarG (Y)$.  This is not necessarily a 
one-to-one correspondence, but if we quotient the space of flat connections by 
the gauge group action and quotient $\repvarG (Y)$ by conjugation, we do get an 
identification of the isomorphism classes of flat connections with the 
character variety $\charvarG (Y)$. In other words, $\charvarG (Y)$ is the set 
of critical points of $\CS$.  Further, the set of critical values of the 
Chern--Simons functional $\CS$ is a finite set, which is a topological 
invariant of $Y$.

In the definition of the Chern--Simons functional, the assumptions on the Lie 
group $G$ are not essential. Indeed, we may take $G$ to be a compact, 
connected, simple Lie group that is possibly not simply connected, with 
universal cover $\Gt$. An important example to keep in mind is when $G = 
\SOthree$ and $\Gt = \SUtwo$.  Instead of a trivial principal bundle, we 
consider a possibly non-trivial principal $G$-bundle on $Y$. We may still form 
the space of connections $\connspA (P)$ as before, and we may form the 
configuration space $\connspB (P)$ by quotienting $\connspA (P)$ by the 
$\gauge_{\Gt}$-action (rather than the $\gauge_G$-action).  There is no longer 
a distinguished element $\trivconn \in \connspA (P)$.  Instead, we arbitrarily 
choose a connection $B_0 \in \connspA (P)$, which plays the role of $\trivconn$ 
in the definitions of $\CSt$; this determines an $\R/\Z$-valued functional 
$\CS$ on $\connspB (P)$ that is well defined up to addition by a constant 
(representing the indeterminacy of the choice of $B_0$).  The critical points 
of $\CS$ are isomorphism classes of flat connections on $P$.  Moreover, the set 
of (relative) values of the Chern--Simons functional at this set of critical 
points is a topological invariant of the pair $(Y, P)$.

\begin{proof}[Proof of {\fullref{cor:chern-simons}}]
  Let $W \colon \Ym \to \Yp$ be a ribbon $\Q$-homology cobordism. Let 
  $\alpha_-$ be a flat connection on $\Ym$, whose holonomy gives an element 
  $\rhom \in \repvar_{G} (\Ym)$.  By \fullref{prop:character-embedding}, we may 
  extend $\rhom$ to an element $\rho_W \in \repvar_{G} (W)$, which pulls back 
  to an element $\rhop \in \repvar_{G} (\Yp)$. We may then choose a 
  corresponding flat connection $\alpha_+$ on $\Yp$.  By Auckly \cite{Auckly}, 
  the Chern--Simons invariants of $\alpha_-$ and $\alpha_+$ agree.
\end{proof}

\subsection{An overview of instanton Floer theory}
\label{ssec:i-intro}

In this section, we review the two main versions of instanton Floer homology 
and develop some properties of the associated cobordism maps. (Other versions 
will be discussed later in this section.)  Throughout, we work only with 
coefficients in $\Q$. We begin with Floer's original version of instanton 
Floer homology \cite{Floer:inst}, which associates to any $\Z$-homology sphere 
$Y$ a $\Z/8$-graded vector space $\Io (Y)$. To a $\Q$-homology cobordism $W 
\colon Y_1 \to Y_2$ of $\Z$-homology spheres, the theory associates a 
homomorphism $\Io (W) \colon \Io (Y_1) \to \Io (Y_2)$ of vector spaces 
\cite{Don:YM-Floer}.\footnote{The homomorphism $\Io (W)$ is also defined for 
  more general cobordisms $W$; see \cite{Don:YM-Floer} for details. We focus on 
  $\Q$-homology cobordisms here for ease of exposition, as this specialization 
  suffices for our purposes.}

The vector space $\Io (Y)$ is the homology of a chain complex $(\Co (Y), d)$.  
The chain complex $\Co (Y)$ is defined roughly as the Morse homology of the 
Chern--Simons functional $\CS$ with the Lie group $G = \SUtwo$ and the trivial 
bundle on $Y$. Recall from \fullref{ssec:i-CS} that the critical set of $\CS$ 
is exactly the space of isomorphism classes of flat connections; in this setup, 
all non-trivial flat connections are irreducible. Here, a connection is 
\emph{irreducible} if its isotropy group is $\set{\pm 1}$; when the connection 
is flat, this is equivalent to the condition that the associated representation 
is irreducible.

In order to achieve Morse--Smale transversality, one perturbs the Chern--Simons 
functional. The critical set of the perturbed Chern--Simons functional still 
contains the trivial connection; the other critical points are no longer 
necessarily flat, but the perturbation can be chosen to be small, which 
guarantees that the non-trivial critical points are still (isomorphism classes 
of) irreducible connections.\footnote{For simplicity, it is customary to blur 
  the line between connections and isomorphism classes of connections (i.e.\ 
  connections up to the gauge group action).  From now on, we will often follow 
  this custom; for example, by an irreducible element of $\fC (Y)$, we will 
  mean an isomorphism class of irreducible connections.} We denote the set of 
all non-trivial critical points by $\fC (Y)$.\footnote{Although it is not 
  reflected in the notation, the set $\fC (Y)$ depends on the choice of 
  perturbation of the Chern--Simons functional.} Then $\Co (Y)$ is the 
$\Q$-vector space generated by the elements of $\fC (Y)$, equipped with the 
differential $d$, where the coefficients $\eval{d (\alpha)}{\beta}$ are given 
by the signed count of index-$1$ gradient flow lines of the perturbation of 
$\CS$ that are asymptotic to $\alpha$ and $\beta$.  A useful observation, which 
is also essential in the development of the analytical aspects of the theory, 
is that the gradient flow lines of (a perturbation of) $\CS$ may be viewed as 
the solutions of (a corresponding perturbation of) the ASD (anti--self-dual) 
equation for the trivial $\SUtwo$-bundle on $Y \cross \R$.

The cobordism map $\Io (W) \colon \Io (Y_1) \to \Io (Y_2)$ is also defined with 
the aid of the ASD equation.  We first attach cylindrical ends to $W$ and fix a 
Riemannian metric on this new manifold, which we also denote by $W$ by abuse of 
notation.  For any pair $(\alpha_1, \alpha_2) \in \fC (Y_1) \cross \fC (Y_2)$, 
we may form a moduli space $\moduli (W; \alpha_1, \alpha_2)$ of connections 
that satisfy a perturbed ASD equation for the trivial $\SUtwo$-bundle on $W$ 
and that are asymptotic to $\alpha_1$ and $\alpha_2$ on the ends. Here, the 
perturbation of the ASD equation is chosen such that it is compatible with the 
perturbations of the Chern--Simons functionals of $Y_1$ and $Y_2$, and 
guarantees that each connected component of $\moduli (W; \alpha_1, \alpha_2)$ 
is a smooth manifold, of possibly different dimensions.  We write $\moduli (W; 
\alpha_1, \alpha_2)_d$ for the union of the $d$-dimensional connected 
components of $\moduli (W; \alpha_1, \alpha_2)$. The value of $d$ mod $8$ is 
determined by $\alpha_1$ and $\alpha_2$. We then define a chain map $\Co (W) 
\colon \Co (Y_1) \to \Co (Y_2)$ by
\begin{equation}
  \label{eq:cob-map-I}
  \Co (W) (\alpha_1) = \sum_{\alpha_2 \in \fC (Y_2)} \# \moduli (W; \alpha_1, 
  \alpha_2)_0 \cdot \alpha_2 \in \Co (Y_2).
\end{equation}
Here, $\# \moduli (W; \alpha_1, \alpha_2)_0$ is the signed count of the 
elements of $\moduli (W; \alpha_1, \alpha_2)_0$. The homomorphism $\Io (W) 
\colon \Io (Y_1)\to \Io (Y_2)$ is the map induced by $\Co (W)$ at the level of 
homology.  It turns out that this map depends only on $W$ and is independent of 
the choice of Riemannian metric on $W$ and perturbation of the ASD equation.

A variation of instanton Floer homology is obtained by replacing the trivial 
$\SUtwo$-bundles with non-trivial $\SOthree$-bundles. Fix a closed $3$-manifold 
$Y$. The isomorphism class of an $\SOthree$-bundle $P$ on $Y$ is determined by 
its second Stiefel--Whitney class $w = w_2 (P) \in H^2 (Y; \zeetwo)$. As 
described in \fullref{ssec:i-CS}, we may define a Chern--Simons functional 
$\CS_w$ on the configuration space $\connspB (P)$ of connections on $P$ up to 
gauge group action.  We say that $(Y, w)$ is an \emph{admissible pair} if the 
pairing of $w$ with $H_2 (Y)$ is not trivial. This condition guarantees that 
the set of critical points of $\CS_w$, or equivalently, the set of flat 
connections on $P$, consists only of irreducible elements of $\connspB (P)$.  
This assumption considerably simplifies the analytical aspects of gauge theory 
and allows us to define an instanton Floer homology $\Io (Y, w)$ for an 
admissible pair, analogous to instanton Floer homology of a $\Z$-homology 
sphere.  As in the previous case, we apply a small perturbation to $\CS_w$ to 
obtain a Morse--Smale functional with the critical set $\fC (Y, w)$. Again, the 
critical points of the perturbed functional are no longer necessarily flat, but 
they remain irreducible.  We define $\Co (Y, w)$ to be the $\Q$-vector space 
generated by $\fC (Y, w)$, equipped with a differential $d$ defined using 
gradient flow lines of the perturbed Chern--Simons functional. 

Instanton Floer homology of admissible pairs is also functorial with respect to 
cobordisms. Let $(Y_1, w_1)$ and $(Y_2, w_2)$ be admissible pairs, let $W 
\colon Y_1 \to Y_2$ be an arbitrary cobordism (i.e.\ not necessarily a 
$\Q$-homology cobordism), and let $c \in H^2 (W; \zeetwo)$ be a cohomology 
class whose restriction to $Y_i$ is equal to $w_i$. The cohomology class $c$ 
determines an $\SOthree$-bundle on $W$, and solutions to a perturbed ASD 
equation for connections on this bundle that are asymptotic to $\alpha_1 \in 
\fC (Y_1, w_1)$ and $\alpha_2 \in \fC (Y_2, w_2)$ give rise to the moduli space 
$\moduli (W, c; \alpha_1, \alpha_2)$. As in the previous case, the perturbation 
of the ASD equation is chosen such that it is compatible with the perturbations 
of the Chern--Simons functionals of $(Y_1, w_1)$ and $(Y_2, w_2)$ and that each 
component of $\moduli (W, c; \alpha_1, \alpha_2)$ a smooth manifold. As in 
\eqref{eq:cob-map-I}, these moduli spaces can be used to define a homomorphism 
$\Io (W, c) \colon \Io (Y_1, w_1) \to \Io (Y_2, w_2)$.  In general, this map is 
defined only up to a sign; this sign can be determined if we fix a homology 
orientation on $W$, which is an orientation of $\topwedge H^1 (W; \Q) \tensor 
\topwedge H^+ (W; \Q) \tensor \topwedge H^1 (Y_2; \Q)$.  Here $H^+ (W; \Q)$ is 
the subspace of $H^2 (W; \Q)$ represented by $L^2$ self-dual harmonic $2$-forms 
on $W$.  (See, for example,  \cite{KM:YAFT} for more details on how to use 
homology orientations to remove the sign ambiguity of $\Io (W, c)$.) In 
particular, for a $\Q$-homology cobordism $W$, there is a canonical choice of 
homology orientation.

There are more general cobordism maps defined for instanton Floer homology of 
admissible pairs. Let $\A (W)$ be the $\Z$-graded algebra $\Sym^* (H_2 (W; \Q) 
\dirsum H_0 (W; \Q)) \tensor \extprod^* (H_1 (W; \Q))$, where the elements in 
$H_i (W; \Q)$ have degree $4 - i$.  For any $z \in \A (W)$ with degree $i$, a 
standard construction gives rise to a cohomology class $\mu (z)$ of degree $i$ 
in $\moduli (W, c; \alpha_1, \alpha_2)_d$, represented by a linear combination 
of submanifolds $V (W, c; \alpha_1, \alpha_2;z)_{d-i}$ of codimension $i$; see, 
for example, \cite[Chapter~5]{DK:book}. Then the homomorphism $\Co (W, c; z) 
\colon \Co (Y_1, w_1) \to \Co (Y_2, w_2)$ defined by
\begin{equation}
  \label{eq:cob-map-I-sharp}
  \Co (W, c; z) (\alpha_1) = \sum_{\alpha_2 \in \fC (Y_2, w_2)} \# V (W, c; 
  \alpha_1, \alpha_2; z)_0 \cdot \alpha_2 \in C (Y_2, w_2)
\end{equation}
is a chain map, and the induced homomorphism $\Io (W, c; z)$ at the level of 
homology is independent of the choice of metric, perturbation, and the 
representative submanifold $V (W,c; \alpha_1, \alpha_2; z)_0$. The homomorphism 
$\Io (W, c; z)$ depends linearly on $z$, and is again defined up to a sign that 
can be fixed using a homology orientation on $W$.  It is also functorial: Let 
$(Y_1, w_1)$, $(Y_2, w_2)$, and $(Y_3, w_3)$ be admissible pairs, $W \colon Y_1 
\to Y_2$ and $W' \colon Y_2 \to Y_3$ be cobordisms equipped with homology 
orientations, and $c_\circ$ be an element of $H^2 (W' \circ W; \zeetwo)$ whose 
restrictions to $W$ and $W'$ are denoted by $c$ and $c'$ respectively, and fix 
$z \in \A (W)$ and $z' \in \A (W')$; then $\Io (W' \circ W,c_\circ; z \cdot 
z')$, defined using the composed homology orientation, is equal to $\Io (W', 
c'; z') \circ \Io (W, c; z)$.

It is natural to ask whether for a cobordism $W$ between $\Z$-homology spheres, 
the definition of the cobordism map $\Io (W)$ can also be extended to a 
homomorphism $\Io (W; z)$ for $z \in \A (W)$. In this context, it would also be 
useful to define $\Io (W; z)$ when $W$ is not a $\Q$-homology cobordism, e.g.\ 
when $b_1 (W) > 0$; to do so, we would also need to make use of homology 
orientations to remove the sign ambiguity.  In general, the main obstruction to 
defining this extension is the existence of reducible ASD connections on $W$: 
One can still define a subspace $V (W; \alpha_1, \alpha_2; z)_0$ of $\moduli 
(W; \alpha_1, \alpha_2)_d$ in the case that $\deg (z) = d$,  but $V (W; 
\alpha_1, \alpha_2; z)_0$ might not be compact because of the existence of 
reducible connections.  Thus one cannot proceed easily, as in 
\eqref{eq:cob-map-I-sharp}, to define $\Io (W; z)$.  In the case that $b^+ (W) 
> 1$, the cobordism map $\Io (W; z) \colon \Io (Y_1) \to \Io (Y_2)$ is defined 
for any $z \in \A (W)$; see \cite[Chapter 6]{Don:YM-Floer}.  For our purposes, 
we need to consider the case where $b^+ (W) = 0$ and the degree of $z$ is 
sufficiently small.  The following compactness result provides the essential 
analytical input to define $\Io (W; z)$ in this context.

\begin{lemma}
  \label{lem:compactness-I}
  Let $Y_1$ and $Y_2$ be $\Z$-homology spheres, and let $\alpha_1 \in \fC 
  (Y_1)$ and $\alpha_2 \in \fC (Y_2)$. Suppose that 
  $W \colon Y_1 \to Y_2$ is a cobordism with $b_1 (W) = m$ and $b^+ (W) = 0$, 
  and that $\set{A_i}_{i=1}^\infty$ is a sequence of connections on $W$ each 
  representing an element of $\moduli (W; \alpha_1, \alpha_2)_d$, where $d \leq 
  3 m + 4$.  Then there are $\alpha_1' \in \fC (Y_1)\cup \{\Theta\}$, $\alpha_2'\in \fC 
  (Y_2)\cup \{\Theta\}$, a finite set of points $\set{p_1, \dotsc, p_\ell} \subset W$, and an 
  irreducible connection $A_0$ on $W$ representing an element of $\moduli (W; 
  \alpha_1', \alpha_2')_{d'}$,
  such that
  \begin{enumerate}
    \item $0 \leq d'\leq d - 8 \ell$; and
    \item after possibly passing to a subsequence and changing each connection
      $A_i$ by an action of the gauge group, the sequence of connections 
      $\set{A_i}$ converges in $\mathcal{C}^\infty$-norm to $A_0$ on any 
      compact subspace of the complement of $\set{p_1, \dotsc, p_\ell}$.  
  \end{enumerate}
\end{lemma}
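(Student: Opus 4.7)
The plan is to apply an Uhlenbeck-style weak compactness theorem for the perturbed ASD equation on $W$ with cylindrical ends attached, then use a dimension count to guarantee that the limit is both irreducible and lies in a moduli space of the required dimension. The main novelty beyond standard Uhlenbeck compactness is the irreducibility conclusion, which must be extracted from the numerical hypothesis $d \leq 3m + 4$ together with $b^+(W) = 0$.

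First, I would invoke Uhlenbeck weak compactness for sequences of perturbed ASD connections of uniformly bounded energy. After passing to a subsequence and applying gauge transformations $g_i \in \gauge_{\SUtwo}$, there is a finite set of interior bubble points $\set{p_1, \dotsc, p_\ell} \subset W$ such that $g_i^* A_i$ converges in $\mathcal{C}^\infty_{\mathrm{loc}}$ to a smooth perturbed ASD connection $A_0$ on $W \setminus \set{p_1, \dotsc, p_\ell}$, with at least one unit of instanton charge (energy $8 \pi^2$) concentrated at each $p_j$. Uhlenbeck's removable singularity theorem then extends $A_0$ smoothly to all of $W$. On the cylindrical ends, finiteness of energy combined with exponential decay estimates for gradient flow lines of the perturbed Chern--Simons functional forces $A_0$ to be asymptotic to critical points $\alpha_1' \in \fC(Y_1) \cup \set{\Theta}$ and $\alpha_2' \in \fC(Y_2) \cup \set{\Theta}$; the trivial connection $\Theta$ is allowed because $Y_1, Y_2$ are $\Z$-homology spheres and $\Theta$ is an isolated critical orbit. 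Any residual energy difference between $A_i$ and $A_0$ along the ends is accounted for by broken trajectories in $Y_i \cross \R$ from $\alpha_i$ to $\alpha_i'$.

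Next, I would perform the dimension bookkeeping. Each interior bubble contributes exactly $8$ to the formal dimension of the instanton moduli space by the standard index formula, and each piece of a broken trajectory on the ends contributes a non-negative amount. Summing over bubbles and ends yields $d \geq d' + 8 \ell$, giving the upper bound $d' \leq d - 8\ell$. Genericity of the perturbation data ensures that any non-empty component of the irreducible moduli is a smooth manifold of non-negative formal dimension, so $d' \geq 0$.

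The main obstacle is verifying that $A_0$ is irreducible. Suppose, for contradiction, that $A_0$ is reducible. Since $Y_1, Y_2$ are $\Z$-homology spheres, the only reducible flat connections on them are trivial, forcing $\alpha_1' = \alpha_2' = \Theta$. Writing $A_0$ as a reduction $L \dirsum L^{-1}$ with $c_1(L) |_{\bdy W} = 0$, the linearization $\asdopa$ splits along $\ad(A_0) = \R \dirsum L^{\tensor 2}$. Applying the Atiyah--Patodi--Singer index formula to each summand, using $b^+(W) = 0$ and $b_1(W) = m$, one checks that the reducible stratum of $\moduli(W; \Theta, \Theta)$ containing $A_0$, with whatever instanton number $A_0$ has, already sits inside a component of formal dimension strictly larger than $3m + 4$; this is precisely the numerical point of the hypothesis $d \leq 3m + 4$. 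Combined with $d \geq d' + 8\ell$, this contradicts $d \leq 3m + 4$, and irreducibility of $A_0$ follows. The delicate part of the whole argument is this last dimension bookkeeping near the reducible locus, because one must keep track of how the $\Uone$-stabilizer interacts with both the harmonic theory on $W$ (governed by $b_1$ and $b^+$) and the instanton charge absorbed by the reducible itself.
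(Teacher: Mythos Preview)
Your overall strategy—invoke Uhlenbeck compactness on the cylindrical-end manifold, then rule out reducible limits by a dimension count using $b^+(W)=0$ and $b_1(W)=m$—is exactly the paper's approach. The paper's proof is terse: it cites the standard compactness theorem and then records the one nontrivial observation, namely that any reducible ASD connection lies in some $\moduli(W;\Theta,\Theta)_e$ with $e \geq 3m-3$, after which an index computation excludes reducible limits when $d \leq 3m+4$.

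The gap in your argument is the numerical claim you make for the reducible stratum. You assert that the reducible $A_0$ ``already sits inside a component of formal dimension strictly larger than $3m+4$,'' but this is not true in general: the paper's bound is $e \geq 3m-3$, and this lower bound is essentially attained by the trivial connection $\Theta$ on $W$ (instanton number zero). For that limit your inequality $d \geq d' + 8\ell$ gives only $d \geq 3m-3$ when $\ell=0$, which does not contradict $d \leq 3m+4$. So your contradiction fails precisely in the case $A_0 = \Theta$.

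What closes the gap is something you mention but then drop from the count: the broken trajectories on the ends. Since $\alpha_1,\alpha_2 \in \fC(Y_i)$ are irreducible and hence distinct from $\Theta$, any weak limit with $\alpha_1'=\alpha_2'=\Theta$ forces nontrivial trajectories on $Y_i \times \R$ from $\alpha_i$ to $\Theta$. These pieces, together with the $3$-dimensional stabilizer of $\Theta$ entering the gluing/index bookkeeping, contribute the missing $8$ to the total index. The ``straightforward index computation'' the paper alludes to is exactly this: starting from $e \geq 3m-3$ and adding the forced end contributions yields $d \geq 3m+5$, which \emph{does} contradict $d \leq 3m+4$. Your write-up needs to carry the trajectory dimensions through to the final inequality rather than absorbing everything into an (incorrect) lower bound on the $W$-piece alone.
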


\begin{proof}
  This is a consequence of the standard compactness theorem for the solutions 
  of the ASD equation on manifolds with cylindrical ends (see, for example, 
  \cite[Chapter~5]{Don:YM-Floer}), together with the following observation.  If 
  the chosen perturbations of the Chern--Simons functionals of $Y_1$ and $Y_2$ 
  and of the ASD equation on $W$ are small enough, then any reducible ASD 
  connection on $W$ is a (singular) element of a moduli space of the form 
  $\moduli (W; \trivconn, \trivconn)_{e}$, where $\trivconn$ is the trivial 
  connection, and $e \geq 3 m - 3$. A straightforward index computation shows that such reducible connections do not appear as limits of a 
  sequence in $\moduli (W; \alpha_1, \alpha_2)_{d}$ when $d \leq 3 m + 4$.
\end{proof}

Suppose that $W$ is a cobordism as in the statement of 
\fullref{lem:compactness-I}. We equip $W$ with a homology orientation by fixing 
an orientation for the vector space $H^1 (W; \Q)$. Suppose also that $z \in 
\A(W)$ has degree at most $3 m+3$.  \fullref{lem:compactness-I} together with a 
standard counting argument shows that the moduli space $V (W; \alpha_1, 
\alpha_2; z)_{0}$ is compact.  Thus we may use a formula similar to  
\eqref{eq:cob-map-I-sharp} to define the cobordism map $\Io (W; z) \colon \Io 
(Y_1) \to \Io (Y_2)$. A standard argument shows that this map is independent of 
the choice of metric, perturbation, and representative submanifold for the 
cohomology class associated to $z$. 

\subsection{Surgery and cobordism maps in instanton Floer theory}
\label{ssec:i-surgery}

We first start with two basic propositions, in which
we will relate certain cobordism maps associated to two cobordisms $X$ and $Z$, 
where $Z$ is the result of surgery on $X$ along a loop $\gamma$.
First, we have a surgery formula for instanton Floer homology of admissible 
pairs.

\begin{proposition}
  \label{prop:surgery-i-adm}
  Let $(Y_1, w_1)$ and $(Y_2, w_2)$ be admissible pairs, and let $X \colon Y_1 
  \to Y_2$ be a cobordism. Suppose that $\gamma \subset \Int (X)$ is a loop 
  with neighborhood $\nbhd{\gamma} \homeo \gamma \cross D^3$, and denote by $Z$ 
  the result of surgery on $X$ along $\gamma$.  Fix a properly embedded surface 
  $S \subset \Int (X)$ supported away from $\nbhd{\gamma}$, such that the 
  cohomology class $c_X \in H^2 (X; \zeetwo)$ dual to $[S]$ restricts to $w_1$ 
  and $w_2$ on $Y_1$ and $Y_2$ respectively, and denote by $c_Z$ the class in 
  $H^2 (Z; \zeetwo)$ determined by $[S]$. Suppose that $z_X \in \A (X)$ admits 
  representatives for its homology classes that are supported away from 
  $\nbhd{\gamma}$, and denote by $z_Z$ the class in $\A (Z)$ determined by 
  these representatives.  Then up to a sign,
  \[
    \Io (X, c_X; [\gamma] \cdot z_X) = \Io (Z, c_Z; z_Z).
  \]
\end{proposition}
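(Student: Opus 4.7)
The plan is to reduce both sides of the equation to counts on the common complement $X_0 = X \setminus \Int(\nbhd{\gamma}) = Z \setminus \Int(D^2 \cross S^2)$, viewed as a cobordism from $Y_1$ to $Y_2$ with one additional cylindrical end modeled on $S^1 \cross S^2$. Since the surface $S$ and the geometric representatives underlying $z_X$ all lie in $X_0$ by hypothesis, the $\SOthree$-bundles determined by $c_X$ and $c_Z$ restrict to a common bundle over $X_0$ and are trivial over each filling $\nbhd{\gamma} \homeo S^1 \cross D^3$ and $D^2 \cross S^2$, and the $\mu$-insertions arising from $z_X = z_Z$ coincide as insertions on the $X_0$-factor. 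Thus the claimed identity becomes a comparison of the contributions of the two fillings to the respective cobordism maps, weighted in the $X$-case by $\mu([\gamma])$.

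I would next choose a geometric representative for $\mu([\gamma])$ supported in $\nbhd{\gamma}$ (for instance, one cutting down the moduli space via the holonomy around the core circle $\gamma$), and then stretch the neck along the separating $S^1 \cross S^2$. Standard cylindrical-end compactness and gluing for the perturbed ASD equation decomposes the relevant moduli spaces on $X$ as fibered products
\[
  \moduli(X, c_X; \alpha_1, \alpha_2) \;=\; \bigsqcup_\beta \moduli(X_0; \alpha_1, \alpha_2; \beta) \times_\beta \moduli(S^1 \cross D^3; \beta),
\]
indexed by flat limits $\beta$ on the neck, with the obvious analogue for $Z$ using $\moduli(D^2 \cross S^2; \beta)$. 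The insertions from $z_X$ descend to cuts on the $X_0$-factor, while $\mu([\gamma])$ cuts down the $S^1 \cross D^3$-factor.

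The heart of the argument is a local model computation on the two fillings. Since $\pi_1(D^2 \cross S^2) = 0$, the only flat limit on the $Z$-side is the trivial one, and its moduli contributes a single point. On the $S^1 \cross D^3$-side, flat connections are parametrized by the holonomy around $\gamma$, and one checks that only the trivial boundary flat connection extends; evaluating $\mu([\gamma])$ against this one-parameter family of holonomies cuts it down to a single point. Matching these contributions limit by limit, and verifying that the induced signs agree under the stipulated compatibility $\homor_X = \omega \wedge \homor_Z$ of homology orientations, yields the desired equality of cobordism maps.

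The main obstacle I anticipate is the analytical input: the neck-stretching and gluing on the separating $S^1 \cross S^2$, bubbling control at $\gamma$ in the filling, and sign bookkeeping across the compactification. The admissibility of $(Y_1, w_1)$ and $(Y_2, w_2)$, combined with the triviality of the $\SOthree$-bundles over $\nbhd{\gamma}$ and $D^2 \cross S^2$, controls reducibles; and the dimension count on the two sides agrees because surgery along $\gamma$ exchanges a class in $H_1$ for one of the same $\Q$-rank in $H_2$ (or both classes are trivial, when $\gamma$ is rationally null-homologous in $X$), which is precisely what makes the choice $\homor_X = \omega \wedge \homor_Z$ natural and compatible with the sign conventions.
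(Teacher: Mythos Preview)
Your approach is correct and is precisely the excision/neck-stretching argument underlying \cite[Theorem~7.16]{Don:YM-Floer}, which is all the paper's own proof invokes (it simply cites that result and notes that both sides vanish when $[\gamma]=0$). One wording slip: on the $S^1 \times D^3$-side it is not that ``only the trivial boundary flat connection extends''---every flat limit on $S^1 \times S^2$ extends over $S^1 \times D^3$, and it is the $\mu([\gamma])$-insertion that cuts the resulting family to a single point, exactly as your next clause says.
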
 

\begin{proof}
  This is essentially \cite[Theorem~7.16]{Don:YM-Floer}, and the same proof 
  works in this set up.
\end{proof}

Similarly, we have a surgery formula for instanton Floer homology of 
$\Z$-homology spheres.

\begin{proposition}
  \label{prop:surgery-i}
  Let $Y_1$ and $Y_2$ be $\Z$-homology spheres, and suppose that $X \colon Y_1 
  \to Y_2$ is a cobordism with $b_1 (X) = m$ and $b^+ (X) = 0$.  Suppose that 
  $\gamma \in \Int (X)$ is a loop with neighborhood $\nbhd{\gamma} \homeo 
  \gamma \cross D^3$, and denote by $Z$ the result of surgery on $X$ along 
  $\gamma$.  Suppose that $z_X \in \A (X)$ has degree at most $3 m - 3$ and 
  admits representatives for its homology classes that are supported away from 
  $\nbhd{\gamma}$, and denote by $z_Z$ the class in $\A (Z)$ determined by 
  these representatives.  Then up to a sign,
  \[
    \Io (X; [\gamma] \cdot z_X) = \Io (Z; z_Z).
  \]
\end{proposition}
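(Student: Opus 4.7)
The plan is to adapt the proof of the admissible-pair surgery formula, Proposition~\ref{prop:surgery-i-adm} (which is essentially \cite[Theorem~7.16]{Don:YM-Floer}), to the present setting, using Lemma~\ref{lem:compactness-I} in place of the automatic compactness enjoyed by admissible configurations.

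First, I would realize the degree-$3$ cohomology class $\mu([\gamma])$ on $\moduli(X; \alpha_1, \alpha_2)$ as a codimension-$3$ geometric representative, for instance via a $3$-submanifold of $X$ transverse to $\gamma$ as in \cite[Chapter~5]{DK:book}, or equivalently by imposing a holonomy constraint along $\gamma$. Next, I would perform a neck-stretching argument along the separating hypersurface $\partial \nbhd{\gamma} \homeo S^1 \cross S^2$, writing $X = (X \setminus \nbhd{\gamma}) \cup (S^1 \cross D^3)$ and $Z = (X \setminus \nbhd{\gamma}) \cup (D^2 \cross S^2)$. In the limit, any broken solution representing a class in $V(X; \alpha_1, \alpha_2; [\gamma] \cdot z_X)_0$ decomposes into an ASD connection on $X \setminus \nbhd{\gamma}$ paired with an ASD connection on $S^1 \cross D^3$ whose holonomy along the core circle is pinned down by the $\mu([\gamma])$ constraint. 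Such boundary data on $S^1 \cross S^2$ extend flatly across $D^2 \cross S^2$, and gluing therefore produces ASD connections representing classes in $V(Z; \alpha_1, \alpha_2; z_Z)_0$. Matching the signed counts, with respect to the compatible homology orientations prescribed above, yields the claimed equality.

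The principal obstacle is to guarantee that the cut-down moduli spaces remain compact throughout this deformation, so that the counts are well-defined and invariant. The hypothesis $\deg(z_X) \leq 3m - 3$ is designed precisely so that $\deg([\gamma] \cdot z_X) = 3 + \deg(z_X) \leq 3m$, which lies within the range $d \leq 3m + 4$ of Lemma~\ref{lem:compactness-I}; hence no sequence of solutions in $V(X; \alpha_1, \alpha_2; [\gamma] \cdot z_X)_0$ converges to a limit containing a reducible piece. Since $b^+(Z) = b^+(X) = 0$ and $b_1(Z)$ differs from $m$ by at most one via the Mayer--Vietoris sequence for the surgery, a parallel bound applies on the $Z$-side. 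Standard bubbling analysis, combined with the codimension-$3$ nature of the $\mu([\gamma])$ constraint, then rules out the remaining degenerations during the neck-stretch and gives the desired equality.
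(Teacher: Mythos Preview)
Your proposal is correct and matches the paper's approach: the paper's proof consists entirely of the sentence ``This is again essentially \cite[Theorem~7.16]{Don:YM-Floer},'' and you have correctly unpacked what this entails, including the role of Lemma~\ref{lem:compactness-I} in supplying the compactness that is automatic in the admissible-pair setting of Proposition~\ref{prop:surgery-i-adm}. Your identification of the degree bound $\deg([\gamma]\cdot z_X)\le 3m$ as fitting within the range of Lemma~\ref{lem:compactness-I} is exactly the point of the hypothesis $\deg z_X\le 3m-3$, and this is precisely the content the paper sets up in the paragraph following that lemma.
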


\begin{proof}
  This is again essentially \cite[Theorem~7.16]{Don:YM-Floer}.
\end{proof}
 
\begin{remark}
  \label{rmk:surgery-i-adm}
  While we do not provide a proof, we expect that it is possible to remove the 
  sign ambiguities in \fullref{prop:surgery-i-adm} and 
  \fullref{prop:surgery-i}, which would then remove the sign ambiguities in 
  \fullref{thm:main-i-o}, \fullref{thm:main-i-s}, \fullref{thm:main-i-sut}, and 
  \fullref{cor:main-i-knot}.  In the case that $[\gamma] = 0$, both sides of 
  the equation vanish.  In the case that $[\gamma] \neq 0$, we would have to 
  choose homology orientations.  Note that, in this case, $H^+ (X; \Q) \isom 
  H^+ (Z; \Q)$ and $H_1 (X; \Q) \isom \langle [\gamma] \rangle \dirsum H_1 (Z; 
  \Q)$.  Fix a homology orientation $\homor_Z$ on $Z$; we may set $\homor_X = 
  \omega \wedge \homor$, where $\omega \in H^1 (X; \Q)$ is determined by 
  $[\gamma]$.  With this choice, we expect the equations in 
  \fullref{prop:surgery-i-adm} and \fullref{prop:surgery-i} to hold without a 
  sign adjustment.
\end{remark}

We now use the propositions above to study ribbon homology cobordisms. First, 
we verify an analogue of \fullref{thm:main-i-o} for admissible pairs, which we 
will use in the following subsections.

\begin{theorem}
  \label{thm:main-i-adm}
  Let $(\Ym, \wm)$ and $(\Yp, \wpl)$ be admissible pairs, and suppose that $W 
  \colon \Ym \to \Yp$ is a ribbon $\Q$-homology cobordism. Fix a properly 
  embedded surface $S \subset \Int (W)$ supported away from the cocores in a 
  ribbon handle decomposition of $W$, such that the cohomology class $c_W \in 
  H^2 (W; \zeetwo)$ dual to $[S]$ restricts to $\wm$ and $\wpl$ on $\Ym$ and 
  $\Yp$ respectively, and denote by $c_{\double{W}} \in H^2 (\double{W}; 
  \zeetwo)$ and $c_{\Ym \cross I} \in H^2 (\Ym \cross I; \zeetwo)$ the 
  cohomology classes determined by $\double{S}$. Then up to a sign, the 
  cobordism map $\Io (\double{W}, c_{\double{W}}) \colon \Io (\Ym, \wm) \to \Io 
  (\Ym, \wm)$ satisfies
  \[
    \Io (\double{W}, c_{\double{W}}) = \abs{H_1 (W, \Ym)} \cdot \Io (\Ym \cross 
    I, c_{\Ym \cross I}).
  \]
  In particular, if $c_{\Ym \cross I}$ is the pull-back of $\wm$, then up to a 
  sign,
  \[
    \Io (\double{W}, c_{\double{W}}) = \abs{H_1 (W, \Ym)} \cdot \Id_{\Io (\Ym, 
      \wm)},
  \]
  and $\Io (W, c_W)$ includes $\Io (\Ym, \wm)$ into $\Io (\Yp, \wpl)$ as a 
  summand.
\end{theorem}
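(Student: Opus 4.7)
The plan is to compute $\Io(\double{W}, c_{\double{W}})$ by combining the topological surgery description from \fullref{prop:surgery} with the surgery formula in \fullref{prop:surgery-i-adm}, and then to deduce the summand statement from functoriality. By \fullref{prop:surgery}, $\double{W}$ is obtained from $X \homeo (\Ym \cross I) \connsum m(S^1 \cross S^3)$ by surgery on loops $\gamma_1, \dotsc, \gamma_m$ with $[\gamma_j] = \sigma_j + \sum_i c_{ij}\alpha_i$, $\sigma_j \in H_1(\Ym)$ and $\abs{\det(c_{ij})} = \abs{H_1(W, \Ym)}$; while surgery on $X$ along the cores $\beta_1, \dotsc, \beta_m$ of the $S^1 \cross S^3$ summands (for which $[\beta_i] = \alpha_i$) recovers $\Ym \cross I$. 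Since $S$ may be taken supported away from the cocores of a ribbon handle decomposition, we may find $c_X \in H^2(X; \zeetwo)$ simultaneously restricting to $c_{\double{W}}$ and $c_{\Ym \cross I}$ after the respective surgeries; fix coherent homology orientations on $X$, $\double{W}$, and $\Ym \cross I$ throughout.

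Iterated application of \fullref{prop:surgery-i-adm}, once per $\gamma_j$, yields
\[
  \Io(\double{W}, c_{\double{W}}) = \Io(X, c_X; [\gamma_1] \dotsb [\gamma_m]),
\]
and similarly $\Io(\Ym \cross I, c_{\Ym \cross I}) = \Io(X, c_X; \alpha_1 \dotsb \alpha_m)$. Expanding the exterior product and substituting $[\gamma_j] = \sigma_j + \sum_i c_{ij}\alpha_i$ produces
\[
  [\gamma_1] \wedge \dotsb \wedge [\gamma_m] = \det(c_{ij}) \cdot \alpha_1 \wedge \dotsb \wedge \alpha_m + R,
\]
where each term of $R$ contains at least one factor from $H_1(\Ym; \Q) \subset H_1(X; \Q)$; this is exactly the identity of \fullref{prop:surgery} read in the quotient exterior algebra.

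The main obstacle is to show that the terms of $R$ contribute trivially under $\Io(X, c_X; \cdot)$, so that this map factors through $\extprod^*(H_1(X; \Q)) / \langle H_1(\Ym; \Q) \rangle$ in the relevant top degree. The approach is to represent $\sigma \in H_1(\Ym)$ by a loop $\tilde\sigma$ in the slice $\Ym \cross \set{\tfrac{1}{2}} \subset X$, disjoint from the connect-sum regions and from representatives of the $\alpha_i$'s, and apply \fullref{prop:surgery-i-adm} a further time to trade $\mu(\tilde\sigma)$ for a surgery along $\tilde\sigma$; after also surgering out the remaining $\alpha_i$'s, the resulting cobordism should split as a product joined to a $4$-manifold whose cobordism map is forced to vanish for topological reasons. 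Making this precise will require a careful splitting/gluing analysis of the instanton moduli spaces with respect to the connected-sum decomposition of $X$, and this is the principal technical step of the argument.

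Granting the vanishing, linearity of $\Io(X, c_X; \cdot)$ and the coherent choice of homology orientations yield
\[
  \Io(\double{W}, c_{\double{W}}) = \abs{H_1(W, \Ym)} \cdot \Io(\Ym \cross I, c_{\Ym \cross I}).
\]
When $c_{\Ym \cross I}$ is pulled back from $\wm$, the cobordism $(\Ym \cross I, c_{\Ym \cross I})$ is a product and $\Io(\Ym \cross I, c_{\Ym \cross I}) = \Id$, giving $\Io(\double{W}, c_{\double{W}}) = \abs{H_1(W, \Ym)} \cdot \Id$. By functoriality, $\Io(\double{W}, c_{\double{W}}) = \Io(-W, c_{-W}) \circ \Io(W, c_W)$, so $\abs{H_1(W, \Ym)}^{-1} \Io(-W, c_{-W})$ is a left inverse to $\Io(W, c_W)$, which therefore includes $\Io(\Ym, \wm)$ as a direct summand of $\Io(\Yp, \wpl)$.
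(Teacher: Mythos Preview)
Your overall strategy matches the paper's exactly: describe $\double{W}$ as surgery on $X$ via \fullref{prop:surgery}, apply \fullref{prop:surgery-i-adm} iteratively to rewrite both $\Io(\double{W}, c_{\double{W}})$ and $\Io(\Ym \cross I, c_{\Ym \cross I})$ as $\Io(X, c_X; \cdot)$ evaluated on top exterior powers, and then compare.  The gap you leave is precisely the vanishing of the cross-terms $R$, which you flag as requiring ``a careful splitting/gluing analysis of the instanton moduli spaces'' and then explicitly grant.

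This is where you miss a clean shortcut; no new analytical input is needed.  For any monomial $\xi \in \extprod^m(H_1(X; \Q)) \cap \ideal{H_1(\Ym; \Q)}$, apply \fullref{prop:surgery-i-adm} once for each of the $m$ factors to obtain $\Io(X, c_X; \xi) = \Io(Z', c_{Z'})$, where $Z'$ is surgery on $X$ along all $m$ representative loops.  Since $\xi$ contains at least one factor from $H_1(\Ym; \Q)$, it contains at most $m-1$ of the classes $\alpha_i$, so at least one $S^1 \cross S^3$ summand of $X$ survives untouched and remains a connected summand of $Z'$.  The standard vanishing theorem for cobordism maps on connected sums (here, with an $S^1 \cross S^3$ summand) then forces $\Io(Z', c_{Z'}) = 0$.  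This completely replaces your proposed splitting/gluing analysis.

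Two smaller points.  Your description of the surgered manifold as ``a product joined to a $4$-manifold'' is not accurate: surgery on $\tilde\sigma \subset \Ym \cross I$ does not produce a product; what matters is only the surviving $S^1 \cross S^3$ summand.  And the passage from $\det(c_{ij})$ to $\abs{H_1(W, \Ym)}$ needs more than ``fix coherent homology orientations throughout'': the paper tracks this by comparing $\homor_{X,\gamma} = [\gamma_1] \wedge \dotsb \wedge [\gamma_m] \wedge \homor_{\double{W}}$ with $\homor_{X,\alpha} = \alpha_1 \wedge \dotsb \wedge \alpha_m \wedge \homor_{\double{W}}$, so that switching between them introduces exactly the sign of $\det(c_{ij})$ and turns the determinant into its absolute value.
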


\begin{proof}
  By \fullref{prop:surgery}, $\double{W}$ is described by surgery on $X \homeo 
  (\Ym \times I) \connsum m (S^1 \times S^3)$ along $m$ disjoint circles 
  $\gamma_1, \dotsc, \gamma_m$, with
  \[
    [\gamma_1] \wedge \dotsb \wedge [\gamma_m] = \det (c_{ij}) \cdot \alpha_1 
    \wedge \dotsb \wedge \alpha_m \in (\extprod^* (H_1 (X) / \Tors) / \langle 
    H_1 (\Ym) / \Tors \rangle) \tensor_{\Z} \Q,
  \]
  where $\alpha_i \in H_1 (X)$ is the homology class of the core of the 
  $i^\text{th}$ $S^1 \cross S^3$ summand, $c_{ij}$ is the multiplicity of 
  $\alpha_i$ in $[\gamma_j]$, and $\lvert \det (c_{ij}) \rvert = \abs{H_1 (W, 
    \Ym)}$.  Applying \fullref{prop:surgery-i-adm} with $Z = \double{W}$, we 
  have that, up to a sign,
  \[
    \Io (X, c_X; [\gamma_1] \wedge \dotsb \wedge [\gamma_m]) = \Io (\double{W}, 
    c_{\double{W}}).
  \]
  We claim that
	\begin{equation}
    \label{eq:term-type-1}
    \Io (X, c_X; [\gamma_1] \wedge \dotsb \wedge [\gamma_m]) = \det (c_{ij}) 
    \cdot \Io (X, c_X; \alpha_1 \wedge \dotsb \wedge \alpha_m),
  \end{equation}
  where we are using $\homor_{X, \gamma}$ on both sides of the equation; 
  indeed, by the linearity of $\Io$, it suffices to show that $\Io (X, c_X; 
  \xi) = 0$ for $\xi \in \Lambda^m (H_1 (X; \Q)) \intersect \ideal{H_1 (\Ym; 
    \Q)}$.  To see this, we may apply \fullref{prop:surgery-i-adm} in the 
  opposite direction to see that $\Io (X, c_X; \xi) = \Io (Z', c_{Z'})$ for 
  some cobordism $Z'$ with at least one $S^1 \cross S^3$ connected summand; the 
  general vanishing theorem for connected sums implies that this map is zero.  
  (The interested reader may compare this argument with the penultimate 
  paragraph of the proof of \fullref{thm:main-hf} in 
  \fullref{ssec:hf-surgery}.)
  Note that $\det (c_{ij}) = \abs{H_1 (W, \Ym)}$ up to a sign.

  Applying \fullref{prop:surgery-i-adm} again with $Z = \Ym \cross I$, we see 
  that up to a sign,
  \[
    \Io (X, c_X; \alpha_1 \wedge \dotsb \wedge \alpha_m) = \Io (\Ym \cross I, 
    c_{\Ym \cross I}).
  \]
  This completes our proof.
\end{proof}

Similarly, we prove \fullref{thm:main-i-o}.

\begin{proof}[Proof of \fullref{thm:main-i-o}]
  The proof is completely analogous to that of \fullref{thm:main-i-adm}, 
  without the need to keep track of the cohomology classes or consider elements 
  of $H_1 (\Ym)$.
\end{proof}

\begin{proof}[Proof of {\fullref{cor:casson}}]
  A standard gluing argument shows that the signed count of elements in the 
  moduli space of index-$0$ (perturbed) ASD connections on $\DWbar$ is equal to 
  $2 \Lef (\Io (\double{W} \colon \Io (\Ym) \to \Io (\Ym))$. (See \cite[Theorem 
  6.7]{Don:YM-Floer} for a similar gluing result.) By definition, the former 
  count is equal to $4\lFO (\DWbar) $, and by \fullref{thm:main-i-o}, the 
  Lefschetz number $\Lef (\Io (\double{W})$ is the Euler characteristic of $\Io 
  (\Ym)$, which is precisely twice the Casson invariant of $\Ym$.
\end{proof}

\subsection{Framed instanton Floer theory}
\label{ssec:i-fr}

Instanton Floer homology of admissible pairs can be used to define a 3-manifold 
invariant called framed instanton Floer homology \cite{KM:YAFT}. First, by a 
\emph{framed manifold}, we mean a closed $3$-manifold with a framed basepoint.  
Fix $(T^3, u)$ to be the admissible pair of the $3$-dimensional torus and the 
element of $H^2 (T^3; \zeetwo)$ given by the dual of $S^1 \cross \set{q} 
\subset T^3$ for some point $q \in T^2$.  Let $Y$ be a framed manifold with a 
framed basepoint $p \in Y$.  Then define $\Ys$ to be $Y \connsum T^3$, where 
the connected sum takes place in a neighborhood of $p$, and let $\ws \in H^2 
(\Ys; \zeetwo)$ be the class induced by the trivial class in $Y$ and $u$ in 
$T^3$.  Let $x \in \A (\Ys \cross I)$ be the class of degree $4$ determined by 
the homology class of a point in $\Ys \cross I$. The operator $\mu (x) = \Io 
(\Ys \cross I, \pi_1^* (\ws); x)$ acts on the $\Z/8$-graded vector space $\Io 
(\Ys, \ws)$, and satisfies $\mu (x)^2 = 4 \cdot \Id_{\Io (\Ys, \ws)}$ 
\cite[Corollary 7.2]{KM:sutured}. The \emph{framed instanton Floer homology} of 
$Y$, denoted by $\Is (Y)$, is defined to be the kernel of $\mu(x) - 2$; it 
inherits a $\Z/4$-grading from $\Io (\Ys, \ws)$.  This flavor of instanton 
Floer homology is conjectured to agree with the hat flavor of Heegaard Floer 
homology, when both are computed over $\Q$.

Framed instanton Floer homology is functorial with respect to cobordisms of 
framed manifolds. Given framed $3$-manifolds $Y_1$ and $Y_2$ with framed 
basepoints $p_1$ and $p_2$ respectively, a \emph{framed cobordism} $W \colon 
Y_1 \to Y_2$ is a cobordism together with a choice of an embedded framed path 
in $W$ between $p_1$ and $p_2$. A framed cobordism $W \colon Y_1 \to Y_2$ can 
be used to define a cobordism $\Ws \colon \Ys_1 \to \Ys_2$ by taking the sum 
with $T^3 \cross I$ along a regular neighborhood of the framed path in $W$.  A 
homology orientation on $W$ induces a homology orientation on $\Ws$ in an 
obvious way.  Moreover, the dual of $S^1 \cross \set{q} \cross I \subset T^3 
\cross I$ defines a cohomology class $c \in H^2 (\Ws; \zeetwo)$ that restricts 
to $\ws_1$ and $\ws_2$ on $\Ys_1$ and $\Ys_2$ respectively.  The functoriality 
of instanton Floer homology of admissible pairs implies that
\[
  \Io (\Ws, c) \comp \Io (\Ys_1 \times I, \pi_1^* (\ws_1); x_1) = \Io (\Ys_2 
  \times I, \pi_1^* (\ws_2); x_2) \comp \Io (\Ws, c).
\]
In particular, $\Io (\Ws, c)$ gives rise to a homomorphism $\Is (W) \colon \Is 
(Y_1) \to \Is (Y_2)$.

\begin{proof}[Proof of \fullref{thm:main-i-s}]
  Let $W \colon \Ym \to \Yp$ be a ribbon $\Q$-homology cobordism of framed 
  $3$-manifolds.  We also denote by $\wm^\sharp$ and $\wpl^\sharp$ the 
  cohomology classes in $\Ym^\sharp$ and $\Yp^\sharp$ induced by $u$ 
  respectively.  Then $(\Ym^\sharp, \wm^\sharp)$, $(\Yp^\sharp, \wpl^\sharp)$, 
  $W^\sharp$, and $S^1 \cross \set{q} \cross I \subset W^\sharp$ satisfy the 
  conditions of \fullref{thm:main-i-adm}, and we can thus apply it to conclude 
  that, up to a sign, $\Io (\double{W}^\sharp, c)$ is equal to multiplication 
  by $\abs{H_1 (W,\Ym)}$. Since this map clearly respects the eigenspace 
  decomposition of $\mu (x)$, we obtain the analogous statement for $\Is 
  (\double{W})$.
\end{proof}

\subsection{Sutured instanton Floer theory}
\label{ssec:i-sut}

We first define what we mean by a cobordism of sutured manifolds. Note that 
this definition is narrower than the one used by Juh\'asz \cite{Juh16}.

\begin{definition}
  \label{defn:sut-cob}
  Let $(M_1, \sut_1)$ and $(M_2, \sut_2)$ be sutured manifolds. A 
  \emph{cobordism} $N \colon (M_1, \sut_1) \to (M_2, \sut_2)$ is a $4$-manifold 
  $N$ obtained by a sequence of interior handle attachments on $M_1 \times I$.  
  In particular, there is a natural diffeomorphism of $\bdy M_1$ and $\bdy M_2$ 
  that identifies $\sut_1$ with $\sut_2$.
\end{definition}

If $Y$ is a framed $3$-manifold, then we can define a sutured manifold $(M, 
\sut)$, where $M$ is the complement of a regular neighborhood of the basepoint 
diffeomorphic to the $3$-ball, and $\alpha$ is the equator in $\bdy M$. A 
framed cobordism $W \colon Y_1 \to Y_2$ of framed $3$-manifolds then induces a 
cobordism of the sutured manifolds associated to $Y_1$ and $Y_2$.

More generally, the theory of instanton Floer homology of admissible pairs can 
be also used to define a functorial invariant of sutured manifolds, 
generalizing the framed instanton Floer construction. Instanton homology of 
sutured manifolds is defined using \emph{closures} of sutured manifolds, which 
we now recall.

Let $(M, \sut)$ be a sutured manifold whose set of sutures $\sut$ has $d$ 
elements.  Denote by $F_{g, d}$ the genus-$g$ surface with $d$ boundary 
components. Fix an arbitrary $g \geq 0$; we glue $(M, \sut)$ to the product 
sutured manifold $F_{g, d} \cross [-1, 1]$ by identifying $A (\sut)$ with 
$(\bdy F_{g, d}) \cross [-1, 1]$. The resulting space has two boundary 
components $\Rh_\pm \homeo R_\pm (\sut) \union (F_{g, d} \cross \set{\pm 1})$, 
which are closed surfaces of the same genus; we choose a diffeomorphism $\phi$ 
of these two boundary components that fixes some point $p \in F_{g, d}$, and 
glue $\Rh_{\pm}$ together by $\phi$ to obtain a closed $3$-manifold $\Mh$. Then 
$\set{p} \cross [-1, 1] \subset F_{g, d} \cross [-1,1]$ determines a closed 
curve in $\Mh$, and we write $w \in H^2 (\Mh; \zeetwo)$ for its Poincar\'e 
dual.  The image of $\Rh_\pm$ gives rise to an embedded oriented surface $R$ of 
a certain genus $g'$ in $\Mh$ with $g' \geq g$, and $(\Mh, w)$ is an admissible 
pair because the pairing of $w$ with $R$ is not trivial. At this point, we 
require $g' \geq 1$; this could be ensured by the sufficient (but not 
necessary) condition that we choose $g \geq 1$.  Then, $R$ induces an 
endomorphism
\[
  \mu (R) = \Io (\Mh \cross I, \pi_1^* (w); R) \colon \Io (\Mh, w) \to \Io 
  (\Mh, w).
\]
If $g' > 1$, then the instanton homology of $(M, \sut)$ is defined by
\[
  \SHI (M, \sut) = \ker (\mu (R) - (2g' - 2))
\]
In the case that $g' = 1$, the operator $\mu (R)$ acts trivially and the 
definition of $\SHI (M, \sut)$ should be modified using the operator $\mu (x) = 
\Io (\Mh \cross I, \pi_1^* (w); x)$, where $x \in \A (\Mh \cross I)$ is the 
class given by a point. Thus, if $g' = 1$, we define
\[
  \SHI (M, \sut) = \ker (\mu (x) - 2)
\]
In any case, the key fact is that this construction of $\SHI (M, \sut)$ above 
is independent of all choices made in the process. (The interested reader may 
compare the above with the proof of \fullref{thm:main-hf-sut} in 
\fullref{ssec:hf-sut}, in the context of sutured Heegaard Floer theory.)

We also have an analogous construction for a cobordism of sutured manifolds $N 
\colon (M_1, \sut_1) \to (M_2, \sut_2)$. First, fix $g \geq 0$, and glue the 
product of an interval and the product sutured manifold $F_{g,d} \cross [-1, 
1]$ to $N$ to obtain a cobordism of manifolds with boundary, where the induced 
cobordism of the boundary components is the trivial cobordism $(\Rh_+ \cross I) 
\disjunion (\Rh_- \cross I)$ to itself.  Using the diffeomorphism $\phi$ of 
$\Rh_+$ and $\Rh_-$, we identify $\Rh_+ \cross I$ with $\Rh_- \cross I$ to 
obtain a cobordism $\Nh$ from a closure $\Mh_1$ of $(M_1, \sut_1)$ to a closure 
$\Mh_2$ of $(M_2, \sut_2)$. (As before, we require that the image $R$ of 
$\Rh_\pm$ has genus $g' \geq 1$.) Also, the product of an interval and $\set{p} 
\cross [-1, 1]$ determines a properly embedded surface in $\Nh$, whose 
Poincar\'e dual $c \in H^2 (\Nh; \zeetwo)$ restricts to $w_i \in H^2 (\Mh_i; 
\zeetwo)$ for $i \in \set{1, 2}$. Thus, we obtain a cobordism map $\Io (\Nh, c) 
\colon \Io (\Mh_1, w_1) \to \Io (\Mh_2, w_2)$ of admissible pairs.  It turns 
out that $\Io (\Nh, c)$ respects the eigenspace decompositions of $\Io (\Mh_1, 
w_1)$ and $\Io (\Mh_2, w_2)$, and so we obtain a homomorphism $\SHI (N) \colon 
\SHI (M_1, \sut_1) \to \SHI (M_2, \sut_2)$ simply by restricting to the 
($+2$)-eigenspace.

\begin{proof}[Proof of \fullref{thm:main-i-sut}]
  This follows directly from \fullref{thm:main-i-adm} together with the 
  description of sutured instanton Floer homology as the eigenspace of the 
  instanton Floer homology for an admissible pair.
\end{proof}

The sutured instanton homology of the sutured manifold associated to a framed 
$3$-manifold $Y$ is isomorphic to $\Is (Y)$. In fact, the manifold $\Ys$ can be 
obtained as a closure of the sutured manifold associated to $Y$, where we use 
the product sutured manifold $F_{1,1} \cross I$ in the construction of the 
closure.

\begin{proof}[Proof of \fullref{cor:main-i-knot}]
  The main idea of this proof is that the known isomorphism between $\KHI$ and 
  $\In$ is natural with respect to cobordism maps. To simplify the exposition, 
  we focus on the cobordism maps associated to $(\double{W}, \double{C})$ 
  below.

  To make this precise, we first recall an explicit description of $\KHI (Y, 
  K)$, as contained in \cite[Section~5.1 and Section~7.6]{KM:sutured}.
  Let $K$ be a knot in a closed, oriented $3$-manifold $Y$; first, we associate 
  to the pair $(Y, K)$ the sutured manifold $(Y \setminus \nbhd{K}, \sut)$, 
  where $Y \setminus \nbhd{K}$ is the exterior of $Y$, and $\sut \subset \bdy 
  (Y \setminus \nbhd{K})$ consists of two sutures that are oppositely oriented 
  meridians. Then $\KHI (Y, K)$ is defined as $\SHI (Y \setminus \nbhd{K}, 
  \sut)$. As described earlier in this subsection, $\SHI (Y \setminus \nbhd{K}, 
  \sut)$ is in turn defined by taking a closure; we choose to work with the 
  closure associated to $F_{0,2}$, the genus-$0$ surface with $2$ boundary 
  components, and denote this closure $\Mh$ by $\knotclos_K$.
  According to \cite[Section~5.1]{KM:sutured}, the closed $3$-manifold 
  $\knotclos_K$ admits an equivalent description. It is formed by gluing 
  $F_{1,1} \cross S^1$ to $Y \setminus \nbhd{K}$, with $\bdy F_{1,1} \cross 
  \set{p}$ being identified with a longitude of $K$ on $\bdy \nbhd{K}$, and 
  $\set{q} \cross S^1$ being identified with the meridian of $K$ on $\bdy 
  \nbhd{K}$.  In this new description, the element $w \in H^2 (\knotclos_K; 
  \zeetwo)$ is the Poincar\'e dual of the oriented loop $\gamma \cross \set{p} 
  \subset F_{1,1} \cross S^1 \subset \knotclos_K$, where $\gamma \subset 
  F_{1,1}$ is some oriented, non-separating loop.  The embedded oriented 
  surface $R$ is then $\gamma' \cross S^1 \subset F_{1,1} \cross S^1 \subset 
  \knotclos_K$, where $\gamma'$ is another non-separating loop in $F_{1,1}$ 
  that intersects $\gamma$ at exactly one point. This, in particular, means 
  that $R$ has genus $g' = 1$, and so
  \[
    \KHI (Y, K) = \SHI (Y \setminus \nbhd{K}, \sut) = \ker (\mu (x) - 2),
  \]
  where $\mu (x) \colon \Io (\knotclos_K, w) \to \Io (\knotclos_K, w)$ is a 
  degree-$4$ operator determined by a point $x \in \knotclos_K$. By 
  \cite[Corollary~7.2]{KM:sutured}, one can see that $\mu (x)$ has eigenvalues 
  $\pm 2$ (so that $\mu (x)^2 = 4 \cdot \Id_{\Io (\knotclos_K, w)}$), each of 
  whose eigenspace has half the dimension of $\Io (\knotclos_K, w)$.  In 
  particular, one concludes that the dimension of $\KHI (Y, K)$ is half that of 
  $\Io (\knotclos_K, w)$.

  Next, we recall the isomorphism between $\KHI$ and $\In$. In 
  \cite[Section~5]{KM:unknot}, a degree-$4$ involution $\psi_K \colon \Io 
  (\knotclos_K, w) \to \Io (\knotclos_K, w)$ is constructed, whose associated 
  quotient is denoted $\Io (\knotclos_K, w)^\psi$; then, using a version of 
  Floer's Excision Theorem, it is shown that there is an isomorphism 
  $\Phi_{(Y,K)} \colon \In (Y, K) \to \Io (\knotclos_K, w)^\psi$. From this, 
  one again concludes that the dimension of $\In (Y, K)$ is half that of $\Io 
  (\knotclos_K, w)$, and thus that $\In (Y, K)$ is isomorphic to $\KHI (Y, K)$.

  Let $(\Ypm, \Kpm)$ and $(W, C) \colon (\Ym, \Km) \to (\Yp, \Kp)$ be as in the 
  statement. We now argue that the isomorphism between $\KHI$ and $\In$ is 
  natural with respect to cobordism maps associated to $(\double{W}, 
  \double{C})$. To begin, let $N \colon (\Ym \setminus \nbhd{\Km}, \sutm) \to 
  (\Yp \setminus \nbhd{\Kp}, \sutp)$ be the cobordism of sutured manifolds, in 
  the sense of \fullref{defn:sut-cob}, obtained by removing a regular 
  neighborhood of $C$ from $W$; obviously, $N$ is a ribbon $\Q$-homology 
  cobordism. Then, the cobordism map
  \[
    \KHI (\double{W}, \double{C}) \colon \KHI (\Ym, \Km) \to \KHI (\Ym, \Km)
  \]
  is defined as the cobordism map
  \[
    \SHI (\double{N}) \colon \SHI (\Ym \setminus \nbhd{\Km}, \sutm) \to \SHI 
    (\Ym \setminus \nbhd{\Km}, \sutm).
  \]
  By \fullref{thm:main-i-sut}, we know that, up to a sign,
  \[
    \SHI (\double{N}) = \abs{H_1 (N, \Ym \setminus \nbhd{\Km})} \cdot \Id_{\SHI 
      (\Ym \setminus \nbhd{\Km}, \sutm)} = \abs{H_1 (W, \Ym)} \cdot \Id_{\SHI 
      (\Ym \setminus \nbhd{\Km}, \sutm)},
  \]
  which in particular implies that it is a degree-$0$ map. Passing to the 
  closure, this homomorphism is in turn induced by a cobordism map
  \[
    \Io (\double{\Nh}, c) \colon \Io (\knotclos_{\Km}, \wm) \to \Io 
    (\knotclos_{\Km}, \wm)
  \]
  of admissible pairs that commutes with $\mu (x_-) \colon \Io 
  (\knotclos_{\Km}, \wm) \to \Io (\knotclos_{\Km}, \wm)$, where $x_- \in 
  \knotclos_{\Km}$. In particular, $\SHI (\double{N})$ is the restriction of 
  $\Io (\double{\Nh}, c)$ to the ($+2$)-eigenspace of $\mu (x_-)$. Taking into 
  account the facts that $\Io (\knotclos_{\Km}, \wm)$ is a $\Z / 8$-graded 
  vector space and that $\mu (x_-)$ is a degree-$4$ map, we conclude that $\Io 
  (\double{\Nh}, c)$ itself must satisfy
  \[
    \Io (\double{\Nh}, c) = \abs{H_1 (W, \Ym)} \cdot \Id_{\Io (\knotclos_{\Km}, 
      \wm)}
  \]
  up to a sign.

  Now $\Io (\double{\Nh}, c)$ commutes with the degree-$4$ involution 
  $\psi_{\Km} \colon \Io (\knotclos_{\Km}, \wm) \to \Io (\knotclos_{\Km}, \wm)$ 
  \cite[Section~5]{KM:unknot}, and thus induces a map
  \[
    \Io (\double{\Nh}, c)^\psi \colon \Io (\knotclos_{\Km}, \wm)^\psi \to \Io 
    (\knotclos_{\Km}, \wm)^\psi
  \]
  on the quotients. Clearly, this must also satisfy
  \[
    \Io (\double{\Nh}, c)^\psi = \abs{H_1 (W, \Ym)} \cdot \Id_{\Io 
      (\knotclos_{\Km}, \wm)^\psi}
  \]
  up to a sign. Finally, we claim that $\Phi_{(\Ym, \Km)} \colon \In (\Ym, \Km) 
  \to \Io (\knotclos_{\Km}, \wm)^\psi$ intertwines $\Io (\double{\Nh}, c)^\psi$ 
  with $\In (\double{W}, \double{C}) \colon \In (\Ym, \Km) \to \In (\Ym, \Km)$:
  \[
    \Phi_{(\Ym, \Km)} \comp \Io (\double{\Nh}, c)^\psi = \In (\double{W}, 
    \double{C}) \comp \Phi_{(\Ym, \Km)}.
  \]
  Indeed, this claim follows from the fact that the excision map $\Phi_{(\Ym, 
    \Km)}$ is itself a cobordism map, meaning that the two sides of the 
  identity above can be interpreted as two homomorphisms associated to 
  diffeomorphic cobordisms. This implies that the desired result that
  \[
    \In (\Ym, \Km) = \abs{H_1 (W, \Ym)} \cdot \Id_{\In (\Ym, \Km)}
  \]
  holds, up to a sign.
\end{proof}

\subsection{Equivariant instanton Floer theory}
\label{ssec:i-eq}

For a $\Z$-homology sphere $Y$, one can define a stronger invariant that 
contains the information of $\Io (Y)$ and $\Is (Y)$. Let $(\Co (Y), d)$ be the 
instanton Floer chain complex whose homology is equal to $\Io (Y)$. We consider 
a larger chain complex $(\Cto (Y), \dto)$ defined by $\Cto (Y) = \Co (Y) 
\dirsum \Q \dirsum \Co (Y) [3]$, where $\Co (Y) [3]$ denotes the complex $\Co 
(Y)$ with the $\Z/8$-grading shifted up by $3$. The complex $\Cto (Y)$ is 
equipped with a  $\Z/8$-grading on $\widetilde{C}(Y)$ by assigning degree $0$ 
to the summand $\Q$. With respect to the direct sum decomposition of $\Cto (Y)$ 
above, the differential $\dto$, which has degree $-1$, has the matrix form
\begin{equation}
  \label{eq:d-tilde}
  \dto =
  \begin{pmatrix}
    d & 0 & 0\\
    D_1 & 0 & 0\\
    U & D_2 & -d
  \end{pmatrix},
\end{equation}
where $U \colon \Co (Y) \to \Co (Y) [4]$ is a degree-preserving map, $D_1$ is a 
functional on $\Co (Y)$ that is not zero only on elements of degree $1$, and 
$D_2 (1)$ is a degree-$4$ element in $\Co (Y)$. We refer the reader to 
\cite{Don:YM-Floer,Fro:h-inv} for more details on the definition of $U$, $D_1$, 
and $D_2$. Here we use the same conventions as in \cite{AD:CS-Th}, where an 
exposition of the definition of $(\Cto (Y), \dto)$ is given. The characterizing 
feature of the special form of $\dto$ in \eqref{eq:d-tilde} is that it 
anti-commutes with the endomorphism of $\Cto (Y)$ given by
\[
  \chi =
  \begin{pmatrix}
    0 & 0 & 0\\
    0 & 0 & 0\\
    1 & 0 & 0
  \end{pmatrix}.
\]
We call a chain complex $(\widetilde{C}, \widetilde d)$ over $\Q$ whose 
differential has the form in \eqref{eq:d-tilde} an 
\emph{$\cSO$-complex}.\footnote{For a topological space with an 
  $\SOthree$-action that has a unique fixed point, one can form an 
  $\cSO$-complex whose homology is the homology of the space. This justifies 
  the terminology $\cSO$-complex.} 

The chain complex $(\Cto (Y), \dto)$ depends on some auxiliary choices, namely 
the Riemannian metric on $Y$ and the perturbation of the Chern--Simons 
functional of $Y$. However, the chain homotopy type of $(\Cto (Y), \dto)$ is an 
invariant of $Y$ in an appropriate sense. Suppose $(\Cto{}' (Y), \dto')$ is the 
chain complex that is obtained from another set of auxiliary choices.  Then 
there is a degree-$0$ chain map $\lambdat \colon \Cto (Y) \to \Cto{}' (Y)$, 
such that
\begin{equation}
  \label{eq:C-tilde}
  \lambdat =
  \begin{pmatrix}
    \lambda & 0 & 0\\
    \Delta_1 & 1 & 0\\
    \mu & \Delta_2 & \lambda\\
  \end{pmatrix}.
\end{equation}
Notice that the map $\lambdat$ commutes with $\chi$.  Similarly, there is a 
degree-$0$ chain map $\lambdat' \colon \Cto{}' (Y) \to \Cto (Y)$ with the same 
form as \eqref{eq:C-tilde}, with $\lambda'$ playing the role of $\lambda$.  
Moreover, there are degree-$1$ maps $\Kt \colon \Cto (Y) \to \Cto (Y)$ and 
$\Kt' \colon \Cto{}' (Y) \to \Cto{}' (Y)$ that anti-commute with $\chi$, such 
that
\[
  \Kt \comp \dto + \dto \comp \Kt = \lambdat' \comp \lambdat - \Id_{\Cto (Y)}, 
  \qquad
  \Kt' \comp \dto' + \dto' \comp \Kt' = \lambdat \comp \lambdat' - \Id_{\Cto{}' 
    (Y)}.
\]
As is customary in Floer theories, the existence of the maps $\lambdat$ and 
$\lambdat'$ is a consequence of a more general functoriality of the theory.  In 
fact, for any $\Z$-homology cobordism $W \colon Y_1 \to Y_2$, there is a chain 
map $\lambdat (W) \colon \Cto (Y_1) \to \Cto (Y_2)$ of the form in 
\eqref{eq:C-tilde}.  In particular, this morphism contains in its data a chain 
map $\lambda (W) \colon \Co (Y_1) \to \Co (Y_2)$, which induces the cobordism 
map $\Io (W) \colon \Io (Y_1) \to \Io (Y_2)$ on the level of homology. 

For general $\cSO$-complexes, a chain map of the form \eqref{eq:C-tilde}, with 
the number $1$ possibly replaced by a non-zero rational number, is called an 
\emph{$\cSO$-morphism}. An \emph{$\cSO$-homotopy} from an $\cSO$-morphism 
$\lambdat_1$ to another $\cSO$-morphism $\lambdat_2$ is given by a map $\Kt$ of 
degree $1$ that anti-commutes with $\chi$, such that
\[
  \Kt \comp \dto + \dto \comp \Kt = \lambdat_2 - \lambdat_1,
\]
and we say that two $\cSO$-complexes $\widetilde{C}$ and $\widetilde{C}'$ are 
\emph{$\cSO$-homotopy equivalent} if there are $\cSO$-morphisms $\lambdat 
\colon \widetilde{C} \to \widetilde{C}'$ and $\lambdat' \colon \widetilde{C}' 
\to \widetilde{C}$ such that $\lambdat' \comp \lambdat$ and $\lambdat \comp 
\lambdat'$ are $\cSO$-homotopic to identity maps. In other words, the 
discussion above shows that the $\cSO$-homotopy type of $(\Cto (Y), \dto)$ is 
an invariant of $Y$.

The $\cSO$-homotopy type of the complex $(\Cto (Y), \dto)$ contains the 
information of the instanton homology groups $\Io (Y)$ and $\Is (Y)$. It is 
clear from the definition that $\Io (Y)$ is the homology of the quotient complex 
$(\Co (Y), d)$, whose chain homotopy type can be recovered from the 
$\cSO$-homotopy type of $(\Cto (Y), \dto)$.  The homology of the chain complex 
$(\Cto (Y), \dto + 4 \chi)$ is also isomorphic to $\Is (Y)$ \cite{Sca:sur}.  

One could extract from $(\Cto (Y), \dto)$ several other homologies, which are 
analogous to $\HFm$, $\HFp$, and $\HFi$ in Heegaard Floer theory respectively.  
Following \cite{Don:YM-Floer, AD:CS-Th}, consider the $\Z/8$-graded chain 
complexes $(\Cf (Y), \df)$ and $(\Ct (Y), \dt)$ defined by
\begin{align*}
  \Cf (Y) & = \Co (Y) [3] \dirsum \Q [x], & \df \paren{\alpha, \sum_{i=0}^N a_i 
    x^i} & = \paren{d \alpha - \sum_{i=0}^{N} U^i D_2 (a_i), 0},\\
  \Ct (Y) & = \Co (Y) \dirsum (\Qlocpoly / \Q [x]), & \dt \paren{\alpha, 
    \sum_{i=-\infty}^{-1} a_i x^i} & = \paren{d \alpha, \sum_{i=-\infty}^{-1} 
    D_1 U^{-i-1} (\alpha) x^i}.
\end{align*}
Here, the degree of $x$ is defined to be $-4$. The homology of $(\Cf (Y), \df)$ 
and $(\Ct (Y), \dt)$ are denoted by $\If (Y)$ and $\It (Y)$ respectively.   
They are modules over the polynomial ring $\Q [x]$, with the action of $x$ 
given by the endomorphisms
\begin{align*}
  x & \colon \Cf (Y) \to \Cf (Y), & x \cdot \paren{\alpha, \sum_{i=0}^N a_i 
    x^i} & = \paren{U \alpha, D_1 (\alpha) + \sum_{i=0}^{N} a_i x^{i+1}},\\
  x & \colon \Ct (Y) \to \Ct (Y), & x \cdot \paren{\alpha, 
    \sum_{i=-\infty}^{-1} a_i x^i} & = \paren{U \alpha + D_2 (a_{-1}), 
    \sum_{i=-\infty}^{-2} a_i x^{i+1}}.
\end{align*}
We define $(\Cb (Y), \db)$ to be the $\Q [x]$-module $\Qlocpoly$ with the 
trivial differential; although it is independent of $Y$, it is convenient to 
consider it and its homology $\Ib (Y) \isom \Qlocpoly$. Together, $\If (Y)$, 
$\It (Y)$, and $\Ib (Y)$ are called the \emph{equivariant instanton Floer 
  homologies} of $Y$.

The modules $\If (Y)$, $\It (Y)$ and $\Ib (Y)$ fit into an exact triangle
\begin{equation}
  \label{eq:ex-triangle}
  \xymatrix{
    \It (Y) \ar[rr]^{j_*} & & \If (Y) \ar[dl]^{ i_*}\\
    & \Ib (Y) \ar[ul]^{p_*}
  }
\end{equation}
where the module homomorphisms are induced by the maps
\begin{align*}
  i & \colon \Cf (Y) \to \Cb (Y), & i \paren{\alpha, \sum_{i=0}^{N} a_i x^i} & 
  = \sum_{i=-\infty}^{-1} D_1 U^{-i-1} (\alpha) x^{i} + \sum_{i=0}^{N} a_i 
  x^i,\\
  j & \colon \Ct (Y) \to \Cf (Y), & j \paren{\alpha, \sum_{i=-\infty}^{-1} a_i 
    x^{i}} & = (-\alpha, 0),\\
  p & \colon \Cb (Y) \to \Ct (Y), & p \paren{\sum_{i=-\infty}^{N} a_i x^i} & = 
  \paren{\sum_{i=0}^{N} U^{i} D_2 (a_i), \sum_{i=-\infty}^{-1} a_i x^{i}}.
\end{align*}

As is apparent from the definitions, the construction of the equivariant 
instanton homologies and the exact triangle \eqref{eq:ex-triangle} from $(\Cto 
(Y), \dto)$ is completely algebraic and does not require any additional 
geometric input. In particular, for any $\cSO$-complex $(\widetilde{C}, 
\widetilde{d})$, one can define the chain complexes $(\widehat{C}, \df)$, 
$(\widecheck{C}, \dt)$, their homologies $\widehat{I}$, $\widecheck{I}$, and 
the analogue of the exact triangle \eqref{eq:ex-triangle}.  These constructions 
are functorial; given an $\cSO$-morphism $\lambdat \colon \widetilde{C} \to 
\widetilde{C}'$, there are corresponding chain maps $\widehat{\lambda} \colon 
\widehat{C} \to \widehat{C}'$, $\widecheck{\lambda} \colon \widecheck{C} \to 
\widecheck{C}'$, and $\overline{\lambda} \colon \overline{C} \to 
\overline{C}'$, which induce module homomorphisms $\widehat{\lambda}_* \colon 
\widehat{I} \to \widehat{I}'$, $\widecheck{\lambda}_* \colon \widecheck{I} \to 
\widecheck{I}'$, and $\overline{\lambda}_* \colon \overline{I} \to 
\overline{I}'$ that commute with the exact triangles associated to 
$\widetilde{C}$ and $\widetilde{C}'$, as explained in 
\cite[Section~2.3]{AD:CS-Th}.  An $\cSO$-homotopy between two $\cSO$-morphisms 
$\widetilde{\lambda}_1$ and $\widetilde{\lambda}_2$ induces a homotopy between 
$\widehat{\lambda}_1$ and $\widehat{\lambda}_2$, a homotopy between 
$\widecheck{\lambda}_1$ and $\widecheck{\lambda}_2$, and a homotopy between 
$\overline{\lambda}_1$ and $\overline{\lambda}_2$.  Moreover, the maps 
corresponding to the composition $\widetilde{\lambda}' \comp 
\widetilde{\lambda}$ of two $\cSO$-morphisms are equal to the compositions of 
the maps corresponding to $\widetilde{\lambda}'$ and $\widetilde{\lambda}$.  As 
a consequence of this functoriality, the equivariant instanton homologies $\If 
(Y)$, $\It (Y)$, $\Ib(Y)$ and the exact triangle \eqref{eq:ex-triangle} are 
invariants of $Y$, and do not depend on the auxiliary choices in the definition 
of $(\widetilde{C} (Y), \widetilde{d})$.

We now turn our attention to the behavior of equivariant instanton Floer 
homologies under ribbon $\Q$-homology cobordisms. (Recall from 
\fullref{rmk:zhs} that $\Q$-homology cobordisms between $\Z$-homology spheres 
are in fact $\Z$-homology cobordisms.) The key statement is the following 
proposition about the associated $\cSO$-complexes.

\begin{proposition}
  \label{prop:iso-double-tilde}
  Let $\Ym$ and $\Yp$ be $\Z$-homology spheres, and suppose that $W \colon \Ym 
  \to \Yp$ is a ribbon $\Q$-homology cobordism. Then the $\cSO$-morphism  
  $\lambdat (\double{W}) \colon \Cto (\Ym) \to \Cto (\Ym)$ is $\cSO$-homotopic 
  to an $\cSO$-isomorphism.
\end{proposition}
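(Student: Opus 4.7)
The plan is to enhance \fullref{thm:main-i-o} to the $\cSO$-complex level, showing that $\lambdat (\double{W})$ is $\cSO$-homotopic to $\abs{H_1 (W, \Ym)} \cdot \Id_{\Cto (\Ym)}$. Because $\abs{H_1 (W, \Ym)}$ is a positive integer, this scalar $\cSO$-morphism is manifestly an $\cSO$-isomorphism, with $\cSO$-inverse given by multiplication by the reciprocal. The argument therefore reduces to lifting the computation from the proof of \fullref{thm:main-i-o}, which is built on \fullref{prop:surgery-i}, from cobordism maps on $\Io$ to the full $\cSO$-morphism.

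First, I would use \fullref{prop:surgery} to express $\double{W}$ as the result of surgery on $X \homeo (\Ym \cross I) \connsum m (S^1 \cross S^3)$ along $m$ disjoint loops $\gamma_1, \dotsc, \gamma_m$ satisfying
\[
  [\gamma_1] \wedge \dotsb \wedge [\gamma_m] = \det (c_{ij}) \cdot \alpha_1 \wedge \dotsb \wedge \alpha_m
\]
in $(\Lambda^* (H_1 (X) / \Tors) / \ideal{H_1 (\Ym) / \Tors}) \tensor_{\Z} \Q$, with $\abs{\det (c_{ij})} = \abs{H_1 (W, \Ym)}$. Next, one would extend the $\cSO$-morphism $\lambdat (W)$ of a $\Z$-homology cobordism to an $\cSO$-morphism $\lambdat (W; z)$ incorporating an $\A$-class insertion $z \in \A (W)$, paralleling the extension of $\Io (W)$ to $\Io (W; z)$ described before \fullref{lem:compactness-I}, and then establish the following $\cSO$-level surgery formula: if $Z$ is obtained from a cobordism $X$ between $\Z$-homology spheres (with $b_1 (X) = m$ and $b^+ (X) = 0$) by surgery along a loop $\gamma \subset \Int (X)$, and if $z_X \in \A (X)$ has sufficiently low degree with representatives supported away from $\gamma$, then $\lambdat (X; [\gamma] \cdot z_X)$ is $\cSO$-homotopic to $\lambdat (Z; z_Z)$. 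Iterating this formula shows that $\lambdat (\double{W})$ is $\cSO$-homotopic to $\lambdat (X; [\gamma_1] \wedge \dotsb \wedge [\gamma_m])$; by multilinearity and the wedge identity above, this is in turn $\cSO$-homotopic to $\det (c_{ij}) \cdot \lambdat (X; \alpha_1 \wedge \dotsb \wedge \alpha_m)$ modulo residual terms $\lambdat (X; \xi)$ with $\xi \in \Lambda^m (H_1 (X; \Q)) \intersect \ideal{H_1 (\Ym; \Q)}$. Each residual term is $\cSO$-null-homotopic: reversing the surgery formula on a factor of $\xi$ lying in $H_1 (\Ym; \Q)$, as in the proof of \fullref{thm:main-i-adm}, expresses it as $\lambdat$ of a cobordism with an $S^1 \cross S^3$ connected summand, to which an $\cSO$-level connected-sum vanishing applies. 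A final application of the surgery formula to the cores of the $m$ summands of $X$ identifies $\lambdat (X; \alpha_1 \wedge \dotsb \wedge \alpha_m)$ with $\lambdat (\Ym \cross I)$, which is $\cSO$-homotopic to $\Id_{\Cto (\Ym)}$ by the functoriality of the $\cSO$-complex applied to the trivial cobordism.

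The principal obstacle is the $\cSO$-enhancement of Donaldson's surgery formula, together with the construction of $\lambdat (W; z)$ itself. One must verify that each of the four components of $\lambdat$, namely the chain map $\lambda$, the functional $\Delta_1$ encoding the trivial-end contribution of degree-$1$ critical points, the cycle $\Delta_2$, and the degree-$4$ correction $\mu$, obeys a surgery relation compatible with the corresponding component for $Z$, and that the chain-level interpolation between them yields a degree-$1$ homotopy anti-commuting with $\chi$. Donaldson's parametrized-gluing argument from \cite[Chapter~7]{Don:YM-Floer} should carry over with the trivial connection carefully tracked, so that the additional contributions to $D_1$, $D_2$, and $U$ match on the two sides; the parallel $\cSO$-enhancement of the connected-sum vanishing, needed for the ideal terms, should similarly follow from the standard neck-stretching argument applied componentwise to $\lambdat$. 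The compactness statement of \fullref{lem:compactness-I} already restricts insertion degrees to the range in which reducible solutions do not bubble off, which is precisely the regime one needs to control the trivial-connection contributions to the additional components of $\lambdat$.
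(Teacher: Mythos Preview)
Your approach is an unnecessary detour. You propose to lift the entire surgery machinery of \fullref{ssec:i-surgery} to the $\cSO$-level: constructing $\lambdat (W; z)$ with $\A$-insertions, proving an $\cSO$-enhanced version of \fullref{prop:surgery-i}, and establishing an $\cSO$-level connected-sum vanishing. You yourself flag these as the principal obstacle, and indeed none of them is proved in the paper or in the references you cite; the claim that Donaldson's parametrized-gluing argument ``should carry over'' with the trivial connection tracked through $D_1$, $D_2$, $U$ is plausible but nontrivial, and would constitute substantial new analytic work.

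The paper bypasses all of this with a two-line algebraic observation. From \fullref{thm:main-i-o} (more precisely, from its proof in \fullref{ssec:i-surgery}) one already knows that the $(1,1)$-block $\lambda (\double{W})$ of $\lambdat (\double{W})$ is chain-homotopic to $\Id_{\Co (\Ym)}$ via some $K \colon \Co (\Ym) \to \Co (\Ym)$. Now simply set
\[
  \Kt = \begin{pmatrix} K & 0 & 0 \\ 0 & 0 & 0 \\ 0 & 0 & -K \end{pmatrix},
\]
which anti-commutes with $\chi$ and hence is an admissible $\cSO$-homotopy. A direct matrix computation shows that $\lambdat (\double{W}) + \Kt \dto + \dto \Kt$ is lower-triangular with $\Id_{\Co (\Ym)}$, $1$, $\Id_{\Co (\Ym)}$ on the diagonal, and such a matrix is automatically invertible over $\Q$. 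The point is that the lower-triangular form \eqref{eq:C-tilde} of an $\cSO$-morphism means you never need to control $\Delta_1$, $\Delta_2$, or $\mu$ directly: once the diagonal block is homotoped to the identity, invertibility is free. Note also that since $\Ym$ and $\Yp$ are $\Z$-homology spheres, $\abs{H_1 (W, \Ym)} = 1$ by \fullref{lem:same-h1}, so your target $\abs{H_1 (W, \Ym)} \cdot \Id$ is just $\Id$; but even this stronger conclusion (that the off-diagonal entries vanish up to homotopy) is not what the paper proves, and is not needed.
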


\begin{proof}
  Write the differential $\dto$ of the $\cSO$-complex $(\Cto (\Ym), \dto)$ as 
  in \eqref{eq:d-tilde}, with the maps $d$, $U$, $D_1$, and $D_2$, and write 
  the $\cSO$-morphism $\lambdat (\double{W})$ as in \eqref{eq:C-tilde}, with 
  the maps $\lambda (\double{W})$, $\Delta_1$, $\Delta_2$, and $\mu$. Since we 
  are working with chain complexes over a field, our argument in 
  \fullref{ssec:i-surgery} shows that there is a chain homotopy $K \colon \Co 
  (\Ym) \to \Co (\Ym)$ such that $K \comp d + d \comp K = \Id_{\Co (\Ym)} - 
  \lambda (\double{W})$.  Defining the map $\Kt \colon \Cto (\Ym) \to \Cto 
  (\Ym)$ by
  \[
    \Kt =
    \begin{pmatrix}
      K & 0 & 0\\
      0 & 0 & 0\\
      0 & 0 & -K\\
    \end{pmatrix},
  \]
  we immediately see that $\Kt$ anti-commutes with $\chi$; moreover, we can 
  compute that
  \[
    \Kt \comp \dto + \dto \comp \Kt + \lambdat (\double{W}) =
    \begin{pmatrix}
      \Id_{\Co (\Ym)} & 0 & 0\\
      * & 1 & 0\\
      * & * & \Id_{\Co (\Ym)}
    \end{pmatrix},
  \]
  and so $\Kt$ is an $\cSO$-homotopy between $\lambdat (\double{W})$ and 
  $\widetilde{q} = \Kt \comp \dto + \dto \comp \Kt + \lambdat (\double{W})$, 
  which is clearly invertible over $\Q$.
\end{proof}

\begin{proof}[Proof of \fullref{thm:main-i-eq}]
  \fullref{prop:iso-double-tilde} and the discussion above it together imply 
  that $\If (\double{W}) = \widehat{q}_*$, $\It (\double{W}) = 
  \widecheck{q}_*$, and $\Ib (\double{W}) = \overline{q}_*$, where $\widehat{q} 
  \colon \Cf (\Ym) \to \Cf (\Ym)$, $\widecheck{q} \colon \Ct (\Ym) \to \Ct 
  (\Ym)$, and $\overline{q} \colon \Cb (\Ym) \to \Cb (\Ym)$ are the chain maps 
  corresponding to some $\cSO$-isomorphism $\widetilde{q} \colon \Cto (\Ym) \to 
  \Cto (\Ym)$.  It is clear that $\widehat{q}_*$, $\widecheck{q}_*$, and 
  $\overline{q}_*$ are $\Q [x]$-module isomorphisms.
\end{proof}

\subsection{A character variety approach to 
  \texorpdfstring{\fullref{thm:main-i-o}}{the instanton Floer statement}}
\label{ssec:characters-i}

In this subsection, we sketch a different approach to prove 
\fullref{thm:main-i-o} and \fullref{thm:main-i-s}. For simplicity, we focus on 
the proof of \fullref{thm:main-i-o}.  In particular, let $\Ym$ and $\Yp$ be 
$\Z$-homology spheres, and suppose that $W \colon \Ym \to \Yp$ is a ribbon 
$\Q$-homology cobordism. (See \fullref{rmk:zhs}.) Our approach in this section 
is based on the relationship between the character varieties of $\Ym$ and 
$\Yp$. A key component of our proof is an energy argument that also appears in 
\cite{braam-donaldson, fukaya, dfl}.

Fix a Riemannian metric on $\Ym$ and a cylindrical metric on $\double{W}$ that 
is compatible with the metric on $\Ym$. For simplicity, we first assume that 
these metrics allow us to define the instanton Floer homology $\Io (\Ym)$ and 
the cobordism map $\Io (\double{W})$ without perturbing the Chern--Simons 
functional of $\Ym$ or the ASD equation on $\double{W}$. In particular, $\Io 
(\Ym)$ is the homology of a chain complex $(\Co (\Ym), d)$, where $\Co (\Ym)$ 
is generated by gauge equivalence classes of non-trivial flat connections, or 
equivalently, non-trivial elements of the character variety of $\Ym$. The 
cobordism map $\Io (\double{W})$ is defined using the moduli spaces $\moduli 
(\double{W}; \alpha_1, \alpha_2)$, i.e.\ gauge equivalence classes of solutions 
of the (unperturbed) ASD equation
\begin{equation}
  \label{eq:ASD}
  F(A)^+ = 0,
\end{equation}
where $A$ is an $\SUtwo$-connection on $\double{W}$ asymptotic to the 
non-trivial flat $\SUtwo$-connections $\alpha_1$ and $\alpha_2$ on the ends of 
$\double{W}$. 

Let $\connspB (\double{W}; \alpha_1, \alpha_2)$ be the space of gauge 
equivalence classes of connections on $\double{W}$ that are asymptotic to 
$\alpha_1$ and $\alpha_2$ on the ends (that may or may not satisfy 
\eqref{eq:ASD}).  For a connection $A$ representing an element of $\connspB 
(\double{W}; \alpha_1, \alpha_2)$, the topological energy of $A$, given by the 
Chern--Weil integral
\[
  \energy (A) = \frac{1}{8 \pi^2} \int_{\double{W}} {\tr} (F (A) \wedge F (A)),
\]
can easily be verified to be invariant under the action of the gauge group, and 
also under continuous deformation of $A$.  Moreover, \eqref{eq:ASD} implies 
that, for connections $A$ that represent an element in $\moduli (\double{W}; 
\alpha_1, \alpha_2)$, we always have $\energy (A) \geq 0$, and $\energy (A) = 
0$ if and only if $A$ is a flat connection.  We will also need the following 
fact, which says that the topological energy of $A$ determines the dimension of 
the component of the moduli space $\moduli (\double{W}; \alpha_1, \alpha_2)$ 
that contains $A$.

\begin{lemma}
  \label{lem:index-formula}
  There exists a function $\epsilon$ that associates to each non-trivial (i.e.\ 
  irreducible) flat connection $\alpha$ on $\Ym$ a real number $\epsilon 
  (\alpha)$, such that the equality
  \[
    d = 8 \energy (A) + \epsilon (\alpha_1) - \epsilon (\alpha_2)
  \]
  holds whenever $[A] \in \moduli (\double{W}; \alpha_1, \alpha_2)_d$.
\end{lemma}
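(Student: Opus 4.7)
The plan is to derive this as a direct consequence of the Atiyah--Patodi--Singer (APS) index theorem for the linearized ASD operator on the cylindrical-end $4$-manifold $\double{W}$. Since the dimension $d$ of the component of $\moduli(\double{W}; \alpha_1, \alpha_2)$ containing $A$ equals the index of the deformation complex (under the standard transversality assumption implicitly built into the discussion preceding the lemma), it suffices to compute this index and read off the energy dependence.

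First, I would recall the APS-type formula for the index of the twisted ASD deformation operator $\asdop_A$ on a $4$-manifold $X$ with cylindrical ends, when $A$ is asymptotic to flat connections $\alpha$ on those ends. The standard form is
\[
\ind(\asdop_A) = 8 c_2(A) - 3\bigl(\chi(X) + \sigma(X)\bigr)/2 + \sum_{\text{ends}} \bigl(h(\alpha) + \eta(\alpha)\bigr)/2,
\]
where $c_2(A)$ denotes the Chern--Weil integral defining $\energy(A)$ (so $c_2(A) = \energy(A)$ for ASD connections), $h(\alpha) = \dim H^0(\ad_\alpha) + \dim H^1(\ad_\alpha)$, and $\eta(\alpha)$ is the eta-invariant of the odd-signature operator coupled to $\alpha$. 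The contributions from the two ends enter with signs determined by whether the end is incoming or outgoing: the incoming end $\Ym$ contributes with one sign and the outgoing end (also identified with $\Ym$) contributes with the opposite sign.

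The key step is then to organize the terms. Group the purely topological piece $-\tfrac{3}{2}(\chi + \sigma)(\double{W})$ as a constant $C$ depending only on $\double{W}$, and let $\tau(\alpha)$ denote the spectral/eta-invariant contribution attached to $\alpha$ at a single end. The APS formula takes the shape
\[
d = 8 \energy(A) + C + \tau(\alpha_1) - \tau(\alpha_2),
\]
where the opposite signs come from the opposite orientations that the two ends inherit as boundary components of $\double{W}$. Absorbing the constant $C$ symmetrically into the boundary terms, one sets $\epsilon(\alpha) = \tau(\alpha) + C/2$ (or any convention that makes the formula balance), and the lemma follows.

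The main obstacle I anticipate is bookkeeping rather than substance: one must verify that the eta/spectral-flow contributions truly depend only on the flat connection $\alpha$ and not on the global connection $A$, and that the sign conventions for incoming versus outgoing ends are consistent with the orientation conventions used in defining $\moduli(\double{W}; \alpha_1, \alpha_2)$. A clean way to sidestep the delicate analysis is to work with \emph{relative} indices: for any two connections $A, A'$ asymptotic to the same pair $(\alpha_1, \alpha_2)$, the difference $\ind(\asdop_A) - \ind(\asdop_{A'}) = 8\bigl(\energy(A) - \energy(A')\bigr)$ is a purely topological statement (an excision argument on $\double{W}$, independent of APS machinery), which immediately isolates the $8\energy(A)$ dependence. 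The remainder is then a function of $(\alpha_1, \alpha_2)$ alone, and the same excision argument applied separately near each end shows that this remainder splits as $\epsilon(\alpha_1) - \epsilon(\alpha_2)$ for some function $\epsilon$ on flat connections, which is exactly what the lemma asserts.
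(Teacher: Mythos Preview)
Your proposal is correct, and your alternative route (relative index plus excision) is essentially what the paper does. The paper never invokes the APS formula; instead it argues entirely via gluing and the closed index theorem. Concretely, for $\alpha_1=\alpha_2$ it glues the two ends of $\double{W}$ to form the closed manifold $\DWbar$, observes that $\DWbar$ has the $\Z$-homology of $S^1\times S^3$ (so the topological constant you call $C$ vanishes), and reads off $\ind(\asdopa)=8\energy(A)$ from the closed index formula together with additivity of the index under gluing. For general $\alpha$, the paper fixes a reference flat connection $\alpha_1$, sets $\epsilon(\alpha_1)=0$, and for any other $\alpha$ \emph{defines} $\epsilon(\alpha)=\ind(\asdop_B)-8\energy(B)$ where $B$ is any connection on $\Ym\times\R$ interpolating from $\alpha_1$ to $\alpha$; one more application of index additivity under gluing then gives the lemma.

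Your APS approach would also work, but the bookkeeping concern you flag is real: in the APS boundary contribution $(h(\alpha)+\eta(\alpha))/2$, only the $\eta$-part changes sign under orientation reversal of the end, while $h(\alpha)$ does not, so the claim that the two ends contribute $\tau(\alpha_1)-\tau(\alpha_2)$ with a single function $\tau$ requires a bit more care than you indicate. The paper's gluing argument sidesteps this entirely and has the added virtue of making transparent why there is no residual topological constant, since $\chi(\DWbar)=\sigma(\DWbar)=0$.
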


\begin{proof}
  To each connection $A$ representing an element of $\moduli (\double{W}; 
  \alpha_1, \alpha_2)$, we may associate the ASD operator $\asdopa$, which is 
  an elliptic operator; if $A$ represents an element of $\moduli (\double{W}; 
  \alpha_1, \alpha_2)_d$, then $d$ is equal to the index of $\asdopa$.
  Therefore, it suffices to show that, for some choice of $\epsilon$,
	\begin{equation}
    \label{eq:ind-formula-id}
    \ind (\asdopa) = 8 \energy (A) + \epsilon (\alpha_1) - \epsilon(\alpha_2).
	\end{equation}
  We first verify this formula when $\alpha_1 = \alpha_2$. Since index and 
  topological energy are invariant under continuous deformation, we may assume 
  without loss of generality that the connection $A$ is the pull-back of a 
  fixed flat connection $\alpha_1$ on the cylindrical ends of $\double{W}$. In 
  particular, $A$ induces a connection $\Abar$ on the closed $4$-manifold 
  $\DWbar$ obtained by gluing the incoming and the outgoing ends of 
  $\double{W}$ by the identity. Clearly, the topological energy of $A$ and 
  $\Abar$ are equal to each other.  Moreover, the additive property of indices 
  with respect to gluing (see \cite[Chapter~3]{Don:YM-Floer}) implies that 
  $\ind (\asdopa) = \ind (\asdopabar)$.  Since $\DWbar$ has the same 
  $\Z$-homology as $S^1 \cross S^3$, the standard index theorems for the ASD 
  operator on closed $4$-manifolds imply that
	\[
    \ind (\asdopabar) = 8 \energy (\Abar).
	\]
  This shows that \eqref{eq:ind-formula-id} holds in the case that $\alpha_1 = 
  \alpha_2$.
	
  In the more general case, we fix an arbitrary irreducible flat connection 
  $\alpha_1$ on $\Ym$, and set $\epsilon (\alpha_1) = 0$. For any other 
  irreducible flat connection $\alpha$, we take an arbitrary connection $B$ on 
  $\Ym \cross \R$ that is equal to the pull-backs of representatives of 
  $\alpha_1$ and $\alpha$ on $(-\infty, -1] \cross \Ym$ and $[1, \infty) \cross 
  \Ym$ respectively, and define
	\[
    \epsilon (\alpha) = \ind (\asdop_B) - \frac{1}{\pi^2} \int_{\Ym \cross \R} 
    \tr (F (B) \wedge F (B)).
	\]
  One can check that $\epsilon$ is well defined, and it only depends on the 
  gauge equivalence class of $\alpha$.
  Another application of the additive property of the index of ASD operators 
  with respect to gluing completes the proof of the lemma.
\end{proof}

\begin{lemma}
  \label{lem:flat-ASD}
  If the moduli space $\moduli (\double{W}; \alpha_1, \alpha_2)_0$ is not 
  empty, then either $\epsilon (\alpha_2) > \epsilon (\alpha_1)$, or $\alpha_1 
  = \alpha_2$.  Moreover, the moduli space $\moduli (\double{W}; \alpha_1, 
  \alpha_1)_0$ consists of an odd number of flat connections.
\end{lemma}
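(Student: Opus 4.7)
The plan has three main steps. First, for the dichotomy, I would apply \fullref{lem:index-formula} to any $[A] \in \moduli(\double{W}; \alpha_1, \alpha_2)_0$, obtaining $8\energy(A) = \epsilon(\alpha_2) - \epsilon(\alpha_1)$. Since the topological energy of an ASD connection is nonnegative, with equality exactly when $A$ is flat, we conclude either $\epsilon(\alpha_2) > \epsilon(\alpha_1)$, or $A$ is flat and $\epsilon(\alpha_2) = \epsilon(\alpha_1)$.

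In the flat case, I would show $\alpha_1 = \alpha_2$ by identifying $\pi_1(\double{W})$. Van Kampen presents it as $\pi_1(W) *_{\pi_1(\Yp)} \pi_1(W)$, where both amalgamation maps coincide with the inclusion-induced homomorphism $(\iota_+)_* \colon \pi_1(\Yp) \to \pi_1(W)$. This map is surjective by \fullref{thm:gordon}\eqref{it:gordon-surj}, and the pushout of two copies of a surjection along itself is simply the target, so $\pi_1(\double{W}) \isom \pi_1(W)$. Under this isomorphism, both boundary copies of $\Ym$ include via the same injection $(\iota_-)_* \colon \pi_1(\Ym) \hookrightarrow \pi_1(W)$. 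A flat connection on $\double{W}$ is thus a single representation $\rho_W \colon \pi_1(W) \to \SUtwo$, whose restriction $\rho_W \comp (\iota_-)_*$ equals both $\alpha_1$ and $\alpha_2$; hence $\alpha_1 = \alpha_2$.

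For the parity statement, the first part ensures every element of $\moduli(\double{W}; \alpha_1, \alpha_1)_0$ is a flat connection, and these correspond bijectively to conjugacy classes of extensions $\rho_W \colon \pi_1(W) \to \SUtwo$ of $\alpha_1$. Using a ribbon presentation $\pi_1(W) \isom (\pi_1(\Ym) \fprod \grpgen{b_1, \dotsc, b_m}) / \normal{v_1, \dotsc, v_m}$ as in the proof of \fullref{prop:character-embedding-2}, such extensions are parametrized by the preimage of the identity under a map $K \colon \SUtwo^m \to \SUtwo^m$ induced by the relators. The Gerstenhaber--Rothaus argument identifies the mapping degree of $K$ with $\pm \det(c_{ij})$, where $(c_{ij})$ is the abelianized relator matrix of \fullref{prop:surgery}. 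Since $\Ypm$ are $\Z$-homology spheres, \fullref{rmk:zhs} together with \fullref{lem:same-h1} implies $W$ is a $\Z$-homology cobordism, so $\abs{\det(c_{ij})} = \abs{H_1(W, \Ym)} = 1$. After arranging transversality by generic perturbation, the signed count of $K^{-1}(e)$ equals $\pm 1$, and the unsigned cardinality has the same parity, hence is odd.

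The principal difficulty lies in this final parity count. Existence of at least one extension follows readily from Gerstenhaber--Rothaus, but deducing that the unsigned count is odd requires transversality and careful bookkeeping of signs; fortunately, since $\alpha_1 \in \fC(\Ym)$ is irreducible by construction, any extension $\rho_W$ is also irreducible, so the relevant count takes place entirely within the smooth locus of $\repvar(W)$ and avoids the reducible stratum.
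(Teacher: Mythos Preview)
Your proof is correct and follows the same approach as the paper. The energy/index reasoning for the dichotomy matches exactly, and your van Kampen computation showing $\pi_1(\double{W}) \cong \pi_1(W)$ is a clean explicit route to $\alpha_1 = \alpha_2$ in the flat case; the paper instead cites \fullref{prop:character-embedding-2} (injectivity of $\iota_+^*$ forces the two representations of $\pi_1(W)$ restricting to the same class on $\Yp$ to coincide), but the content is the same.

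One point needs correcting in the parity step. You write ``after arranging transversality by generic perturbation,'' but perturbing $K$ would change $K^{-1}(e)$ and hence would no longer be counting $\moduli(\double{W}; \alpha_1, \alpha_1)_0$. The lemma lives inside the simplifying hypothesis made at the start of \fullref{ssec:characters-i}: the unperturbed Chern--Simons functional and ASD equation are assumed to already satisfy Morse--Smale transversality. The paper invokes this directly: regularity of a flat $A$ in the moduli space means $H^1(\double{W}; \Ad_A) = 0$, and this is precisely what forces $K$ to be transverse to $e$ at the corresponding solution. So transversality is a standing hypothesis, not something you arrange. With that fix, your signed-count-equals-degree argument (degree $\pm\det(c_{ij}) = \pm 1$ since $W$ is a $\Z$-homology cobordism, hence unsigned count odd) is exactly what the paper does.
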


\begin{proof}
  \fullref{lem:index-formula} implies that, if there is an element $[A]$ in 
  $\moduli (\double{W}; \alpha_1, \alpha_2)_0$, then $\epsilon (\alpha_2) \geq 
  \epsilon (\alpha_1)$.  Moreover, if $\epsilon (\alpha_1) = \epsilon 
  (\alpha_2)$, then the connection $A$ has to be flat, which is to say that $A$ 
  represents an element of the character variety $\charvarSUtwo (\double{W})$.  
  In particular, \fullref{prop:character-embedding-2} implies that $\alpha_1 = 
  \alpha_2$. By assumption, any element of $\moduli (\double{W}; \alpha_1, 
  \alpha_1)_0$ is cut out regularly, and we do not need to perform any 
  perturbation. Regularity of a flat connection $A$ on $\double{W}$ is 
  equivalent to the property that $H^1 (\double{W}; \Ad_{A})$ is trivial.  The 
  proof of \fullref{prop:character-embedding-2} implies that the 
  $\SUtwo$-representations of $\pi_1 (\double{W})$ that extend a given 
  representation of $\pi_1 (\Ym)$ is the set of solutions of $K (g_1, \dotsc, 
  g_m) = 1$, where $K \colon \SUtwo^m \to \SUtwo^m$ is a map of degree $\pm 1$.  
  Since the solutions of these equations are cut out transversely, the number 
  of solutions of this extension problem is an odd integer.
\end{proof}	

\fullref{lem:flat-ASD} implies that if we sort flat connections on $\Ym$ based 
on their $\epsilon$-values, then the chain map $\Co (\double{W})$ is upper 
triangular with non-zero diagonal entries. In particular, $\Io (\double{W})$ is 
an isomorphism.

In general, we need to consider perturbations of the Chern--Simons functional 
of $\Ym$ and the ASD equation on $\double{W}$. There are standard functions on 
the space of connections on $\Ym$ that give rise to perturbed Chern--Simons 
functionals of $\Ym$ (see \cite[Chapter~5]{Don:YM-Floer}) that are sufficient 
to define the instanton Floer homology $\Io (\Ym)$. Any such perturbation can 
be extended to a perturbation of the ASD equation on $\double{W}$ that is 
\emph{time independent} in the sense defined by Braam and Donaldson 
\cite{braam-donaldson}.  The main point of considering such perturbations is 
that, even after we slightly modify the definition of topological energy, the 
solutions of the perturbed ASD equation will still have non-negative 
topological energy.  Having fixed the above, another technical issue would be 
to know whether the solutions of the perturbed ASD equation with vanishing 
topological energy are cut out regularly. If we happen to know that our chosen 
perturbation has this additional property, then we can proceed as above to show 
that the map $\Io (\double{W})$ is an isomorphism.  However, the authors have 
not checked whether there is a time-independent perturbation with this 
property.

\section{Heegaard Floer homology}
\label{sec:hf}

\subsection{Surgery and cobordism maps in Heegaard Floer theory}
\label{ssec:hf-surgery}

In light of \fullref{prop:surgery}, our strategy to prove \fullref{thm:main-hf} 
will be to show that the cobordism map for $\double{W}$ is actually just 
determined by that for $X \homeo (\Ym \cross I) \connsum m (S^1 \cross S^3)$ 
and the homology classes of the $\gamma_i$'s, and hence must agree with that of 
$\Ym \cross I$.  We will first focus on $\HFh$; it will be shown later in the 
proof of \fullref{thm:main-hf} that this is sufficient to recover the result 
for the other flavors.  The necessary tool is \fullref{prop:surgery-maps} 
below, which shows the behavior of the Heegaard Floer cobordism maps under 
surgery along circles, and is the counterpart of \fullref{prop:surgery-i-adm} 
and \fullref{prop:surgery-i} for Heegaard Floer homology.  This statement is 
known to experts, and can be derived from the link cobordism TQFT of Zemke; see 
\fullref{rmk:surgery-maps} below.  A closely related result is also already 
established in \cite[Example~1.4]{KhaLinSas19}.  For completeness, we provide a 
proof in this subsection. Note that we do not assume $3$- and $4$-manifolds to 
be connected in this subsection.

Recall that given a connected $\SpinC$-cobordism $(W, \spinct) \colon (Y_1, 
\spinc_1) \to (Y_2, \spinc_2)$ between closed, connected $3$-manifolds, 
Ozsv\'ath and Szab\'o \cite{OzsSza06} define cobordism maps
\[
  \Fg_{W, \spinct} \colon \HFg (Y_1, \spinc_1) \tensor (\extprod^* (H_1 (W) / 
  \Tors) \tensor \zeetwo) \to \HFg (Y_2, \spinc_2).
\]
These maps have the property that
\begin{equation}
  \label{eq:H1-action}
  \Fg_{W, \spinct} (x \tensor \xi) = \Fg_{W,\spinct} (\xi_1 \cdot x) + \xi_2 
  \cdot \Fg_{W, \spinct} (x),
\end{equation}
whenever $\xi \in H_1(W) / \Tors$ satisfies $\xi = \iota_1(\xi_1) - 
\iota_2(\xi_2)$, where $\xi_i \in H_1(Y_i) / \Tors$ and $\iota_i$ is induced by 
inclusion; see \cite[p.~186]{OzsSza03}. We may also sum over all 
$\SpinC$-structures on $W$, and obtain a total map
\[
  \Fg_{W} \colon \HFg (Y_1) \tensor (\extprod^* (H_1 (W) / \Tors) \tensor 
  \zeetwo) \to \HFg (Y),
\]
satisfying a property analogous to \eqref{eq:H1-action}. We are now ready to 
state:

\begin{proposition}
  \label{prop:surgery-maps}
  Let $Y_1$ and $Y_2$ be closed, connected $3$-manifolds, and let $X \colon Y_1 
  \to Y_2$ be a connected cobordism. Suppose that $\gamma_1, \dotsc, 
  \gamma_\ell \subset \Int (X)$ are loops with disjoint neighborhoods 
  $\nbhdgammai \homeo \gamma_i \cross D^3$, and denote by $Z$ the result of 
  surgery on $X$ along $\gamma_1, \dotsc, \gamma_\ell$. Then for $x \in \HFh 
  (Y_1)$,
  \begin{equation}
    \label{eq:surgery-maps}
    \Fh_{X} (x \tensor ([\gamma_1] \wedge \dotsb \wedge [\gamma_\ell])) = 
    \Fh_{Z} (x).
  \end{equation}
  Thus, $\Fh_Z$ depends only on $X$ and $[\gamma_1] \wedge \cdots \wedge 
  [\gamma_\ell] \in \extprod^* (H_1 (X) / \Tors) \tensor \zeetwo$.
\end{proposition}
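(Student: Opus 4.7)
The plan is to reduce the identity to a local model computation at the $S^1 \cross S^2$ boundary components created by drilling out the loops. Let $X_0 = X \setminus \bigunion_i \Int \nbhdgammai$, so that each $\nbhdgammai \homeo S^1 \cross D^3$ contributes an $S^1 \cross S^2$ boundary component $B_i$ of $\bdy X_0$. Then
\[
  X = X_0 \cup_{\bigunion_i B_i} \bigunion_i (S^1 \cross D^3), \qquad Z = X_0 \cup_{\bigunion_i B_i} \bigunion_i (D^2 \cross S^2).
\]
Each capping piece admits a relative handle decomposition on its boundary: the standard decomposition of $S^1 \cross D^3$, viewed as a manifold with boundary $S^1 \cross S^2$, consists of a $3$-handle attached along a sphere fiber $\set{\text{pt}} \cross S^2$ together with a $4$-handle, while that of $D^2 \cross S^2$ consists of a $2$-handle attached along the $0$-framed longitude $S^1 \cross \set{\text{pt}}$ together with a $4$-handle. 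Crucially, the $i$-th longitude, regarded as a loop in $\bdy X_0$, is a parallel copy of $\gamma_i$ sitting in $\bdy \nbhdgammai$, and therefore represents $[\gamma_i] \in H_1 (X)$.

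By functoriality of the Heegaard Floer cobordism maps, both $\Fh_X$ and $\Fh_Z$ factor through $\Fh_{X_0}$ -- viewing $X_0$ as a cobordism into the disjoint union $Y_2 \disjunion \bigdisjunion_i B_i$ and using the K\"unneth identification $\HFh (A \disjunion B) \isom \HFh (A) \tensor \HFh (B)$ -- followed by the cap handle attachments, and the two cobordism maps differ only on those cap pieces. The problem thus localizes, at each $B_i$, to a comparison of the two ``cap'' maps $\HFh (S^1 \cross S^2) \to \zeetwo$ induced by $S^1 \cross D^3$ and $D^2 \cross S^2$. Writing $\HFh (S^1 \cross S^2)$ in the standard basis $\set{\theta^+, \theta^-}$ (with $[S^1] \cdot \theta^+ = \theta^-$ and $[S^1] \cdot \theta^- = 0$), the two caps project onto complementary summands, and one verifies that precomposing the $S^1 \cross D^3$ cap with the $H_1$-action by the generator recovers the $D^2 \cross S^2$ cap. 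Iterating across the $\ell$ independent factors -- with no signs arising since coefficients lie in $\zeetwo$ -- produces the wedge product $[\gamma_1] \wedge \dotsb \wedge [\gamma_\ell]$ on the left-hand side, yielding the desired identity. The final ``Thus'' clause is then immediate from the formula.

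The main obstacle is the local identity on a single $S^1 \cross S^2$ factor. This is a standard Heegaard-triple computation, and is also a direct consequence of Zemke's link cobordism TQFT \cite{Zem19}, in which the $H_1$-action and surgery maps arise from a unified construction. As indicated in \fullref{rmk:surgery-maps}, we shall adopt Zemke's framework for brevity.
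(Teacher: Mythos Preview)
Your proposal is correct and follows essentially the same route as the paper: excise the loops, factor both $X$ and $Z$ through the common piece $X_0 = P$, and reduce to comparing the two cap maps on each $S^1 \cross S^2$ boundary. The step you defer to Zemke's framework---making the factorization with disconnected ends precise and carrying the wedge $[\gamma_1]\wedge\dotsb\wedge[\gamma_\ell]$ onto the individual caps---is exactly what the paper spells out via \fullref{lem:product-loops}, \fullref{lem:gen-loops}, and the graph-TQFT gluing axiom; the paper also punctures the caps to view them as cobordisms $S^3 \to S^1 \cross S^2$ (with $P$ having \emph{incoming} end $Y_1 \disjunion \bigdisjunion_i(S^1\cross S^2)_i$) rather than as fillings $S^1 \cross S^2 \to \emptyset$, but this reversal is cosmetic.
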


\begin{remark}
  \label{rmk:surgery-maps}
  A surgery formula for link cobordisms and link Floer homology, similar to 
  \fullref{prop:surgery-maps}, is provided by Zemke 
  \cite[Proposition~5.4]{Zem19a}. One may obtain \fullref{prop:surgery-maps} 
  via an identification, also provided by Zemke \cite[Theorem~C]{Zem18}, of 
  link cobordism maps with maps induced by cobordisms between $3$-manifolds.  
  In this paper, we instead provide a direct proof without mentioning any link 
  cobordism theory, in the interest of providing a self-contained discussion.
\end{remark}

Before giving the proof, we describe the idea informally.  Surgery on 
$\gamma_i$ is the result of removing a copy of $S^1 \cross D^3$ and replacing 
it with $D^2 \cross S^2$.  The cobordism map for $D^2 \cross S^2$ agrees with 
that of $S^1 \cross D^3$ if one contracts the latter map by the generator of 
$H_1$. Composing with the cobordism map for $X \setminus (\bigdisjunion 
\nbhdgammai)$, the result follows.  However, to prove this carefully, we must 
cut and re-glue several different codimension-$0$ submanifolds, and thus need 
to use the graph TQFT framework by Zemke \cite{Zem-graph-hat}.  Below, we give a brief 
review of the necessary elements.

Let $Y$ be a possibly disconnected $3$-manifold, and let $\pts$ be a set of 
points in $Y$ with at least one point in each component.  Let $W \colon Y_1 \to 
Y_2$ be a cobordism, and let $\graph$ be a graph embedded in $W$ with $\bdy 
\graph = \pts_1 \union \pts_2$.  Then, Zemke \cite{Zem-graph-hat} constructs Heegaard 
Floer homology groups $\HFh (Y_i, \pts_i)$ and cobordism maps $\Fh_{W, \graph} 
\colon \HFh(Y_1, \pts_1) \to \HFh(Y_2, \pts_2)$.

In a later paper, Zemke \cite{Zem-graph-minus} constructs cobordism maps \[
  F^A_{W, \graph, \spinct} \colon \HFg(Y_1, \pts_1, \spinct \rvert_{Y_1}) \to 
  \HFg(Y_2, \pts_2, \spinct \rvert_{Y_2}), \qquad F^B_{W, \graph, \spinct} 
  \colon \HFg(Y_1, \pts_1, \spinct \rvert_{Y_1}) \to \HFg(Y_2, \pts_2, 
  \spinct \rvert_{Y_2})
\]
for each $\spinct \in \SpinC (W)$, for various flavors $\HFg$ of Heegaard 
Floer homology groups. One may also take the sum over all $\spinct \in \SpinC 
(W)$ to obtain maps $F^A_{W, \graph}$ and $F^B_{W, \graph}$. In this theory, 
for $\HFm$, the graph $\graph$ needs to be equipped with a cyclic ordering of 
the edges adjacent to each vertex; however, for $\HFh$, the map is 
independent of this choice of a cyclic ordering 
\cite[Lemma~4.5]{Zem-graph-minus}.  Furthermore, for $\HFh$, the maps 
$F^A_{W, \graph, \spinct}$ and $F^B_{W, \graph, \spinct}$ coincide, as can be 
seen by combining \cite[Lemma~5.7]{Zem-graph-minus} and the definitions of 
the type-$A$ and type-$B$ graph action maps \cite[Equation~(7.1) and 
Equation~(7.2)]{Zem-graph-minus}.

As pointed out to the authors by Ian Zemke, for $\HFh$, the maps $F^A_{W, 
  \graph} = F^B_{W, \graph}$ in fact agree with $\Fh_{W, \graph}$. Indeed, it 
suffices to check this for maps associated to $4$-dimensional $1$-, $2$-, and 
$3$-handles, as well as maps associated to three elementary graph cobordisms: 
free-stabilization cobordisms, free-destabilization cobordisms, and 
wye-shaped cobordisms \cite[Figure~1.1~($\Gamma$-1) and 
($\Gamma$-2)]{Zem-graph-hat}. For handles, the definitions of $\Fh_{W, 
  \graph}$ \cite[Section~2.4 and Section~3]{Zem-graph-hat} and $F^A_{W, 
  \graph} = F^B_{W, \graph}$ \cite[Section~8 and Section~9]{Zem-graph-minus} 
coincide, as they are ultimately equal to the maps described by Ozsv\'ath and 
Szab\'o \cite{OzsSza06}. For graph cobordisms, $\Fh_{W, \graph}$ is computed 
in \cite[Section~4]{Zem-graph-hat}, while $F^A_{W, \graph} = F^B_{W, \graph}$ 
are computed in \cite[Section~4]{Zem-graph-duality}. This equivalence between 
the two graph TQFTs helps us establish some of the properties for $\Fh_{W, 
  \graph}$ in the following theorem.

\begin{theorem}[Zemke \cite{Zem-graph-hat, Zem-graph-minus}]
  \label{thm:zemke}
  The cobordism maps $\Fh_{W, \Gamma}$ satisfy the following.
  \begin{enumerate}
    \item \label{item:disj-union} Under disjoint union, we have that $\HFh (Y_1 
      \disjunion Y_2, \pts_1 \disjunion \pts_2) = \HFh (Y_1, \pts_1) \tensor 
      \HFh (Y_2, \pts_2)$, and $\Fh_{(W_1, \graph_1) \disjunion (W_2, 
        \graph_2)} = \Fh_{(W_1, \graph_1)} \tensor \Fh_{(W_2, \graph_2)}$.
    \item \label{item:gluing} Given $(W, \graph) \colon (Y_1, \pts_1) \to (Y_2, 
      \pts_2)$ and $(W', \graph') : (Y_2, \pts_2) \to (Y_3, \pts_3)$, then 
      $\Fh_{W', \graph'} \comp \Fh_{W, \graph} = \Fh_{W \union W', \graph 
        \union \graph'}$; see \cite[Theorem~1.2~(2)]{Zem-graph-hat}.
    \item \label{item:spinc} $\Fh_{W,\graph}$ admits a decomposition by 
      $\SpinC$-structures in the usual way. In particular, $\Fh_{W, \graph} = 
      \sum_{\spinct \in \SpinC (W)} \Fh_{W, \graph, \spinct}$, and
      \[
        \Fh_{W', \graph', \spinct_{W'}} \comp \Fh_{W, \graph, \spinct_W} = 
        \tallsum{\sum_{\substack{\spinct \in \SpinC (W \union W')\\ \spinct 
              \rvert_{W} = \spinct_W, \, \spinct \rvert_{W'} = \spinct_{W'}}}} 
        \Fh_{W \union W', \graph \union \graph', \spinct};
      \]
      see \cite[Theorem~C]{Zem-graph-minus}. (We take the convention that this 
      equation remains valid when $\spinct_{W} \rvert_{Y_2} \neq \spinct_{W'} 
      \rvert_{Y_2}$, in which case both sides of the equation are identically 
      zero.)
    \item \label{item:exterior} If $\lambda$ is an arc from the boundary of 
      some $B^4 \subset W$ to $\graph$, then $\Fh_{W, \graph} (x) = \Fh_{W 
        \setminus B^4, \graph \union \lambda} (x \tensor y)$, where $y$ is the 
      generator of $\HFh (\bdy B^4)$; see 
      \cite[Proposition~11.1]{Zem-graph-minus}.
    \item \label{item:osz} Suppose that $Y_1$ and $Y_2$ are connected, $\pts_1$ 
      and $\pts_2$ each consist of a single point, and $\graph$ is a path. Then 
      $\Fh_{W} (x) = \Fh_{W, \graph} (x)$, where $\Fh_{W}$ is the original 
      Ozsv\'ath--Szab\'o cobordism map; see 
      \cite[Theorem~1.2~(1)]{Zem-graph-hat}.  (Implicitly, the 
      Ozsv\'ath--Szab\'o cobordism map requires a choice of basepoints and a 
      choice of path, but the injectivity statement in \fullref{thm:main-hf} is 
      independent of both choices.)
    \item \label{item:H1} Suppose again that $Y_1$ and $Y_2$ are connected, 
      $\pts_1$ and $\pts_2$ each consist of a single point, and $\graph$ is a 
      path.  Let $\gamma$ be a simple closed loop in $\Int (W)$ that intersects 
      $\graph$ at a single point.  Then $\Fh_{W} (x \tensor [\gamma]) = \Fh_{W, 
        \graph \union \gamma} (x)$, where the left-hand side is the 
      Ozsv\'ath--Szab\'o cobordism map defined above; see 
      \cite[Lemma~4.3]{Zem-graph-hat}.
    \item \label{item:action} As a special case of \eqref{item:H1}, Let $Y$ be 
      connected and let $\pts$ consist of a single point.  Consider $\graph = 
      \pts \cross I \subset Y \cross I$.  Choose a simple closed loop $\gamma$ 
      in $Y$ based at $\pts$ and let $\graph_\gamma$ be the graph obtained by 
      appending $\gamma \cross \set{1/2}$ to $\graph$.  Denote the cobordism 
      map $\Fh_{Y \cross I, \graph_\gamma}$ by $\Fc (\gamma)$.  Then, $\Fc 
      (\gamma)$ depends only on $[\gamma] \in H_1 (Y)$.  Furthermore, $\Fc 
      (\gamma * \gamma') = \Fc (\gamma) + \Fc (\gamma')$ and $\Fc (\gamma) 
      \comp \Fc (\gamma) = 0$.  Here, $\gamma * \gamma'$ is a simple closed 
      loop in the based homotopy class of the concatenation.
  \end{enumerate}
\end{theorem}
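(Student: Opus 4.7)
My plan is not to reprove Zemke's graph TQFT from scratch but to outline how one would verify each itemized property, acknowledging that the full construction is carried out in \cite{Zem15}. The foundation is to define $\HFh(Y, \pts)$ using a Heegaard diagram with a basepoint for each element of $\pts$, and to define $\Fh_{W, \graph}$ by decomposing $(W, \graph)$ into a sequence of elementary pieces: index-$1$, $2$, and $3$ handle attachments (with the graph passing through them in a controlled way), together with elementary graph modifications (isotopies, subdivisions, addition of leaves). Each elementary piece is assigned a chain map built from holomorphic triangle/disk counts on an appropriate Heegaard multi-diagram, and the assignment is shown to be invariant under the Cerf-type moves that relate different decompositions of the same $(W, \graph)$.

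Once this framework is in place, most of the listed properties are formal. For \eqref{item:disj-union}, one simply uses a disjoint union of Heegaard diagrams; the tensor product structure on $\CFh$ and the factorization of cobordism maps are immediate. For \eqref{item:spinc}, the decomposition by $\SpinC$-structures is inherited from the classical Ozsv\'ath--Szab\'o setup, since each triangle count already splits according to $\SpinC$-structures on the relevant cobordism and the graph plays no role in this splitting. For \eqref{item:exterior}, invariance under replacing $\graph$ by a graph with diffeomorphic exterior follows from the move-invariance of the construction, since any such replacement can be realized by a sequence of the allowed elementary graph moves; the second half, absorbing a separate $S^3$ component into $\graph$ via an arc, is a direct computation using the fact that $\HFh(S^3)$ is rank one.

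The substantive content lies in \eqref{item:gluing}, \eqref{item:osz}, \eqref{item:H1}, and \eqref{item:action}. For \eqref{item:gluing} the hard part is a neck-stretching/degeneration argument for the holomorphic curves counted on glued Heegaard diagrams, matching the composition of chain maps with the chain map for the glued cobordism; this is the same technical heart as in the original Ozsv\'ath--Szab\'o composition law, but now with basepoints and graph data threaded through. Item \eqref{item:osz} requires comparing Zemke's construction, specialized to a path, with the original Ozsv\'ath--Szab\'o cobordism map; one verifies this on each elementary handle by choosing compatible diagrams so the two prescriptions coincide tautologically. Item \eqref{item:H1} then follows by first reducing (via \eqref{item:exterior}) to the case where $\graph \union \gamma$ meets a single $3$-ball containing $\gamma$, and then identifying the local contribution of the extra loop with the $H_1(W)$-action defined on $\HFh$; this can be checked on a model cobordism $S^1 \cross S^3$ minus a ball, where both sides are explicit. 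Finally, \eqref{item:action} is a computation inside $Y \cross I$: homotopy invariance of $\Fc(\gamma)$ comes from \eqref{item:exterior}; additivity under concatenation and the vanishing $\Fc(\gamma)^2 = 0$ are verified by exhibiting explicit graph cobordisms and applying \eqref{item:gluing} together with a simple symmetry argument.

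The main obstacle in such a reconstruction would be \eqref{item:gluing}, since it demands the full analytic machinery needed to show that the composition law survives the enrichment of the theory by basepoints and graphs; the remaining items are then largely formal consequences, provided one has set up the elementary cobordism maps and invariance under graph moves carefully enough.
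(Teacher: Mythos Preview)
The paper does not prove this theorem at all: it is stated as a compilation of results from Zemke's graph TQFT paper \cite{Zem15}, with each item carrying an explicit citation to the relevant theorem, lemma, or remark there. So there is no ``paper's own proof'' to compare your proposal against; the authors treat \fullref{thm:zemke} as background input and immediately use it to establish \fullref{prop:surgery-maps} and \fullref{thm:main-hf}.

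Your outline is a reasonable high-level sketch of how the results are established in \cite{Zem15}, and nothing in it is wrong as a summary. But for the purposes of this paper no such reconstruction is needed or expected: the correct ``proof'' here is simply the list of citations already given in the statement.
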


We now need a slight generalization of \fullref{thm:zemke}~\eqref{item:H1}, 
i.e.\ \cite[Lemma~4.3]{Zem-graph-hat}, which will allow us to analyze the 
effect on the cobordism map of appending multiple loops to a path.  We begin 
with the identity cobordism.  

\begin{lemma}
  \label{lem:product-loops}
  Suppose that $Y$ is connected, and that $\pts$ consists of a single point.  
  Suppose that $\graph$ is a graph obtained by taking $\pts \cross I \subset Y 
  \cross I$ and appending to it $\ell$ disjoint simple closed curves $\gamma_1, 
  \dotsc, \gamma_\ell$, which each intersect $\pts \cross I$ only at a single 
  point.  Then
  \[
    \Fh_{Y \cross I} (x \tensor ([\gamma_1] \wedge \dotsb \wedge 
    [\gamma_\ell])) = \Fh_{Y \cross I, \graph} (x),
  \]
  where the left-hand side is the Ozsv\'ath--Szab\'o cobordism map.
\end{lemma}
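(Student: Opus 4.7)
The strategy is to decompose the cobordism $Y \cross I$ into $\ell$ consecutive slabs, each containing a single loop, and thereby reduce to iterated applications of the single-loop case \fullref{thm:zemke}~\eqref{item:H1}.

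To set up the decomposition, I would first apply an ambient isotopy to $\graph$ within $Y \cross I$ to arrange that the attachment points of the loops along $\pts \cross I$ sit at distinct heights $0 < t_1 < \dotsb < t_\ell < 1$, and to compress each $\gamma_i$ into a thin horizontal slab around $t_i$, keeping the $\ell$ slabs pairwise disjoint. Such an isotopy exists by general position in the $4$-manifold $Y \cross I$: each $\gamma_i$ can be isotoped to any simple closed curve in its free homotopy class, and disjointness from the other loops and from the rest of $\pts \cross I$ can be maintained transversally. The exterior of the isotoped graph is diffeomorphic to that of $\graph$, so by \fullref{thm:zemke}~\eqref{item:exterior} the associated graph cobordism maps coincide.

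Next, slicing $Y \cross I$ at level surfaces $Y \cross \set{s_i}$ with $0 = s_0 < t_1 < s_1 < \dotsb < t_\ell < s_\ell = 1$ produces $\ell$ slabs, each identified with $Y \cross I$ and each containing exactly one loop together with its portion of the path; call this restriction $\graph_i$. By the composition law \fullref{thm:zemke}~\eqref{item:gluing},
\[
\Fh_{Y \cross I, \graph} = \Fh_{Y \cross I, \graph_\ell} \comp \dotsb \comp \Fh_{Y \cross I, \graph_1},
\]
and \fullref{thm:zemke}~\eqref{item:H1} identifies each factor with $\Fh_{Y \cross I}(\blank \tensor [\gamma_i])$. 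It then remains to recognize the resulting composition as the OS map on the wedge $[\gamma_1] \wedge \dotsb \wedge [\gamma_\ell]$; this follows from the fact that $\HFh(Y)$ is a module over $\extprod^*(H_1(Y)/\Tors) \tensor \zeetwo$ via the OS $H_1$-action, realized on the nose by the identity cobordism $Y \cross I$.

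The main obstacle I foresee is this last identification. Although it is standard that the $H_1$-action endows $\HFh(Y)$ with an exterior algebra module structure, checking that the OS cobordism map for the identity cobordism realizes the full wedge action — rather than merely the iterated single-class action — requires a careful application of the OS composition law for $H_1$-decorated cobordism maps, specialized to the situation where each $[\gamma_i]$ is supported in a distinct slab. A secondary subtlety is justifying the isotopy in the first step, but this should reduce to standard general-position arguments in dimension four together with the fact that the graph cobordism map is invariant under ambient isotopy of the graph within the cobordism.
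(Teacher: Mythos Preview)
Your approach is correct and essentially matches the paper's: isotope the loops to distinct heights, slice $Y \cross I$ into slabs, and use \fullref{thm:zemke}~\eqref{item:gluing} to express $\Fh_{Y\cross I,\graph}$ as a composition of single-loop maps $\Fc(\gamma_i)$. The paper sidesteps your ``main obstacle'' by citing \fullref{thm:zemke}~\eqref{item:action} rather than \eqref{item:H1}: since $\Fc(\gamma)$ depends only on $[\gamma]\in H_1(Y)$, is additive, and satisfies $\Fc(\gamma)^2=0$, the assignment $\Fc$ descends to $\extprod^*(H_1(Y)/\Tors)\tensor\zeetwo$ directly, which is exactly the Ozsv\'ath--Szab\'o module structure on $\HFh(Y)$.
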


\begin{proof}
  This is implicit in the work of Zemke \cite{Zem-graph-hat}, but we give the 
  proof for completeness.  By a homotopy, and hence isotopy, in $Y \cross I$, 
  we may arrange that $\gamma_i \subset Y \cross \set{i / (\ell+1)}$.  
  Therefore, using \fullref{thm:zemke}~\eqref{item:gluing}, we can write 
  $\Fh_{Y \cross I, \graph}$ as a composition of the maps $\Fc (\gamma_i)$.  
  Viewing $\Fc$ as a function from $H_1 (Y)$ to $\End_{\zeetwo} (\HFh (Y))$,
  \fullref{thm:zemke}~\eqref{item:action} implies that this descends to the 
  exterior algebra.
\end{proof}

We move on to more general cobordisms.

\begin{lemma}
  \label{lem:gen-loops}
  Suppose that $Y_1$ and $Y_2$ are connected, and that $\pts_1$ and $\pts_2$ 
  each consist of a single point. Let $W \colon Y_1 \to Y_2$ be a connected 
  cobordism.  Suppose that $\graph$ is a graph obtained by taking a path 
  $\alpha$ from $\pts_1$ to $\pts_2$ and appending to it $\ell$ disjoint simple 
  closed loops $\gamma_1, \dotsc, \gamma_\ell$, which each intersect $\alpha$ 
  only at a single point.  Then
  \[
    \Fh_{W} (x \tensor ([\gamma_1] \wedge \dotsb \wedge [\gamma_\ell])) = 
    \Fh_{W, \graph} (x),
  \]
  where the left-hand side is the Ozsv\'ath--Szab\'o cobordism map.
\end{lemma}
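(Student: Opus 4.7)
The plan is to proceed by induction on the number of loops $\ell$, reducing to \fullref{thm:zemke}~\eqref{item:H1} for the single-loop case. The cases $\ell = 0$ and $\ell = 1$ are immediate from \fullref{thm:zemke}~\eqref{item:osz} and \eqref{item:H1}, respectively. The strategy for the inductive step is to split $W$ along a codimension-$1$ hypersurface that separates the loop $\gamma_\ell$ from $\gamma_1, \dotsc, \gamma_{\ell-1}$, apply the inductive hypothesis to one piece and \eqref{item:H1} to the other, and reassemble using Zemke's gluing formula.

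Concretely, after isotopy of $\alpha$ rel endpoints, I would arrange that the attaching points $p_1, \dotsc, p_\ell$ occur in order of increasing value of some Morse function $f \colon W \to [0, 1]$ defining the cobordism. Picking a regular value $t_* \in (f(p_{\ell-1}), f(p_\ell))$, the level set $Y' := f^{-1}(t_*)$ meets $\alpha$ transversely at a single point $p_*$ and decomposes $W = W_1 \cup_{Y'} W_2$ with $W_1 \colon Y_1 \to Y'$ and $W_2 \colon Y' \to Y_2$. After a further isotopy of the graph, I may assume $\gamma_1, \dotsc, \gamma_{\ell-1} \subset \Int (W_1)$ and $\gamma_\ell \subset \Int (W_2)$. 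If $Y'$ is not connected, modify $f$ by inserting a cancelling $1$-/$2$-handle pair away from $\Gamma$; by \fullref{thm:zemke}~\eqref{item:exterior} this does not change cobordism maps but allows $Y'$ to be taken connected. Setting $\Gamma_1 := (\alpha \cap W_1) \union \gamma_1 \union \dotsb \union \gamma_{\ell - 1}$ and $\Gamma_2 := (\alpha \cap W_2) \union \gamma_\ell$, \fullref{thm:zemke}~\eqref{item:gluing} gives $F_{W, \Gamma} = F_{W_2, \Gamma_2} \comp F_{W_1, \Gamma_1}$. The inductive hypothesis applied to $(W_1, \Gamma_1)$ and \fullref{thm:zemke}~\eqref{item:H1} applied to $(W_2, \Gamma_2)$ then yield
\[
  F_{W, \Gamma}(x) = F_{W_2}\bigl( F_{W_1}(x \tensor ([\gamma_1] \wedge \dotsb \wedge [\gamma_{\ell-1}])) \tensor [\gamma_\ell] \bigr),
\]
with homology classes taken in $H_1(W_i) / \Tors$.

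To conclude, I would invoke the composition law for Ozsv\'ath--Szab\'o cobordism maps equipped with their $H_1$-action: if $\xi_i \in \extprod^*(H_1(W_i) / \Tors) \tensor \zeetwo$, then the iterated map above equals $F_W(x \tensor (j_{1*}(\xi_1) \wedge j_{2*}(\xi_2)))$, where $j_i \colon W_i \hookrightarrow W$ are the inclusions. Applied to our setting, this gives the desired identity, since $j_{i*}$ simply sends each $[\gamma_k]$ to its class in $H_1(W) / \Tors$.

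The main technical obstacle in this plan is the final composition law for the $H_1$-action under decomposition of cobordisms. While it is a standard feature of the theory, the cleanest self-contained route is probably to instead adapt Zemke's proof of \eqref{item:H1} so that it applies verbatim when $\alpha$ already carries loops: because any additional loops interact with the Heegaard triple diagram only through their attaching points, the effect of inserting one more loop still corresponds to wedging its $H_1$-class into the cobordism map. This avoids appealing to a composition formula as a black box and realizes the induction as a direct one-step argument. A secondary concern, arranging the separating level $Y'$ to be connected, is handled by the ad hoc handle modification described above.
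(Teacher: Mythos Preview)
Your inductive scheme is reasonable, but as you yourself flag, the step
\[
  \Fh_{W_2}\bigl(\Fh_{W_1}(x \tensor \xi_1) \tensor [\gamma_\ell]\bigr)
  \;=\;
  \Fh_{W}\bigl(x \tensor (j_{1*}\xi_1 \wedge j_{2*}[\gamma_\ell])\bigr)
\]
is not among the properties listed in \fullref{thm:zemke}, and establishing it directly for an arbitrary splitting $W = W_1 \cup_{Y'} W_2$ is essentially as hard as the lemma itself. So the main line of your argument has a genuine gap at exactly the point you identify. Your proposed workaround of ``adapting Zemke's proof of \eqref{item:H1}'' is in fact what the paper does, but the paper organizes it more cleanly than a loop-by-loop induction.

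The paper does not separate the $\gamma_i$ from one another. Instead it uses the standard handle decomposition of $W$ to write $(W,\Gamma)$ as a composition of three pieces: $(W_1,\Gamma_1)$ consisting only of $1$-handles with $\Gamma_1$ a path; a product $(\bdy W_1 \times I, \Gamma_*)$ carrying \emph{all} of the loops $\gamma_1,\dotsc,\gamma_\ell$; and $(W_2,\Gamma_2)$ consisting of $2$- and $3$-handles with $\Gamma_2$ a path. This is exactly the decomposition in Zemke's proof of \eqref{item:H1}, just with more loops inserted in the middle. By \eqref{item:osz} the outer pieces give ordinary Ozsv\'ath--Szab\'o maps, and \fullref{lem:product-loops} handles the middle piece in one shot. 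The result then follows from \eqref{item:gluing}, and no separate composition law for the $H_1$-action is required: the identification with $\Fh_W(x\tensor\wedge[\gamma_i])$ is built into the way the Ozsv\'ath--Szab\'o twisted map is defined via this very decomposition, just as in Zemke's single-loop case. The advantage of this route over yours is that all the exterior-algebra bookkeeping is confined to the product cobordism, where \fullref{lem:product-loops} already does the work.
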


\begin{proof}
  We may decompose $(W, \graph)$ as a composition of three cobordisms: $(W_1, 
  \graph_1)$, where $W_1$ consists only of 1-handles and $\graph_1$ is a path; 
  $(\bdy W_1 \cross I, \graph_*)$, where $\graph_*$ consists of a graph in 
  $\bdy W_1 \cross I$ as in the statement of \fullref{lem:product-loops}; and 
  $(W_2, \graph_2)$, where $W_2$ consists of $2$- and $3$-handles, and 
  $\graph_2$ is again a path. The result now follows from 
  \fullref{lem:product-loops} together with 
  \fullref{thm:zemke}~\eqref{item:gluing}.
\end{proof}

With this generalization, we may now complete the proof of 
\fullref{prop:surgery-maps}.

\begin{proof}[Proof of \fullref{prop:surgery-maps}]
  Let $\nbhdgammai \homeo \gamma_i \cross D^3$ be a neighborhood of $\gamma_i$, 
  and let $P = X \setminus (\bigdisjunion_i \nbhdgammai)$.
  Let $X' = X \setminus (B^4_1 \disjunion \dotsb \disjunion B^4_\ell)$, where 
  $B^4_i \subset \Int (\nbhdgammai)$.     We construct a properly embedded 
  graph $\graphXp$ in $X'$ as follows; see \fullref{fig:chain-A}.

  \vspace*{\factualfontsize}

  \begin{figure}[!htbp]
    \labellist
    \footnotesize\hair 2pt
    \pinlabel {$p_0$} at 13 12
    \pinlabel {$p_1$} at 70 16
    \pinlabel {$p_2$} at 145 16
    \pinlabel {$p_\ell$} at 230 16
    \pinlabel {$p_{\ell+1}$} at 282 16

    \pinlabel {$\alpha_0$} at 45 16
    \pinlabel {$\alpha_1$} at 110 16
    \pinlabel {$\alpha_2$} at 175 16
    \pinlabel {$\alpha_\ell$} at 255 16

    \pinlabel {$\beta_1$} at 70 35
    \pinlabel {$\beta_2$} at 145 35
    \pinlabel {$\beta_\ell$} at 230 35

    \pinlabel {$q_1$} at 70 50
    \pinlabel {$q_2$} at 145 50
    \pinlabel {$q_\ell$} at 232 50

    \pinlabel {$\gamma_1$} at 58 85
    \pinlabel {$\gamma_2$} at 133 85
    \pinlabel {$\gamma_\ell$} at 220 85

    \pinlabel {$r_1$} at 97 85
    \pinlabel {$r_2$} at 172 85
    \pinlabel {$r_\ell$} at 259 85

    \pinlabel {$\delta_1$} at 94 70
    \pinlabel {$\delta_2$} at 170 70
    \pinlabel {$\delta_\ell$} at 256 70

    \pinlabel {$\epsilon_1$} at 91 100
    \pinlabel {$\epsilon_2$} at 167 100
    \pinlabel {$\epsilon_\ell$} at 253 100

    \pinlabel {$s_1$} at 71 103
    \pinlabel {$s_2$} at 147 103
    \pinlabel {$s_\ell$} at 233 103

    \pinlabel {$S^3$} [b] at 78 111
    \pinlabel {$S^3$} [b] at 154 111
    \pinlabel {$S^3$} [b] at 241 111

    \pinlabel {\normalsize $Y_1$} at -9 27
    \pinlabel {\normalsize $Y_2$} at 323 27

    \endlabellist
    \includegraphics[scale=1.0]{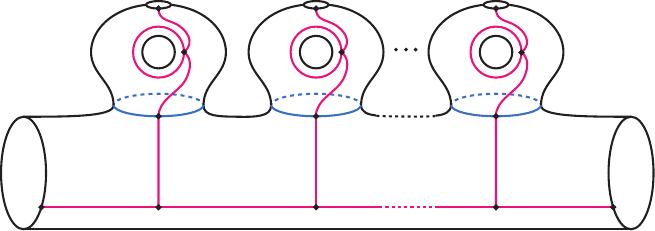}
    \caption{The embedded graph $\graphXp$ in $X'$.}
    \label{fig:chain-A}
  \end{figure}

  We begin with the vertex set.  Choose $\ell$ points $p_1, \dotsc, p_\ell$ in 
  the interior of $P$, and points $p_0$ and $p_{\ell+1}$ in $Y_1$ and $Y_2$ 
  respectively.  Choose $\ell$ points $q_1, \dotsc q_\ell$ with $q_i \in \bdy 
  \nbhdgammai$, which are copies of $S^1 \cross S^2$.  Choose $\ell$ points 
  $r_1, \dotsc, r_\ell$ with $r_i \in \gamma_i$.  Finally, let $s_i$ be a point 
  in $S^3_i = \partial B^4_i$ for each $i$.

  Now we define the edge sets.  Choose any collection of embedded arcs 
  $\alpha_0, \dotsc, \alpha_\ell$ with $\alpha_i \subset P$ connecting $p_i$ 
  and $p_{i+1}$.  Let $\beta_i \subset P$ be an arc from $p_i$ to $q_i$.  
  Connect $q_i$ and $r_i$ by arcs $\delta_i$, and $r_i$ and $s_i$ by arcs 
  $\epsilon_i$, in $\nbhdgammai \setminus B^4_i$.  We may choose the edges 
  above in such a way that their interiors are mutually disjoint, avoid the 
  $\gamma_i$, and are contained in the interior of $X'$.  Then, the edge set of 
  $\graphXp$ consists of the edges $\alpha_i$, $\beta_i$, $\gamma_i$, 
  $\delta_i$, and $\epsilon_i$.  In accordance with 
  \fullref{thm:zemke}~\eqref{item:disj-union}, we view the cobordism map for 
  $(X', \graphXp)$ as a map \[\Fh_{X', \graphXp} \colon \HFh (Y_1) \tensor 
    \paren{\bigtensor^\ell_{i=1} \HFh (S^3_i)} \to \HFh(Y_2).\]

  It follows from \fullref{lem:gen-loops} as well as  
  \fullref{thm:zemke}~\eqref{item:disj-union} and \eqref{item:exterior} that
  \[
    \Fh_X (x \tensor ([\gamma_1] \wedge \dotsb \wedge [\gamma_\ell])) = 
    \Fh_{X', \graphXp} (x \tensor y_1 \tensor \dotsb \tensor y_\ell),
  \]
  where $y_i$ is the generator of $\HFh(S^3_i)$.  (We can first contract the 
  homology elements, and then contract the arcs $\beta_i \union \delta_i \union 
  \epsilon_i$.)  Let $\graphP$ be the intersection of $\graphXp$ with $P$, 
  which can alternatively be obtained by excising the $\gamma_i, \delta_i,$ and 
  $\epsilon_i$ arcs.  

  Note that $Z = P \union (\bigdisjunion_i (D^2 \cross S^2)_i)$.  Here, we 
  suppress the choice of gluing from the notation.  Similarly, we let $Z' = Z 
  \setminus (B^4_1 \disjunion \dotsb \disjunion B^4_\ell)$ where $B^4_i \subset 
  (D^2 \cross S^2)_i$; then $Z' = P \union (\bigdisjunion_i R_i)$, where each 
  $R_i$ is a punctured $D^2 \cross S^2$.  Let $\zeta_i$ be an arc in $R_i$ that 
  connects $q_i$ and $s_i$; then we define $\graphRi$ in $R_i$ to be $\zeta_i$, 
  and define $\graphZp$ in $Z'$ as the union of the arcs $\alpha_i, \beta_i, 
  \zeta_i$. See \fullref{fig:chain-B} for an illustration of $(Z', \graphZp)$.

  \vspace*{\factualfontsize}

  \begin{figure}[!htbp]
    \labellist
    \footnotesize \hair 2pt
    \pinlabel {$p_0$} at 13 12
    \pinlabel {$p_1$} at 70 16
    \pinlabel {$p_2$} at 145 16
    \pinlabel {$p_\ell$} at 230 16
    \pinlabel {$p_{\ell+1}$} at 282 16

    \pinlabel {$\alpha_0$} at 45 16
    \pinlabel {$\alpha_1$} at 110 16
    \pinlabel {$\alpha_2$} at 175 16
    \pinlabel {$\alpha_\ell$} at 255 16

    \pinlabel {$\beta_1$} at 70 35
    \pinlabel {$\beta_2$} at 145 35
    \pinlabel {$\beta_\ell$} at 230 35

    \pinlabel {$q_1$} at 70 50
    \pinlabel {$q_2$} at 145 50
    \pinlabel {$q_\ell$} at 232 50

    \pinlabel {$\zeta_1$} at 95 85
    \pinlabel {$\zeta_2$} at 171 85
    \pinlabel {$\zeta_\ell$} at 258 85

    \pinlabel {$s_1$} at 71 103
    \pinlabel {$s_2$} at 147 103
    \pinlabel {$s_\ell$} at 233 103

    \pinlabel {$S^3$} [b] at 78 111
    \pinlabel {$S^3$} [b] at 154 111
    \pinlabel {$S^3$} [b] at 241 111

    \pinlabel {\normalsize $Y_1$} at -9 27
    \pinlabel {\normalsize $Y_2$} at 323 27

    \endlabellist
    \includegraphics[scale=1.0]{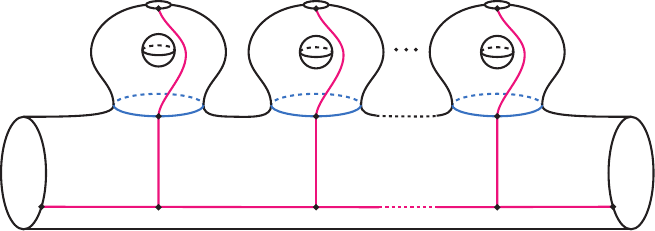}
    \caption{The embedded graph $\graphZp$ in $Z'$.}
    \label{fig:chain-B}
  \end{figure}

  Viewing the cobordism map for $(Z', \graphZp)$ as a map $\Fh_{Z', \graphZp} 
  \colon \HFh (Y_1) \tensor (\bigtensor_i \HFh (S^3_i)) \to \HFh (Y_2)$, we 
  have
  \[
    \Fh_Z (x) = \Fh_{Z', \graphZp} (x \tensor y_1 \tensor \dotsb \tensor 
    y_\ell),
  \]
  again by \fullref{thm:zemke}~\eqref{item:exterior}. Thus, 
  \eqref{eq:surgery-maps} will follow if we can show
  \[
    \Fh_{X', \graphXp} (x \tensor y_1 \tensor \dotsb \tensor y_\ell) = \Fh_{Z', 
      \graphZp} (x \tensor y_1 \tensor \dotsb \tensor y_\ell).
  \]

  To do so, let $Q_i = \nbhdgammai \setminus B^4_i$, and let $\graphQi$ be the 
  intersection of $\graphXp$ with $Q_i$. Both $(Q_i, \graphQi)$ and $(R_i, 
  \graphRi)$ are cobordisms from $(S^3, s_i)$ to $(S^1 \cross S^2, q_i)$; see 
  \fullref{fig:caps}.

  \begin{figure}[!htbp]
    \begin{subfigure}[b]{0.45\textwidth}
      \centering
      \labellist
      \footnotesize \hair 2pt
      \pinlabel {$s_i$} at 22 48
      \pinlabel {$r_i$} at 62 24
      \pinlabel {$q_i$} at 103 48

      \pinlabel {$\epsilon_i$} at 37 30
      \pinlabel {$\gamma_i$} at 61 70
      \pinlabel {$\delta_i$} at 83 30

      \pinlabel {\normalsize $S^3$} at -8 50
      \pinlabel {\normalsize $S^1 \times S^2$} at 145 50
      \endlabellist
      \includegraphics[scale=1.0]{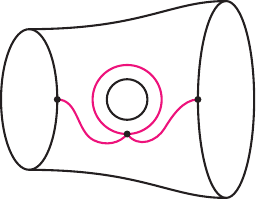}
      \caption{The cobordism $(Q_i, \graphQi)$.}
      \label{fig:caps1}
    \end{subfigure}
    ~
    \begin{subfigure}[b]{0.45\textwidth}
      \centering
      \labellist
      \footnotesize \hair 2pt
      \pinlabel {\normalsize $S^3$} at -8 50
      \pinlabel {\normalsize $S^1 \times S^2$} at 145 50

      \pinlabel {$s_i$} at 22 48
      \pinlabel {$q_i$} at 103 48

      \pinlabel {$\zeta_i$} at 62 27

      \endlabellist
      \includegraphics[scale=1.0]{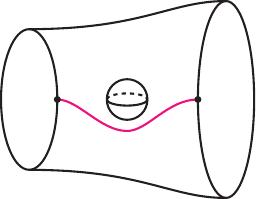}
      \caption{The cobordism $(R_i, \graphRi)$.}
      \label{fig:caps2}
    \end{subfigure}
    \caption{The cobordisms $(Q_i, \graphQi)$ and $(R_i, \graphRi)$.}
    \label{fig:caps}
  \end{figure}

  Viewing $(P, \graphP)$ as a cobordism from $(Y_1, p_0) \disjunion 
  (\bigdisjunion_i (S^1 \cross S^2)_i, q_i) \to (Y_2, p_{\ell+1})$, by 
  \fullref{thm:zemke}~\eqref{item:disj-union} and \eqref{item:gluing}, we have 
  that
  \[
    \Fh_{X', \graphXp} = \Fh_{P, \graphP} \comp \paren{\Id_{\HFh (Y_1)} \tensor 
      \Fh_{Q_1, \graph_{Q_1}} \tensor \dotsb \tensor \Fh_{Q_\ell, 
        \graph_{Q_\ell}}}
  \]
  and
  \[
    \Fh_{Z', \graphZp} = \Fh_{P, \graphP} \comp \paren{\Id_{\HFh (Y_1)} \tensor 
      \Fh_{R_1, \graph_{R_1}} \tensor \dotsb \tensor \Fh_{R_\ell, 
        \graph_{R_\ell}}}.
  \]
  Thus, we need only to show that $\Fh_{Q_i, \graphQi} = \Fh_{R_i, \graphRi}$ 
  for each $i$. On the one hand, \fullref{thm:zemke}~\eqref{item:H1} together 
  with \eqref{eq:H1-action} imply that
  \[
    \Fh_{Q_i, \graphQi} (y_i) = \Fh_{Q_i} (y_i \tensor [\gamma_i]) = [\gamma_i] 
    \cdot \Fh_{Q_i} (y_i).
  \]
  Since $Q_i$ is simply a $1$-handle attachment to $S^3$, its cobordism map, by 
  Ozsv\'ath and Szab\'o's definition, sends $y_i$ to the topmost generator of 
  $\HFh (S^1 \cross S^2)$ (see \cite[p.~364]{OzsSza06}), and the action by 
  $[\gamma_i]$ sends this to the bottommost generator (see 
  \cite[Proposition~6.4]{PropApp}).  On the other hand, 
  \fullref{thm:zemke}~\eqref{item:osz} implies that
  \[
    \Fh_{R_i, \graphRi} (y_i) = \Fh_{R_i} (y_i).
  \]
  Since $R_i$ is simply a $0$-framed $2$-handle attachment along the unknot in 
  $S^3$, its cobordism map sends $y_i$ to the bottommost generator of $\HFh 
  (S^1 \cross S^2)$. (One could directly compute the map from the definition on 
  \cite[pp.~356--357]{OzsSza06} using the standard Heegaard triple diagram for 
  $R_i$. Alternatively, one could observe that $\Fh_{R_i}$ must not be zero 
  because of the exact triangle for surgery along the unknot; since the 
  cobordism map respects the $H_1$-action, and the $H_1$-action on $\HFh (S^3)$ 
  is trivial, this means that $\Fh_{R_i} (y_i)$ is in the kernel of the 
  $H_1$-action on $\HFh (S^1 \cross S^2)$.)  Consequently, $\Fh_{Q_i, \graphQi} 
  (y_i) = \Fh_{R_i, \graphRi} (y_i)$ as desired.
\end{proof}

\begin{proof}[Proof of \fullref{thm:main-hf}]
  We consider the hat flavor first. Consider the double $\double{W}$ of $W$.  
  Then, by \fullref{prop:surgery}, $\double{W}$ is described by surgery on $X 
  \homeo (\Ym \cross I) \connsum m (S^1 \cross S^3)$ along $m$ circles 
  $\gamma_1, \dotsc, \gamma_m$, where $[\gamma_1] \wedge \dotsb \wedge 
  [\gamma_m] = \alpha_1 \wedge \dotsb \wedge \alpha_m \in (\extprod^* (H_1 (X) 
  / \Tors) / \langle H_1 (\Ym) / \Tors \rangle) \tensor_{\Z} \zeetwo$, and 
  $\alpha_i$ is the homology class of the core of the $i^{\text{th}}$ $S^1 
  \cross S^3$ summand.  Note that the same description is true of $\Ym \cross 
  I$; in this case, the surgery is performed along the core circles $\gamma'_i$ 
  of the $(S^1 \cross S^3)$'s themselves.  

  Applying \fullref{prop:surgery-maps} with $Z = \double{W}$, we have that
  \[
    \Fh_{X} (x \tensor ([\gamma_1] \wedge \dotsb \wedge [\gamma_m])) = 
    \Fh_{\double{W}} (x) = \Fh_{-W} \comp \Fh_{W} (x).
  \]
  Now consider $\Ym \cross I$ as surgery on $X$ along the cores $\gamma'_i$.  
  Applying \fullref{prop:surgery-maps} again, this time with $Z = \Ym \cross I$, 
  we have
  \[
    \Fh_{X} (x \tensor ([\gamma'_1] \wedge \dotsb \wedge [\gamma'_m])) = 
    \Fh_{\Ym \cross I} (x) = \Id_{\HFh (\Ym)}.
  \]
  Since $[\gamma_1] \wedge \dotsb \wedge [\gamma_m] = [\gamma'_1] \wedge \dotsb 
  \wedge [\gamma'_m]$ in $\extprod^*(H_1 (X) / \Tors) / \langle H_1(\Ym)/\Tors 
  \rangle \tensor_{\Z} \zeetwo$, by the linearity of $\Fh$, it suffices to show 
  that $\Fh_{X} (x \tensor \xi) = 0$ for $x \in \HFh (\Ym)$ and $\xi \in 
  \extprod^m (H_1(X) / \Tors) \intersect \langle H_1 (\Ym) / \Tors \rangle 
  \tensor_{\Z} \zeetwo$.  Indeed, this will imply that $\Fh_{-W} \circ \Fh_{W} 
  = \Id_{\HFh (\Ym)}$, and we have the desired result for $\HFh$.  

  Note that $\extprod^m (H_1 (X) / \Tors) \intersect \langle H_1 (\Ym) / \Tors 
  \rangle$ is generated by elements of the form $\omega \wedge (\bigwedge_{i 
    \in I} \alpha_i)$, where $\omega$ is a wedge of elements in $H_1 (\Ym) / 
  \Tors$ and $I \subsetneq \{1,\ldots, m\}$; we would like to show that if $\xi 
  \in \langle H_1(\Ym)/\Tors \rangle \tensor_{\Z} \zeetwo$ is of this form, 
  then $\Fh(x \tensor \xi) = 0$ for $x \in \HFh(\Ym)$.  Therefore, let $\xi = 
  \omega \wedge (\bigwedge_{i \in I} \alpha_i)$ be of this form. The idea is 
  that $\xi$ misses at least one $S^1 \cross S^3$ summand, and the cobordism 
  map associated to a twice punctured $S^1 \cross S^3$, without an 
  $H_1$-action, is identically zero. Concretely, choose $j \in \set{1, \dotsc, 
    m} \setminus I$, and write $X = T_j \union_{S^3} V$, where $T_j$ is the 
  $j^{\text{th}}$ $S^1 \cross S^3$ summand punctured once, and $V = ((\Ym 
  \cross I) \connsum (m-1) (S^1 \cross S^3)) \setminus B^4$; then $\xi$ 
  determines a graph $\graph_\xi$ in $X$ such that $\Fh_X (x \tensor \xi) = 
  \Fh_{X, \graph_\xi} (x)$, and we may assume that $\graph_\xi \intersect T_j = 
  \emptyset$. Let $X' = X \setminus B^4_j$, where $B^4_j \subset \Int (T_j)$.  
  As in \fullref{thm:zemke}~\eqref{item:exterior}, choose an arc $\lambda$ from 
  $\bdy B^4_j$ to $\graph_{\xi}$ that intersects $\bdy T_j = \bdy V$ once, and 
  let $\graph_{\xi}' = \graph_{\xi} \union \lambda$; then we have
  \[
    \Fh_{X, \graph_\xi} (x) = \Fh_{X', \graph_\xi'} (x \tensor y_j),
  \]
  where $y_j$ is the generator of $\HFh (\bdy B^4_j)$. Writing $T_j' = T_j 
  \setminus B^4_j$, it is also clear that
  \[
    \Fh_{X', \graph_\xi'} = \Fh_{V, \graph_{\xi}' \intersect V} \comp 
    \paren{\Id_{\HFh (\Ym)} \tensor \Fh_{T_j', \lambda \intersect T_j'}}.
  \]
  Since $\lambda \intersect T_j'$ is simply a path, $\Fh_{T_j', \lambda 
    \intersect T_j'} = \Fh_{T_j'}$. Note that $T_j' \homeo (S^3 \cross I) 
  \connsum (S^1 \cross S^3)$ is obtained by adding a $1$-handle and a 
  $3$-handle to $S^3 \cross I$, and so a direct computation shows that 
  $\Fh_{T_j'} (y_j) = 0$; thus, $\Fh_X (x \tensor \xi) = 0$, as desired.

  To obtain the analogous result for the other flavors of Heegaard Floer 
  homology, we use that the long exact sequences relating the various flavors 
  are natural with respect to cobordism maps.  It is straightforward to see 
  that only an isomorphism on $\HFp$ can induce the identity map on $\HFh$, and 
  similarly for $\HFm$.  Finally, only an isomorphism on $\HFi$ can induce an 
  isomorphism on both $\HFp$ and $\HFm$.
\end{proof}

\subsection{A \texorpdfstring{$\SpinC$}{SpinC}-refinement of 
  \texorpdfstring{\fullref{thm:main-hf}}{the Heegaard Floer statement}}
\label{ssec:spinc}

We now provide a $\SpinC$-refinement of \fullref{thm:main-hf}. First, observe 
that any $\SpinC$-structure $\spinct$ on a cobordism $W \colon Y_1 \to Y_2$ can 
be extended to a $\SpinC$-structure $\double{\spinct}$ on $\double{W}$, since 
$\spinct$ on $W$ and $\spinct$ on $-W$ coincide on the intersection $W 
\intersect -W = Y_2$. We now have the following observation when $W$ is a 
ribbon $\Q$-homology cobordism.

\begin{lemma}
  \label{lem:unique-spinc}
  Let $\Ym$ and $\Yp$ be closed $3$-manifolds, and suppose that $W \colon \Ym 
  \to \Yp$ is a ribbon $\Q$-homology cobordism. If a $\SpinC$-structure 
  $\spincp$ on $\Yp$ can be extended to a $\SpinC$-structure $\spinct$ on $W$, 
  then the extension is unique; moreover, in this case, $\double{\spinct}$ is 
  the unique $\SpinC$-structure on $\double{W}$ that restricts to $\spincp$ on 
  $\Yp$.
\end{lemma}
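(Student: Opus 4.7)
The plan is to use the torsor structure on the set of $\SpinC$-structures together with a vanishing computation for relative cohomology. Recall that $\SpinC (X)$ is a torsor over $H^2 (X; \Z)$, and restriction to the boundary is equivariant with respect to pullback, so for any $\spincp \in \SpinC (\Yp)$ the set of extensions to $W$ is either empty or a torsor over $\ker (H^2 (W; \Z) \to H^2 (\Yp; \Z))$.

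For the first claim, I would use the long exact sequence of the pair $(W, \Yp)$ to identify this kernel with the image of $H^2 (W, \Yp; \Z) \to H^2 (W; \Z)$, so it suffices to show $H^2 (W, \Yp; \Z) = 0$. By Poincar\'e--Lefschetz duality, $H^2 (W, \Yp; \Z) \isom H_2 (W, \Ym; \Z)$. The ribbon handle decomposition of $W$ has only $1$- and $2$-handles, so the relative chain complex $C_* (W, \Ym)$ is concentrated in degrees $1$ and $2$ (each isomorphic to $\Z^m$), and $H_2 (W, \Ym; \Z)$ is the kernel of the attaching matrix $C$. By the $\Q$-homology cobordism hypothesis, $C$ is invertible over $\Q$ (as used in the proof of \fullref{lem:gordon-h1}), hence injective over $\Z$, so $H_2 (W, \Ym; \Z) = 0$.

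For the second claim, I would deduce it from the first. Suppose $\spinct_1, \spinct_2 \in \SpinC (\double{W})$ both restrict to $\spincp$ on $\Yp$. Writing $\double{W} = W \union_{\Yp} (-W)$, the restrictions $\spinct_i \rvert_W$ both extend $\spincp$ to $W$ and so agree by the first claim; similarly $\spinct_1 \rvert_{-W} = \spinct_2 \rvert_{-W}$. Since a $\SpinC$-structure on $\double{W}$ is determined by its restrictions to the closed halves (which already agree on the common boundary $\Yp$), we conclude $\spinct_1 = \spinct_2$. As $\double{\spinct}$ is itself such an extension, it is the unique one. The only subtle point, which I view as minor, is justifying that a $\SpinC$-structure on the double is determined by its restrictions to the halves; this can alternatively be formalized by using Mayer--Vietoris or excision to show $H^2 (\double{W}, \Yp; \Z) = 0$ (which follows from the first-claim vanishing on each half), and then repeating the torsor argument directly on $\double{W}$.
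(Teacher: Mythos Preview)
Your proof is correct. The first claim is handled exactly as in the paper: long exact sequence of $(W,\Yp)$, Poincar\'e--Lefschetz duality $H^2(W,\Yp)\isom H_2(W,\Ym)$, and injectivity of the attaching matrix over $\Z$ from the $\Q$-homology cobordism hypothesis.

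For the second claim you take a different and somewhat more economical route than the paper. The paper proves injectivity of $H^2(\double{W})\to H^2(W)\oplus H^2(-W)$ via Mayer--Vietoris, and to do so it establishes separately that $H^1(W)\to H^1(\Yp)$ is an isomorphism (using surjectivity of $H_1(\Yp)\to H_1(W)$, a rank argument on the free parts, and the Universal Coefficient Theorem). Your argument instead recycles the first claim: since $H^2(W,\Yp)=0$ and likewise $H^2(-W,\Yp)=0$ (same underlying pair of spaces), excision/relative Mayer--Vietoris gives $H^2(\double{W},\Yp)\isom H^2(W,\Yp)\oplus H^2(-W,\Yp)=0$, and then the torsor argument on $\double{W}$ finishes. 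This avoids the separate $H^1$ computation entirely. The only point worth making explicit in a final write-up is the isomorphism $H^2(\double{W},\Yp)\isom H^2(W,\Yp)\oplus H^2(-W,\Yp)$; your ``determined by restrictions to the halves'' phrasing is precisely the injectivity of $H^2(\double{W})\to H^2(W)\oplus H^2(-W)$, which is not automatic in general but follows here from this vanishing.
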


\begin{proof}
  For the first statement, consider
  \[
    H^2 (W, \Yp) \to H^2 (W) \to H^2 (\Yp)
  \]
  from the long exact sequence of the pair $(W, \Yp)$. By the Poincar\'e 
  Duality, $H^2 (W, \Yp) \isom H_2 (W, \Ym)$. Take a ribbon handle 
  decomposition of $W$; since $W$ is a $\Q$-homology cobordism, the numbers $m$ 
  of $1$- and $2$- handles are the same, and the differential $\bdy_2 \colon 
  C_2 (W, \Ym) \to C_1 (W, \Ym)$ in the cellular chain complex is given by a 
  homomorphism $R \colon \Z^m \to \Z^m$ such that $R \tensor_{\Z} \Q$ is an 
  isomorphism. This means that $R$, and hence $\bdy_2$, are injective, and so 
  $H_2 (W, \Ym) = 0$. Thus, the map $H^2 (W) \to H^2 (\Yp)$ induced by 
  inclusion is injective, proving that any extension $\spinct$ of $\spincp$ is 
  unique.

  For the second statement, consider
  \[
    H^1 (W) \dirsum H^1 (-W) \to H^1 (\Yp) \to H^2 (\double{W}) \to H^2 (W) 
    \dirsum H^2 (-W)
  \]
  from the Mayer--Vietoris exact sequence; we wish to prove the first map is 
  surjective. In fact, we will prove that the map $H^1 (W) \to H^1 (\Yp)$ is an 
  isomorphism. To do so, consider the map $H_1 (\Yp) \to H_1 (W)$. Since $W$ is 
  a $\Q$-homology cobordism, we have $\rk_{\Z} H_1 (\Yp) = \rk_{\Z} H_1 (W)$, 
  and we denote this number by $k$; then the map in question is given by some 
  homomorphism $R' \colon \Z^k \dirsum T_1 \to \Z^k \dirsum T_2$, where $T_1$ 
  and $T_2$ are torsion, with matrix
  \[
    R' =
    \begin{pmatrix}
      A & 0\\
      B & C
    \end{pmatrix}.
  \]
  Viewing $W$ upside down, it is built from $\Yp$ by adding $2$- and 
  $3$-handles, which implies that $R'$ is surjective; in particular, $A \colon 
  \Z^k \to \Z^k$ is also surjective, and thus an isomorphism. By the Universal 
  Coefficient Theorem, the map $H^1 (W) \to H^1 (\Yp)$ is exactly given by the 
  transpose $A^\mathrm{T} \colon \Z^k \to \Z^k$, which is also an isomorphism.  
  Returning to the exact sequence, we see that the third map is injective, 
  showing that the extension from $\spinct$ to $\double{\spinct}$ is unique.
\end{proof}

We are now ready to state the following refinement of \fullref{thm:main-hf}.

\begin{theorem}
  \label{thm:main-hf-spinc}
  Let $\Ym$ and $\Yp$ be closed $3$-manifolds, and suppose that $W \colon \Ym 
  \to \Yp$ is a ribbon $\zeetwo$-homology cobordism. Fix a $\SpinC$-structure 
  $\spinc$ on $\Ym$. Then the sum of cobordism maps
  \[
    \bigparen{\sum_{\substack{\spinct \in \SpinC (W)\\ \spinct \rvert_{\Ym} = 
          \spinc}}}{\Fg_{W, \spinct}} \colon \HFg (\Ym, \spinc) \to 
    \tallsum{\bigdirsum_{\substack{\spinct \in \SpinC (W)\\ \spinct 
          \rvert_{\Ym} = \spinc}}} \HFg (\Yp, \spinct \rvert_{\Yp})
  \]
  includes $\HFg (\Ym, \spinc)$ into the codomain as a summand. In fact,
  \[
    \bigparen{\sum_{\substack{\spinct \in \SpinC (W)\\ \spinct \rvert_{\Ym} = 
          \spinc}}}{\Fh_{\double{W}, \double{\spinct}}} \colon \HFh (\Ym, 
    \spinc) \to \HFh (\Ym, \spinc)
  \]
  is the identity map.
\end{theorem}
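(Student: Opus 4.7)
The plan is to lift the proof of \fullref{thm:main-hf} to the level of $\SpinC$-structures, using the rigidity provided by \fullref{lem:unique-spinc}. The first step would be to observe that every $\SpinC$-structure $\tau$ on $\double{W}$ arises as $\double{\spinct}$ for a unique $\spinct \in \SpinC (W)$: setting $\spinct = \tau \rvert_W$ and $\spincp = \tau \rvert_{\Yp}$, the second statement of \fullref{lem:unique-spinc} identifies $\tau$ as the unique $\SpinC$-structure on $\double{W}$ restricting to $\spincp$, which is $\double{\spinct}$ by definition. Moreover, by construction, $\double{\spinct}$ restricts to $\spinct \rvert_{\Ym}$ on \emph{each} copy of $\Ym$ in $\bdy \double{W}$; in particular, $\Fg_{\double{W}, \double{\spinct}}$ acts within the single summand $\HFg (\Ym, \spinct \rvert_{\Ym})$ of $\HFg (\Ym)$.

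The second step would apply the composition law \fullref{thm:zemke}~\eqref{item:spinc} to the decomposition $\double{W} = W \cup_{\Yp} (-W)$. This expresses $\Fg_{-W, \spinct'} \comp \Fg_{W, \spinct}$ as a sum over $\SpinC$-structures on $\double{W}$ restricting to $\spinct$ on $W$ and $\spinct'$ on $-W$. The uniqueness-of-extension part of \fullref{lem:unique-spinc} forces this sum to vanish unless $\spinct \rvert_{\Yp} = \spinct' \rvert_{\Yp}$, which in turn forces $\spinct = \spinct'$, leaving the single surviving term $\Fg_{\double{W}, \double{\spinct}}$.

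The third step would invoke \fullref{thm:main-hf}. For the hat flavor, $\Fh_{\double{W}} = \Id_{\HFh (\Ym)}$ is block-diagonal with respect to the $\SpinC$-decomposition by step one, so restricting to the $\spinc$-block yields the identity formula asserted in the second half of the theorem. Combined with step two, the maps $A = \sum_{\spinct \rvert_{\Ym} = \spinc} \Fh_{W, \spinct}$ and $B = \sum_{\spinct \rvert_{\Ym} = \spinc} \Fh_{-W, \spinct}$ satisfy $B \comp A = \Id_{\HFh (\Ym, \spinc)}$, realizing $A$ as a summand inclusion. For $\HFg \in \set{\HFp, \HFm, \HFi}$, the same block-diagonal structure combined with the isomorphism statement of \fullref{thm:main-hf} shows that the $\spinc$-block of $\Fg_{\double{W}}$ is an isomorphism of $\HFg (\Ym, \spinc)$; consequently $B \comp A$ is again an isomorphism, and $A$ splits as a summand inclusion.

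The main obstacle is ensuring that the $\SpinC$-bookkeeping in step two is watertight, in particular that the composition-law sum truly collapses to the single diagonal term. This collapse hinges on the uniqueness-of-extension statement in \fullref{lem:unique-spinc}, which rules out distinct $\spinct \neq \spinct'$ on $W$ agreeing on $\Yp$ and is precisely what allows the $\SpinC$-refinement to be extracted cleanly from the total cobordism map of $\double{W}$.
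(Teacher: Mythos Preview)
Your proposal is correct and follows essentially the same approach as the paper: restrict the identity/isomorphism statement of \fullref{thm:main-hf} to the $\spinc$-summand of $\HFg(\Ym)$, using \fullref{lem:unique-spinc} to control the $\SpinC$-structures on $W$ and $\double{W}$. The paper's proof is somewhat terser---it simply observes that the map in question is the restriction of $\Fg_W$ to a summand and that all $\SpinC$-structures on $\double{W}$ are doubles---whereas you spell out explicitly, via the composition law, why the off-diagonal terms $\Fg_{-W,\spinct'}\comp\Fg_{W,\spinct}$ vanish; but the substance is the same.
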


\begin{proof}
  The assertion that the first map is injective is a direct consequence of 
  \fullref{thm:main-hf}, since it is simply the restriction of $\Fg_W$ to the 
  summand $\HFg (\Ym, \spinc)$ of $\HFg (\Ym)$. (However, in writing the 
  codomain as the direct sum above, we have implicitly used the fact that for 
  distinct $\spinct_1, \spinct_2 \in \SpinC (W)$, their restrictions $\spinct_1 
  \rvert_{\Yp}, \spinct_2 \rvert_{\Yp} \in \SpinC (\Yp)$ are distinct, which is 
  a consequence of \fullref{lem:unique-spinc}.) The second assertion is 
  obtained by restricting the identity map $\Fh_{\double{W}}$ in 
  \fullref{thm:main-hf} to the summand $\HFh (\Ym, \spinc)$, and observing that 
  all $\SpinC$-structures on $\double{W}$ are of the form $\double{\spinct}$, 
  which follows from \fullref{lem:unique-spinc}.
\end{proof}

With the additional condition that $W$ is a $\Z$-homology cobordism, a $\SpinC$ 
structure $\spincm$ on $\Ym$ determines a unique $\spinct$ on $W$, and hence a 
unique $\spincp$ on $\Yp$.   We have:

\begin{corollary}
  \label{cor:spinc-Z-cob}
  Let $\Ym$ and $\Yp$ be $3$-manifolds, and suppose that $W \colon \Ym \to \Yp$ 
  is a ribbon $\Z$-homology cobordism. Fix a $\SpinC$-structure $\spincm$ on 
  $\Ym$, and let $\spinct$ and $\spincp$ be the corresponding $\SpinC$ 
  structures on $W$ and $\Yp$ respectively. Then the cobordism map $\Fg_{W, 
    \spinct} \colon \HFg (\Ym, \spincm) \to \HFg (\Yp, \spincp)$ includes 
  $\HFg(\Ym, \spincm)$ into $\HFg(\Yp, \spincp)$ as a summand. \qed
\end{corollary}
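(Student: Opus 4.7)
The plan is to deduce \fullref{cor:spinc-Z-cob} directly from \fullref{thm:main-hf-spinc} by showing that the stronger hypothesis --- that $W$ is a ribbon $\Z$-homology cobordism, rather than merely a $\zeetwo$-homology cobordism --- forces the sum appearing in \fullref{thm:main-hf-spinc} to collapse to a single term. Concretely, I would establish that every $\SpinC$-structure $\spincm$ on $\Ym$ extends uniquely to a $\SpinC$-structure $\spinct$ on $W$, identify the ``corresponding'' $\spinct$ and $\spincp = \spinct \rvert_{\Yp}$ in the statement with this unique extension, and then restrict \fullref{thm:main-hf-spinc} accordingly.

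To verify uniqueness of the extension, I would invoke the long exact sequence of the pair $(W, \Ym)$ with integer coefficients. Since $W$ is a $\Z$-homology cobordism, $H^{*}(W, \Ym; \Z) = 0$; in particular $H^2(W, \Ym; \Z) = 0$ and $H^3(W, \Ym; \Z) = 0$. The set $\SpinC(W)$ sits in the standard exact sequence governing extensions: the obstruction to extending $\spincm$ to $W$ lies in $H^3(W, \Ym; \Z)$, and the set of extensions, when non-empty, is a torsor over $H^2(W, \Ym; \Z)$. Both groups vanish, so $\spincm$ extends uniquely to $\spinct \in \SpinC(W)$, and $\spincp$ is then unambiguously defined as its restriction to $\Yp$.

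Finally, I would apply \fullref{thm:main-hf-spinc} to $\spincm$. By the uniqueness established above, the index set
\[
  \setc{\spinct' \in \SpinC(W)}{\spinct' \rvert_{\Ym} = \spincm}
\]
consists solely of $\spinct$, so the sum in \fullref{thm:main-hf-spinc} reduces to the single map $\Fg_{W, \spinct}$ and the codomain collapses to $\HFg(\Yp, \spincp)$. The conclusion that $\Fg_{W, \spinct}$ includes $\HFg(\Ym, \spincm)$ into $\HFg(\Yp, \spincp)$ as a summand is then immediate. There is no substantive obstacle here --- the corollary is essentially a bookkeeping consequence of \fullref{thm:main-hf-spinc} combined with the standard observation that integral (as opposed to mod-$2$) homology cobordisms induce bijections on $\SpinC$-structures via restriction.
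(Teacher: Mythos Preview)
Your proposal is correct and is precisely the argument the paper has in mind: the sentence preceding the corollary notes that under a $\Z$-homology cobordism the $\SpinC$-structure $\spincm$ extends uniquely to $W$, and the corollary is then marked with a \qed as an immediate specialization of \fullref{thm:main-hf-spinc}. Your explicit justification of uniqueness via the vanishing of $H^2(W,\Ym;\Z)$ and $H^3(W,\Ym;\Z)$ simply spells out what the paper leaves implicit.
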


\subsection{Sutured Heegaard Floer theory}
\label{ssec:hf-sut}

First, we mention that the definition of a cobordism of sutured manifolds is 
given in \fullref{defn:sut-cob}.

We now use \fullref{thm:main-hf-spinc} to prove the sutured analogue.

\begin{proof}[Proof of \fullref{thm:main-hf-sut}]
  Recall from Lekili's work \cite[Theorem 24]{Lek13} that the sutured Floer 
  homology of a sutured manifold $(M, \sut)$ can be described in terms of the 
  Heegaard Floer homology of the sutured closure $\widehat{M} = M \union 
  (F_{g,d} \cross [-1, 1])$ and a closed surface $R$ in $\widehat{M}$ obtained 
  from $F_{g,d}$, where $F_{g,d}$ is a surface of genus $g \geq 2$ and $d$ 
  boundary components.  For more details on the construction of $\widehat{M}$ 
  and $R$, see \fullref{ssec:i-sut}. Then we have the isomorphism
  \[
    \SFH (M, \sut) \isom \bigdirsum_{\langle c_1 (\spinc), R \rangle = 2g - 2} 
    \HFp (\widehat{M}, \spinc).
  \]
  Now, given a ribbon $\zeetwo$-homology cobordism $N \colon \Msutm \to \Msutp$ 
  between sutured manifolds, we can attach $F_{g,d} \cross [-1, 1] \cross I$ to 
  obtain a ribbon $\zeetwo$-homology cobordism $\widehat{N}$ between the 
  sutured closures.  Furthermore, for any $\SpinC$-structure $\spinct$ on 
  $\widehat{N}$,
  \[
    \eval{c_1 \paren{\spinct \vert_{\widehat{M}_-}}}{\brac{R_{\widehat{M}_-}}} 
    = \eval{c_1 \paren{\spinct 
        \vert_{\widehat{M}_+}}}{\brac{R_{\widehat{M}_+}}}.
  \]
  Consequently, the desired result follows from \fullref{thm:main-hf-spinc}.
\end{proof}

\subsection{Involutive Heegaard Floer theory}
\label{ssec:hf-inv}

We now extend our work in \fullref{ssec:hf-surgery} to prove 
\fullref{thm:main-hf-inv}.  Recall that $\HFIh(Y)$ is defined as the homology 
of the mapping cone of $1 + \iota$, where $\iota$ is a chain homotopy 
equivalence on $\CFh (Y)$ coming from $\SpinC$-conjugation.  Since we are 
working over $\zeetwo$, we have that $\HFIh(Y)$ is in fact isomorphic to the 
homology of the mapping cone of $1 + \iota_* \colon \HFh(Y) \to \HFh(Y)$.
Unfortunately, the theory of cobordism maps is not fully developed in the 
theory, but we can still compare the involutive Heegaard Floer homologies under 
ribbon homology cobordisms.

\begin{proof}[Proof of \fullref{thm:main-hf-inv}]
  Fix a self-conjugate $\SpinC$-structure $\spinc_-$ on $\Ym$, which determines 
  a unique $\SpinC$-structure $\spinct$ on $W$ and a unique $\spinc_+$ on 
  $\Yp$.
  Then we have the commutative diagram
  \[
    \xymatrix{
      \HFh(\Ym, \spinc_-) \ar[r]^{\Fh_{W, \spinct}} \ar[d]^{1 + \iota_*} & 
      \HFh(\Yp, \spinc_+) \ar[r]^{\Fh_{-W, \spinct}} \ar[d]^{1 + \iota_*}& 
      \HFh(\Ym, \spinc_-) \ar[d]^{1 + \iota_*}\\
      \HFh(\Ym, \spinc_-) \ar[r]^{\Fh_{W, \spinct}} & \HFh(\Yp, \spinc_+) 
      \ar[r]^{\Fh_{-W, \spinct}} & \HFh(\Ym, \spinc_-).
    }
  \]
  The result now follows from \fullref{thm:main-hf-spinc}.
\end{proof}

\section{Some specific obstructions}
\label{sec:specific}

In this section, we derive some obstructions to ribbon homology cobordisms from 
our results on character varieties (\fullref{sec:character}) and Floer 
homologies (\fullref{sec:results_floer}).

\subsection{Ribbon cobordisms between Seifert fibered homology spheres}
\label{ssec:character-seifert}

First, we prove \fullref{thm:seifert}, a statement on ribbon $\Q$-homology 
cobordisms between Seifert fibered homology spheres.  Since 
\fullref{thm:seifert}~\eqref{it:casson} and \eqref{it:plumbing} will follow 
easily from instanton or Heegaard Floer homology, our main goal is to prove the 
following, which is a restatement of \fullref{thm:seifert}~\eqref{it:fibers}.  The complete proofs of
\fullref{thm:seifert} and \fullref{cor:montesinos} are given at the end of this subsection.

\begin{theorem}
  \label{thm:number-fibers}
  Suppose that there exists a ribbon $\Z$-homology cobordism from the Seifert 
  fibered homology sphere $\Sigma(a_1, \dotsc, a_n)$ to $\Sigma(a_1', \dotsc, 
  a_m')$.  Then the numbers of fibers satisfy $n \leq m$.
\end{theorem}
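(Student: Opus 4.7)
The plan is to apply \fullref{prop:zariski} to an irreducible $\SUtwo$-representation of $\pi_1 (\Ym)$. First I dispose of the small cases. For $n \leq 2$, $\Ym \homeo S^3$ and the inequality is trivial. For $n = 3$, the Brieskorn sphere $\Ym$ has nontrivial fundamental group; if $m \leq 2$, then $\Yp \homeo S^3$, and \fullref{thm:gordon} would force $\pi_1 (\Ym)$ to inject into $\pi_1 (W)$, which is a quotient of $\pi_1 (\Yp) = 1$, a contradiction. So we may assume $n \geq 4$ henceforth.

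For $n \geq 4$, I would produce an irreducible representation $\rho_- \colon \pi_1 (\Ym) \to \SUtwo$ lying on a smooth component of $\charvarSUtwo (\Ym)$ of real dimension $2 (n - 3)$. Such components exist by the classical identification (due to Furuta--Steer, Bauer, \emph{et al.}) of the $\SUtwo$-character variety of $\Sigma (a_1, \dotsc, a_n)$, via the quotient by the central regular-fiber element $h$, with a moduli space of parabolic $\SU(2)$-bundles on the base $2$-orbifold $S^2 (a_1, \dotsc, a_n)$; each such component is a smooth symplectic manifold of real dimension $2 (n - 3)$. Applying \fullref{prop:character-embedding-2}, I extend $\rho_-$ to $\rho_W \in \repvarG (W)$ and pull back to $\rho_+ \in \repvarG (\Yp)$. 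Because $\pi_1 (\Ym) \hookrightarrow \pi_1 (W) \twoheadleftarrow \pi_1 (\Yp)$ and $\rho_-$ is irreducible, both $\rho_W$ and $\rho_+$ are irreducible, so $\dimR H^0 (\Ym; \Ad_{\rho_-}) = \dimR H^0 (\Yp; \Ad_{\rho_+}) = 0$ and the hypothesis of \fullref{prop:zariski} is met.

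Applying \fullref{prop:zariski} then yields
\[
  2 (n - 3) = \dimR H^1 (\Ym; \Ad_{\rho_-}) \leq \dimR H^1 (\Yp; \Ad_{\rho_+}).
\]
To finish, I would bound the right-hand side by $2 (m - 3)$. For this, the Lyndon--Hochschild--Serre spectral sequence for the central extension
\[
  1 \to \langle h \rangle \to \pi_1 (\Yp) \to \pi_1^{\mathrm{orb}} (S^2 (a_1', \dotsc, a_m')) \to 1,
\]
together with the fact that $\rho_+ (h) \in \set{\pm I}$ so that $\Ad_{\rho_+}$ factors through the base orbifold group, collapses to give $H^1 (\Yp; \Ad_{\rho_+}) \isom H^1 (\pi_1^{\mathrm{orb}}; \Ad_{\bar{\rho}_+})$. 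The latter is the Zariski tangent space to the character variety of the $m$-punctured $2$-sphere orbifold at an irreducible point, which has dimension at most $2 (m - 3)$: equality is generic, while degenerations where some $\rho_+ (y_i')$ is central cause the representation to factor through a smaller orbifold group, with a further drop. Combining the two inequalities gives $n \leq m$.

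The principal obstacle is the uniform upper bound $\dimR H^1 (\Yp; \Ad_{\rho_+}) \leq 2 (m - 3)$, since \fullref{prop:character-embedding-2} offers no control over which irreducible representation $\rho_+$ arises; in particular, one must rule out jumps of the Zariski tangent space at singular points of $\charvarSUtwo (\Yp)$. This is handled by a direct Fox-calculus rank count in the presentation of $\pi_1^{\mathrm{orb}}$, where the key input is that the relevant product-of-conjugacy-classes constraint on the $2$-dimensional kernels $\ker A_i \subset \sutwo$ is forced to have full rank $3$ by the irreducibility of $\rho_+$, excluding the only mechanism for degeneration.
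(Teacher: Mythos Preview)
Your strategy is exactly the paper's: apply \fullref{prop:zariski} at an irreducible $\SUtwo$-representation of $\pi_1(\Ym)$ whose Zariski tangent space has the maximal dimension $2n-6$, and bound the Zariski tangent space at the resulting $\rho_+$ above by $2m-6$. The differences lie only in how those two bounds are established.

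For the upper bound you are working much harder than necessary. The paper simply invokes the Fintushel--Stern formula (\fullref{thm:fintushel-stern}~\eqref{it:fs3}): for any irreducible $\rho$ on $\Sigma(a'_1,\dotsc,a'_m)$ one has $\dimR H^1 = 2t-6$, where $t\leq m$ is the number of generators $x'_i$ with $\rho(x'_i)\neq \pm 1$. This gives $\dimR H^1(\Yp;\Ad_{\rho_+})\leq 2m-6$ at once, with no spectral sequence and no Fox calculus. Your LHS route is correct (the collapse uses that $H^0$ of the orbifold group vanishes by irreducibility), but it merely rederives that computation.

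For the lower bound, note first that the components of $\charvarSUtwo(\Ym)$ do \emph{not} all have dimension $2(n-3)$; by the same Fintushel--Stern formula they have dimensions $2t-6$ for various $3\leq t\leq n$. What you actually need is the existence of an irreducible $\rho_-$ with $t=n$, i.e.\ with every $\rho_-(x_i)\neq \pm 1$. This is precisely \fullref{prop:seifert-zariski}, which the paper calls ``well known'' but nonetheless proves directly: it reduces to prime $a_i$ via a branched-cover homomorphism (\fullref{lem:seifert-prime}), pinches to the three-fiber case $\Sigma(a_1,a_2,a_3\cdots a_n)$ (\fullref{lem:seifert-pinch}), and then constructs an explicit representation there whose image of $z$ has argument with numerator coprime to $a_3\cdots a_n$ (\fullref{prop:seifert-composite}). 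Your appeal to the parabolic-bundle identification is a legitimate alternative route, but non-emptiness of a top-dimensional component is exactly the substantive step, not a free consequence of the identification; you would still need to exhibit weights making the moduli space non-empty.
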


Our strategy will be to use \fullref{prop:zariski} with $G = \SUtwo$. We begin 
by mentioning a basic fact about $\SUtwo$-representations. Every representation 
$\rho \colon \pi \to \SUtwo$ is either trivial, Abelian, or irreducible, and 
$\dimR H^0 (\pi; \Ad_\rho)$ is respectively $3$, $1$, or $0$, according to this 
trichotomy.

We now review some useful facts from the work of Fintushel and Stern 
\cite{FintushelStern} on $\SUtwo$-representations for Seifert fibered homology 
spheres (see also the work of Boyer \cite{Boyer}).  To fix our notation, the 
Seifert fibered homology sphere $\Sigma (a_1, \dotsc, a_n)$ has base orbifold 
$S^2$ and presentation $(b; (a_1, b_1), \dotsc, (a_n, b_n))$, where we do not 
require that $0 < b_i < a_i$, but do require that $a_i$ and $b_i$ are 
relatively prime, and that
\[
  -b + \sum_{i=1}^n \frac{b_i}{a_i} = \frac{1}{a_1 \dotsm a_n}.
\]
Then the fundamental group of $\Sigma(a_1,\ldots,a_n)$ is given by 
\[
  \pi_1 (\Sigma (a_1, \dotsc, a_n)) \isom \grppre{x_1, \dotsc, x_n, h}{h \text{ 
      central}, \, x_i^{a_i} = h^{-b_i}, \, x_1 \dotsm x_n = h^{-b}}.
\]

\begin{theorem}[Fintushel and Stern 
  {\cite{FintushelStern}}]
  \label{thm:fintushel-stern}
  Suppose that $\rho \in \repvar (\Sigma (a_1, \dotsc, a_n))$ is irreducible.  
  Then
  \begin{enumerate}
    \item \label{it:fs1} \cite[Lemma~2.1]{FintushelStern} $\rho(h) = \pm 1$;
    \item \label{it:fs2} \cite[Lemma~2.2]{FintushelStern} $\rho(x_i) \neq \pm 1$ for at least 
      three values of $i \in \set{1, \dotsc, n}$; and
    \item \label{it:fs3} \cite[Proposition~2.5]{FintushelStern} $\dimR H^1 
      (\Sigma(a_1, \dotsc, a_n); \Ad_\rho) = 2 t - 6$, where $t$ is the number 
      of $x_i$'s such that $\rho (x_i) \neq \pm 1$.
  \end{enumerate}
\end{theorem}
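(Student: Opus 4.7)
All three parts follow from standard Lie-theoretic arguments combined with a direct cohomology computation from the Seifert presentation of $\pi_1 (\Sigma (a_1, \dotsc, a_n))$.  For \eqref{it:fs1}, since $h$ is central, the image $\rho (h)$ commutes with the irreducible subgroup $\rho (\pi_1) \subset \SUtwo$; if $\rho (h) \notin \set{\pm 1}$, it would lie in a unique maximal torus $T \subset \SUtwo$ whose centralizer is $T$ itself, forcing $\rho (\pi_1) \subset T$ and contradicting irreducibility.  For \eqref{it:fs2}, suppose that $\rho (x_i) \in \set{\pm 1}$ for all but at most two indices $i_1, i_2$.  Combining the product relation $x_1 \dotsm x_n = h^{-b}$ with $\rho (h) \in \set{\pm 1}$ yields $\rho (x_{i_1}) \rho (x_{i_2}) \in \set{\pm 1}$, so $\rho (x_{i_2}) = \pm \rho (x_{i_1})^{-1}$; hence $\rho (\pi_1)$ is generated by $\rho (x_{i_1})$ together with central elements, making it abelian and thus reducible, a contradiction.

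For \eqref{it:fs3}, the plan is to compute $\dimR Z^1 (\pi; \Ad_\rho)$ directly from the Seifert presentation.  A $1$-cocycle is a tuple $(\alpha_1, \dotsc, \alpha_n, \beta) \in \sutwo^{n+1}$ satisfying the linearizations of the three types of relations.  Using $\Ad_{\rho (h)} = \Id$ from \eqref{it:fs1}, the commutator relation $[h, x_i] = 1$ linearizes to $\Ad_{\rho (x_i)} \beta = \beta$; for any non-central $\rho (x_i)$ this confines $\beta$ to the $1$-dimensional axis of $\rho (x_i)$, and by \eqref{it:fs2} together with irreducibility the non-central $\rho (x_i)$'s cannot all share a common axis, so $\beta = 0$.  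The relation $x_i^{a_i} = h^{-b_i}$ then linearizes, again using $\beta = 0$, to $(\Id + \Ad_{\rho (x_i)^{-1}} + \dotsb + \Ad_{\rho (x_i)^{-(a_i - 1)}}) \alpha_i = 0$; for $\rho (x_i) = \pm 1$ this operator equals $a_i \cdot \Id$ and forces $\alpha_i = 0$, while for $\rho (x_i) \neq \pm 1$ an eigenvalue analysis using $\rho (x_i)^{a_i} = \pm 1$ identifies the kernel with the $2$-dimensional tangent space $T_{\rho (x_i)} C_i$ to the conjugacy class $C_i$ of $\rho (x_i)$.  Summing over $i$ gives $2 t$ free parameters before the final relation is imposed.

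The remaining relation $x_1 \dotsm x_n = h^{-b}$ linearizes to a single linear equation on $(\alpha_1, \dotsc, \alpha_n)$ taking values in $\sutwo$.  The key technical step, which I expect to be the main obstacle, is verifying that this equation cuts the parameter space by the full codimension $3 = \dim \sutwo$, equivalently, that the multiplication map $C_1 \cross \dotsb \cross C_n \to \SUtwo$ is a submersion at $(\rho (x_1), \dotsc, \rho (x_n))$.  Since each $T_{\rho (x_i)} C_i$ for non-central $\rho (x_i)$ is the $2$-plane in $\sutwo$ perpendicular to the axis of $\rho (x_i)$, and irreducibility forces at least two of the non-central $\rho (x_i)$'s to have non-parallel axes (preserved after the $\Ad$-conjugations occurring in the linearization), the image $2$-planes sum to all of $\sutwo$, yielding the desired surjectivity.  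Hence $\dimR Z^1 = 2 t - 3$.  Finally, since $\rho$ is irreducible, $H^0 (\pi; \Ad_\rho) = 0$, so the coboundary $d^0 \colon \sutwo \to C^1$ is injective and $\dimR B^1 = 3$.  Therefore $\dimR H^1 (\Sigma (a_1, \dotsc, a_n); \Ad_\rho) = (2 t - 3) - 3 = 2 t - 6$, as claimed.
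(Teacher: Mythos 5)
The paper does not actually prove this theorem; it quotes it from Fintushel--Stern, so there is no internal argument to compare against. Your sketch is essentially the standard proof, equivalent to Fintushel--Stern's dimension count (each $\rho(x_i)\neq\pm 1$ ranges over a $2$-sphere conjugacy class, the product relation cuts $3$, conjugation cuts another $3$), recast as a direct computation of $Z^1$ from the Seifert presentation. Parts \eqref{it:fs1} and \eqref{it:fs2} are fine, and the reduction $\beta=0$, the analysis of $\bigl(\sum_{j}\Ad_{\rho(x_i)}^{\,j}\bigr)\alpha_i=0$ (kernel $=a_i\cdot\Id$ on the axis, zero operator on the perpendicular plane since the rotation angle is $2\pi\ell_i/a_i$ with $\ell_i\not\equiv 0$), and the final count $(2t-3)-3=2t-6$ using $H^0=0$ are all correct.

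The one step you should tighten is the surjectivity of the linearized product relation, and your parenthetical justification is not quite right as stated: the relevant planes are $\Ad_{\rho(x_1\dotsm x_{i-1})}K_i$, and it is not automatic that two non-parallel axes of $\rho(x_i)$, $\rho(x_j)$ remain non-parallel after being conjugated by \emph{different} group elements. What you actually need is that the twisted axes $\Ad_{g_i}(\text{axis of }\rho(x_i))$, $g_i=\rho(x_1\dotsm x_{i-1})$, cannot all be parallel; this does follow from irreducibility, but by a short induction rather than by ``preservation'': if the first non-central $g_iy_ig_i^{-1}$ lies in a maximal torus $T$ then $y_i\in T$ (since $g_i=\pm 1$), whence $g_j\in T$ for the next non-central index and $y_j\in T$, and so on, forcing the whole image into $T$, contradicting irreducibility. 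With that argument inserted, two of the twisted $2$-planes are distinct, they span $\sutwo$, the relation cuts the full codimension $3$, and your computation closes.
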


We also recall their recipe for constructing conjugacy classes of irreducible 
$\SUtwo$-representations. For this, it will be useful to think of elements of 
$\SUtwo$ as unit quaternions.  After choosing the sign of $\rho (h)$, we may 
choose an integer $\ell_1$ and define $\rho (x_1) = e^{i \pi \ell_1 / a_1}$, as 
long as $0 \leq \ell_1 \leq a_1$ and $(-1)^{\ell_1} = (\rho (h))^{-b_1}$.  
Next, for each $q \in \set{2, \dotsc, n}$, we choose an integer $\ell_q$ with 
analogous constraints and consider $e^{i \pi \ell_q / a_q}$; we will eventually 
define $\rho (x_q)$ to be some element conjugate to this. (The choice of the 
integers $\ell_q$ is also subject to 
\fullref{thm:fintushel-stern}~\eqref{it:fs2}.) Note that once we choose $\rho 
(x_2), \dotsc, \rho (x_{n-1})$, they will determine $\rho (x_n)$ by the 
equation
\begin{equation}
  \label{eqn:fs-eq1}
  \rho (x_1) \dotsm \rho (x_n) = (\rho (h))^{-b};
\end{equation}
the difficulty, however, lies in ensuring that $\rho (x_n)$ is also conjugate 
to $e^{i \pi \ell_n / a_n}$ for some integer $\ell_n$ with analogous 
constraints. Plugging this last condition into \eqref{eqn:fs-eq1}, we see that 
$\rho (x_2), \dotsc, \rho (x_{n-1})$ must satisfy
\begin{equation}
  \label{eqn:fs-eq2}
  (\rho (x_1) \dotsm \rho (x_{n-1}))^{a_n} = (\rho (h))^{- b a_n + b_n} = \pm 
  1.
\end{equation}
To fulfill this condition, let $S_q$ denote the set of elements in $\SUtwo$ 
conjugate to $e^{i \pi \ell_q / a_q}$, for each $q \in \set{2, \dotsc, n - 1}$.  
If $e^{i \pi \ell_q / a_q} \neq \pm 1$, then $S_q$ is a copy of $S^2$.  In any 
case, consider the map $\phi \colon S_2 \cross \dotsb \cross S_{n-1} \to [0, 
\pi]$ given by
\[
  \phi (s_2, \dotsc, s_{n-1}) = \Arg (\rho(x_1) s_2 \cdots s_{n-1}),
\]
where $\Arg (z)$ is defined to be the value $\theta \in [0, \pi]$ such that $z$ 
is conjugate to $e^{i \theta}$. If $\pi \ell_n' / a_n$ is in the image of 
$\phi$, then, for each integer $\ell_n'$ such that $0 
\leq \ell_n' \leq a_n$ and $(-1)^{\ell_n'} = (\rho (h))^{-b a_n + b_n}$, there 
exists some choice of $\rho (x_2), \dotsc, \rho (x_{n-1})$ such that $\rho 
(x_1) \dotsm \rho (x_{n-1})$ is conjugate to $e^{i \pi \ell_n' / a_n}$ (and 
hence \eqref{eqn:fs-eq2} holds). This determines a well-defined representation $\rho$. Finally, note 
that since the Abelianization of $\pi_1 (\Sigma (a_1, \dotsc, a_n))$ is 
trivial, there are no non-trivial Abelian $\SUtwo$-representations, and every 
non-trivial representation is irreducible.

We now proceed towards the proof of \fullref{thm:number-fibers}. The main 
technical proposition we will prove is the following.  While this is well 
known, we include a direct proof for completeness.

\begin{proposition}
  \label{prop:seifert-zariski}
  Suppose that $Y$ is the Seifert fibered homology sphere $\Sigma(a_1, \dotsc, 
  a_n)$. Then there exists an irreducible $\rho \in \repvar (Y)$ such that $H^1 
  (Y; \Ad_\rho)$ has the maximal dimension possible, i.e.\ $2n - 6$.  
\end{proposition}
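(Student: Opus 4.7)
By \fullref{thm:fintushel-stern}~\eqref{it:fs3}, for any irreducible $\rho \in \repvar(Y)$ we have $\dim_{\R} H^1(Y; \Ad_\rho) = 2t - 6$, where $t = \#\set{i : \rho(x_i) \neq \pm 1}$. Since $t \leq n$, the plan is to reduce the statement to exhibiting an irreducible $\rho$ with $\rho(x_i) \neq \pm 1$ for every $i$ (i.e., $t = n$). For $n \leq 2$ the group $\pi_1(Y)$ is cyclic and $2n - 6 < 0$, so the claim is vacuous; for $n = 3$, \fullref{thm:fintushel-stern}~\eqref{it:fs2} forces $t = 3$ for any irreducible, and existence of irreducibles follows from the recipe below. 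So I focus on the case $n \geq 4$.

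I will apply the Fintushel--Stern recipe recalled immediately before the statement, with every $\ell_i$ chosen strictly between $0$ and $a_i$ (so that $\rho(x_i) = e^{i\pi\ell_i/a_i} \neq \pm 1$). I set $\rho(h) = -1$ and check that the required parity condition $\ell_i \equiv b_i \pmod{2}$ is always compatible with $0 < \ell_i < a_i$: when $a_i = 2$ the coprimality $\gcd(a_i, b_i) = 1$ forces $b_i$ odd so $\ell_i = 1$ works, and when $a_i \geq 3$ both parity classes meet $\set{1, \dotsc, a_i - 1}$. A valid $\ell_n'$ with $0 < \ell_n' < a_n$ of the correct parity (which here is $\ell_n' \equiv b a_n + b_n \pmod 2$) is selected analogously.

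The main obstacle will be verifying Fintushel--Stern's existence condition for the recipe: that $\pi \ell_n'/a_n$ lies in the image of the map $\phi(s_2, \dotsc, s_{n-1}) = \Arg(\rho(x_1) s_2 \dotsm s_{n-1})$. I plan to use the $\SUtwo$-triangle inequality, which for $u \sim e^{i\alpha}$ and $v \sim e^{i\beta}$ with $\alpha, \beta \in (0, \pi)$ says that $\Arg(uv)$ sweeps out the full closed interval $[\abs{\alpha - \beta}, \min(\alpha + \beta, 2\pi - \alpha - \beta)]$. Inductively, the image of $\phi$ is a closed subinterval $[A, B] \subseteq [0, \pi]$ of positive length (using $n - 1 \geq 3$). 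To ensure a valid $\pi\ell_n'/a_n$ lies in $[A, B]$, I will exploit the freedom in the choice of $\ell_1, \dotsc, \ell_{n-1}$: by tuning these so that each $\pi \ell_i/a_i$ is close to a suitable target (e.g.\ $\pi/2$ when $a_i$ is large), the resulting interval $[A, B]$ can be arranged to contain a rational multiple of $\pi/a_n$ of the prescribed parity, with the finitely many small-$a_i$ configurations handled by direct inspection.
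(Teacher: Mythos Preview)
Your plan differs from the paper's route. Rather than a direct construction for general $n$, the paper (i) reduces to the case where every $a_i$ is prime via a branched-cover homomorphism (\fullref{lem:seifert-prime}), (ii) uses a pinching homomorphism $\pi_1(\Sigma(a_1,\dots,a_n)) \to \pi_1(\Sigma(a_1,a_2,a_3\cdots a_n))$ to pull back a representation from a three-fiber space (\fullref{lem:seifert-pinch}), and (iii) builds the required representation on $\Sigma(a_1,a_2,p)$ by an explicit case analysis (\fullref{prop:seifert-composite}). Even in step~(iii), with only a single sphere $S_2$ in the domain of $\phi$, the argument already splits into the cases $a_1=2$, $(a_1,a_2)=(3,5)$, and the generic odd case, with hand-picked target values for $\ell_z'$; the primality from step~(i) is what lets one verify the needed coprimality of $\ell_z'$ with $p$.

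Your direct approach is reasonable in outline---with $n-2\ge 2$ spheres in the domain of $\phi$ one does expect a larger image interval than in the three-fiber case---but the final step is a genuine gap rather than a routine detail. The phrase ``finitely many small-$a_i$ configurations handled by direct inspection'' does not name a finite problem: for any threshold on ``small'', there are infinitely many coprime tuples with, say, $a_1=2$ and $a_2=3$, so one cannot literally enumerate cases. What is missing is a \emph{uniform} estimate showing that, after choosing the $\ell_i$ (whose parities $\ell_i\equiv b_i\pmod 2$ are fixed once $\rho(h)=-1$ is imposed; you have not invoked the freedom in the Seifert presentation to adjust the $b_i$), the image interval $[A,B]$ has length exceeding $2\pi/a_n$ and avoids the endpoints $\{0,\pi\}$, so that a value $\pi\ell_n'/a_n$ of the prescribed parity with $0<\ell_n'<a_n$ is guaranteed to lie inside. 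The amount of case-work the paper needs in \fullref{prop:seifert-composite}, and the fact that it switches to $\rho(h)=+1$ in one case, suggest these estimates are not automatic.
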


We now briefly describe our strategy to prove this proposition. By 
\fullref{thm:fintushel-stern}~\eqref{it:fs3}, we would like to show that $\pi_1 
(\Sigma (a_1, \dotsc, a_n))$ admits an irreducible $\SUtwo$-representation that 
does not send any $x_i$ to $\pm 1$. The idea is to reduce to the case where 
there are exactly three singular fibers; in other words, we will construct such a 
representation from an irreducible $\SUtwo$-representation of $\pi_1 (\Sigma 
(a_1, a_2, a_3 \dotsm a_n))$, by a pinching argument. A subtlety here is that 
for this pinching argument to work, we will require the representation of 
$\pi_1 (\Sigma (a_1, a_2, a_3 \dotsm a_n))$ to be of a certain form; to show 
that this exists, we will assume the primality of the $a_i$'s.  Thus, we begin 
by reducing to the case that all the $a_i$'s are prime.

\begin{lemma}
  \label{lem:seifert-prime}
  Let $r \in \Z_{\geq 0}$. Suppose that $\pi_1 (Y)$ admits an irreducible 
  representation $\rho$ such that $\dimR H^1 (Y; \Ad_{\rho}) = r$ for $Y \homeo 
  \Sigma (a_1, \dotsc, a_n)$.  Then the same holds for $Y \homeo \Sigma (a_1, 
  \dotsc, a_{n-1}, k a_n)$, where $k$ is relatively prime to $a_1, \dotsc, a_{n-1}$.
\end{lemma}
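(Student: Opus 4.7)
The plan is to construct an irreducible representation $\rho' \colon \pi_1(Y') \to \SU(2)$ with $Y' = \Sigma(a_1, \dotsc, a_{n-1}, ka_n)$ such that the count $t = \#\set{i : \rho'(x_i') \neq \pm 1}$ equals the corresponding count for $\rho$; then \fullref{thm:fintushel-stern}~\eqref{it:fs3} will yield $\dimR H^1(Y'; \Ad_{\rho'}) = 2t - 6 = r$, as required. I look for $\rho'$ of the form $\rho'(x_i') = \pm \rho(x_i)$ for each $i$, together with $\rho'(h') \in \set{\pm 1}$. Any such $\rho'$ automatically preserves $t$ (since $\pm \rho(x_i) \in \set{\pm 1}$ iff $\rho(x_i) \in \set{\pm 1}$) and inherits irreducibility from $\rho$ (since the images agree modulo the central subgroup $\set{\pm 1}$), so the problem reduces to choosing the signs and $\rho'(h')$ consistently so that $\rho'$ respects the defining relations of $\pi_1(Y')$.

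Substituting this ansatz into the relations of $\pi_1(Y')$ converts them into a system of parity conditions relating $\rho(h)$, $\rho'(h')$, the signs, and the Seifert invariants $(b'; (a_1, b_1'), \dotsc, (a_{n-1}, b_{n-1}'), (ka_n, b_n'))$ of $Y'$. By \fullref{thm:fintushel-stern}~\eqref{it:fs1} we have $\rho(h) \in \set{\pm 1}$, which splits the argument into cases. If $\rho(h) = 1$, I take $\rho'(h') = 1$ and all signs $+1$: the relations collapse to $\rho(x_i)^{a_i} = 1$, $\rho(x_n)^{ka_n} = 1$, and $\prod_i \rho(x_i) = 1$, which all follow from $\rho(h) = 1$. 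If $\rho(h) = -1$, I split further on the parity of $k$. When $k$ is odd, I take $\rho'(h') = -1$ with all signs $+1$, and match the parity of each $b_i'$ to that of $b_i$ by shifting $b_i' \mapsto b_i' + a_i$ as needed within the residue class of $b_i' \pmod{a_i}$. When $k$ is even, coprimality of $k$ with $a_1, \dotsc, a_{n-1}$ forces each $a_i$ for $i < n$ to be odd; I instead take $\rho'(h') = 1$, after using the substitution $x_i \mapsto x_i h$ (which replaces $\rho(x_i)$ by $-\rho(x_i)$ and shifts $(b_i, b) \mapsto (b_i + a_i, b + 1)$ in the presentation of $\pi_1(Y)$) to arrange that the downstairs $b_i$ are even for $i < n$. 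In each case the parity of $b'$ is then adjusted by a further shift of $b_n'$ by $ka_n$, and the sign of $\rho'(x_n')$ is chosen to enforce the product relation.

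The main obstacle is the parity bookkeeping in the $\rho(h) = -1$ case; the hypothesis that $k$ is coprime to $a_1, \dotsc, a_{n-1}$ enters precisely here, because if $k$ were even and some $a_i$ with $i < n$ were even, then $b_i'$ would be forced to be odd and the required parity adjustment would be unavailable. Once the signs have been chosen consistently, the lemma is immediate from the preservation of $t$ and \fullref{thm:fintushel-stern}~\eqref{it:fs3}.
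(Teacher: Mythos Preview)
Your approach is fundamentally correct and is the same idea as the paper's: produce an irreducible $\rho'$ on $\pi_1(Y')$ with $\rho'(x_i') = \pm\rho(x_i)$, so that the count $t$ is preserved and \fullref{thm:fintushel-stern}~\eqref{it:fs3} finishes the job. The paper packages this more cleanly by choosing compatible presentations: if $Y'$ has Seifert invariants $(b; (a_1,b_1),\dotsc,(ka_n,b_n))$, then $Y$ may be presented as $S^2(kb; (a_1,kb_1),\dotsc,(a_{n-1},kb_{n-1}),(a_n,b_n))$, and the assignment $\phi(x_i')=y_i$, $\phi(h')=h^k$ defines a group homomorphism $\phi\colon\pi_1(Y')\to\pi_1(Y)$ (induced by the $k$-fold branched cover over the fiber of order $a_n$). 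Setting $\rho'=\phi^*\rho$ requires no case analysis at all; this is exactly your ansatz with all signs $+1$ and $\rho'(h')=\rho(h)^k$, once the presentations are aligned.

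Your parity bookkeeping in the subcase $k$ odd, $\rho(h)=-1$ has a small gap when some $a_j$ with $j<n$ is even. Concretely, take $Y=\Sigma(2,3,5)$ with invariants $(1;(2,1),(3,1),(5,1))$, $k=7$, and $Y'=\Sigma(2,3,35)$ with invariants $(1;(2,1),(3,1),(35,6))$; here every irreducible $\rho$ has $\rho(h)=-1$. After matching the parities $b_i'\equiv b_i$ (your step for $i<n$ and $i=n$), any further shift of $b_n'$ by $ka_n$ toggles \emph{both} the $x_n'$-relation and the product relation simultaneously (since $ka_n$ is odd), so the two remaining constraints on $e_n$ become incompatible. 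The point you are missing is that when $a_j$ is even, the $j$-th relation $a_j e_j + b_j \equiv b_j'$ holds for \emph{either} choice of $e_j$, and it is this unconstrained sign $e_j$, not $e_n$ or a $b_n'$-shift, that should absorb the product relation. With that reallocation the system is always solvable: when all $a_i$ are odd, the Seifert integer relations reduced mod~$2$ force $\sum_i(b_i'-b_i)\equiv b'-b$, making the product relation automatic; when some $a_j$ is even, the free $e_j$ handles it.
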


\begin{proof}
  Fix a presentation $(b; (a_1, b_1), \dotsc, (a_{n-1}, b_{n-1}), (k a_n, 
  b_n))$ for $\Sigma (a_1, \dotsc, a_{n-1}, k a_n)$; then
  \[
    \Sigma(a_1, \dotsc, a_n)  = S^2 (k b; (a_1, k b_1), \dotsc, (a_{n-1}, k 
    b_{n-1}), (a_n, b_n)).
  \]
  We denote by $x_i$ and $y_i$ the respective generators of $\pi_1 (\Sigma 
  (a_1, \dotsc, a_{n-1}, k a_n))$ and $\pi_1 (\Sigma (a_1, \dotsc, a_n))$ 
  associated to the singular fibers, but we abusively write $h$ for the central 
  generator in both groups.  Consider the homomorphism $\phi \colon \pi_1 
  (\Sigma (a_1, \dotsc, a_{n-1}, k a_n)) \to \pi_1 (\Sigma (a_1, \dotsc, a_n))$ 
  defined by
  \[
    \phi (h) = h^k, \quad \phi (x_i) = y_i \text{ for all } i \in \set{1, 
      \dotsc, n},
  \]
  which can be easily checked to be well-defined. (For completeness, we note 
  that $\phi$ is induced by the $k$-fold cover of $\Sigma (a_1, \dotsc, a_n)$ 
  branched over the singular fiber of order $a_n$.)

  Since $\rho \in \repvar (\Sigma (a_1, \dotsc, a_n))$ is irreducible, so is 
  $\phi^* \rho \in \repvar (\Sigma (a_1, \dotsc, a_{n-1}, k a_n))$, and the 
  number of $x_i$'s such that $\phi^* \rho (x_i) = \pm 1$ is exactly the same 
  as the number of $y_i$'s such that $\rho (y_i) = \pm 1$.  Therefore, by 
  \fullref{thm:fintushel-stern}~\eqref{it:fs3}, we conclude that $\dimR H^1 
  (\Sigma (a_1, \dotsc, a_{n-1}, k a_n); \Ad_{\phi^* \rho}) = \dimR H^1 (\Sigma 
  (a_1, \dotsc, a_n); \Ad_{\rho}) = r$.
\end{proof}

Next, we reduce to the case where there are exactly three singular fibers.  
Given $\Sigma(a_1, \dotsc, a_n)$, let $p = a_3 \dotsm a_n$. Denote the 
generators for $\pi_1 (\Sigma (a_1, \dotsc, a_n))$ corresponding to the 
singular fibers by $x_1, \dotsc, x_n$, and those for $\pi_1 (\Sigma (a_1, a_2, 
p))$ by $y_1$, $y_2$, and $z$ respectively; we continue to write $h$ for the 
central generator. Note that the $a_i$'s are not assumed to be prime in the 
following lemma; their primality will instead be used later.

\begin{lemma}
  \label{lem:seifert-pinch}
  Suppose that $n \geq 4$. Then there exists a surjective homomorphism
  \[
    f \colon \pi_1 (\Sigma (a_1, \dotsc, a_n)) \to \pi_1 (\Sigma (a_1, a_2, p))
  \]
  such that, for every irreducible $\rho \in \repvar (\Sigma (a_1, a_2, p))$ 
  where $\rho (z)$ is conjugate to $e^{i \pi \ell_z / p}$ with $\ell_z$ 
  relatively prime to $p$,
  we have that $f^* \rho \in \repvar (\Sigma (a_1, \dotsc, a_n))$ is 
  irreducible and $f^* \rho (x_i) \neq \pm 1$ for all $i$; in other words, 
  $\dimR H^1 (\Sigma (a_1, \dotsc, a_n); \Ad_{f^* \rho})$ is maximal.
\end{lemma}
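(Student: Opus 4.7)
The plan is to construct $f$ explicitly on generators, verify it is a well-defined homomorphism, show it is surjective, and then analyze $f^*\rho$. I will set $f(h) = h$, $f(x_1) = y_1$, $f(x_2) = y_2$, and for $i \geq 3$, $f(x_i) = z^{k_i} h^{r_i}$, where $k_i = c_i p/a_i$ for integers $c_i \in \{1, \dotsc, a_i - 1\}$ to be chosen below. Writing $k_i$ in this form both guarantees $p \nmid k_i$ (since $0 < c_i < a_i$) and makes $a_i k_i = c_i p$ divisible by $p$, as forced by the power relation.

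To ensure well-definedness, I will check the defining relations of $\pi_1(\Sigma(a_1, \dotsc, a_n))$. The relation $x_i^{a_i} = h^{-b_i}$ becomes $z^{c_i p} h^{a_i r_i} = h^{-b_i}$, i.e.\ $h^{-c_i b_p' + a_i r_i} = h^{-b_i}$ using $z^p = h^{-b_p'}$; this determines $r_i = (c_i b_p' - b_i)/a_i$, subject to the congruence $c_i b_p' \equiv b_i \pmod{a_i}$. The product relation $x_1 \cdots x_n = h^{-b}$, combined with $y_1 y_2 z = h^{-b'}$, forces $\sum_{i \geq 3} k_i \equiv 1 \pmod{p}$ along with a single integer equation relating $\sum r_i$ to $b$ and $b'$. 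By \fullref{lem:seifert-prime}, I may assume the $a_i$ are prime, hence pairwise coprime, so CRT converts $\sum k_i \equiv 1 \pmod{p}$ into $c_j (p/a_j) \equiv 1 \pmod{a_j}$ for each $j \geq 3$.

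Combining the two congruences on $c_j$ forces $b_p' \equiv b_j (p/a_j) \pmod{a_j}$ for all $j \geq 3$. I will exploit the freedom in choosing Seifert invariants for $\Sigma(a_1, a_2, p)$ to prescribe $b_p'$ satisfying all these congruences via CRT, while maintaining $\gcd(b_p', p) = 1$, and then adjust $b_1', b_2', b'$ (and, if needed, the presentation of $\Sigma(a_1, \dotsc, a_n)$) to fulfill the remaining integer equation while still realizing both manifolds. This will yield a well-defined $f$. Surjectivity follows because $y_1, y_2, h$ are visibly in $\im(f)$, while $\gcd(k_3, \dotsc, k_n, p)$ divides both $\sum k_i$ and $p$, so it divides $\sum k_i - 1 \bmod p = 0$ and $\sum k_i \bmod p = 1$, forcing it to equal $1$; Bezout then recovers $z$ as a product of the $f(x_i)$ and powers of $h$.

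Finally, for the pullback $f^*\rho$: surjectivity of $f$ gives $\im(f^*\rho) = \im(\rho)$, which is non-Abelian by irreducibility of $\rho$, so $f^*\rho$ is irreducible. Further, $f^*\rho(x_1) = \rho(y_1) \neq \pm 1$ and $f^*\rho(x_2) = \rho(y_2) \neq \pm 1$ by hypothesis, while for $i \geq 3$, \fullref{thm:fintushel-stern}~\eqref{it:fs1} gives $\rho(h) = \pm 1$, so $f^*\rho(x_i) = \pm \rho(z)^{k_i}$; since $\rho(z)$ is conjugate to $e^{i \pi \ell_z / p}$ with $\gcd(\ell_z, p) = 1$ and $p \nmid k_i$, we have $\rho(z)^{k_i} \neq \pm 1$. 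Hence $t = n$ in \fullref{thm:fintushel-stern}~\eqref{it:fs3}, giving $\dimR H^1(\Sigma(a_1, \dotsc, a_n); \Ad_{f^*\rho}) = 2n - 6$, the maximum. The main obstacle will be the bookkeeping needed to simultaneously choose Seifert invariants of both $\Sigma(a_1, a_2, p)$ and $\Sigma(a_1, \dotsc, a_n)$ realizing all the modular and integral conditions on $c_j, r_i, b_p', b'$, and $b$; once this is handled via CRT and the presentational freedom, the remaining verifications are routine.
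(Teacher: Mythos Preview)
Your approach is correct and essentially the same as the paper's: both define $f(h)=h$, $f(x_1)=y_1$, $f(x_2)=y_2$, and $f(x_i)=z^{\alpha_i}h^{\beta_i}$ for $i\geq 3$ with $\alpha_i$ a multiple of $p/a_i$. Two remarks. First, your invocation of \fullref{lem:seifert-prime} is misplaced---that lemma does not let you assume primality inside this proof---but it is also unnecessary, since the $a_i$ are already pairwise coprime (this is part of being a Seifert fibered homology sphere), which is all your CRT argument uses. Second, the ``main obstacle'' you identify dissolves if you make the paper's choice: fix $(b;(a_1,b_1),\dotsc,(a_n,b_n))$ for $\Sigma(a_1,\dotsc,a_n)$ and set $q=\sum_{i\geq 3} p b_i/a_i$, $b'=b$, $b_1'=b_1$, $b_2'=b_2$; then $\Sigma(a_1,a_2,p)=S^2(b;(a_1,b_1),(a_2,b_2),(p,q))$, your congruence $b_p'\equiv b_j(p/a_j)\pmod{a_j}$ holds automatically, and all the integer equations for $r_i$ and the product relation are satisfied on the nose, with no further adjustment of presentations.
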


Recall that there exists a degree-$1$ map from $\Sigma(a_1, \dotsc, a_n)$ to 
$\Sigma(a_1, a_2, a_3 \dotsm a_n)$ given by pinching along a suitable vertical 
torus in the Seifert fibration.  The homomorphism $f$ above is induced by this 
map.

\begin{proof}
  Fix a presentation $(b; (a_1, b_1), \dotsc, (a_n, b_n))$ for $\Sigma(a_1, 
  \dotsc, a_n)$, and let $q = \sum_{i=3}^n p b_i / a_i$. Then
  \[
    \Sigma (a_1, a_2, p) = S^2 (b; (a_1, b_1), (a_2, b_2), (p, q)).
  \]
  (Since the $a_i$'s are pairwise relatively prime, $p$ and $q$ are relatively 
  prime.) The two fundamental groups are
  \begin{gather*}
    \pi_1 (\Sigma (a_1, \dotsc, a_n)) \isom \grppre{x_1, \dotsc, x_n, h}{h 
      \text{ central}, \, x_i^{a_i} = h^{-b_i}, \, x_1 \dotsm x_n = h^{-b}},\\
    \pi_1 (\Sigma (a_1, a_2, p)) \isom \grppre{y_1, y_2, z, h}{h \text{ 
        central}, \, y_i^{a_i} = h^{-b_i}, \, z^p = h^{-q}, \, y_1 y_2 z = 
      h^{-b}}.
  \end{gather*}
  With these presentations, we now define $f$. Since the $a_i$'s are pairwise 
  relatively prime, for each $i \geq 3$, we may choose an integer $\eta_i$ such 
  that $\eta_i p / a_i \equiv 1$ mod $a_i$. Clearly, $\eta_i$ and $a_i$ are 
  relatively prime for each $i$. We define
  \[
    f (x_1) = y_1, \quad f (x_2) = y_2, \quad f (x_i) = z^{\alpha_i} 
    h^{\beta_i} \, \text{ for } i \geq 3, \quad f (h) = h,
  \]
  where $\alpha_i = \eta_i p / a_i$ and $\beta_i = (\eta_i q - b_i) / a_i$.  
  (Note that $f$ does not depend on the choice of $\eta_i$.) Observe that 
  $\sum_{j=3}^n \alpha_j \equiv 1$ mod $a_i$ for each $i \geq 3$, which implies 
  that $\sum_{j=3}^n \alpha_j \equiv 1$ mod $p$; using this fact, it is 
  straightforward to check that $f$ is a well-defined group homomorphism.

  We now claim that, for an irreducible $\rho \in \repvar (\Sigma (a_1, a_2, 
  p))$ satisfying the conditions in the lemma, we have $f^* \rho (x_i) \neq \pm 
  1$ for $i = 1, \dotsc, n$.  This is clear for $i = 1$ and $i = 2$.  For $i 
  \geq 3$, suppose that $f^* \rho (x_i) = \pm 1$ for some $i \geq 3$; then 
  $\rho (z)^{\eta_i p/a_i} = \pm 1$, implying that $e^{i \pi \ell_z \eta_i / 
    a_i} = \pm 1$, which is a contradiction since $\eta_i$ is relatively prime 
  to $a_i$.
\end{proof}

We now demonstrate the existence of an irreducible $\rho \in \repvar (\Sigma 
(a_1, a_2, p))$ satisfying the conditions of \fullref{lem:seifert-pinch}, in 
the case that the $a_i$'s are pairwise prime.

\begin{proposition}
  \label{prop:seifert-composite}
  Suppose that $n \geq 4$, and that $a_1 < \dotsb < a_n$ are positive prime 
  numbers. There exists an irreducible $\rho \in \repvar (\Sigma (a_1, a_2, 
  p))$ such that $\rho (z)$ is conjugate to $e^{i \pi \ell_z / p}$, where 
  $\ell_z$ is relatively prime to $p$.
\end{proposition}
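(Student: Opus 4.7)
The approach is to apply the Fintushel--Stern construction recipe reviewed above directly to the three-fiber Seifert space $\Sigma(a_1, a_2, p)$ with a fixed presentation $(b; (a_1, b_1), (a_2, b_2), (p, q))$. The recipe reduces the existence of $\rho$ to choosing a sign $\varepsilon := \rho(h) \in \{\pm 1\}$ and integers $\ell_1 \in \{1, \dotsc, a_1 - 1\}$, $\ell_2 \in \{1, \dotsc, a_2 - 1\}$, and $\ell_z \in \{1, \dotsc, p-1\}$ coprime to $p$, satisfying the parity constraints $(-1)^{\ell_i} = \varepsilon^{-b_i}$ for $i \in \{1, 2\}$ and $(-1)^{\ell_z} = \varepsilon^{-q}$, and such that $\pi\ell_z/p$ lies in the image of the map $\phi$, a closed subinterval of $[0, \pi]$ determined by $\alpha_i := \pi\ell_i/a_i$.

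To manage the parity constraints, note that since $a_1 < a_2 < \dotsb < a_n$ are distinct primes, only $a_1$ can equal $2$; hence $p = a_3 \dotsm a_n$ is odd, and coprimality to $p$ is independent of parity. The standard Seifert-invariant transformations $(b_i, b) \mapsto (b_i + a_i, b - 1)$ and $(q, b) \mapsto (q + p, b - 1)$ preserve the homeomorphism class of $\Sigma(a_1, a_2, p)$ while flipping the parities of $b_i$ (when $a_i$ is odd) and of $q$, respectively. Taking $\varepsilon = -1$ so that $(-1)^{\ell_i} = (-1)^{b_i}$, we can therefore prescribe any desired parities for $\ell_1, \ell_2, \ell_z$ when $a_1 \geq 3$. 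In the edge case $a_1 = 2$, the values $\ell_1 = 1$ and $\varepsilon = -1$ are forced (since $b_1$ must be odd), but the parities of $\ell_2$ and $\ell_z$ are still freely adjustable by modifying $b_2$ and $q$.

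The crux of the argument is to verify that an appropriate choice of $\ell_1, \ell_2$ produces a valid interval for $\ell_z$ containing an integer coprime to $p$ with the prescribed parity. Taking $\ell_i$ within $1$ of $a_i/2$ (subject to parity), so that $\alpha_i$ lies within $\pi/(2a_i)$ of $\pi/2$, makes the image of $\phi$ fill all but an $O(\pi/a_1)$-length piece of $[0, \pi]$; the corresponding set of valid $\ell_z$ is then an interval of size $p - O(p/a_1)$ inside $\{1, \dotsc, p - 1\}$. Since $p$ is odd, the $\varphi(p)$ residues coprime to $p$ pair up as $(a, p - a)$ of opposite parities, so each parity class contains $\varphi(p)/2$ coprime representatives; an elementary sieve or Chinese Remainder Theorem argument then locates one of these inside the valid interval, using the bound $\varphi(p) = \prod_{i=3}^n (a_i - 1) \geq a_3 - 1 > a_2$. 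The principal obstacle is ensuring this sieve yields an explicit positive count in all cases, especially for small $a_3$ where the sieve's error terms are comparable to the main term; this should reduce to a finite, explicit case analysis.
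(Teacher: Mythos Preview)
Your setup is sound: centering $\alpha_i$ near $\pi/2$ does make the image of $\phi$ fill an interval of length $\pi(1-1/a_1)$ (or $\pi(1-1/a_2)$ when $a_1=2$), and the presentation can indeed be adjusted so the parity constraints on $\ell_1,\ell_2$ match the choices $\ell_i=(a_i\pm1)/2$. The genuine gap is the sieve step. The bound you invoke, $\varphi(p)\ge a_3-1>a_2$, is not the quantity that governs whether an inclusion--exclusion sieve succeeds here: counting integers in an interval that are coprime to $p$ and of a fixed parity carries an error of order $2^{n-1}$ (the number of divisors of $2p$), so what one actually needs is something like $\varphi(p)\gg 2^{n}$. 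That holds asymptotically, since $\prod_{i\ge 3}(a_i-1)$ grows much faster than $2^n$, but pinning down the constants so the count is strictly positive in every case---including borderline ones such as $(a_1,a_2,a_3,a_4)=(2,3,5,7)$, where the crude estimate gives exactly zero---is precisely the ``finite, explicit case analysis'' you defer, and the bound you actually wrote down plays no role in it.

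The paper takes a different route that avoids sieving altogether. Rather than maximizing the interval, it fixes small values $\ell_1=\ell_2\in\{1,2\}$ and splits into three short cases: $a_1=2$; $(a_1,a_2)=(3,5)$; and $1/a_1+1/a_2<1/2$. In the first two the candidates $\ell_z'=(p\pm1)/2$ lie in the image of $\phi$ by a direct inequality and are automatically coprime to $p$. In the third the image of $\phi$ has length $4/a_2>2/a_3$, so it contains some $[k/a_3,(k+2)/a_3]$ with $0<k<a_3-2$; the paper then takes $r=kp/a_3+a_3$ and $r'=kp/a_3+2a_3$ as candidates of opposite parity, and a one-line residue check modulo each $a_i$ (namely $r\equiv kp/a_3\not\equiv 0\pmod{a_3}$ and $r\equiv a_3\pmod{a_i}$ for $i>3$) shows both are coprime to $p$. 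This explicit construction replaces your sieve and requires no error-term bookkeeping. Your large-interval strategy can likely be completed, but as written it is an outline rather than a proof.
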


\begin{proof}
  We continue to write $\Sigma (a_1, a_2, p) = S^2 (b; (a_1, b_1), (a_2, b_2), 
  (p, q))$, and use the same presentation for $\pi_1 (\Sigma (a_1, a_2, p))$ as 
  before.  First, we make some general observations.  Recall the construction 
  of irreducible $\SUtwo$-representations in the paragraph after 
  \fullref{thm:fintushel-stern}.  Observe that $\rho (z)$ is conjugate to $e^{i 
    \pi \ell_z / p}$ for some $\ell_z$ relatively prime to $p$ if and only if 
  $\rho (y_1) \rho (y_2)$ is conjugate to $e^{i \pi \ell_z' / p}$ for some 
  $\ell_z'$ relatively prime to $p$; thus, the goal is to show that after 
  picking appropriate values for $\rho (h)$, $\ell_1$, and $\ell_2$ satisfying 
  $(-1)^{\ell_j} = (\rho (h))^{-b_j}$ (for us to define $\rho (y_1) = e^{i \pi 
    \ell_1 / a_1}$ and decree $\rho (y_2)$ to be conjugate to $e^{i \pi \ell_2 
    / a_2}$), there exists an integer $\ell_z'$ such that
  \begin{enumerate}
    \item \label{it:pinch-prime} $\ell_z'$ is relatively prime to $p$;
    \item \label{it:pinch-sign} $(-1)^{\ell_z'} = (\rho (h))^{- b p + q}$; and
    \item \label{it:pinch-image} $\pi \ell_z' / p$ is in the image of $\phi 
      \colon S_2 \to [0, \pi]$.
  \end{enumerate}
  Since $\phi$ is continuous, to satisfy \eqref{it:pinch-image}, we simply need 
  to exhibit choices $s_2, s_2' \in S_2$ (i.e.\ elements $s_2$ and $s_2'$ that 
  are conjugate to $e^{i \pi \ell_2 / a_2}$) such that
  \begin{equation}
    \label{eq:squeezephi}
    \Arg (\rho (y_1) s_2) \leq \frac{\pi \ell_z'}{p} \leq \Arg (\rho (y_1) 
    s_2').
  \end{equation}
  Our strategy will be to find $s_2$, $s_2'$, and two values of $\ell_z'$ of 
  opposite parities satisfying \eqref{it:pinch-prime} and 
  \eqref{it:pinch-image}; then exactly one of them will satisfy 
  \eqref{it:pinch-sign}. Finally, by construction, $\rho$ is not trivial, and 
  thus is irreducible.

  First, we consider the special case that $a_1 = 2$.  In this case, we may 
  choose a presentation where $b_1 = 1$ and $b_2$ is odd (at the expense of 
  changing $b$). We take $\rho (h) = -1$, $\ell_1 = 1$, and $\ell_2 = 1$.  We 
  claim that the image of $\phi$ contains $\pi r_\pm / p$, where $\ell_z' = 
  r_\pm = (p \pm 1) / 2$.  Note that these two numbers are \emph{both} integers 
  relatively prime to $p$, and have opposite parities; thus, if $\pi r_{\pm} / 
  p$ are both in the image of $\phi$, the proof will be complete in this case.  
  To prove our claim, note that if we choose $s_2 = e^{- i \pi / a_2}$ and 
  $s_2' = e^{i \pi / a_2}$, which are both conjugate to $e^{i \pi / a_2}$, then 
  since $a_2 < p$, we have
  \[
    \Arg (e^{i \pi / 2} e^{- i \pi / a_2}) = \frac{\pi (a_2 - 2)}{2 a_2} \leq 
    \frac{\pi (p \pm 1)}{2 p} \leq \frac{\pi (a_2 + 2)}{2 a_2} = \Arg (e^{i \pi 
      / 2} e^{i \pi / a_2});
  \]
  in other words, \eqref{eq:squeezephi} is satisfied.

  We may now assume that all the $a_i$'s are odd.  Next, we consider the 
  special case that $a_1 = 3$ and $a_2 = 5$.  Again, we may choose a 
  presentation where $b_1$ and $b_2$ are both odd, and take $\rho(h) = -1$ and 
  $\ell_1 = \ell_2 = 1$. We again claim that the image of $\phi$ contains $\pi 
  r_{\pm} / p$, where $\ell_z' = r_{\pm} = (p \pm 1) / 2$. Indeed, if we choose 
  $s_2 = e^{- i \pi / 5}$ and $s_2' = e^{i \pi / 5}$, both of which are 
  conjugate to $e^{i \pi / 5}$, then since $p > 15$, we have
  \[
    \Arg (e^{i \pi/3} e^{-i \pi/5}) = \frac{2 \pi}{15} < \frac{\pi (p \pm 
      1)}{2p} < \frac{8 \pi}{15} = \Arg (e^{i \pi/3} e^{i \pi/5}),
  \]
  and \eqref{eq:squeezephi} is satisfied.

  By dispensing with the two cases above, we may assume that all the $a_i$'s 
  are odd, and further that $1 / a_1 + 1 / a_2 < 1 / 2$.  In this case, there 
  are several choices we could take for $\ell_j$ and $\rho (h)$; for 
  concreteness, we choose a presentation where $b_1$ and $b_2$ are both even, 
  and take $\ell_1 = \ell_2 = 2$ and $\rho (h) = 1$.  We choose $s_2 = e^{-i 
    \pi 2 / a_2}$ and $s_2' = e^{i \pi 2 / a_2}$, and compute the arguments to 
  be
  \[
    0 < \Arg (e^{i \pi 2 / a_1} e^{\pm i \pi 2 / a_2}) = 2 \pi 
    \paren{\frac{1}{a_1} \pm \frac{1}{a_2}} < \pi.
  \]
  As before, we now wish to find two values $r$ and $r'$ for $\ell_z'$, of 
  opposite parities, each relatively prime to $p$, such that 
  \eqref{eq:squeezephi} is satisfied, i.e.
  \[
    2 \paren{\frac{1}{a_1} - \frac{1}{a_2}} \leq \frac{r}{p}, \frac{r'}{p} \leq 
    2 \paren{\frac{1}{a_1} + \frac{1}{a_2}}.
  \]
  Let $I$ denote the interval governed by the above inequality.  Note that the 
  length of this interval is $4 / a_2$, and $4 / a_3 < 4 / a_2 < 1$.  
  Therefore, we may choose an integer $k$ with $0 < k < k + 2 < a_3$, such that 
  $[k / a_3, (k + 2) / a_3] \subset I$; we can rewrite this as
  \[
    \brac{\frac{k \frac{p}{a_3}}{p}, \frac{(k +2) \frac{p}{a_3}}{p}} \subset I.
  \]
  In fact, since $p / a_3 > a_3$, we have
  \[
    \frac{k \frac{p}{a_3} + a_3}{p}, \frac{k \frac{p}{a_3} + 2a_3}{p} \in I.
  \]
  Let $r = k p / a_3 + a_3$ and $r' = k p / a_3 + 2a_3$.  Note that $r$ and 
  $r'$ are between $0$ and $p$, and have opposite parities since $a_3$ is odd.  
  It remains to see that $r$ and $r'$ are relatively prime to $p$.  First, 
  since $0 < k < a_3$ and $a_3$ is prime, we see that $k$ is relatively prime 
  to $a_3$; thus, $r, r' \equiv k p / a_3 \not\equiv 0$ mod $a_3$.  At the same 
  time, for $i > 3$, we observe that $r \equiv a_3$ mod $a_i$ and $r' \equiv 2 
  a_3$ mod $a_i$; since the $a_i$'s are odd, prime, and greater than $a_3$, we 
  have that $a_3$ and $2 a_3$ are also relatively prime to $a_i$.  Putting this 
  together, we conclude that $r, r'$ are relatively prime to $p$, which 
  completes the proof.
\end{proof}

\begin{proof}[Proof of \fullref{prop:seifert-zariski}]
  Since the Casson invariant of any Seifert fibered homology sphere is never 
  zero, we have that the result trivially holds for $n = 3$. For $n \geq 4$, 
  the result follows from combining \fullref{lem:seifert-prime}, 
  \fullref{lem:seifert-pinch}, and \fullref{prop:seifert-composite}.
\end{proof}

\begin{proof}[Proof of \fullref{thm:number-fibers}]
  \fullref{thm:fintushel-stern}~\eqref{it:fs3} says that the Zariski tangent 
  space to the $\SUtwo$-character variety of $\Sigma(a_1, \dotsc, a_n)$ has 
  dimension less than or equal to $2 n - 6$.  By 
  \fullref{prop:seifert-zariski}, the equality is always realized at some 
  irreducible representation.  The result then follows from 
  \fullref{prop:zariski}.
\end{proof}

\begin{proof}[Proof of {\fullref{thm:seifert}}]
  \eqref{it:casson} This follows from \fullref{thm:main-i-o}, since $2 
  \abs{\casson (Y)} = \abs{\chi(\Io (Y))} = \dim \Io (Y)$ for a Seifert fibered 
  homology sphere $Y$ \cite{Saveliev}.

  \eqref{it:plumbing} The only Seifert fibered homology sphere with trivial 
  Casson invariant is $S^3$, which bounds both positive- and negative-definite 
  plumbings. Again by \cite{Saveliev}, $\Io (\Yp)$ is supported in one 
  $\zeetwo$-grading; this $\zeetwo$-grading determines the sign of $\casson 
  (\Yp)$ and hence the definiteness of the plumbing $\Yp$ bounds.  
  \fullref{thm:main-i-o} implies that $\Io (\Ym)$ is supported in the same 
  $\zeetwo$-grading.

  \eqref{it:fibers} This is \fullref{thm:number-fibers}.
\end{proof}

Note that the first two items above can also be proved using Heegaard Floer 
homology, by \cite[Theorem~1.3]{OzsSza03} and \cite[Corollary~1.4]{Plumbed}.

\begin{remark}
  \label{rmk:seifert}
  While the conclusions in \fullref{thm:seifert} seem strong, the authors do 
  not know of any ribbon $\Q$-homology cobordisms between two Seifert fibered 
  homology spheres distinct from $S^3$, or from a non--Seifert fibered space to 
  a Seifert fibered space.  For comparison, given any closed $3$-manifold 
  $\Ym$, one can always construct a ribbon $\Q$-homology cobordism from $\Ym$ 
  to a hyperbolic $3$-manifold, and one to a $3$-manifold with non-trivial JSJ 
  decomposition, as explained below.

  To construct a ribbon $\Q$-homology cobordism to a hyperbolic $3$-manifold, 
  first attach a $1$-handle to $\Ym$, so that the positive end is $\Ym \connsum 
  S^1 \cross S^2$. Let $K \subset \Ym \connsum S^1 \cross S^2$ be a hyperbolic 
  knot that is homotopic to $S^1 \cross \set{p}$; such a knot exists by a 
  result of Myers \cite[Theorem~1.1]{Myers}.  Attaching a $2$-handle along $K$ 
  with any framing will then yield a ribbon $\Z$-homology cobordism. By 
  Thurston's Hyperbolic Dehn Surgery Theorem, all but finitely many surgeries 
  along $K$ will yield a hyperbolic $3$-manifold.  In other words, for any 
  choice among all but finitely many surgery slopes, we have constructed a 
  ribbon $\Z$-homology cobordism from $\Ym$ to a hyperbolic $3$-manifold.
  
  To construct a ribbon $\Q$-homology cobordism to a $3$-manifold with 
  non-trivial JSJ decomposition, recall that the exterior of a hyperbolic knot 
  has incompressible boundary.  Again, we attach a $1$-handle to $\Ym$, and 
  choose a hyperbolic knot $K \subset \Ym \connsum S^1 \cross S^2$ that is 
  homotopic to $S^1 \cross \set{p}$.  This time, we will attach a $2$-handle 
  along a satellite knot with $K$ as the companion; for any framing, this will 
  give a ribbon $\Z$-homology cobordism as long as the pattern $P$ of the 
  satellite knot $P (K)$ has winding number $1$, since $K$ and $P (K)$ will be 
  homologous.  To carry this out, take a hyperbolic knot $P \subset S^1 \cross 
  D^2$ with winding number $1$, such as the Mazur pattern; note that Thurston's 
  Hyperbolic Dehn Surgery Theorem again implies that all but finitely many 
  surgeries along $P$ will give rise to a hyperbolic $3$-manifold. A 
  $3$-manifold obtained via surgery along the satellite $P (K) \subset \Ym 
  \connsum S^1 \cross S^2$ can be expressed as the union of $\Ym \connsum S^1 
  \cross S^2 \setminus K$ and a surgery along $P \subset S^1 \cross D^2$. In 
  other words, for any choice among all but finitely many surgery slopes, the 
  positive end of the ribbon $\Z$-homology cobordism we have constructed is 
  obtained by gluing two hyperbolic $3$-manifolds with incompressible torus 
  boundary, which is exactly a $3$-manifold with non-trivial JSJ decomposition.
\end{remark}

\fullref{thm:seifert}~\eqref{it:fibers} immediately implies a statement on 
Montesinos knots.

\begin{proof}[Proof of {\fullref{cor:montesinos}}]
  Let $C \colon \Km \to \Kp$ be a strongly homotopy-ribbon concordance from 
  $\Km$ to $\Kp$. The branched double cover of $C$ gives a ribbon $\Q$-homology 
  cobordism between Seifert fibered homology spheres $\Ypm$, where the number 
  of exceptional fibers in $\Ypm$ is precisely the number of rational tangles 
  in the Montesinos knot $\Kpm$ with denominator at most 2.
\end{proof}

\subsection{Ribbon homology cobordisms and \texorpdfstring{$L$}{L}-spaces}
\label{ssec:L-spaces}

We now utilize the $U$-action on $\HFm$ to derive two obstructions to ribbon 
homology cobordisms involving $L$-spaces.

Recall that the \emph{reduced Heegaard Floer homology} $\HFr$ is the 
$U$-torsion submodule of $\HFm$, and a $\Q$-homology sphere $Y$ is an 
\emph{$L$-space} if $\HFr (Y) = 0$.\footnote{Technically, $Y$ should be called 
  a \emph{$\zeetwo$--Heegaard $L$-space}. One could also define $L$-spaces with 
  other coefficients, or with instanton Floer homology.  However, we never 
  consider these concepts of $L$-spaces in the present article.}

\begin{corollary}
  \label{cor:lspace}
  Suppose that $\Ym$ and $\Yp$ are $\Q$-homology spheres, and that $\Yp$ is an 
  $L$-space while $\Ym$ is not. Then there does not exist a ribbon 
  $\zeetwo$-homology cobordism from $\Ym$ to $\Yp$.
\end{corollary}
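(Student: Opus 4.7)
The plan is to apply \fullref{thm:main-hf} with the minus flavor and exploit the fact that Heegaard Floer cobordism maps are $\zeetwo[U]$-module homomorphisms. Specifically, suppose for a contradiction that $W \colon \Ym \to \Yp$ is a ribbon $\zeetwo$-homology cobordism. By \fullref{thm:main-hf} applied to $\HFm$, the cobordism map
\[
  \Fm_W \colon \HFm (\Ym) \to \HFm (\Yp)
\]
includes $\HFm (\Ym)$ into $\HFm (\Yp)$ as a summand. In particular, $\Fm_W$ is injective, and since cobordism maps are $\zeetwo[U]$-equivariant, it respects the $U$-action on both sides.

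Since $\Ym$ is not an $L$-space, by definition $\HFr (\Ym) \neq 0$, so there exists a nonzero element $x \in \HFm (\Ym)$ and a positive integer $n$ such that $U^n x = 0$. By injectivity of $\Fm_W$, the image $\Fm_W (x) \in \HFm (\Yp)$ is nonzero, and by $U$-equivariance,
\[
  U^n \cdot \Fm_W (x) = \Fm_W (U^n x) = 0.
\]
Thus $\Fm_W (x)$ is a nonzero $U$-torsion element of $\HFm (\Yp)$, so $\HFr (\Yp) \neq 0$. This contradicts the assumption that $\Yp$ is an $L$-space.

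There is no real obstacle here; the corollary is a direct consequence of \fullref{thm:main-hf}, once one observes the two structural features that the included summand is honestly an image of $\Fm_W$ and that $\Fm_W$ preserves $U$-torsion. (A cosmetic point: one could equally well run the argument $\SpinC$-structure by $\SpinC$-structure using \fullref{thm:main-hf-spinc}, since some $\SpinC$-structure on $\Ym$ must support $U$-torsion, and its image in a direct sum of $\HFm (\Yp, \spinct \rvert_{\Yp})$ summands must contain a nonzero $U$-torsion term.)
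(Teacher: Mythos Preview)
Your proof is correct and follows essentially the same approach as the paper: apply \fullref{thm:main-hf} to obtain that $\Fm_W$ is injective, use $U$-equivariance to conclude that $U$-torsion maps to $U$-torsion, and deduce a contradiction from $\HFr(\Ym) \neq 0$ while $\HFr(\Yp) = 0$.
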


Note that this applies whenever $\Ym$ is a toroidal $\Z$-homology sphere, since 
such a manifold is necessarily not an $L$-space \cite[Theorem~1.1]{Eftekhary} 
(see also \cite[Corollary~10]{HRW}).

\begin{proof}
  Suppose that $W \colon \Ym \to \Yp$ is a ribbon $\zeetwo$-homology cobordism; 
  then \fullref{thm:main-hf} implies that $\Fm_W \colon \HFm (\Ym) \to \HFm 
  (\Yp)$ is injective. Under this map, $U$-torsion elements must be mapped to 
  $U$-torsion elements; thus, we obtain an injection on $\HFr$ as well.
\end{proof}

\begin{corollary}
  \label{cor:hf-seifert}
  Suppose that $Y_1$ and $Y_2$ are $\Q$-homology spheres that are not 
  $L$-spaces.  Then there does not exist a ribbon $\Z/2$-homology cobordism 
  from $Y_1 \connsum Y_2$ to a Seifert fibered space.
\end{corollary}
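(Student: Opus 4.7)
The plan is to argue by contradiction, combining \fullref{thm:main-hf}, the K\"unneth formula for connect sums in Heegaard Floer homology, and the Ozsv\'ath--Szab\'o plumbing calculus. Suppose $W \colon Y_1 \connsum Y_2 \to Y$ is a ribbon $\zeetwo$-homology cobordism with $Y$ Seifert fibered. First I would note that a $\zeetwo$-homology cobordism is automatically a $\Q$-homology cobordism (by the universal coefficient theorem, since $H_* (W, Y_i; \zeetwo) = 0$ forces $H_* (W, Y_i; \Z)$ to be finite of odd order), so $Y$ is a $\Q$-homology sphere and $W$ satisfies $\chi (W) = \sigma (W) = 0$ and $c_1 (\spinct)^2 = 0$ for every $\spinct \in \SpinC (W)$. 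Hence each cobordism map $\Fm_{W, \spinct}$ preserves the absolute $\Q$-grading and therefore the intrinsic $\zeetwo$-grading, and \fullref{thm:main-hf} restricted to the $U$-torsion submodule yields a $\zeetwo$-grading--preserving injection $\HFr (Y_1 \connsum Y_2) \hookrightarrow \HFr (Y)$ as an $\F [U]$-module summand.

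Next, I would use the Ozsv\'ath--Szab\'o connect sum formula, together with the K\"unneth short exact sequence for derived tensor products over the PID $\F [U]$, to exhibit elements of $\HFr (Y_1 \connsum Y_2)$ in both intrinsic $\zeetwo$-gradings. Choose $\SpinC$-structures $\spinc_i$ with $\HFr (Y_i, \spinc_i) \neq 0$, which exist since neither $Y_i$ is an $L$-space. The connect sum formula identifies $\HFm (Y_1 \connsum Y_2, \spinc_1 \connsum \spinc_2)$ with the homology of the derived tensor product of $\CFm (Y_1, \spinc_1)$ and $\CFm (Y_2, \spinc_2)$, yielding a K\"unneth short exact sequence whose outer terms are a tensor contribution $\HFm (Y_1, \spinc_1) \otimes_{\F [U]} \HFm (Y_2, \spinc_2)$ and a $\Tor_1^{\F [U]}$ contribution that differs from the tensor term by one in $\Z$-grading (and hence in $\zeetwo$-grading). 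Since both $\HFr (Y_i, \spinc_i)$ are non-trivial $U$-torsion $\F [U]$-modules, both the tensor and $\Tor$ parts contribute non-trivially to $\HFr (Y_1 \connsum Y_2, \spinc_1 \connsum \spinc_2)$, and they lie in opposite $\zeetwo$-gradings.

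The contradiction will then come from the structural fact, parallel to the instanton statement in the proof of \fullref{thm:seifert}~\eqref{it:plumbing}, that for a Seifert fibered $\Q$-homology sphere $Y$ the group $\HFr (Y)$ is supported in a single intrinsic $\zeetwo$-grading; this follows from the Ozsv\'ath--Szab\'o plumbing calculus \cite{Plumbed}, which shows that $\HFr (Y, \spinc)$ is concentrated at even offsets from $d (Y, \spinc)$ with intrinsic $\zeetwo$-grading consistent across $\SpinC$-structures. But the injection from the first step forces $\HFr (Y)$ to contain elements in both $\zeetwo$-gradings, a contradiction. The main obstacle is carefully tracking the grading shift in the connect sum formula to ensure the $\Tor$ contribution genuinely lies in the opposite $\zeetwo$-grading from the tensor contribution, and justifying the across-$\SpinC$ consistency of the intrinsic $\zeetwo$-grading on $\HFr (Y)$ for Seifert fibered $Y$.
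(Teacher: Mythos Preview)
Your proposal is correct and follows essentially the same approach as the paper's proof: both use \fullref{thm:main-hf} to inject $\HFr(Y_1 \connsum Y_2)$ into $\HFr(Y)$, invoke the K\"unneth formula (with the tensor and $\Tor$ contributions differing by one in grading) to produce $U$-torsion in both $\zeetwo$-gradings, and appeal to \cite[Corollary~1.4]{Plumbed} for the single-grading property of Seifert fibered spaces. The obstacles you flag (the grading shift in K\"unneth and the across-$\SpinC$ consistency) are standard and are simply asserted in the paper without further comment.
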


\begin{proof}
  If $Y_1$ and $Y_2$ both have non-trivial $\HFr$, then in both 
  $\Z/2$-gradings, $\HFr (Y_1 \connsum Y_2)$ is not trivial. Indeed, by the 
  K\"unneth formula \cite[Theorem~1.5]{PropApp}, $\HFr (Y_1 \connsum Y_2)$ 
  contains a summand isomorphic to two copies of $\HFr (Y_1) \tensor \HFr 
  (Y_2)$, with one copy shifted in grading by $1$.  (One comes from the tensor 
  product and one from the $\Tor$ term.)
  Meanwhile, Seifert fibered spaces have $\HFr$ supported in a single 
  $\Z/2$-grading \cite[Corollary~1.4]{Plumbed}.
\end{proof}

\subsection{Ribbon homology cobordisms from connected sums}
\label{ssec:ribbon-cobs}

\fullref{cor:hf-seifert} concerns the existence of ribbon homology cobordisms 
from a connected sum. The following corollary also concerns connected sums, but 
is proved using $\repvarG$.

\begin{corollary}
  \label{cor:su2sums}
  Let $Y$ and $N$ be compact $3$-manifolds, and suppose that $N \not\homeo 
  S^3$.  Then there does not exist a ribbon $\Q$-homology cobordism from $Y 
  \connsum N$ to $Y$.
\end{corollary}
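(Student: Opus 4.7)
The plan is to apply \fullref{prop:character-embedding-2} with $G = \matrixgrp{U} (n)$ for an appropriate $n$, and derive a contradiction from a dimension count on representation varieties.

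Suppose, for contradiction, that $W \colon Y \connsum N \to Y$ is a ribbon $\Q$-homology cobordism. First I would invoke \fullref{lem:gordon-h1}: the inclusion-induced maps yield $H_1 (Y) \dirsum H_1 (N) \hookrightarrow H_1 (W)$ and $H_1 (Y) \twoheadrightarrow H_1 (W)$. Comparing free ranks forces $b_1 (N) = 0$, and then comparing the orders of the torsion subgroups forces $\abs{H_1 (N)} = 1$; so $N$ is a $\Z$-homology sphere. By Perelman's resolution of the Poincar\'e conjecture, $\pi_1 (N) \neq 1$.

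The main step is to exhibit, for some $n$, a representation $\rho_N \colon \pi_1 (N) \to \matrixgrp{U} (n)$ whose image is not contained in the center of $\matrixgrp{U} (n)$. By residual finiteness of $3$-manifold groups (a consequence of Geometrization), $\pi_1 (N)$ surjects onto some non-trivial finite group $G_0$. Since $H_1 (N) = 0$, the group $G_0$ is perfect and therefore non-abelian. Composing the quotient map with a faithful complex representation of $G_0$ (say, its regular representation) produces $\rho_N \colon \pi_1 (N) \to \matrixgrp{U} (n)$ with non-abelian image. Because the center of $\matrixgrp{U} (n)$ is the abelian group $\matrixgrp{U} (1) \cdot I$, the image of $\rho_N$ is not central; consequently the $\matrixgrp{U} (n)$-conjugacy orbit of $\rho_N$ in $\repvarG (N)$ has positive dimension, whence $\dimR \repvarG (N) > 0$.

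Setting $G = \matrixgrp{U} (n)$, which is compact and connected, \fullref{prop:character-embedding-2} provides a surjection $\iota_-^* \colon \repvarG (W) \twoheadrightarrow \repvarG (Y \connsum N)$ and an injection $\iota_+^* \colon \repvarG (W) \hookrightarrow \repvarG (Y)$. Since $\pi_1 (Y \connsum N) \isom \pi_1 (Y) \fprod \pi_1 (N)$ is a free product, a representation of the free product is determined by its restrictions to the factors, giving a natural identification $\repvarG (Y \connsum N) \isom \repvarG (Y) \cross \repvarG (N)$ of real algebraic varieties. Combining these ingredients yields
\[
  \dimR \repvarG (Y) + \dimR \repvarG (N) = \dimR \repvarG (Y \connsum N) \leq \dimR \repvarG (W) \leq \dimR \repvarG (Y),
\]
contradicting $\dimR \repvarG (N) > 0$. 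The main subtlety is justifying these dimension inequalities from the a priori set-theoretic statements in \fullref{prop:character-embedding-2}; this is routine since the relevant restriction maps are morphisms of real algebraic varieties (cut out by polynomial conditions inside powers of $G$), so surjectivity on $\R$-points forces the expected dimension bound.
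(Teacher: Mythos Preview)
Your proof is correct and follows essentially the same strategy as the paper: both first show that $N$ must be a $\Z$-homology sphere, then use a nontrivial finite perfect quotient of $\pi_1(N)$ to produce a $\matrixgrp{U}(n)$-representation with positive-dimensional conjugacy orbit, and finally derive a contradiction from \fullref{prop:character-embedding-2} via the product decomposition $\repvarG(Y \connsum N) \cong \repvarG(Y) \times \repvarG(N)$.

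The one substantive difference is the invariant being compared. The paper introduces $\omega_G(\rho) = \dimR \ker(d\Phi_\rho)$, the Zariski tangent-space dimension of $\repvarG$ at $\rho$, and shows $\omega_G(\Ym) \leq \omega_G(\Yp)$ directly from the block structure of the relation map (in the spirit of \fullref{prop:zariski}); it then takes $\eta$ to be an \emph{irreducible} representation of the perfect quotient and uses the additivity $\omega_G(\rho * \eta) = \omega_G(\rho) + \omega_G(\eta)$. You instead compare the global real dimension of $\repvarG$ and use the regular representation. Your route is a bit more streamlined and avoids introducing $\omega_G$, at the cost of having to justify the dimension inequalities for surjective and injective morphisms of real algebraic sets; the paper's tangent-space version sidesteps that by working linearly at a single point.
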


\begin{proof}[Proof of \fullref{cor:su2sums}]
  First, we fix some notation. Let $\pi$ be a group, and let $G$ be a compact, 
  connected Lie group.  Fix a presentation $\grppre{a_1, \dotsc, a_g}{w_1, 
    \dotsc, w_r}$ of $\pi$.  For each $\rho \in \repvarG(\pi)$, the words $w_i$ 
  give a smooth map $\Phi \colon G^g \to G^r$; denote by $\phi_\rho$ the 
  derivative of $\Phi$ at $(\rho (a_1), \dotsc, \rho (a_g))$, and we define 
  $\omega_G \colon \repvarG (\pi) \to \Z_{\geq 0}$ by
  \[
    \omega_G (\rho) = \dimR \ker (\phi_\rho).
  \]
  (The reader may wish to compare $\phi_\rho$ here with the map $\phi$ in 
  \eqref{eq:zariski-cochain}.)
  It is not difficult to check that $\omega_G$ is independent of the 
  presentation of $\pi$. We also define
  \[
    \omega_G (\pi) = \max_{\rho \in \repvarG (\pi)} \omega_G (\rho),
  \]
  and write $\omega_G (X)$ for $\omega_G (\pi_1 (X))$, for a path-connected 
  space $X$.

  Suppose that $\Ym$, $\Yp$, $W$, $\rhom$, $\rhop$, and $\rhoW$ are as in 
  \fullref{prop:character-embedding}.  By \fullref{prop:character-embedding-2}, 
  we have
  \[
    \omega_G (\rhom) \leq \omega_G (\rhoW) \leq \omega_G (\rhop).
  \]
  Indeed, since $\pi_1 (W)$ is obtained from $\pi_1 (\Yp)$ by adding relations, 
  the matrix for $\omega_G (\rhoW)$ contains that for $\omega_G (\rhop)$ as a 
  block, with additional rows; similarly, the matrix for $\phi_{\rhoW}$ 
  contains that for $\phi_{\rhom}$ as a block, with $m$ additional rows and 
  columns. Consequently, we see that
  \begin{equation}
    \label{eq:omega}
    \omega_G (\Ym) \leq \omega_G (W) \leq \omega_G(\Yp).
  \end{equation}
  
  Now suppose that there exists a ribbon $\Q$-homology cobordism $W \colon Y 
  \connsum N \to Y$. Homology considerations show that $N$ must be a 
  $\Z$-homology sphere (cf.\ \fullref{rmk:zhs} and \fullref{lem:gordon-h1}).  
  This means that $\pi_1 (N)$ cannot be solvable, since the Abelianization of a 
  solvable group is trivial. Thus, the residual finiteness of $3$-manifold 
  groups implies the existence of a non-trivial, finite quotient $H$ of $\pi_1 
  (N)$. Since $\pi_1 (N)$ is perfect, the quotient $H$ is also perfect. Take 
  any non-trivial, irreducible representation of $H$ in $\GL_n (\C)$; by Weyl's 
  trick, we may turn it into a non-trivial, irreducible, unitary representation 
  from $H$ to $\Un$; since $\mathrm{U} (1)$ is Abelian and $H$ is perfect, we 
  may assume $n \geq 2$. (Of course, the possible choices for $n$ depend on 
  $H$.)  Let $\eta \colon \pi_1 (N) \to \Un$ be the associated representation, 
  and choose $\rho \in \repvar_{\Un} (Y)$ that maximizes $\omega_{\Un}$.  
  Consider the free product representation $\rho \fprod \eta \colon \pi_1 (Y 
  \connsum N) \to \Un$.  It follows from the definitions that
  \[
    \omega_{\Un}(\rho \fprod \eta) = \omega_{\Un}(\rho) + \omega_{\Un}(\eta).  
  \]
  It is easy to see that since $\eta$ is irreducible, we have 
  $\omega_{\Un}(\eta) > 0$. We see that $\omega_{\Un}(Y \connsum N) > 
  \omega_{\Un}(Y)$, which contradicts \eqref{eq:omega}.
\end{proof}

Note that \fullref{cor:su2sums} can alternatively be proved if $N$ has 
non-trivial $\HFr$, by an application of the K\"unneth formula, as in the proof 
of \fullref{cor:hf-seifert}.  However, such an argument does not work for $N = 
\Sigma(2,3,5) \connsum (-\Sigma(2,3,5))$, since this is an $L$-space.  (In 
fact, in this case, $Y \connsum N$ is even $\Z$-homology cobordant to $Y$.) The 
same issue arises for framed instanton Floer homology $\Is$.  For $\Io$, it is 
difficult to study the instanton Floer homology of connected sums in general.

\fullref{cor:su2sums} can be viewed as obstructing ribbon homology cobordisms 
from a $3$-manifold with an essential sphere to one without. We now turn to 
proving \fullref{cor:composite-to-small}, which is a statement for knots with a 
similar flavor: It states that there are no strongly homotopy-ribbon 
concordances from a connected sum $K_1 \connsum K_2$ to a knot without a 
closed, non--boundary-parallel, incompressible surface in its exterior.

\begin{proof}[Proof of {\fullref{cor:composite-to-small}}]
  Write $\Km \homeo K_1 \connsum K_2$. For a knot $K \subset S^3$, denote a 
  fixed meridian by $\mu$, and a fixed longitude by $\lambda$. Also, write 
  $\repvar (K)$ for $\repvar_{\SUtwo} (S^3 \setminus K)$ and $\charvar (K)$ for 
  $\charvar_{\SUtwo} (S^3 \setminus K)$.  First, recall from the proof of 
  \cite[Proposition~12]{Klassen} that if $\rho_1 \in \repvar (K_1)$ and $\rho_2 
  \in \repvar (K_2)$ satisfy $\rho_1 (\mu) = \rho_2 (\mu)$, then we can 
  amalgamate $\rho_1$ and $\rho_2$ into a $1$-parameter family of 
  representations $\rho \in \repvar (K_1 \connsum K_2)$, by fixing $\rho$ on 
  one summand and conjugating it on the other by the $\Uone$-stabilizer of the 
  peripheral subgroup.

  Now by \cite[Theorem~4.1]{SivekZentner}, for a non-trivial knot $K \subset 
  S^3$, at least one of the following holds:
  \begin{enumerate}
    \item \label{it:sz1} There is a smooth $1$-parameter family of irreducible 
      $\rho_t \in \repvar (K)$, such that $\rho_t (\mu) = \diag (i, -i)$ and 
      $\rho_t (\lambda) = \diag (e^{it}, e^{-it})$, for $t$ in some interval 
      $(t_0, t_2)$; or
    \item \label{it:sz2} There is a smooth $1$-parameter family of irreducible 
      $\rho_s \in \repvar (K)$, such that $\rho_s (\mu) = \diag (e^{is}, 
      e^{-is})$, where $s \in [\pi/2, \pi/2 + \epsilon)$, for some $\epsilon > 
      0$.
  \end{enumerate}
  Note that in either case, there is a representation $\rho' \in \repvar (K)$ 
  with $\rho' (\mu) = \diag (i, -i)$. (See also 
  \cite[Corollary~7.17]{KM:sutured}.) If \eqref{it:sz2} holds for both $K_1$ 
  and $K_2$, then we may amalgamate representations with the same eigenvalue on 
  the meridian to get a $2$-parameter family $\rho \in \repvar (K_1 \connsum 
  K_2)$.  If \eqref{it:sz1} holds for $K_1$, then we may amalgamate this 
  $1$-parameter family with $\rho' \in \repvar (K_2)$, again to obtain a 
  $2$-parameter family of representations $\rho \in \repvar (K_1 \connsum 
  K_2)$.

  In any case, note that these representations $\rho$ can in fact be 
  conjugated. Thus, we have shown that $\repvar (\Km)$ has an open submanifold 
  of dimension at least $5$ on which the conjugation action by $\SOthree$ is 
  free.  Suppose that there is a strongly homotopy-ribbon concordance $C \colon 
  \Km \to \Kp$; \fullref{prop:character-embedding-2} implies that $\repvar 
  (\Kp)$ has an open submanifold of dimension at least $5$ on which $\SOthree$ 
  acts freely. (Although the representation variety is not a smooth manifold in 
  general, it is a real algebraic variety and hence can be stratified into the 
  union of finitely many smooth manifolds \cite{Whitney}; it is easy to see 
  that the maps $\iota_-^*$ and $\iota_+^*$ in 
  \fullref{prop:character-embedding-2} induce smooth maps on an open subset of 
  each stratum.) Thus, $\charvar (\Kp)$ must have a component of dimension at 
  least $2$.  By \cite[Proposition 15]{Klassen}, this implies that $\Kp$ is not 
  small, which is a contradiction.
\end{proof}

\begin{remark}
  \label{rmk:eliashberg}
  Eliashberg \cite{Eliashberg} shows that a Stein filling of a connected sum is 
  a boundary sum of Stein fillings.  It is interesting to compare this with the 
  two results above.
\end{remark}

\section{Surgery obstructions}
\label{sec:surgery}

In this section, we give some applications of the work above on ribbon homology 
cobordisms to reducible Dehn surgery problems.  We first explain the main idea 
in this section. Let $Y$ be a $\Q$-homology sphere, and $L$ a null-homologous 
link of $\ell$ components in $Y$. Denote by $Y_0 (L)$ the result of $0$-surgery 
along each component of $L$. Suppose that $Y_0 (L) \homeo N \connsum \ell (S^1 
\cross S^2)$; in this case, we may deduce facts about $N$ or $L$ with the 
following construction.

Consider the cobordism $W \colon Y \to N$ obtained by attaching a $0$-framed 
$2$-handle along each of the components of $L$, and then a $3$-handle along 
some $\set{p} \cross S^2$ in each of the $S^1 \cross S^2$ summands of $Y_0 
(L)$. Flipping $W$ upside down and reversing its orientation, we obtain a 
cobordism $-W \colon N \to Y$.

\begin{lemma}
  \label{lem:surgery-ribbon}
  Suppose that $Y$ is a $\Q$-homology sphere, $L$ is a null-homologous link of 
  $\ell$ components in $Y$, and $Y_0 (L) \homeo N \connsum \ell (S^1 \cross 
  S^2)$. Then the cobordism $-W \colon N \to Y$ constructed above is a ribbon 
  $\Z$-homology cobordism.
\end{lemma}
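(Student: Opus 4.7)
The plan is to analyze the handle decomposition of $-W \colon N \to Y$ and verify both the ribbon condition and the $\Z$-homology cobordism condition.

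First, I would observe that by construction $W \colon Y \to N$ consists of $\ell$ $2$-handles (from the $0$-framed $2$-handle attachments along $L$) followed by $\ell$ $3$-handles (from capping off the nonseparating spheres $\{p\} \cross S^2$). Flipping $W$ upside down and reversing orientation therefore presents $-W \colon N \to Y$ as a cobordism built by attaching $\ell$ $1$-handles (dual to the original $3$-handles) and $\ell$ $2$-handles (dual to the original $2$-handles), with no $3$-handles. Hence $-W$ is ribbon.

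Next, to show that $-W$ is a $\Z$-homology cobordism, it suffices to verify $H_*(W, Y) = 0$, since Poincar\'e--Lefschetz duality would then give $H_k(W, N) \isom H^{4-k}(W, Y) = 0$. The relative cellular chain complex $C_*(W, Y)$ has $C_2 \isom \Z^\ell$ and $C_3 \isom \Z^\ell$ generated by the cores of the $2$- and $3$-handles, and vanishes in all other degrees. The boundary $\bdy_2$ is trivial because there are no $1$-handles, so the computation reduces to showing that $\bdy_3 \colon C_3 \to C_2$ is an isomorphism. The matrix entry $(\bdy_3)_{ji}$ counts the algebraic intersection of the attaching $2$-sphere $\{p_i\} \cross S^2$ of the $i$-th $3$-handle with the cocore disk of the $j$-th $2$-handle, whose boundary is a meridian $m_j$ of $L_j$ in $Y_0(L)$. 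Since the $i$-th attaching sphere lies entirely in the $i$-th $S^1 \cross S^2$ summand of $Y_0(L) \homeo N \connsum \ell (S^1 \cross S^2)$ and meets $m_i$ transversally in a single point (while being disjoint from $m_j$ for $j \neq i$), the matrix $\bdy_3$ is the identity; hence $H_*(W, Y) = 0$.

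The main technical point to verify carefully is the intersection computation for $\bdy_3$: one must identify the attaching sphere of the $i$-th $3$-handle with the $S^2$-factor of the $i$-th $S^1 \cross S^2$ summand, and recognize that the $S^1$-factor of this summand is precisely the meridian of $L_i$. This is the standard description of $0$-surgery along a null-homologous knot, and the null-homologous hypothesis on $L$ is used here to give canonical longitudes along which the $2$-handles are attached. Once this identification is in place, the rest of the argument is essentially algebraic bookkeeping with the cellular chain complex and duality.
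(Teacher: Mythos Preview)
Your ribbon argument is fine, and the duality reduction to computing $H_*(W,Y)$ is a clean strategy. The genuine gap is in your computation of $\bdy_3$. You assert that the meridian $m_i$ of $L_i$ coincides with the $S^1$-factor of the $i$-th $S^1\times S^2$ summand, so that $S_i\cdot m_j=\delta_{ij}$. But the homeomorphism $Y_0(L)\homeo N\connsum\ell(S^1\times S^2)$ is only given abstractly; nothing in the hypotheses ties the surgery meridians to the connected-sum decomposition. In general the matrix $(\bdy_3)_{ji}=S_i\cdot m_j$ is only the change-of-basis matrix expressing the classes $[m_j]$ in terms of the $S^1$-factors $[c_k]$ in $H_1(Y_0(L))/\Tors$, and there is no reason for it to be the identity.

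What you actually need is that this matrix lies in $\GL_\ell(\Z)$, i.e.\ that both $\{[m_j]\}$ and $\{[c_k]\}$ are $\Z$-bases of $H_1(Y_0(L))/\Tors\cong\Z^\ell$. The $c_k$'s clearly are. For the $m_j$'s one must first show that the linking matrix $M$ of $L$ vanishes: since $L$ is null-homologous one has $H_1(Y_0(L))\cong H_1(Y)\oplus(\Z^\ell/M)$, and comparing with $H_1(N)\oplus\Z^\ell$ together with $b_1(Y)=0$ forces $\mathrm{rk}\,M=0$, hence $M=0$. Only then do the meridians split off a free $\Z^\ell$ summand and your $\bdy_3$ becomes invertible over $\Z$. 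This is exactly the computation the paper carries out at the start of its proof; without it your argument only yields a $\Q$-homology cobordism.

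For comparison, the paper does not compute $\bdy_3$ directly. After establishing $M=0$ and hence $H_1(Y)\cong H_1(N)$, it checks that $-W$ is a $\Q$-homology cobordism by a rank argument on the $2$-handle attaching circles, and then invokes \fullref{lem:same-h1} to upgrade from $\Q$ to $\Z$. Your chain-level approach is more hands-on and avoids that lemma, but it requires the same linking-matrix input to go through.
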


\begin{proof}
  On the one hand, since $L \subset Y$ is null-homologous, we have $H_1 (Y_0 
  (L)) \isom H_1 (Y) \dirsum (\Z^\ell / M)$, where $M$ is given by the linking 
  matrix. On the other hand, we have $H_1 (Y_0 (L)) \isom H_1 (N) \dirsum 
  \Z^\ell$.  Since $b_1 (Y) = 0$, rank considerations imply that $M$ is 
  trivial.  (In particular, the linking matrix of $L$ must be identically 
  zero.)  Thus, we have $H_1 (Y) \isom H_1 (N)$.

  Now the cobordism $-W \colon N \to Y$ consists of the same number $\ell$ of 
  $1$- and $2$-handles and so is ribbon. It is a $\Q$-homology cobordism if and 
  only if the attaching circles of the $2$-handles are linearly independent in 
  $H_1 (N \connsum \ell (S^1 \cross S^2)) / H_1 (N)$, which is obviously true 
  since the $3$-manifold resulting from the $2$-handle attachments is $Y$, 
  which has $b_1 (Y) = 0$.  By \fullref{lem:same-h1}, $-W$ is in fact a ribbon 
  $\Z$-homology cobordism.
\end{proof}

\subsection{Null-homotopic links and reducing spheres}
\label{ssec:surgery-null-homotopic}

In this subsection, we focus on proving \fullref{thm:surgery-general}, which we 
recall asserts that when $0$-surgery on an $\ell$-component null-homotopic link 
in an irreducible $\Q$-homology sphere $Y$ produces $N \connsum \ell (S^1 
\cross S^2)$, then $N$ is orientation-preserving homeomorphic to $Y$.  Recall 
that a closed $3$-manifold $Y$ is aspherical if it is irreducible and has 
infinite fundamental group.  

We begin by relating the fundamental groups of $Y$ and $N$:

\begin{proposition}
  \label{prop:surgery-fundamental}
  Suppose that $Y$ is an irreducible $\Q$-homology sphere, $L$ is a 
  null-homotopic link of $\ell$ components in $Y$, and $Y_0 (L) \homeo N 
  \connsum \ell (S^1 \cross S^2)$. Then there is an orientation-preserving 
  degree-$1$ map from $N$ to $Y$ that induces isomorphisms on $\pi_1$.  
  Moreover, the inclusions of $Y$ and $N$ into the cobordism $W \colon Y \to N$ 
  constructed above also induce isomorphisms on $\pi_1$.
\end{proposition}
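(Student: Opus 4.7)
The plan is to construct a degree-$1$ map from $N$ to $Y$ directly using the cobordism $W$, and then combine an obstruction-theoretic surjectivity argument with Gordon's injectivity statement (\fullref{thm:gordon}) applied to the ribbon homology cobordism $-W$ furnished by \fullref{lem:surgery-ribbon}.

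First I would analyze the inclusion $\iota_Y \colon Y \hookrightarrow W$ purely from the handle description. By construction, $W$ is built from $Y$ by attaching $\ell$ two-handles along the null-homotopic components of $L$, followed by $\ell$ three-handles. Van Kampen's theorem then shows that none of these attachments change the fundamental group: the two-handle relators are trivial because each component of $L$ is null-homotopic in $Y$, while three-handles never affect $\pi_1$. Thus $\iota_{Y*} \colon \pi_1(Y) \to \pi_1(W)$ is an isomorphism.

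Next I would construct a retraction $F \colon W \to Y$ with $F|_Y = \id_Y$ by obstruction theory. The extension across each two-handle uses a chosen null-homotopy in $Y$ of the corresponding component of $L$. The extension across each three-handle is governed by an obstruction in $\pi_2(Y)$, which vanishes: since $Y$ is irreducible (and, being a $\Q$-homology sphere, is not $S^2 \cross S^1$), the Sphere Theorem gives $\pi_2(Y) = 0$. Setting $f = F|_N$, a short relative-homology computation---using $F_*[W] \in H_4(Y) = 0$ together with $\partial [W] = [N] - [Y]$---yields $f_*[N] = [Y]$, so $f$ is an orientation-preserving degree-one map.

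Now I would invoke the classical covering-space argument that a degree-$1$ map between closed, oriented $3$-manifolds induces a surjection on $\pi_1$: a proper image would lift $f$ into a covering of $Y$ whose top homology either vanishes or is too large to accommodate $f_*[N] = [Y]$. This gives $f_* \colon \pi_1(N) \twoheadrightarrow \pi_1(Y)$. On the other hand, \fullref{lem:surgery-ribbon} asserts that $-W \colon N \to Y$ is a ribbon $\Z$-homology cobordism, so \fullref{thm:gordon} forces the inclusion $\iota_N \colon N \hookrightarrow W$ to induce an injection $\iota_{N*} \colon \pi_1(N) \hookrightarrow \pi_1(W)$. Since $f_* = F_* \comp \iota_{N*}$ and $F_* = (\iota_{Y*})^{-1}$ is an isomorphism, the surjectivity of $f_*$ forces the surjectivity of $\iota_{N*}$; hence $\iota_{N*}$, and with it $f_*$, is an isomorphism.

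The main technical obstacle I anticipate is the obstruction-theoretic construction of $F$ across the three-handles, which is precisely where the irreducibility of $Y$ enters essentially, via $\pi_2(Y) = 0$. Once $F$ is in hand, the remainder is a formal consequence of the classical behavior of degree-one maps and of \fullref{thm:gordon}.
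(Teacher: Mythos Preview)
Your proposal is correct and follows essentially the same route as the paper: build a retraction $W\to Y$ using that the $2$-handle cores are null-homotopic and that $\pi_2(Y)=0$ (via irreducibility and the Sphere Theorem), observe that the restriction to $N$ has degree $1$, and combine the resulting $\pi_1$-surjectivity with the $\pi_1$-injectivity of $\iota_N$ coming from \fullref{thm:gordon} applied to the ribbon cobordism $-W$ of \fullref{lem:surgery-ribbon}. The only notable difference is that you establish $(\iota_Y)_*$ is an isomorphism directly by van~Kampen, whereas the paper deduces it from the retraction together with \fullref{thm:gordon}; your argument is slightly more direct, but the two are equivalent in content.
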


\begin{proof}
  Consider the $\Z$-homology cobordism $W \colon Y \to N$ constructed above, 
  and decompose $W$ into two cobordisms $W_2 \colon Y \to N \connsum \ell (S^1 
  \cross S^2)$ and $W_3 \colon N \connsum \ell (S^1 \cross S^2) \to N$, 
  corresponding to the attachment of $2$- and $3$-handles respectively.  We 
  claim that there is a retraction $\rho \colon W \to Y$.

  Indeed, first observe that since $L$ is null-homotopic, $W_2$ is homotopy 
  equivalent to $Y \vee \ell S^2$, which retracts onto $Y$.  More precisely, 
  there is a retraction $\rho_2 \colon W_2 \to Y$ given by deformation 
  retracting the $2$-handles to their cores, homotoping the attaching curves of 
  the $2$-handles to a point, and collapsing the resulting $S^2$ summands.
  Next, we see that $\rho_2$ extends to $W_3$.  Indeed, to prove that $\rho_2$ 
  extends over the $3$-handles, it suffices to see that for each $S^1 \cross 
  S^2$ summand in $N \connsum \ell (S^1 \cross S^2)$, the image $\rho_2 
  (\set{p} \cross S^2) \subset Y$ is null-homotopic. (Recall that the 
  $3$-handles are attached along the $\ell$ copies of $\set{p} \cross S^2$.)  
  Since $Y$ is irreducible, the Sphere Theorem implies that $\pi_2 (Y) = 0$, 
  and $\rho_2$ extends to a retraction $\rho \colon W \to Y$.

  We now claim that pre-composing $\rho$ with the inclusion $\iota_N \colon N 
  \to W$ results in a map $\rho \comp \iota_N \colon N \to Y$ that induces an 
  isomorphism on $\pi_1$. First, we check that the induced map is injective.  
  Indeed, by \fullref{lem:surgery-ribbon}, $-W \colon N \to Y$ is a ribbon 
  $\Z$-homology cobordism, which implies that $(\iota_N)_* \colon \pi_1 (N) \to 
  \pi_1 (W)$ is injective by \fullref{thm:gordon}~\eqref{it:gordon-inj}.  
  Turning to $\rho_* \colon \pi_1 (W) \to \pi_1 (Y)$, note that since $\rho$ is 
  a retraction, we have $\rho \comp \iota_Y = \id_Y$.  This implies that 
  $(\iota_Y)_* \colon \pi_1 (Y) \to \pi_1 (W)$ is injective; at the same time, 
  \fullref{thm:gordon}~\eqref{it:gordon-surj} states that $(\iota_Y)_*$ is 
  surjective. Thus, $(\iota_Y)_*$, and hence $\rho_*$, is an isomorphism. This 
  shows that $(\rho \comp \iota_N)_*$ is injective.

  Next, observe that $H_3 (W) \isom H_3 (Y) \isom \Z$, which implies that 
  $\rho_* \colon H_3 (W) \to H_3 (Y)$ is an isomorphism, since $\rho$ is a 
  retraction.  In fact, since $\iota_N \colon N \to W$ also induces an 
  isomorphism on $H_3$, we see that $(\rho \comp \iota_N)_* \colon H_3 (N) \to 
  H_3 (Y)$ sends $[N]$ to $[Y]$ (and not $-[Y]$), which means that $\rho \comp 
  \iota_N \colon N \to Y$ is an orientation-preserving degree-$1$ map. 
	
  We claim that such a map must induce a surjection $(\rho \comp \iota_N)_* 
  \colon \pi_1 (N) \to \pi_1 (Y)$, for otherwise we could factor the map 
  through a non-trivial cover $\cover{Y}$ of $Y$ corresponding to $(\rho \comp 
  \iota_N)_* (\pi_1 (N))$, showing that its degree is not $1$.  We conclude 
  that $(\rho \comp \iota_N)_*$ is an isomorphism. This gives the first claim.  
  For the second claim, since $\rho_*$ is an isomorphism, we see that 
  $(\iota_N)_*$ is also an isomorphism. Since we have already proved that 
  $(\iota_Y)_*$ is an isomorphism, this concludes the proof.
\end{proof}
  
To deal with the case that $Y$ is a spherical manifold, we will need one 
additional technical lemma.  In what follows, we will fix a basepoint $p_Y \in 
Y$ and a basepoint $p_N \in N$. We will say that
$\ccover{Y}$ is an \emph{unoriented (resp.\ oriented) $\cover{Y}$-cover of $Y$} 
if $\ccover{Y}$ corresponds to a concrete subgroup of $\pi_1 (Y, p_Y)$ and 
$\ccover{Y}$ is unoriented (resp.\ orientation-preserving) homeomorphic to 
$\cover{Y}$. Here, we do \emph{not} consider subgroups up to conjugacy.
Note that, since $Y$ is spherical, $\pi_1 (Y, p_Y)$ is finite, and so all 
covers we consider are finite.

\begin{lemma}
  \label{lem:surgery-covers}
  Suppose that $\cover{Y}$ does not admit an orientation-reversing 
  homeomorphism, and that both the hypotheses and conclusions of 
  \fullref{thm:surgery-general} hold for $\cover{Y}$.  Suppose that $Y$ also 
  satisfies the hypotheses of \fullref{thm:surgery-general}. If $Y$ is 
  unoriented homeomorphic to $N$ and has an odd number $n$ of unoriented 
  $\cover{Y}$-covers, then $Y$ satisfies the conclusions of 
  \fullref{thm:surgery-general}.
\end{lemma}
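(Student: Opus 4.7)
The plan is to use the odd parity assumption to rule out an orientation-reversing homeomorphism $f \colon Y \to N$. Since $Y$ is unoriented homeomorphic to $N$, the two manifolds have isomorphic fundamental groups and $N$ also has exactly $n$ unoriented $\cover{Y}$-cover subgroups. Because $\cover{Y}$ admits no orientation-reversing self-homeomorphism, each such cover is orientation-preserving homeomorphic to either $\cover{Y}$ or $-\cover{Y}$, but not both; I denote the respective counts by $n_Y^{\pm}$ and $n_N^{\pm}$, so that $n_Y^+ + n_Y^- = n = n_N^+ + n_N^-$.

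First, I would lift the cobordism $W \colon Y \to N$ of \fullref{prop:surgery-fundamental} to covers. Fix a subgroup $H \subset \pi_1 (Y)$ with $Y_H$ unoriented homeomorphic to $\cover{Y}$, and let $H'' \subset \pi_1 (N)$ be its image under the isomorphism $(\rho \comp \iota_N)_*^{-1}$. Since the inclusions of $Y$ and $N$ into $W$ both induce isomorphisms on $\pi_1$, we may lift $W$ to the $H$-cover to obtain a ribbon $\Z$-homology cobordism $\widetilde{W} \colon Y_H \to N_{H''}$, built from $d \ell$ $2$-handles along the preimage link $\cover{L}$ followed by $d \ell$ $3$-handles, where $d$ is the degree of the cover. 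In particular, $(Y_H)_0 (\cover{L}) \homeo N_{H''} \connsum (d \ell) (S^1 \cross S^2)$. Noting that the validity of \fullref{thm:surgery-general} is invariant under reversing the orientation of the ambient manifold (reversing orientation on both the source and any putative summand $N'$ preserves the surgery relationship), and that $Y_H$ is unoriented homeomorphic to $\cover{Y}$, the theorem applies to $Y_H$ with $\cover{L}$ and yields that $N_{H''}$ is orientation-preserving homeomorphic to $Y_H$.

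The map $H \mapsto H''$ therefore injects orientation-preserving $\cover{Y}$-cover subgroups of $\pi_1 (Y)$ into those of $\pi_1 (N)$ (giving $n_N^+ \geq n_Y^+$), and similarly for the orientation-reversing type; since the totals agree, both are equalities, so $n_N^{\pm} = n_Y^{\pm}$. On the other hand, the unoriented homeomorphism $f \colon Y \to N$ induces another bijection of $\cover{Y}$-cover subgroups via $f_*$, which preserves the orientation class if $f$ is orientation-preserving and swaps it otherwise. If $f$ were orientation-reversing, then $n_Y^+ = n_N^- = n_Y^-$, making $n = 2 n_Y^+$ even and contradicting the hypothesis; hence $f$ is orientation-preserving. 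The main obstacle I anticipate is setting up the lift of $W$ cleanly and verifying that the validity of \fullref{thm:surgery-general} transfers to $Y_H$ in an orientation-sensitive way, after which the counting argument is elementary.
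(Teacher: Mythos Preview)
Your proposal is correct and follows the same route as the paper: lift $W$ to $\cover{Y}$-covers, apply the assumed instance of \fullref{thm:surgery-general} there to see that the cobordism-induced correspondence on subgroups preserves the oriented $(\pm\cover{Y})$-type, and then compare with the bijection induced by the homeomorphism $f$ to force $n_Y^+ = n_Y^-$ and obtain the parity contradiction. One terminological slip: the lifted $\widetilde{W}$ you describe (built from $2$- and $3$-handles on $Y_H$) is not itself ribbon---its reverse is---and the paper fills in exactly the details you anticipate as obstacles (null-homotopy of $\cover{L}$, that the surgery framings are $0$, and that the intermediate $3$-manifold is precisely $N_{H''} \connsum d\ell(S^1 \times S^2)$).
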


\begin{proof}
  By assumption, we know that $Y$ and $N$ are homeomorphic.  We assume for 
  contradiction that they are not orientation-preserving homeomorphic, and in 
  particular that $Y \homeo -N$.  

  Suppose that $Y_0(L) \homeo N \connsum \ell (S^1 \cross S^2)$ and consider 
  the (non-ribbon) $\Z$-homology cobordism $W \colon Y \to N$ constructed in 
  the paragraph preceding \fullref{lem:surgery-ribbon}.  Since we are working 
  with covers, we will be a bit pedantic with basepoints for the cautious 
  reader.  Pick a path $\gamma$ in $W$ that starts at $p_Y \in Y$ and ends at 
  $p_N \in N$; this gives rise to a change-of-basepoint isomorphism 
  $\Phi_{\gamma} \colon \pi_1 (W, p_Y) \to \pi_1 (W, p_N)$.  By 
  \fullref{prop:surgery-fundamental}, the inclusions $(\iota_Y)_* \colon \pi_1 
  (Y, p_Y) \to \pi_1 (W, p_Y)$ and $(\iota_N)_*: \pi_1(N, p_N) \to \pi_1(W, 
  p_N)$ are isomorphisms. (In this pedantic language, the map on $\pi_1$ 
  induced by $\rho$ in \fullref{prop:surgery-fundamental} is $\rho_* \colon 
  \pi_1 (W, p_Y) \to \pi_1 (Y, p_Y)$, and the isomorphism $(\rho \comp 
  \iota_N)_*$ is really $\rho_* \comp \Phi_\gamma^{-1} \comp (\iota_N)_*$.)

  Choose an oriented $\cover{Y}$-cover $\ccover{Y}$ corresponding to a subgroup 
  $H \subgrp \pi_1 (Y, p_Y)$ of index $[\pi_1 (Y, p_Y) : H] = h$.
  Consider $(\iota_Y)_* (H) \subgrp \pi_1 (W, p_Y)$ and its associated oriented 
  cover $\ccover{W}$ of $W$.  This is a cobordism whose incoming end is 
  $\ccover{Y}$, and whose outgoing end $\ccover{N}$ is the oriented cover of 
  $N$ corresponding to $(\iota_N)_*^{-1} \comp \Phi_{\gamma} \comp 
  (\iota_{Y})_* (H) \subgrp \pi_1 (N, p_N)$.  Note that $\ccover{N}$ is path 
  connected, because $N$ is path connected and $\ccover{N}$ corresponds to a 
  concrete subgroup of $\pi_1 (N, p_N)$.
  Since $(\iota_Y)_*$ and $(\iota_N)_*$ are isomorphisms, each oriented 
  $\cover{Y}$-cover $\ccover{Y}$ of $Y$ produces a distinct oriented cover 
  $\ccover{N}$ of $N$.

  Since $W$ is built out of the same number $\ell$ of $2$- and $3$-handles, we 
  see that $\ccover{W}$ is built by attaching the same number $h \ell$ of $2$- 
  and $3$-handles to $\ccover{Y}$.  (To see this, simply pull back a Morse 
  function on $W$ using the covering map.)  Since the attaching curves for the 
  $2$-handles in $W$ are null-homotopic, the attaching curves for the 
  $2$-handles in $\ccover{W}$ form a null-homotopic link $\ccover{L} \subset 
  \ccover{Y}$. Note also that $\ccover{N}$ does not contain any non-separating 
  2-spheres, since $b_1 (\ccover{N}) = 0$. Now, since $\ccover{N}$ is 
  connected, the result of the surgery along $\ccover{L}$ in $\ccover{Y}$ must 
  be of the form $\ccover{N} \connsum h \ell (S^1 \cross S^2)$, since, from the 
  upside-down perspective, it is also the result of attaching $1$-handles along 
  $h \ell$ copies of $S^0 \cross D^3$. For homology reasons, we see that the 
  surgery coefficients for $\ccover{L} \subset \ccover{Y}$ must be identically 
  $0$.  By our assumption, \fullref{thm:surgery-general} holds for $\cover{Y}$, 
  and so we see that $\ccover{N}$ is orientation-preserving homeomorphic to 
  $\ccover{Y}$, and hence to $\cover{Y}$.

  Note that a simple orientation-reversal argument shows that 
  \fullref{thm:surgery-general} also holds for $-\cover{Y}$, and so we may also 
  repeat the above arguments for oriented ($-\cover{Y}$)-covers, where the 
  corresponding covers of $N$ are then orientation-preserving homeomorphic to 
  $-\cover{Y}$.  

  Suppose that $Y$ has $n_+$ (resp.\ $n_-$) oriented $\cover{Y}$-covers (resp.\ 
  ($-\cover{Y}$)-covers); then $n = n_+ + n_-$.  Then $N \homeo -Y$ has $n_+$ 
  (resp.\ $n_-$) oriented ($-\cover{Y}$)-covers (resp.\ $\cover{Y}$-covers).  
  However, by the above argument, each oriented $(\pm \cover{Y})$-cover of $Y$ 
  induces a distinct oriented $(\pm \cover{Y})$-cover of $N$, which implies 
  that $n_+ = n_-$.  This contradicts the fact that $n$ is odd.
\end{proof}  

With this, we are ready to prove \fullref{thm:surgery-general}.  We use the 
notation as above.

\begin{proof}[Proof of \fullref{thm:surgery-general}]
  First, suppose that $Y$ is aspherical.  Since $Y$ is 
  irreducible and $\pi_1$ detects irreducibility \cite{Stallings}, it follows 
  from \fullref{prop:surgery-fundamental} that $N$ is also irreducible.  
  \fullref{prop:surgery-fundamental} also states that there is an 
  orientation-preserving, degree-$1$ map from $N$ to $Y$ that induces an 
  isomorphism on $\pi_1$.  Asphericity implies that this map is an 
  orientation-preserving homotopy equivalence by Whitehead's Theorem.  It is 
  thus homotopic to a homeomorphism by the Borel Conjecture in dimension $3$; 
  see, for example, \cite[Theorem~0.7]{BorelConj}. Note that this homeomorphism 
  must also preserve orientation, since this property is preserved under 
  homotopy.  This concludes the proof when $Y$ is an aspherical $\Q$-homology 
  sphere.

  Therefore, we may assume that $\pi_1 (Y)$ is finite, or equivalently, that 
  $Y$ and $N$ have spherical geometry.
  We first dispense with the case that $Y$ and $N$ are lens spaces.  Recall 
  from \fullref{lem:surgery-ribbon} that $N$ and $Y$ are $\Z$-homology 
  cobordant.  Two $\Z$-homology cobordant lens spaces are 
  orientation-preserving homeomorphic; see, for example, the discussion in 
  \cite{DoigWehrli}. This concludes the proof when $Y$ is a lens space.
    
  It remains to consider the case that $Y$ is spherical but not a lens space. 
  Recall that two spherical $3$-manifolds with isomorphic fundamental groups 
  are (possibly orientation-reversing) homeomorphic unless they are lens 
  spaces; therefore, \fullref{prop:surgery-fundamental} implies that $N$ and 
  $Y$ are homeomorphic.  Recall also that a non--lens space spherical 
  $3$-manifold $Y$ has $\pi_1$ isomorphic to a central extension of a 
  polyhedral group, and in particular, $\card{\pi_1(Y)} = 2^k m$, where $k \geq 
  2$ and $m$ is odd;
  see \cite[Section~1.7]{AFW} and \cite[Section~6.2]{Orlik}. In the following, 
  we will provide a proof, first for the case that $m = 1$ by inducting on $k$, 
  and then generalizing to the case that $m \geq 3$.

  Before we proceed, we collect three additional observations here.  First, if 
  the fundamental group of a lens space has order $2^k$ with $k \geq 2$, then 
  it does not admit an orientation-reversing homeomorphism, since $-1$ is not a 
  square mod $2^k$.  Second, a non--lens space spherical manifold $Y$ also does 
  not admit an orientation-reversing homeomorphism. Indeed, by considering an 
  order-$4$ subgroup of $\pi_1 (Y)$, we see that such a manifold has an 
  unoriented $L (4, 1)$-cover. (Recall that there are no $3$-manifolds $Y$ with 
  $\pi_1 (Y) \isom \zeetwo \dirsum \zeetwo$, and $L (4, 3) \homeo - L(4, 1)$.) 
  An orientation-reversing homeomorphism of $Y$ would induce an 
  orientation-reversing homeomorphism of $L(4, 1)$, which is impossible. Third, 
  the number of index-$2$ subgroups of any finite group $\pi$ is odd.  Indeed, 
  the index-$2$ subgroups of $\pi$ correspond to non-trivial homomorphisms to 
  $\zeetwo$, and $\Hom (\pi, \zeetwo)$ is a vector space over $\zeetwo$, which 
  has even cardinality.

  We begin by showing that the theorem holds in the case that $m = 1$.  As 
  mentioned, we will induct on $k$; to help illustrate the idea of the proof, 
  we will give a more detailed description for small values of $k$. If $k = 2$, 
  then $Y$ is a lens space $L (4, q)$.  If $k = 3$, then $\pi_1 (Y)$ has an odd 
  number of index-$2$ subgroups; in other words, $Y$ has an odd number of 
  unoriented $L (4, 1)$-covers. We may thus apply \fullref{lem:surgery-covers} 
  with $\cover{Y} = L (4, 1)$, and conclude that the theorem holds for 
  spherical manifolds with order $8$.

  Next, if $k = 4$, again $\pi_1 (Y)$ has an odd number of index-$2$ subgroups.  
  This means that the total number of unoriented covers of $Y$ (of possibly 
  distinct unoriented homeomorphism types) whose $\pi_1$ has order $8$ is odd.
  Hence, there must be an unoriented homeomorphism type $\cover{Y}$ of 
  spherical manifolds with $\card{\pi_1 (\cover{Y})} = 8$, such that $Y$ has an 
  odd number of ($2$-sheeted) unoriented $\cover{Y}$-covers. Again we may apply 
  \fullref{lem:surgery-covers} with this choice of $\cover{Y}$, and the proof 
  is complete for those spherical manifolds $Y$ with $\card{\pi_1 (Y)} = 16$.  
  (Here, $\cover{Y}$ may be a lens space $L (8, q)$, or the unique type-$\dih$ 
  manifold with $\pi_1$ isomorphic to the quaternion group $Q_8$; in either 
  case, the actual homeomorphism type of $\cover{Y}$ is irrelevant, and we are 
  simply relying on the fact that there is no orientation-reversing 
  homeomorphism of $\cover{Y}$, in order to invoke 
  \fullref{lem:surgery-covers}.)  Similarly, we may now continue this induction 
  on $k$ to complete the proof in the case that $\card{\pi_1 (Y)} = 2^k$ with 
  $k \geq 2$.  

  Finally, we consider the case that $Y$ is a non--lens space spherical 
  manifold with $\card{\pi_1(Y)} = 2^k m$, where $k \geq 2$ and $m \geq 3$ is 
  odd.  In this case, the Sylow $2$-subgroups of $\pi_1 (Y)$ have order $2^k$, 
  and there are an odd number of them by the Third Sylow Theorem.  Hence, there 
  exists an unoriented homeomorphism type $\cover{Y}$ of spherical manifolds 
  with $\card{\pi_1 (\cover{Y})} = 2^k$, such that $Y$ has an odd number of 
  finite, unoriented $\cover{Y}$-covers.  We may now apply a similar argument 
  using \fullref{lem:surgery-covers}.  This completes the proof.
\end{proof}

\bibliographystyle{mwamsalphack}
\bibliography{references}

\end{document}